\newtheorem*{rep@theorem}{\rep@title}
\newcommand{\newreptheorem}[2]{%
\newenvironment{rep#1}[1]{%
 \def\rep@title{#2 \ref{##1}}%
 \begin{rep@theorem}}%
 {\end{rep@theorem}}}
\newtheorem{thm}{Theorem}[section]
\newtheorem{cor}[thm]{Corollary}
\newtheorem{lem}[thm]{Lemma}
\newtheorem{prop}[thm]{Proposition}
\newtheorem{defn}[thm]{Definition}
\newtheorem{rem}[thm]{Remark}
\newtheorem{esem}[thm]{Example}
\newtheorem*{cor*}{Corollary}
\newtheorem{theorem}{Theorem}
\newtheorem{question}{Question}
\newtheorem{conjecture}{Conjecture}
\title{Kähler-Einstein metrics with prescribed singularities\\ on Fano manifolds}
\author{Antonio Trusiani\footnote{email: antonio.trusiani@math.univ-toulouse.fr}}
\date{} % Activate to display a given date or no date (if empty),
\begin{document}
\maketitle
\begin{abstract}
Given a Fano manifold $(X,\omega)$ we develop a variational approach to characterize analytically the existence of Kähler-Einstein metrics with prescribed singularities, assuming that these singularities can be approximated algebraically.\newline
Moreover we define a function $\alpha_{\omega}$ on the set of prescribed singularities which generalizes Tian's $\alpha$-invariant, showing that its upper lever set $\{\alpha_{\omega}(\cdot)>\frac{n}{n+1}\}$ produces a subset of the Kähler-Einstein locus, i.e. of the locus given by all prescribed singularities that admit Kähler-Einstein metrics. In particular, we prove that many $K$-stable manifolds admit all possible Kähler-Einstein metrics with prescribed singularities. Conversely, we show that enough positivity of the $\alpha$-invariant function at non-trivial prescribed singularities (or other conditions) implies the existence of genuine Kähler-Einstein metrics.\newline
Finally, through a continuity method we also prove the strong continuity of Kähler-Einstein metrics on curves of totally ordered prescribed singularities when the relative automorphism groups are discrete.
\end{abstract}
\vspace{5pt}
{\small \textbf{Keywords:} Kähler-Einstein metrics, Fano manifolds, complex Monge-Ampère equations, alpha-invariants, log canonical thresholds.\newline
\textbf{2020 Mathematics subject classification:} 32Q20 (primary), 14J45, 32W20, 32U05 (secondary).}
\section{Introduction}
A Fano manifold $X$ admits a Kähler-Einstein (KE) metric if and only if $(X,-K_{X})$ is \emph{K-stable} (\cite{CDS15}). This is the famous solution to the Yau-Tian-Donaldson conjecture for the anticanonical polarization, and it connects a differential-geometric notion to a purely algebro-geometric notion as predicted by S.T. Yau (\cite{Yau93}).\newline
There are now two natural possible singular versions of this correspondence: when $X$ is singular or when the metric has some \emph{prescribed singularities}. In this article we deal with the second problem.\newline

Letting $\omega$ be a Kähler form with cohomology class $c_{1}(X)$, as any KE metric corresponds to a function $u\in PSH(X,\omega)\cap \mathcal{C}^{\infty}(X)$ such that
\begin{equation}
\label{eqn:Ricci}
\mbox{Ric}(\omega+dd^{c}u)=\omega+dd^{c}u
\end{equation}
where $d^{c}:=\frac{i}{4\pi}(\bar{\partial} -\partial)$ so that $dd^{c}=\frac{i}{2\pi}\partial\bar{\partial}$, the most natural extension to the prescribed singularities setting is to fix $\psi\in PSH(X,\omega)$ and to look for $u\in PSH(X,\omega)$ which satisfies (\ref{eqn:Ricci}) in a singular sense and such that $u-\psi$ is globally bounded. We refer to section \S\ref{sec:KE} for the precise definition of \emph{Kähler-Einstein metrics with prescribed singularities $[\psi]$ ($[\psi]$-KE metrics)}, here we underline its characterization in terms of Monge-Ampère equations: by abuse of language $\omega+dd^{c}u$ is a $[\psi]$-KE metric if and only if $u$ solves
\begin{equation}
\label{eqn:MAINT}
\begin{cases}
MA_{\omega}(u)=e^{-u+C}\mu\\
u\in\mathcal{E}^{1}(X,\omega,\psi).
\end{cases}
\end{equation}
for $C\in\mathbbm{R}$. The measure $\mu$ in (\ref{eqn:MAINT}) is the usual smooth volume form on $X$ given as $\mu=e^{-\rho}\omega^{n}$ for $\rho$ Ricci potential, $MA_{\omega}(u):=\langle (\omega+dd^{c}u)^{n}\rangle$ is the Monge-Ampère measure of $u$ in terms of the \emph{non-pluripolar product} (see \cite{BEGZ10}) while $\mathcal{E}^{1}(X,\omega,\psi)$ is the set of all $u\in PSH(X,\omega)$ more singular than $\psi$, i.e. $u\leq \psi+C$ for $C\in \mathbbm{R}$, such that the $\psi$-relative energy $E_{\psi}(u)$ is finite (see \cite{DDNL17b}, \cite{Tru19}, \cite{Tru20a}). Note that the set $\mathcal{E}^{1}(X,\omega,\psi)$ contains all $u$ such that $u-\psi$ is globally bounded.\newline
Recalling that $PSH(X,\omega)$ is naturally endowed with a partial order $u\preccurlyeq v$ if $u\leq v+C$, the following conditions on $\psi$ are necessary to solve (\ref{eqn:MAINT}):
\begin{itemize}
\item[i)] $\psi=P_{\omega}[\psi]:=\Big(\sup\{u\in PSH(X,\omega)\, : \, u\preccurlyeq \psi,u\leq 0\}\Big)^{*}$ where the star is for the upper semicontinuous regularization;
\item[ii)] $V_{\psi}:=\int_{X}MA_{\omega}(\psi)>0$;
\item[iii)] $\mathcal{I}(\psi)=\mathcal{O}_{X}$ where $\mathcal{I}(\psi)$ is the multiplier ideal sheaf attached to $\psi$.
\end{itemize}
The first condition means that $\psi$ is a \emph{model type envelope}, i.e. $\psi\in\mathcal{M}$ (it is shown to be necessary in \cite{DDNL17b}), while we will say that $(X,\psi)$ is \emph{Kawamata Log Terminal (klt)} when $(iii)$ holds (see Remark \ref{rem:Assumptions} for more details on these necessary conditions). With obvious notations we denote respectively by $\mathcal{M}^{+}, \mathcal{M}_{klt}^{+}$ the sets of model type envelopes that satisfy respectively $(ii)$, $(ii)$ and $(iii)$. Thus $\mathcal{M}_{klt}^{+}$ can be thought as the set of all admissible prescribed singularities and it is natural to define the \emph{Kähler-Einstein locus} as
$$
\mathcal{M}_{KE}:=\{\psi\in\mathcal{M}_{klt}^{+}\, :\, \mbox{there exists a }[\psi]\mbox{-KE metric}\}.
$$
Then, observing that the $[0]$-KE metrics are the genuine KE metrics, it is immediate to ask the following questions.
\begin{question}
\label{Question}
Let $(X,\omega)$ be a Fano manifold. Is it possible to characterize $\mathcal{M}_{KE}$? When $\mathcal{M}_{KE}=\mathcal{M}_{klt}^{+}$? What geometric properties does the set $\mathcal{M}_{KE}$ possess?
\end{question}

To start addressing Question \ref{Question}, we first define the $\alpha$-invariant function $\mathcal{M}\ni \psi\to \alpha_{\omega}(\psi)\in (0,+\infty)$,
\begin{equation}
\label{eqn:AlphaIntro}
\alpha_{\omega}(\psi):=\sup\Big\{\alpha\geq 0\, :\, \sup_{u\preccurlyeq \psi, \sup_{X}u=0}\int_{X}e^{\alpha(\psi- u)}e^{-\psi}d\mu<+\infty \Big\},
\end{equation}
if $\psi\in\mathcal{M}_{klt}$ and $\alpha_{\omega}(\psi):=0$ otherwise, and its \emph{modified version}
\begin{equation}
    \label{eqn:AlphaIntro2}
    \tilde{\alpha}_{\omega}(\psi):=\sup\Big\{\alpha\geq 0\, :\, \sup_{u\preccurlyeq \psi, \sup_{X}u=0}\int_{X}e^{-\alpha u}d\mu<+\infty \Big\}.
\end{equation}
Both these functions generalize to the $\psi$-relative setting the classical Tian's $\alpha$-invariant (\cite{Tian87}), and we have the following result.
\begin{theorem}
\label{thmA2}
Let $(X,\omega)$ be a Fano manifold. Then
$$
\Big\{\psi\in\mathcal{M}_{klt}^{+}\, : \, \alpha_{\omega}(\psi)>\frac{n}{n+1}\Big\}\subset \mathcal{M}_{KE}.
$$
Moreover, letting $\mathrm{lct}(X,\psi):=\sup\big\{t>0\,:\, (X,t\psi)\, \mbox{is klt}\}$,
\begin{itemize}
    \item[i)] $\alpha_{\omega}(0)>\frac{n}{n+1}$ implies
    $$
    \Big\{\psi\in\mathcal{M}_{klt}^{+}\, : \, \mathrm{lct}(X,\psi)>1+\frac{1-\alpha_{\omega}(0)}{\frac{n+1}{n}\alpha_{\omega}(0)-1}\Big\}\subset \mathcal{M}_{KE}.
    $$
    In particular $\alpha_{\omega}(0)\geq 1$ implies $\mathcal{M}_{KE}=\mathcal{M}_{klt}^{+}$.
\end{itemize}
On the other hand, each of the followings implies $0\in\mathcal{M}_{KE}$:
\begin{itemize}
    \item[ii)] there exists $\psi\in\mathcal{M}$, $t\in(0,1]$ such that $ \tilde{\alpha}_{\omega}(\psi_{t})>\frac{n}{(n+1)t}$ for $\psi_{t}:=P_{\omega}[(1-t)\psi]$;
    \item[iii)] given $\alpha<\alpha_{\omega}(0)$ there exists $\psi\in\mathcal{M}_{klt}^{+}$ such that $\begin{cases}
        \tilde{\alpha}_{\omega}(\psi)>C_{1,\psi}=C_{1,\psi}(\alpha,V_{\psi}/V_{0}) \\ V_{\psi}/V_{0}> D_{1}=D_{1}(\alpha);
    \end{cases}$
    \item[iv)] given $\alpha<\alpha_{\omega}(0)$ there exists $\psi\in\mathcal{M}_{klt}^{+}$ such that $\begin{cases}
        \alpha_{\omega}(\psi)>C_{2,\psi}=C_{2,\psi}\big(\alpha,V_{\psi}/V_{0},\mathrm{lct}(X,\psi)\big) \\
        V_{\psi}/V_{0}> D_{2,\psi}=D_{2,\psi}\big(\alpha,\mathrm{lct}(X,\psi)\big).
    \end{cases}$
\end{itemize}
\end{theorem}
We refer to section \S\ref{sec:PSS} for the explicit formulas of the constants $C_{i,\psi}, D_{1},D_{2,\psi}$ in Theorem \ref{thmA2}, but the upshot of $(ii), (iii)$ is that the values of $\alpha_{\omega}(\cdot), \tilde{\alpha}_{\omega}(\cdot)$ at singular model type envelopes can help to understand if $X$ admits a genuine Kähler-Einstein metrics.\newline
Let us stress that the advantage of the relative setting is that we can \emph{choose} $\psi\in\mathcal{M}_{klt}^{+}$. Indeed, the more singular $\psi$ is, the less functions are involved in the definitions in (\ref{eqn:AlphaIntro}) and in (\ref{eqn:AlphaIntro2}), and hence the more calculable $\alpha_{\omega}(\psi),  \tilde{\alpha}_{\omega}(\psi)$ are. See for instance subsection \S \ref{ssec:Estimate} where, for $\psi\in\mathcal{M}_{klt}^{+}$ having isolated singularities at $N$ points, we produce lower bounds for the $\psi$-relative $\alpha$-invariant in terms of the \emph{multipoint Seshadri constants} and \emph{pseudoeffective thresholds}.\newline

Theorem \ref{thmA2}$.(i)$ instead shows that $\mathcal{M}_{KE}=\mathcal{M}_{klt}^{+}$ as soon as $\alpha_{\omega}(0)\geq 1$, which together with the discussion below about a new continuity method with \emph{movable singularities} leads to the following natural conjecture.
\begin{conjecture}
\label{conjA}
Let $(X,\omega)$ be a Fano manifold such that $\mbox{Aut}(X)^{\circ}=\{\mbox{Id}\}$. Then
$$
0\in\mathcal{M}_{KE} \Longleftrightarrow \mathcal{M}_{KE}=\mathcal{M}_{klt}^{+}.
$$
\end{conjecture}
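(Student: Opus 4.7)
The implication $\mathcal{M}_{KE}=\mathcal{M}_{klt}^{+}\Rightarrow 0\in\mathcal{M}_{KE}$ is tautological since $0\in\mathcal{M}_{klt}^{+}$, so the content is the converse: assuming $0\in\mathcal{M}_{KE}$ and $\mbox{Aut}(X)^{\circ}=\{\mbox{Id}\}$, every $\psi\in\mathcal{M}_{klt}^{+}$ must admit a $[\psi]$-KE metric. The variational framework encodes a $[\psi]$-KE metric as a minimizer of the relative Ding functional $D_{\psi}$ on $\mathcal{E}^{1}(X,\omega,\psi)$, so the conjecture amounts to the propagation of coercivity of $D_{0}$ (which, under the hypotheses, follows from Tian–Phong–Song–Sturm–Weinkove–Darvas–Rubinstein) to coercivity of $D_{\psi}$ for an arbitrary prescribed singularity.

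My plan is a continuity method along the canonical totally ordered family $\psi_{t}:=P_{\omega}[(1-t)\psi]$ interpolating $\psi_{0}=\psi$ to $\psi_{1}=0$, which is the same family featuring in Theorem \ref{thmA2}. Set $T:=\{t\in[0,1]\,:\,\psi_{t}\in\mathcal{M}_{KE}\}$; by hypothesis $1\in T$, and I aim to show $T$ is open and closed in $[0,1]$. Openness at $t_{0}\in T$ should follow by linearizing (\ref{eqn:MAINT}) around the $[\psi_{t_{0}}]$-KE potential and applying an implicit function theorem in $\mathcal{E}^{1}(X,\omega,\psi_{t_{0}})$; invertibility of the linearized operator is equivalent to strict convexity of $D_{\psi_{t_{0}}}$ along finite-energy geodesics, which I expect to reduce, by a Matsushima-type rigidity in the relative setting, to the triviality of the reduced automorphism group of the pair $(X,\psi_{t_{0}})$ — itself a consequence of $\mbox{Aut}(X)^{\circ}=\{\mbox{Id}\}$.

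Closedness is where the strong continuity of KE metrics on totally ordered curves (the result announced in the abstract) is supposed to intervene: a sequence $t_{n}\to t_{*}$ in $T$ yields $[\psi_{t_{n}}]$-KE potentials $u_{n}$, and one needs uniform relative $J_{\psi_{t_{n}}}$-bounds to extract a limit lying in $\mathcal{E}^{1}(X,\omega,\psi_{t_{*}})$ and satisfying (\ref{eqn:MAINT}) for $\psi_{t_{*}}$. These bounds should come from transferring the absolute coercivity of $D_{0}$ to the relative level, exploiting the monotonicity of $E_{\psi}$, $D_{\psi}$ and $J_{\psi}$ in $\psi$ along the ordered curve together with the klt assumption to control the entropy defect. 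Combined with the strong continuity principle, this yields the limiting $[\psi_{t_{*}}]$-KE metric.

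The hard part, and the reason the statement is only a conjecture, is precisely this uniform transfer of coercivity: a priori the constants in the relative Moser–Trudinger inequality may degenerate as $\psi_{t}$ becomes more singular, and Example \ref{esem:1} shows that one cannot in general hope for $\alpha_{\omega}(\psi_{t})>n/(n+1)$ uniformly in $t$, so Theorem \ref{thmA2}(i) is not directly applicable. The main obstacle is therefore to upgrade the hypothesis $\mbox{Aut}(X)^{\circ}=\{\mbox{Id}\}$ — an absolute symmetry-rigidity statement — into a statement that \emph{no} $\psi\in\mathcal{M}_{klt}^{+}$ carries nontrivial infinitesimal symmetries causing $D_{\psi}$ to fail coercivity; such a relative Matsushima/Calabi-Futaki rigidity for model envelopes, if established, would simultaneously yield openness, closedness, and discreteness of $\mbox{Aut}(X,\psi_{t})^{\circ}$ along the path, completing the continuity argument.
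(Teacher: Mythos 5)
This statement is posed in the paper as a \emph{conjecture}: the paper offers no proof of the forward implication, and your proposal does not supply one either --- it is a strategy outline whose decisive step you yourself flag as unresolved. So there is a genuine gap, and it sits exactly where the paper says the open problem lies. Concretely: your continuity method needs $T=\{t:\psi_{t}\in\mathcal{M}_{KE}\}$ to be both open and closed, starting from the trivial end $t=1$ and moving toward the more singular $\psi_{0}=\psi$. The paper's only mechanism for propagating coercivity, Theorem \ref{thm:Open}, goes in the direction of \emph{less} singular envelopes ($\psi'\succcurlyeq\psi$) and even then only under the volume constraint $V_{\psi'}<V_{\psi}/(1-A)$; it gives nothing in your direction of travel. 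Closedness requires the uniform bound $\sup_{X}u_{k}\leq C$ of Theorem \ref{thm:Closed}(ii), which the paper explicitly identifies as ``the real obstruction'' and shows to be necessary (Remark \ref{rem:CDS}); your proposed source for it --- ``transferring the absolute coercivity of $D_{0}$ to the relative level by monotonicity of $E_{\psi},D_{\psi},J_{\psi}$'' --- is precisely the uniform coercivity transfer that is not known and that you concede may degenerate. Asserting the conclusion modulo that step is circular.

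Two further points. First, your openness argument via linearization and an implicit function theorem in $\mathcal{E}^{1}(X,\omega,\psi_{t_{0}})$ is not what the paper does and is not available at this level of regularity; the paper's openness (Theorem \ref{thmA}, Steps 1--2) is purely variational, combining Theorem \ref{thm:Open} with a compactness--contradiction argument, and moreover requires $\psi_{t}\in\mathcal{M}_{D}$ so that Theorem \ref{thm:Linear} and Corollary \ref{cor:GeodConv} apply --- a hypothesis you never verify for a general $\psi\in\mathcal{M}_{klt}^{+}$ (Example \ref{esem:StrangePiu} shows $\mathcal{M}_{D}\subsetneq\mathcal{M}$). Second, the ``relative Matsushima rigidity'' you invoke is unnecessary for the uniqueness/automorphism side: $\mbox{Aut}(X,[\psi])^{\circ}\subset\mbox{Aut}(X)^{\circ}=\{\mbox{Id}\}$ automatically, so all relative automorphism groups are trivial by hypothesis. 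The obstruction is not symmetry but existence: triviality of $\mbox{Aut}(X,[\psi])^{\circ}$ together with Theorem \ref{thmC} converts existence into coercivity of $D_{\psi}$, and no argument in the paper or in your proposal produces that coercivity from $0\in\mathcal{M}_{KE}$ alone. Your diagnosis of why the statement remains conjectural is accurate, but a diagnosis is not a proof.
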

By $\mbox{Aut}(X)^{\circ}$ we denote the connected component of the identity map.  Note that the assumption $\mbox{Aut}(X)^{\circ}=\{\mathrm{Id}\}$ is necessary as $X=\mathbbm{P}^{2}$ shows (Example \ref{esem:1}).\newline

Moreover, when $\alpha_{\omega}(0)>\frac{n}{n+1}$, Theorem \ref{thmA2}$.(i)$ implies the existence of many log-KE metrics for weak log Fano pairs $(Y,\Delta)$ given by resolutions of integrally closed coherent analytic sheaves. Indeed, among $\mathcal{M}_{klt}^{+}$ there are particular model type envelopes associated to analytic singularities. Namely, we say that $\psi$ has \emph{analytic singularities type} if $\psi=P_{\omega}[\varphi]$ for $\varphi\in PSH(X,\omega)$ with analytic singularities formally encoded in $(\mathcal{I},c)$. In this case, taking $p:Y\to X$ resolution of $\mathcal{I}$, the set of $[\psi]$-KE metrics is in correspondence with the set of log-KE metrics for the log pair $(Y,\Delta)$ where $\Delta:=cD-K_{Y/X}$ and $p^{*}\mathcal{I}=\mathcal{O}_{Y}(-D)$ (see \cite{Tru20b}). If the singularities are \emph{algebraic} (i.e. $c\in\mathbbm{Q}$), we prove that
$$
\alpha_{\omega}(\psi)=\alpha(Y,\Delta),
$$
i.e. $\alpha_{\omega}(\psi)$ coincides with the usual log $\alpha$-invariant $\alpha(Y,\Delta)$ (see Proposition \ref{prop:AlphaAlg}).\newline

In particular, if $D$ is a smooth irreducible divisor in $|-rK_{X}|$, $\varphi_{D}\in PSH(X,\omega)$ such that $\omega+dd^{c}\varphi_{D}=\frac{1}{r}[D]$ and $\psi_{t}:=P_{\omega}[t\varphi_{D}]$ for any $t\in[0,1]$, finding a $[\psi_{t}]$-KE metric is equivalent to find a KE metric $\omega_{u_{t}}:=\omega+dd^{c}u_{t}$ with cone singularities along $D$ of angle $2\pi(1-t)/r$, i.e. $u_{t}\in PSH(X,\omega)$ locally bounded such that
$$
Ric(\omega_{u_{t}})=t\omega_{u_{t}}+\frac{(1-t)}{r}[D].
$$
This is the path considered in \cite{CDS15} to solve the Yau-Tian-Donaldson Conjecture and it is well-known that there exists $t_{0}\in(0,1]$ such that $\psi_{t}\in\mathcal{M}_{KE}$ for any $t\in (0,t_{0})$ (\cite{Berm13}, \cite{JMR11}, see also Remark \ref{rem:CDS}) and
\begin{equation*}
\{\psi_{t}\}_{t\in(0,1]}\subset \mathcal{M}_{KE}\quad \Longleftrightarrow  \quad 0\in \mathcal{M}_{KE}.
\end{equation*}
In particular Theorem \ref{thmA2} gives a valuative criterion to detect if the curve $\{\psi_{t}\}_{t\in(0,1]}$ is entirely contained in $\mathcal{M}_{KE}$.\newline

Since for any $\psi\in\mathcal{M}_{klt}^{+}$ the curve $[0,1]\ni t\to \psi_{t}=P_{\omega}[(1-t)\psi]$ belongs to $\mathcal{M}_{klt}^{+}$ and it is weakly continuous (see Lemma \ref{lem:Star}), i.e. continuous with respect to the natural $L^{1}$-topology on $\mathcal{M}_{klt}^{+}$, it is then natural to wonder if it is possible to perform a continuity method for this path, generalising the underlying idea in \cite{CDS15}.\newline
In the companion paper \cite{Tru20b} we introduced a continuity method with \emph{movable singularities} based on the \emph{strong topology} of $\omega$-psh functions given as the coarsest refinement of the weak topology such that the energy $E_{\cdot}(\cdot)$ becomes continuous (\cite{Tru19}, \cite{Tru20a}). Thus, denoting with $\mathcal{M}_{D}$ the subset of $\mathcal{M}$ of all model type envelopes that are approximable by decreasing sequences of model type envelopes with algebraic singularities (see section \S \ref{sec:Prelim}), we show the following result.
\begin{theorem}
\label{thmA}
Let $(X,\omega)$ be a Fano manifold and let $\{\psi_{t}\}_{t\in[0,1]}\subset\mathcal{M}_{klt}^{+}$ be a weakly continuous segment such that
\begin{itemize}
\item[i)] $\psi_{0}\in\mathcal{M}_{KE}$;
\item[ii)] $\{\psi_{t}\}_{t\in[0,1]}\subset\mathcal{M}_{D}$;
\item[iii)] $\psi_{t}\preccurlyeq \psi_{s}$ if $t\leq s$;
\item[iv)] $\mbox{Aut}(X,[\psi_{t}])^{\circ}=\{\mbox{Id}\}$ for any $t\in [0,1]$.
\end{itemize}
Then the set
$$
S:=\{t\in[0,1]\, : \, \psi_{t}\in \mathcal{M}_{KE}\}
$$
is open, the unique family of $[\psi_{t}]$-KE metrics $\{\omega_{u_{t}}\}_{t\in S}$ is weakly continuous and the family of potentials $\{u_{t}\}_{t\in S}$ can be chosen so that the curve $S\ni t\to u_{t}\in \mathcal{E}^{1}(X,\omega,\psi_{t})$ is strongly continuous.
\end{theorem}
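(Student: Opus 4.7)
The plan is to run a continuity method in the strong topology, built on the variational characterization of $[\psi]$-KE metrics established in the companion paper \cite{Tru20b}. There, under hypothesis (iv), a $[\psi_{t}]$-KE metric corresponds to the (unique modulo additive constants) minimizer of a $\psi_{t}$-relative Ding-type functional $D_{\psi_{t}}$ on $\mathcal{E}^{1}(X,\omega,\psi_{t})$, and existence is equivalent to coercivity of $D_{\psi_{t}}$ with respect to the relative $J$-functional. The theorem then splits into three linked tasks: transferring coercivity from one parameter to nearby ones (openness of $S$), extracting a weak subsequential limit of KE potentials as $t_{n}\to t\in S$ (weak continuity), and upgrading this to convergence of relative energies (strong continuity).

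For openness at $t_{0}\in S$, I would fix the KE potential $u_{t_{0}}$ together with the coercivity constants of $D_{\psi_{t_{0}}}$, and for $t$ close to $t_{0}$ set up a quasi-isometric correspondence between bounded relative-energy sublevel sets at parameter $t$ and at $t_{0}$ by means of the rooftop envelope $u\mapsto P_{\omega}[u+\psi_{t}-\psi_{t_{0}}]$, using the monotonicity (iii) to ensure this lands in the correct class. Using hypothesis (ii) and the continuity of $E_{\psi}$ and $V_{\psi}$ under decreasing algebraic approximations (\cite{Tru19,Tru20a}), I would argue that $E_{\psi_{t}}$ and the logarithmic part of $D_{\psi_{t}}$ agree with those of $D_{\psi_{t_{0}}}$ up to an $o(1)$ error, uniformly on bounded sublevel sets as $t\to t_{0}$. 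This produces coercivity of $D_{\psi_{t}}$ for $|t-t_{0}|$ small, and hence a $[\psi_{t}]$-KE potential. The obstacle here is that $E_{\psi_{t}}$ lives on a \emph{moving} space $\mathcal{E}^{1}(X,\omega,\psi_{t})$, so making sense of such comparisons requires the strong stability of $P_{\omega}[\cdot]$ developed in \cite{Tru20b}.

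For the continuity, pick $t_{n}\to t$ in $S$ and normalize the KE potentials $u_{n}$ by $\sup_{X}(u_{n}-\psi_{t_{n}})=0$. Testing $D_{\psi_{t_{n}}}$ against $\psi_{t_{n}}$ and invoking the uniform coercivity from the openness step confines $\{u_{n}\}$ to a bounded relative-energy sublevel set, hence to a weakly relatively compact subset of $PSH(X,\omega)$. A weak cluster point $u_{\infty}$ belongs to $\mathcal{E}^{1}(X,\omega,\psi_{t})$ by (iii) and the weak continuity of the curve, and solves (\ref{eqn:MAINT}) at $\psi_{t}$ by passage to the limit in the non-pluripolar Monge-Amp\`ere operator together with $V_{\psi_{t_{n}}}\to V_{\psi_{t}}$ in the normalizing constants; uniqueness from (iv) identifies $u_{\infty}=u_{t}$, giving weak continuity along the full sequence. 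Strong continuity then reduces to $E_{\psi_{t_{n}}}(u_{n})\to E_{\psi_{t}}(u_{t})$: the upper bound follows from upper semicontinuity of the relative energy along the parameter, while the matching lower bound is obtained by testing the minimality of $u_{n}$ for $D_{\psi_{t_{n}}}$ against the competitor $P_{\omega}[u_{t}+\psi_{t_{n}}-\psi_{t}]$ built from $u_{t}$. I expect this convergence of energies along a moving reference singularity to be the deepest step and the main technical obstacle, as it is precisely the point where hypotheses (ii), (iii) and the strong-topology machinery of \cite{Tru20b} must be exploited simultaneously.
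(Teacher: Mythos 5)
Your overall architecture (variational characterization via the Ding functional, coercivity, compactness, closedness) matches the paper's, but there is a genuine gap at the decisive step: the transfer of coercivity in the direction of \emph{increasing} singularity. The paper's quantitative coercivity-transfer result (Theorem \ref{thm:Open}) only goes from $\psi$ to \emph{less} singular $\psi'\succcurlyeq\psi$, with the slope degrading by a factor controlled by $V_{\psi'}/V_{\psi}$; combined with the continuity of $t\mapsto V_{\psi_{t}}$ this yields openness only for half-open intervals $[a,b)$. Your claim that $E_{\psi_{t}}$ and the logarithmic term of $D_{\psi_{t}}$ ``agree with those of $D_{\psi_{t_{0}}}$ up to an $o(1)$ error, uniformly on bounded sublevel sets'' is precisely the hard direction and is not justified: for $t<t_{0}$ a potential $v\preccurlyeq\psi_{t}$ has total mass $V_{\psi_{t}}<V_{\psi_{t_{0}}}$ and does not lie in $\mathcal{E}^{1}(X,\omega,\psi_{t_{0}})$, there is no canonical lift to the less singular class, and the envelope $P_{\omega}[u+\psi_{t}-\psi_{t_{0}}]$ is not even well defined on $\{\psi_{t_{0}}=-\infty\}$ (the correct operator $P_{\omega}[\psi_{t}](\cdot)$ projects \emph{downward} in singularity, the wrong way for your comparison). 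The paper does not establish coercivity for $t$ slightly below $t_{0}$ a priori at all; instead it proves left-openness by contradiction, using the convexity of $D_{\psi_{t_{k}}}$ along weak geodesic segments (Corollary \ref{cor:GeodConv}) to produce potentials $w_{k}$ at fixed distance $C$ from $\psi_{t_{k}}$ with $D_{\psi_{t_{k}}}(w_{k})<D_{\psi_{t_{k}}}(v_{k})$ for competitors $v_{k}=P_{\omega}[\psi_{t_{k}}](u_{t_{0}})-c_{k}$ converging strongly to (a normalization of) $u_{t_{0}}$, and then forcing $w_{k}\to u_{t_{0}}$ strongly by weak compactness, lower semicontinuity of $D_{\psi}$ and uniqueness; the contradiction $1=d(\psi_{t_{k}},w_{k})-d(\psi_{t_{k}},v_{k})+o(1)\leq d(v_{k},w_{k})\to 0$ closes the argument. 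Only existence is obtained this way; coercivity for those $t$ then follows a posteriori from Theorem \ref{thmC}.

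This gap propagates to your continuity step. Passing to the limit in the non-pluripolar Monge--Amp\`ere operator under mere weak convergence is false in general; the paper routes this through the closedness theorem (Theorem \ref{thm:Closed}), whose real hypothesis is a uniform upper bound on $\sup_{X}u_{k}$. For $t_{k}\searrow t_{0}$ this bound comes from the uniform coercivity supplied by Theorem \ref{thm:Open}, but for $t_{k}\nearrow t_{0}$ it again requires the contradiction/geodesic-convexity argument of Step 2, not a direct functional comparison. Relatedly, you locate the role of hypothesis (ii) ($\{\psi_{t}\}\subset\mathcal{M}_{D}$) in approximation of energies, but its actual role is to guarantee linearity of $E_{\psi}$ along weak geodesics (Theorem \ref{thm:Linear}) and hence convexity of $D_{\psi}$ (Corollary \ref{cor:GeodConv}) and the variational characterization with uniqueness (Theorem \ref{thmB}) --- the tools your proposal omits and cannot do without.
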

In Theorem \ref{thmA} $\mbox{Aut}(X,[\psi])^{\circ}:=\mbox{Aut}(X)^{\circ}\cap \mbox{Aut}(X,[\psi])$ where $\mbox{Aut}(X,[\psi])$ is the set of automorphisms $F:X\to X$ such that $F^{*}\psi-\psi$ is globally bounded. $(iv)$ is a necessary hypothesis for the uniqueness of $[\psi]$-KE metrics as explained below in Theorem \ref{thmB}.\newline
The set $\mathcal{M}_{D}$ contains plenty of model type envelopes, but in general $\mathcal{M}_{D}\subsetneq \mathcal{M}$ (see Example \ref{esem:StrangePiu}). However, it is worth to underline that if $\psi\in \mathcal{M}_{D}^{+}:=\mathcal{M}_{D}\cap \mathcal{M}^{+}$ then $\psi_{t}=P_{\omega}[(1-t)\psi]\in\mathcal{M}_{D}^{+}$ for any $t\in[0,1]$ (Proposition \ref{prop:Resc}), thus Theorem \ref{thmA} includes these interesting paths.\newline

To prove Theorems \ref{thmA2}, \ref{thmA} we develop a variational approach to study the existence of $[\psi]$-KE metrics for a fixed $\psi\in\mathcal{M}_{klt}^{+}$ (see \cite{BBGZ09}, \cite{DR15} for the absolute setting). Namely, we define two translation invariant functionals $D_{\psi}$, $M_{\psi}$, called respectively the $\psi$-relative Ding and Mabuchi functional, which generalize the well-known Ding and Mabuchi functionals to the $\psi$-relative setting as our next result shows.
\begin{theorem}
\label{thmB}
Let $\psi\in\mathcal{M}^{+}_{D,klt}:=\mathcal{M}^{+}_{klt}\cap\mathcal{M}_{D}$ and let $u\in \mathcal{E}^{1}(X,\omega,\psi)$. Then the following statements are equivalent:
\begin{itemize}
\item[i)] $\omega_{u}:=\omega+dd^{c}u$ is a $[\psi]$-KE metric;
\item[ii)] $D_{\psi}(u)=\inf_{\mathcal{E}^{1}(X,\omega,\psi)}D_{\psi}$;
\item[iii)] $M_{\psi}(u)=\inf_{\mathcal{E}^{1}(X,\omega,\psi)}M_{\psi}$.
\end{itemize}
Moreover if $\omega_{u}$ is a $[\psi]$-KE metric then $u$ has $\psi$-relative minimal singularities (i.e. $u-\psi$ globally bounded) and if $\omega_{v}$ is another $[\psi]$-KE metric then there exists $F\in\mbox{Aut}(X,[\psi])^{\circ}$ such that $F^{*}\omega_{v}=\omega_{u}$. 
\end{theorem}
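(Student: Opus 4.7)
The plan is to adapt the variational framework of Berman--Boucksom--Guedj--Zeriahi and Darvas--Rubinstein to the $\psi$-relative setting, leveraging the pluripotential calculus developed in \cite{Tru19, Tru20a, Tru20b}. The natural candidates are
$$
D_{\psi}(u) := -E_{\psi}(u) - \log\int_{X}e^{-u}d\mu
$$
together with a Mabuchi-type functional $M_{\psi}$ combining the relative entropy $H(MA_{\omega}(u)\,|\,\mu)$ with $\psi$-relative energy terms, arranged so that $D_{\psi}\leq M_{\psi}$ on $\mathcal{E}^{1}(X,\omega,\psi)$ with equality precisely at $[\psi]$-KE potentials. The key technical inputs are the existence of finite-energy geodesics inside $\mathcal{E}^{1}(X,\omega,\psi)$ along which $E_{\psi}$ is affine, together with Berndtsson's convexity of $u\mapsto -\log\int_{X}e^{-u}d\mu$ on such geodesics; since the geodesic segment is plurisubharmonic on the strip regardless of the singularity type imposed at the endpoints, this convexity transfers to the relative setting.

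For the equivalence $(i)\Leftrightarrow (ii)$ I would first establish convexity of $D_{\psi}$ on $\mathcal{E}^{1}(X,\omega,\psi)$ along $\psi$-relative geodesics using the above two facts. A first-variation computation against bounded perturbations supported in $\mathcal{E}^{1}(X,\omega,\psi)$ then shows that the Euler--Lagrange equation for $D_{\psi}$ is exactly $MA_{\omega}(u) = e^{-u+C}\mu/\int_{X}e^{-u}d\mu$, which is (\ref{eqn:MAINT}) after an additive normalization. Convexity then upgrades any critical point to a global minimizer and vice versa. For $(ii)\Leftrightarrow (iii)$ I would rely on the inequality $D_{\psi}\leq M_{\psi}$ (obtained by applying Jensen to the entropy term), which holds with equality at any $[\psi]$-KE potential; this forces the minimizing sets of the two functionals to coincide.

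For uniqueness, given two $[\psi]$-KE potentials $u,v$ I would connect them by the $\psi$-relative finite-energy geodesic $t\mapsto u_{t}$; along it $D_{\psi}$ becomes affine, and Berndtsson's equality case produces a one-parameter family of holomorphic automorphisms $F_{t}$ transporting $u$ to $v$. Since both $F_{t}^{*}u$ and $u_{t}$ lie in $\mathcal{E}^{1}(X,\omega,\psi)$, the flow preserves the singularity class $[\psi]$ and thus $F_{1}\in\mbox{Aut}(X,[\psi])^{\circ}$. The main obstacle I foresee is the $\psi$-relative minimal singularities claim: to upgrade a weak minimizer $u\in\mathcal{E}^{1}(X,\omega,\psi)$ solving the KE equation to one with $u-\psi$ globally bounded. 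My plan is to exploit the klt assumption $\mathcal{I}(\psi)=\mathcal{O}_{X}$ to gain a uniform $L^{p}$ control of $e^{-u}$ against a reference measure, and then, using that $\psi\in\mathcal{M}_{D}$ admits approximation from above by model envelopes with algebraic singularities, apply a relative Ko{\l}odziej-type $L^{\infty}$ estimate along the approximation and pass to the limit. Propagating the uniform $L^{\infty}$ bound through this approximation without loss is where I expect most of the technical work to lie.
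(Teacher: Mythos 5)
Your outline captures the correct overall architecture — $\psi$-relative geodesics, linearity of $E_{\psi}$, Berndtsson's subharmonicity of $t\mapsto -\log\int_{X}e^{-u_{t}}d\mu$, a first-variation argument for the Euler–Lagrange equation, and the inequality $D_{\psi}\leq M_{\psi}$ — and this is indeed the route the paper takes. But there are two real gaps, and one place where you would needlessly fight the problem.

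First, the implication $(iii)\Rightarrow (ii)$ does not follow from "$D_{\psi}\leq M_{\psi}$ with equality at KE potentials, hence the minimizing sets coincide." If $u$ minimizes $M_{\psi}$ but you do not yet know a KE potential exists, the chain $D_{\psi}(u)\leq M_{\psi}(u)=\inf M_{\psi}$ gives you nothing about $\inf D_{\psi}$. The paper closes this circle by a Legendre-type duality (its Lemma \ref{lem:Inf}): it expresses both $V_{\psi}L_{\mu}(u)$ and $E_{\psi}(u)$ as infima over $v$ with finite entropy, and from that deduces $\inf D_{\psi}\geq\inf M_{\psi}=M_{\psi}(u)\geq D_{\psi}(u)$ directly, without any prior existence. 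You need an argument of this type; the entropy–Jensen inequality alone is not enough.

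Second, the equality case of Berndtsson's convexity for model type envelopes that are not assumed to have small unbounded locus is not "automatic." Subharmonicity of $\mathcal{F}(t)=-\log\int e^{-u_{t}}$ indeed transfers with no extra work (Berndtsson's theorem applies to the PSH extension on the strip), but the production of a holomorphic vector field from affineness requires re-running Berndtsson's argument to establish the distributional identity $\partial\bar{\partial}u_{t}\wedge v=\bar{\partial}\dot{u}_{t}\wedge w$. In the small-unbounded-locus case this is Berndtsson's Theorem 6.1; in general the paper (its Theorem \ref{thm:Subh}) has to localize away from the Lelong-number-$\geq 1$ set, use $W^{1,1}_{loc}$ regularity of psh functions and strong openness to pass $e^{-u_{t}^{k}}\to e^{-u_{t}}$ in $L^{p}_{loc}$ for some $p>2$, and then assemble the limit. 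You should at minimum flag that this step requires a genuine extension, not merely a citation.

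Finally, your plan for the $\psi$-relative minimal singularities claim is much harder than needed and uses the $\mathcal{M}_{D}$ hypothesis where none is required. Once $u$ solves $MA_{\omega}(u)=e^{-u+C}\mu$ with $u\in\mathcal{E}^{1}(X,\omega,\psi)$, the klt assumption together with the strong openness theorem gives $e^{-u}\in L^{p}(\mu)$ for some $p>1$, and then the $L^{p}$-density regularity theory for prescribed-singularity Monge–Ampère equations (Theorem A of Darvas–Di Nezza–Lu in \cite{DDNL18b}, applied via Proposition \ref{prop:Lelong}) already yields $u-\psi\in L^{\infty}$. There is no need to propagate uniform $L^{\infty}$ bounds through an algebraic approximation of $\psi$; that route would add real technical risk (the Kołodziej constants degenerate as the approximating envelopes lose positivity of their masses) for no benefit, and it would artificially restrict the minimal-singularities conclusion to $\psi\in\mathcal{M}_{D}$ when it in fact holds for all $\psi\in\mathcal{M}^{+}_{klt}$.
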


Next, when $\mbox{Aut}(X,[\psi])^{\circ}=\{\mbox{Id}\}$, we prove that the existence of the unique $[\psi]$-KE metric is equivalent to the \emph{coercivity} of the $\psi$-relative Ding and Mabuchi functionals as in the absolute setting. We recall that the strong topology on $\mathcal{E}^{1}(X,\omega,\psi)$ is a metric topology given by a complete distance $d$ which generalizes to the $\psi$-relative setting the distance introduced by T. Darvas (\cite{Dar15}) as proved in our previous works \cite{Tru19},\cite{Tru20a}.
\begin{theorem}
\label{thmC}
Let $\psi\in\mathcal{M}_{D,klt}^{+}$ and assume $\mbox{Aut}(X,[\psi])^{\circ}=\{\mbox{Id}\}$. Then the following conditions are equivalent:
\begin{itemize}
\item[i)] the $\psi$-relative Ding functional is $d$-coercive over $\mathcal{E}^{1}_{norm}(X,\omega,\psi):=\{u\in\mathcal{E}^{1}(X,\omega,\psi)\, : \, \sup_{X}u=0\}$;
\item[ii)] the $\psi$-relative Mabuchi functional is $d$-coercive over $\mathcal{E}^{1}_{norm}(X,\omega,\psi)$;
\item[iii)] there exists a unique $[\psi]$-KE metric.
\end{itemize}
\end{theorem}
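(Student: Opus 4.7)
The plan is to follow the variational strategy of Berman--Boucksom--Eyssidieux--Guedj--Zeriahi and Darvas--Rubinstein from the absolute setting, adapted to the $\psi$-relative framework developed in \cite{Tru19,Tru20a} and to Theorem \ref{thmB}. The implication (ii) $\Rightarrow$ (i) is the easiest: via a Jensen-type computation (normalizing the reference measure by the $\psi$-relative volume $V_{\psi}$ and using the Legendre duality definition of $D_{\psi}$) one obtains $D_{\psi} \leq M_{\psi}$ on $\mathcal{E}^{1}(X,\omega,\psi)$, so $d$-coercivity of $M_{\psi}$ transfers to $D_{\psi}$ for free.

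For (i) $\Rightarrow$ (iii) I would run the direct method. Take a minimizing sequence $\{u_{j}\}\subset \mathcal{E}^{1}_{norm}(X,\omega,\psi)$ for $D_{\psi}$; coercivity bounds $d(u_{j},\psi)$, and the $d$-compactness of $d$-bounded sets in $\mathcal{E}^{1}(X,\omega,\psi)$ established in \cite{Tru19,Tru20a} delivers a $d$-limit $u_{\infty}$. One then needs $d$-lower semicontinuity of $D_{\psi}$: the $\psi$-relative energy is $d$-continuous by the very definition of the strong topology, while the Ding entropic term $-\log\int_{X} e^{-u}\mu$ is continuous thanks to the klt assumption on $\psi$, which ensures $\alpha_{\omega}(\psi)>0$ and hence uniform integrability of $e^{-u_{j}}$. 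Theorem \ref{thmB} then identifies $u_{\infty}$ with a $[\psi]$-KE potential, and the automorphism hypothesis combined with the last assertion of Theorem \ref{thmB} forces uniqueness.

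The implication (iii) $\Rightarrow$ (ii) is the most delicate one. Let $u_{KE}$ denote the unique $[\psi]$-KE metric granted by (iii) and Theorem \ref{thmB}. My plan is to argue by contradiction along finite-energy geodesic segments $[0,1]\ni t\mapsto v_{t}$ connecting $u_{KE}$ to an arbitrary $v\in\mathcal{E}^{1}_{norm}(X,\omega,\psi)$ inside the metric space $\bigl(\mathcal{E}^{1}(X,\omega,\psi),d\bigr)$. A $\psi$-relative analogue of Berndtsson's convexity theorem gives convexity of $M_{\psi}$ along such geodesics; together with the minimization property of $u_{KE}$ from Theorem \ref{thmB}, this should yield a linear lower bound $M_{\psi}(v)\geq M_{\psi}(u_{KE})+c\,d(v,u_{KE})$ unless one can extract, from a putative non-coercive minimizing sequence, a \emph{flat} geodesic ray emanating from $u_{KE}$. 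Such a flat ray would, via the standard compactness-extraction procedure, produce a non-trivial one-parameter subgroup inside $\mbox{Aut}(X,[\psi])^{\circ}$, contradicting the assumption that this group is trivial.

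The main obstacle is clearly this last step: the $\psi$-relative convexity of $M_{\psi}$ along finite-energy geodesics and the rigidity argument extracting a non-trivial automorphism from a non-coercive ray both rely on subtle Berndtsson-type positivity statements that must be established in a setting where potentials are only comparable to $\psi$ modulo constants and the Monge--Amp\`ere operator is defined via non-pluripolar products. Checking that the relevant limiting procedures---in particular those passing through complex one-parameter deformations---remain compatible with the partial order $\preccurlyeq$ and preserve the prescribed singularity class $[\psi]$ is where the bulk of the technical work will lie; the strong/weak continuity tools of \cite{Tru19,Tru20a,Tru20b} and the hypothesis $\psi\in\mathcal{M}^{+}_{D,klt}$ (which allows algebraic approximation) should make these reductions tractable.
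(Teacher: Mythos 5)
Your proposal has a genuine error in the implication (ii) $\Rightarrow$ (i). The inequality $D_{\psi}\leq M_{\psi}$ (Proposition \ref{prop:TwoFunct} in the paper) transfers coercivity from the \emph{smaller} functional to the \emph{larger} one: if $D_{\psi}\geq Ad(\cdot,\psi)-B$ then a fortiori $M_{\psi}\geq Ad(\cdot,\psi)-B$, i.e. it gives (i) $\Rightarrow$ (ii) for free, not (ii) $\Rightarrow$ (i). The converse direction is substantive: in the paper (Proposition \ref{prop:Coercivity}) one must convert coercivity of $M_{\psi}$ into an entropy bound of the form $V_{\psi}H_{\mu}(\nu)+\int_{X}\psi V_{\psi}d\nu\geq pE_{\psi}^{*}(\nu)-B$, pass through the Legendre-duality formula for the entropy (Lemma \ref{lem:2.11}) to obtain a $\psi$-relative Moser--Trudinger inequality $\|e^{\psi-u}\|_{L^{p}(e^{-\psi}\mu)}\leq Ce^{-E_{\psi}(u)/V_{\psi}}$, and then recover coercivity of $D_{\psi}$ via H\"older, the klt hypothesis and Skoda's theorem. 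As written, your cycle of implications is broken at this link.

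Your route for (iii) $\Rightarrow$ (ii) also rests on a tool the paper does not have and does not need: convexity of $M_{\psi}$ along finite-energy geodesics. Even in the absolute setting this is a deep theorem (Berman--Berndtsson), and nothing in the relative framework here establishes it. The paper instead proves (iii) $\Rightarrow$ (i) using only the convexity of the \emph{Ding} functional along weak geodesics (Corollary \ref{cor:GeodConv}, which follows from Berndtsson's subharmonicity of $L_{\mu}$ together with the linearity of $E_{\psi}$ for $\psi\in\mathcal{M}_{D}^{+}$, Theorem \ref{thm:Linear}). Moreover, the rigidity step is different and simpler than your ``flat ray produces a one-parameter subgroup of $\mbox{Aut}(X,[\psi])^{\circ}$'' plan: one sets $A:=\inf\{(D_{\psi}(v)-D_{\psi}(u))/d(u,v):d(u,v)\geq 1\}$ and shows $A>0$ by contradiction, using convexity of $D_{\psi}$ to rescale a non-coercive sequence to points $w_{k}$ at distance exactly $1$ from the KE potential $u$, then weak compactness, weak lower semicontinuity of $D_{\psi}$ and the already-established uniqueness (Theorem \ref{thmB}, where the automorphism hypothesis enters) to force $w_{k}\to u$ strongly, contradicting $d(u,w_{k})=1$. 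Finally, a minor point in your (i) $\Rightarrow$ (iii): $d$-bounded sets are not $d$-compact; the argument uses \emph{weak} compactness of the sublevel sets $\mathcal{E}^{1}_{C}(X,\omega,\psi)$ together with weak lower semicontinuity of $D_{\psi}$ (upper semicontinuity of $E_{\psi}$ plus the Demailly--Koll\'ar continuity of $L_{\mu}$), not strong continuity along the minimizing sequence.
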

\subsection{About the assumptions.}
\label{ssec:Hypothesis}
The assumption "$\psi\in\mathcal{M}_{D}$" in Theorems \ref{thmA}, \ref{thmB} and \ref{thmC} is exclusively due to the fact that the linearity of the $\psi$-relative energy along weak geodesic segments holds in the class $\mathcal{M}_{D}^{+}$ (see Theorem \ref{thm:Linear}). In other words, proving Theorem \ref{thm:Linear} for general $\psi\in\mathcal{M}^{+}$ would extend Theorems \ref{thmA}, \ref{thmB} and \ref{thmC} to $\psi\in\mathcal{M}^{+}_{klt}$.\newline
In Theorem \ref{thmA} if we replace $(iv)$ with $\mbox{Aut}(X,[\psi_{t}])^{\circ}=\{\mbox{Id}\}$ for any $t\in[0,1)$, which may be useful when $\mbox{Aut}(X)$ is not discrete, then the openness and the strong continuity result hold in $[0,1)$. However in this situation is unclear if it may happen that $S=[0,1]$ but the family of KE metrics $\{\omega_{u_{t}}\}_{t\in[0,1)}$ does not converge to a $[\psi_{1}]$-KE metric. Indeed the \emph{closedness} of the continuity method depends on a uniform bounds on the supremum of the potentials appropriately chosen (as in other more classical continuity methods), and in the proof of Theorem \ref{thmA} this estimate is basically a consequence of a uniform coercivity.\newline
Finally note that in Theorem \ref{thmA2} there are no assumptions on $\mathcal{M}_{D}$. Indeed this is a consequence of the fact that the arrow $(i)\Rightarrow (iii)$ in Theorem \ref{thmC} holds for general $\psi\in\mathcal{M}_{klt}^{+}$.
\subsection{Related Works}
During the last period of the preparation of this article, T. Darvas and M. Xia in (\cite{DX20}) independently defined the same set $\mathcal{M}_{D}$, exploring deeply its properties and its relations with the algebraic approximations of geodesic rays in $\big(\mathcal{E}^{1}(X,\omega),d\big)$ where $\{\omega\}=c_{1}(L)$ for $L$ ample line bundle. 
\subsection{Structure of the paper}
In the next two sections we work with a general compact Kähler manifold $(X,\omega)$, i.e. $\omega$ is not necessarily integral. In Section \S \ref{sec:Prelim} we collect some preliminaries on model type envelopes and on the strong topologies, while in Section \S \ref{sec:Model} we define the set $\mathcal{M}_{D}$, characterizing it through a version of the Demailly's regularization Theorem (Theorem \ref{thm:Reg}). Moreover in the same section we prove the linearity of the Monge-Ampère energy $E_{\psi}(\cdot)$ along weak geodesic segments for $\psi\in\mathcal{M}^{+}_{D}$, showing also that $\big(\mathcal{E}^{1}(X,\omega,\psi),d\big)$ is a geodesic metric space.\newline
In Section \S \ref{sec:KE} we assume $\{\omega\}=c_{1}(X)$ and we develop the variational approach to study Kähler-Einsten metrics with prescribed singularities. We prove Theorems \ref{thmB} and \ref{thmC}. Furthermore, as the $\psi$-relative $\alpha$-invariant is an important tool to show these two theorems, the subsection \S \ref{ssec:Alpha} is dedicated to explore some of its analytic and algebraic properties. Finally, Section \S \ref{sec:PSS} contains the proof of Theorems \ref{thmA2}, \ref{thmA}.
\subsection{Acknowledgments}
I would like to thank my PhD advisors Stefano Trapani and David Witt Nyström for their comments. The author is supported by a postdoctoral grant of the Knut and Alice Wallenberg Foundation.
\section{Preliminaries}
\label{sec:Prelim}
Letting $(X,\omega)$ be a compact Kähler manifold endowed with a Kähler form $\omega$, we denote by $PSH(X,\omega)$ the set of $\omega$-plurisubharmonic ($\omega$-psh) functions $u$, i.e. all upper semicontinuous function $u\in L^{1}$ such that $\omega+dd^{c}u\geq 0$ in the sense of $(1,1)$-currents. Here $d^{c}:=\frac{i}{4\pi}(\partial-\bar{\partial})$ so that $dd^{c}=\frac{i}{2\pi}\partial \bar{\partial}$.\newline
The maximum of two $\omega$-psh functions $u,v$ still belongs to $PSH(X,\omega)$ but $\min(u,v)$ may not be $\omega$-psh. This is one reason to introduce the $\omega$-psh function
$$
P_{\omega}(u,v):=\Big(\sup\{w\in PSH(X,\omega)\, : \, w\leq \min(u,v)\}\Big)^{*}
$$
(the star is for the upper semicontinuous regularization). It is clearly the largest $\omega$-psh function that is smaller than $u,v$. But sometimes we may want to find the largest function $w\in PSH(X,\omega)$ that is bounded above by $v\in PSH(X,\omega)$ and that is \emph{more singular} than $u\in PSH(X,\omega)$, where $w$ is more singular than $u$ if $w\leq u+C$ for a constant $C\in\mathbbm{R}$ (we denote such partial order by $\preccurlyeq$). Thus we recall the following envelope (\cite{RWN14}):
$$
P_{\omega}[u](v):=\big(\lim_{C\to +\infty}P_{\omega}(u+C,v)\big)^{*}.
$$
If now we take $v=0$ we obtain a projection map $P_{\omega}[\cdot]:=P_{\omega}[\cdot](0):PSH(X,\omega)\to PSH(X,\omega)$. The image of this map is denoted by $\mathcal{M}$ and its elements are called \emph{model type envelopes} (see \cite{RWN14}, \cite{DDNL17b}, \cite{Tru19} and references therein). It is an easy exercise to check that on $\mathcal{M}$ the two partial orders $\leq, \preccurlyeq$ coincides. The set $\mathcal{M}$ is crucial when one tries to solve complex Monge-Ampère equations with prescribed singularities (\cite{DDNL17b}, \cite{DDNL18b}), i.e. equations as
\begin{equation}
\begin{cases}
MA_{\omega}(u)=\nu\\
[u]=[\psi]
\end{cases}
\end{equation}
where $[u]$ is the equivalence class of $u\in PSH(X,\omega)$ under the partial order $\preccurlyeq$ ($[u]=[\psi]$ if and only if $u-\psi\in L^{\infty}$), $\nu$ is a non-pluripolar positive measure on $X$ and
$$
MA_{\omega}(u):=\langle (\omega +dd^{c}u)^{n}\rangle
$$
is the top non-pluripolar product of the closed and positive current $\omega+dd^{c}u$ (see \cite{BEGZ10}). We also need to recall that the \emph{total mass} of the Monge-Ampère operator respects the partial order $\preccurlyeq$ by \cite{WN17}, i.e.
$$
u\preccurlyeq v \Longrightarrow \int_{X}MA_{\omega}(u)\leq \int_{X}MA_{\omega}(v).
$$
Given $\psi\in PSH(X,\omega)$, $\mathcal{E}(X,\omega,\psi):=\{u\preccurlyeq \psi\, : \, \int_{X}MA_{\omega}(u)=\int_{X}MA_{\omega}(\psi)\}$ is the set of all $\omega$-psh functions with $\psi$\emph{-relative full mass}.\newline
Finally we underline that $PSH(X,\omega)$ is naturally endowed with a \emph{weak} topology given by the inclusion $PSH(X,\omega)\subset L^{1}$ (i.e. the $L^{1}$-topology), and that $\mathcal{M}$ is weakly closed. Moreover, setting $\mathcal{M}^{+}:=\{\psi\in\mathcal{M}\, : \, V_{\psi}>0\}$ and given a totally ordered family $\mathcal{A}:=\{\psi_{i}\}_{i\in I}\subset \mathcal{M}^{+}$, the Monge-Ampère operator produces a homeomorphism between $\overline{\mathcal{A}}$ and its image endowed with the weak topology of measures (Lemma $3.12$ in \cite{Tru20a}).
\subsection{Strong topologies}
\label{ssec:StrongTop}
The Monge-Ampère operator may not be continuous with respect to the weak topology on $PSH(X,\omega)$. Here we recall briefly a strengthened of the weak topology for some particular subsets of $PSH(X,\omega)$ which is more efficient when one wants to study complex Monge-Ampère equations. See our previous works \cite{Tru19}, \cite{Tru20a} and references therein.\newline
 
Given $\psi\in\mathcal{M}$, the sets $\mathcal{E}^{1}(X,\omega,\psi)\subset PSH(X,\omega)$ and $\mathcal{P}^{1}(X,\omega,\psi)\subset \mathcal{P}(X):=\{ \mbox{probability measures on} \,X\}$ are defined respectively as
\begin{gather*}
\mathcal{E}^{1}(X,\omega,\psi):=\{u\in \mathcal{E}(X,\omega,\psi)\, : \,E_{\psi}(u)>-\infty\},\\
\mathcal{P}^{1}(X,\omega,\psi):=\{V_{\psi}\nu\, : \, \nu\in\mathcal{P}(X)\, \,\mbox{satisfies} \,\, E_{\psi}^{*}(\mu)<+\infty\}
\end{gather*}
where $E_{\psi}, E_{\psi}^{*}$ are the $\psi$\emph{-relative energies}. More precisely,
$$
E_{\psi}(u):=\frac{1}{n+1}\sum_{j=0}^{n}\int_{X}(u-\psi)\langle (\omega+dd^{c}u)^{j}\wedge (\omega+dd^{c}\psi)^{n-j}\rangle
$$
if $u$ has $\psi$\emph{-relative minimal singularities}, i.e. $[u]=[\psi]$, and $E_{\psi}(u):=\lim_{k\to +\infty}E_{\psi}\big(\max(u,\psi-k)\big)$ otherwise. See \cite{DDNL17b}, \cite{Tru19}, \cite{Tru20a} for many of its properties, noting that the authors in \cite{DDNL17b} assumed $\psi$ with \emph{small unbounded locus}, i.e. locally bounded on the complement of a closed complete pluripolar set, while in \cite{Tru19}, \cite{Tru20a} their results have been extended to the general case.
\begin{prop}[\cite{Tru20a}, Lemma $3.13$, Propositions $3.14$, $3.15$]
\label{prop:Usc}
Let $\{\psi_{k}\}_{k\in\mathbbm{N}}\subset \mathcal{M}^{+}$ be a totally ordered sequence of model type envelopes, and let $\{u_{k}\}_{k\in\mathbbm{N}}\subset PSH(X,\omega)$ such that $u_{k}\in\mathcal{E}^{1}(X,\omega,\psi_{k})$ for any $k\in\mathbbm{N}$. If $u_{k}\to u$ weakly. Then
$$
\limsup_{k\to +\infty}E_{\psi_{k}}(u_{k})\leq E_{P_{\omega}[u]}(u).
$$
Moreover if $u_{k}\to u$ weakly and $E_{\psi_{k}}(u_{k})\geq -C$ uniformly, then $\psi_{k}\to P_{\omega}[u]$ weakly. In particular for any $C\in\mathbbm{N}, \psi\in\mathcal{M}^{+}$ the set
$$
\mathcal{E}^{1}_{C}(X,\omega,\psi):=\{u\in\mathcal{E}^{1}(X,\omega,\psi)\, : \, \sup_{X}u\leq C \, \mbox{and}\, E_{\psi}(u)\geq -C\}
$$
is weakly compact.
\end{prop}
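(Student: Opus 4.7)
The plan is to prove the three assertions in order, noting that the weak compactness of $\mathcal{E}^{1}_{C}(X,\omega,\psi)$ is a formal consequence of the first two applied to the constant sequence $\psi_{k}\equiv \psi$. A preliminary reduction: because $\{\psi_{k}\}_{k}$ is totally ordered in $\mathcal{M}^{+}$, along any subsequence realizing $\limsup_{k}E_{\psi_{k}}(u_{k})$ I may extract a further monotone subsequence, so without loss of generality $\psi_{k}$ is monotone and hence weakly convergent to some $\psi_{\infty}\in\mathcal{M}^{+}$ (using that $\mathcal{M}$ is weakly closed and that the ordered convergence forces $V_{\psi_{k}}\to V_{\psi_{\infty}}$ via Lemma $3.12$ of \cite{Tru20a}).

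For part (1) the key move is to truncate at the reference. Set $u_{k}^{M}:=\max(u_{k},\psi_{k}-M)$; these have $\psi_{k}$-relative minimal singularities and satisfy $E_{\psi_{k}}(u_{k})\leq E_{\psi_{k}}(u_{k}^{M})$. Along the monotone sequence, $u_{k}^{M}$ converges weakly to a candidate comparable to $\max(u,\psi_{\infty}-M)$; expressing $E_{\psi_{k}}(u_{k}^{M})$ as a sum of mixed non-pluripolar integrals and invoking the weak continuity $MA_{\omega}(\psi_{k})\to MA_{\omega}(\psi_{\infty})$ (again Lemma $3.12$) together with the classical upper semicontinuity of $(u,v)\mapsto \int (u-v)\langle (\omega+dd^{c}u)^{j}\wedge (\omega+dd^{c}v)^{n-j}\rangle$ along weak convergence of potentials with minimal singularities allows passage to the limit term by term. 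Sending $M\to\infty$ and using $E_{\psi_{\infty}}(v)=\lim_{M}E_{\psi_{\infty}}(\max(v,\psi_{\infty}-M))$ produces $\limsup_{k}E_{\psi_{k}}(u_{k})\leq E_{\psi_{\infty}}(u)$. The final sharpening to $E_{P_{\omega}[u]}(u)$ is the most delicate point: since $u\preccurlyeq \psi_{\infty}$ forces $P_{\omega}[u]\preccurlyeq \psi_{\infty}$, measuring the energy of $u$ relative to the looser $\psi_{\infty}$ is bounded above by the energy relative to its own projection $P_{\omega}[u]$, by standard monotonicity of the relative energy in the reference envelope restricted to $u\in\mathcal{E}^{1}(X,\omega,P_{\omega}[u])$.

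For part (2), assume the uniform lower bound $E_{\psi_{k}}(u_{k})\geq -C$. Part (1) yields $E_{P_{\omega}[u]}(u)\geq -C>-\infty$, so in particular $V_{u}=V_{P_{\omega}[u]}>0$. On the other hand, $u_{k}\in\mathcal{E}^{1}(X,\omega,\psi_{k})$ gives $V_{u_{k}}=V_{\psi_{k}}$, and along the ordered family $V_{\psi_{k}}\to V_{\psi_{\infty}}$; a parallel mass-continuity argument for the $u_{k}$'s (they lie in the strong energy class with bounded $\psi_{k}$-relative energies) yields $V_{u}=\lim V_{u_{k}}$, hence $V_{P_{\omega}[u]}=V_{\psi_{\infty}}$. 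Combined with $P_{\omega}[u]\preccurlyeq \psi_{\infty}$, the rigidity for model-type envelopes with equal non-pluripolar masses (cf. \cite{DDNL17b}) forces $P_{\omega}[u]=\psi_{\infty}$. Since every subsequential weak limit of $\{\psi_{k}\}$ coincides with $P_{\omega}[u]$, the entire sequence converges weakly to $P_{\omega}[u]$.

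Part (3) is then a clean corollary: $\mathcal{E}^{1}_{C}(X,\omega,\psi)$ is contained in $\{u\in PSH(X,\omega): \sup_{X}u\leq C\}$, which is weakly relatively compact by standard $L^{1}$-estimates on $\omega$-psh functions; from any sequence extract a weakly convergent subsequence $u_{k}\to u$, apply part (2) with the constant sequence $\psi_{k}\equiv \psi$ to force $P_{\omega}[u]=\psi$, apply part (1) to get $E_{\psi}(u)\geq -C$, and note that $\sup_{X}u\leq C$ passes to the weak limit, so $u\in\mathcal{E}^{1}_{C}(X,\omega,\psi)$. The main obstacle throughout is the upper semicontinuity in part (1) with \emph{varying} reference envelopes: pinning down the weak limit of the truncations and justifying passage to the limit in the mixed non-pluripolar integrals is where the total ordering of $\{\psi_{k}\}$ is essential, as it secures simultaneously the existence of $\psi_{\infty}$ and the weak continuity of its Monge-Ampère mass.
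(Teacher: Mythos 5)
This proposition is imported from \cite{Tru20a} and not reproved in the present paper, so your argument has to stand on its own. The skeleton is the right shape: reduce to a monotone subsequence of the $\psi_{k}$ (every sequence in a totally ordered set has one), truncate, identify the weak limit $\psi_{\infty}$ of the $\psi_{k}$ with $P_{\omega}[u]$ via equality of non-pluripolar masses together with the rigidity of Theorem $1.3$ in \cite{DDNL17b}, and deduce the compactness statement formally. Parts (2) and (3) would indeed follow once part (1) and the mass identity are secured. But two steps are genuine gaps, not omitted routine details.

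First, in part (1) the passage to the limit rests on a claimed ``classical upper semicontinuity'' of each mixed term $(u,v)\mapsto\int_{X}(u-v)\langle(\omega+dd^{c}u)^{j}\wedge(\omega+dd^{c}v)^{n-j}\rangle$ under weak convergence of the potentials. No such classical fact is available: mixed non-pluripolar products do not converge under mere $L^{1}$ convergence of potentials, the individual terms of $E_{\psi}$ are not known to be upper semicontinuous term by term, and the upper semicontinuity of the \emph{sum} (i.e.\ of $E_{\psi}$ itself) is precisely what the proposition asserts, so invoking a stronger term-by-term version is circular. The workable device is different: replace $u_{k}$ by the decreasing envelopes $w_{k}:=(\sup_{j\geq k}u_{j})^{*}$, which decrease to $u$ since $u_{k}\to u$ in $L^{1}$, dominate suitable truncations of $u_{k}$ by truncations of $w_{k}$, and then use monotonicity of $E_{\psi}$ together with its continuity along decreasing sequences (Lemma $4.11$ in \cite{DDNL17b}). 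Relatedly, identifying the weak limit of $\max(u_{k},\psi_{k}-M)$ with $\max(u,\psi_{\infty}-M)$ requires almost everywhere (or capacity) convergence, which weak convergence alone does not supply without extracting further subsequences and arguing separately.

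Second, in part (2) you assert that ``a parallel mass-continuity argument for the $u_{k}$'s yields $V_{u}=\lim V_{u_{k}}$.'' Non-pluripolar masses are \emph{not} continuous under weak convergence; they can strictly drop in the limit (the phenomenon behind Example \ref{esem:StrangePiu} is exactly a mass loss invisible to Lelong numbers), and the only thing preventing the drop here is the uniform bound $E_{\psi_{k}}(u_{k})\geq -C$. Showing that this energy bound forces $V_{P_{\omega}[u]}=\lim V_{\psi_{k}}$ is the substantive content of the ``moreover'' clause, and your proposal gives no indication of how the bound enters. Without that input, the inequality $P_{\omega}[u]\preccurlyeq\psi_{\infty}$ combined with the rigidity theorem has nothing to bite on, and part (2) --- hence also the compactness in part (3) --- remains unproved.
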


The $\psi$-relative energy $E_{\psi}^{*}$ (\cite{Tru20a}) is instead defined as
$$
E_{\psi}^{*}(\nu):=\sup_{u\in \mathcal{E}^{1}(X,\omega,\psi)}\Big(E_{\psi}(u)-V_{\psi}L_{\nu}(u)\Big)\in [0,+\infty]
$$
where $L_{\nu}(u):=\lim_{k\to +\infty}\int_{X}\big(\max\{u,\psi-k\}-\psi\big)d\nu$ if $\nu$ does not charge $\{\psi=-\infty\}$ and as $L_{\nu}\equiv-\infty$ otherwise. We refer to \cite{Tru20a} for its properties.\newline
It is then natural to endow these sets with \emph{strong topologies} given as the coarsest refinements of the weak topologies such that the $\psi$-relative energies become continuous. The following summarized result holds.
\begin{thm}[\cite{Tru19}, Theorem $A$; \cite{Tru20a}, Theorem $A$, Proposition $5.5$]
\label{thm:Riass}
Let $\psi\in\mathcal{M}^{+}$. Then:
\begin{itemize}
\item[i)] the strong topology on $\mathcal{E}^{1}(X,\omega,\psi)$ is a metric topology given by the complete distance $d(u,v):=E_{\psi}(u)+E_{\psi}(v)-2E_{\psi}\big(P_{\omega}(u,v)\big)$;
\item[ii)] the Monge-Ampère operator $MA_{\omega}(\cdot)$ produces a homeomorphism
\begin{equation}
\label{eqn:Homeom}
MA_{\omega}:\big(\mathcal{E}_{norm}^{1}(X,\omega,\psi),d\big)\to \big(\mathcal{P}^{1}(X,\omega,\psi), strong\big)
\end{equation}
where we set $\mathcal{E}^{1}_{norm}(X,\omega,\psi):=\{u\in\mathcal{E}^{1}(X,\omega,\psi)\, : \, \sup_{X}u=0\}$;
\item[iii)] for any $V_{\psi}\nu=MA_{\omega}(u)\in \mathcal{P}^{1}(X,\omega,\psi)$ the equality $E_{\psi}^{*}(\nu)=E_{\psi}(u)-\int_{X}(u-\psi)MA_{\omega}(u)$ holds.
\end{itemize}
\end{thm}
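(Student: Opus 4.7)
The overall strategy is to transfer the absolute ($\psi=0$) theory of Berman--Boucksom--Guedj--Zeriahi and Darvas to the $\psi$-relative setting, using $\psi=P_{\omega}[\psi]$ as a model reference in place of $0$. For part (i), symmetry and non-negativity of $d$ follow from the monotonicity of $E_{\psi}$: since $P_{\omega}(u,v)\preccurlyeq u,v$ one has $E_{\psi}(P_{\omega}(u,v))\leq\min\{E_{\psi}(u),E_{\psi}(v)\}$, whence $d(u,v)\geq 0$, and non-degeneracy follows from a domination principle for equal-mass pairs. The triangle inequality is the delicate step: following Darvas I would introduce the auxiliary functional $u\mapsto E_{\psi}(u)-E_{\psi}(P_{\omega}(u,w))$ for fixed $w$, establish its concavity and additivity along cutoffs, and reduce the inequality to a mass comparison for non-pluripolar products. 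Completeness then follows by extracting from a $d$-Cauchy sequence the regularized sup-envelopes $\phi_{k}:=(\sup_{j\geq k}u_{j})^{*}$, identifying their decreasing limit $u$, and combining the upper semicontinuity in Proposition \ref{prop:Usc} with the explicit formula for $d$ to conclude $d(u_{k},u)\to 0$.

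For part (ii), surjectivity of $MA_{\omega}$ onto $\mathcal{M}^{1}(X,\omega,\psi)$ is solved variationally: given $V_{\psi}\nu\in\mathcal{M}^{1}(X,\omega,\psi)$, I would minimize $F_{\nu}(u):=-E_{\psi}(u)+V_{\psi}L_{\nu}(u)$ over $\mathcal{E}^{1}_{norm}(X,\omega,\psi)$. Coercivity is encoded in the finiteness of $E_{\psi}^{*}(\nu)$, weak lower semicontinuity comes from Proposition \ref{prop:Usc} combined with the definition of $L_{\nu}$, and the weak compactness of the sets $\mathcal{E}^{1}_{C}(X,\omega,\psi)$ produces a minimizer $u_{0}$. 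A first-variation argument along perturbations of the form $\max(u_{0},\psi-k)+t\chi$, with $\chi$ bounded, using $\tfrac{d}{dt}\big|_{0}E_{\psi}(v+t\chi)=\int_{X}\chi\,MA_{\omega}(v)$ for $v$ of minimal singularities, and then passing to $k\to+\infty$, identifies $MA_{\omega}(u_{0})=V_{\psi}\nu$. Injectivity on $\mathcal{E}^{1}_{norm}$ is the $\psi$-relative comparison principle; continuity of $MA_{\omega}$ and of its inverse with respect to the strong topologies is then a formal consequence of the variational characterization and of the continuity of $E_{\psi}$.

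For part (iii), the variational picture of (ii) identifies $u\in\mathcal{E}^{1}_{norm}(X,\omega,\psi)$ solving $MA_{\omega}(u)=V_{\psi}\nu$ with the unique maximizer of $E_{\psi}(\cdot)-V_{\psi}L_{\nu}(\cdot)$, so $E_{\psi}^{*}(\nu)=E_{\psi}(u)-V_{\psi}L_{\nu}(u)$; since $u$ has $\psi$-relative minimal singularities, $V_{\psi}L_{\nu}(u)=\int_{X}(u-\psi)\,MA_{\omega}(u)$, and the claim follows. The main obstacle throughout is the generality of $\psi$: unlike the small-unbounded-locus setting of \cite{DDNL17b}, here $\{\psi=-\infty\}$ may be a large pluripolar set, so every cutoff and first-variation argument has to be carried out through the truncation $\max(\cdot,\psi-k)$ and controlled via the monotone continuity of non-pluripolar Monge--Ampère measures along such sequences. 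The triangle inequality in (i), the regularity of the variational minimizers in (ii) (in particular the fact that minimizers automatically have $\psi$-relative minimal singularities), and the identification of $L_{\nu}(u)$ with an honest integral at the minimizer in (iii) all require nontrivial extensions of the Darvas and Darvas--Di Nezza--Lu arguments to this level of generality.
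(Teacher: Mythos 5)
The theorem you are asked about is not proved in this paper at all: it is a recalled statement, cited verbatim to the author's own earlier works \cite{Tru19} (Theorem $A$) and \cite{Tru20a} (Theorem $A$, Proposition $5.5$). So there is no in-paper proof to compare against; I can only assess your sketch against what those references do, which is to extend the Darvas and Darvas--Di Nezza--Lu machinery from $\psi$ with small unbounded locus (or $\psi=0$) to general $\psi\in\mathcal{M}^{+}$. At that level your outline is consistent in spirit with the cited approach: monotonicity of $E_{\psi}$ together with a Pythagorean-type identity for $P_{\omega}(u,v)$ gives the metric axioms, the surjectivity in (ii) is variational, and (iii) is the Legendre-duality identity for the maximizer.

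Two places, however, are genuinely underdeveloped in a way that would not survive closer scrutiny. First, your first-variation argument for the minimizer of $F_{\nu}(u)=-E_{\psi}(u)+V_{\psi}L_{\nu}(u)$ perturbs along $\max(u_{0},\psi-k)+t\chi$ for bounded $\chi$; but such perturbations are not generally $\omega$-psh, and even when they are, passing $k\to+\infty$ through the Euler--Lagrange identity requires the orthogonality relation $\int_{X}(P_{\omega}(u_{0}+t\chi)-u_{0}-t\chi)\,MA_{\omega}(P_{\omega}(u_{0}+t\chi))=0$ and the differentiability of $E_{\psi}\circ P_{\omega}$; this is the real technical content of the surjectivity and cannot be reduced to a monotone limit of cutoffs. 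Second, the assertion that continuity of $MA_{\omega}$ and of its inverse ``is a formal consequence of the variational characterization'' significantly understates the work. In \cite{Tru20a} the strong topology on $\mathcal{M}^{1}(X,\omega,\psi)$ is by definition the coarsest refinement of the weak topology making $E^{*}_{\psi}$ continuous, and establishing the bicontinuity of (\ref{eqn:Homeom}) requires quantitative stability estimates linking $d(u,v)$, $I_{\psi}(u,v)$-type functionals, and $E^{*}_{\psi}(MA_{\omega}(u)/V_{\psi})-E^{*}_{\psi}(MA_{\omega}(v)/V_{\psi})$ (analogues of the BBGZ and Darvas--Di Nezza--Lu estimates), which are theorems in their own right rather than formalities. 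Your identification of the main obstacle (the possibly large unbounded locus of $\psi$) is correct, but the resolution of that obstacle is precisely where the bulk of the proof lives in the cited references, and the sketch does not yet indicate how to carry the capacity-theoretic estimates through.
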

Theorem $2.2.(i)$ was also independently proved by M. Xia in \cite{Xia19}.\newline
We recall that $E_{\psi}(\cdot)$ is continuous along decreasing sequences (Lemma $4.11$ in \cite{DDNL17b}), thus the distance $d$ satisfies the same property, i.e. decreasing sequences are strongly continuous.\newline
Furthermore it is possible to extend the strong topology of $\mathcal{E}^{1}(X,\omega,\psi)$ considering different model type envelopes. Letting $\{\psi_{k}\}_{k\in\mathbbm{N}}\subset\mathcal{M}^{+}$ totally ordered sequence and letting $\psi\in\mathcal{M}^{+}$ be one of its weak limits, we say that a sequence $\{u_{k}\}_{k\in\mathbbm{N}}$ such that $u_{k}\in\mathcal{E}^{1}(X,\omega,\psi_{k})$ \emph{converges strongly} to $u\in\mathcal{E}^{1}(X,\omega,\psi)$ if $u_{k}\to u$ weakly and $E_{\psi_{k}}(u_{k})\to E_{\psi}(u)$.
\begin{prop}[\cite{Tru20a}, Theorem $6.3$]
\label{prop:Strong}
Let $\{\psi_{k}\}_{k\in\mathbbm{N}}\subset \mathcal{M}^{+}$ be a totally ordered sequence converging weakly to $\psi\in\mathcal{M}^{+}$, and let $u_{k}\in\mathcal{E}^{1}(X,\omega,\psi_{k})$ be a sequence converging strongly to $u\in\mathcal{E}^{1}(X,\omega,\psi)$. Then there exists a subsequence $\{u_{k_{h}}\}_{h\in\mathbbm{N}}$ and two sequences $v_{h}\geq u_{k_{h}}\geq w_{h}$ of $\omega$-psh functions such that $v_{h}\searrow u$, $w_{h}\nearrow u$ almost everywhere. In particular $u_{k}\to u$ in capacity.
\end{prop}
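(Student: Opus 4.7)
After passing to a carefully chosen subsequence, the goal is to sandwich $u_{k_h}$ between an upper $\omega$-psh sequence $v_h$ built from the sup envelope of the tail of $(u_{k_j})_{j\geq h}$, and a lower $\omega$-psh sequence $w_h$ built from rooftop envelopes whose monotone limit is pinned to $u$ precisely by the strong convergence of energies. Capacity convergence then follows from quasi-continuity of $\omega$-psh functions combined with the standard subsequence-of-subsequence argument.

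\textbf{Reductions and upper envelope.} Since $\{\psi_k\}$ is totally ordered and weakly convergent, one may extract a subsequence, still denoted $u_{k_h}$, along which $\psi_{k_h}$ is monotone. From the strong convergence $E_{\psi_k}(u_k) \to E_{\psi}(u)$ together with Proposition \ref{prop:Usc}, the suprema $\sup_X u_{k_h}$ stay bounded, so by $L^{1}$-compactness we may further assume $u_{k_h} \to u$ almost everywhere. Define
\[
v_h := \Bigl(\sup_{j \geq h} u_{k_j}\Bigr)^{\!*}.
\]
This is an $\omega$-psh function (sup of uniformly bounded $\omega$-psh functions, usc regularized), clearly satisfies $v_h \geq u_{k_h}$, and is decreasing in $h$. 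Its almost everywhere decreasing limit agrees with the usc regularization of $\limsup_j u_{k_j}$, which equals $u$ a.e.\ by the classical Hartogs principle for $L^{1}$-convergent sequences of $\omega$-psh functions. Hence $v_h \searrow u$ a.e.

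\textbf{Lower envelope.} For the lower side the naive choice $\inf_{j\geq h} u_{k_j}$ is not $\omega$-psh, so one must pass to envelopes. My candidate is to pick $\varepsilon_h \downarrow 0$ sufficiently slowly, set $p_h := P_{\omega}(u_{k_h}, u - \varepsilon_h) \leq \min(u_{k_h}, u)$, and then monotonize by
\[
w_h := \Bigl(\sup\bigl\{w \in PSH(X,\omega)\,:\, w \leq p_j\ \text{for all } j \geq h\bigr\}\Bigr)^{\!*},
\]
which is $\omega$-psh, increasing in $h$, and bounded above by $p_h \leq u_{k_h}$. Its increasing limit $w_{\infty}\leq u$ is $\omega$-psh, and the essential point is to show $w_{\infty} = u$ a.e. Here the strong convergence hypothesis enters crucially: the rooftop formula for the distance in Theorem \ref{thm:Riass}$(i)$ together with $d$-convergence along $\psi$ yields $E_{\psi}\bigl(P_{\omega}(u_{k_h}, u)\bigr) \to E_{\psi}(u)$, and a careful choice of $\varepsilon_h$ transfers this to $E_{\psi}(p_h) \to E_{\psi}(u)$. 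Combined with the upper-semicontinuity of relative energies along totally ordered families (Proposition \ref{prop:Usc}) and the monotone continuity of $E_{\psi}$ on increasing sequences, this forces $E_{\psi}(w_{\infty}) = E_{\psi}(u)$; together with $w_{\infty} \leq u$ a domination-type principle then gives $w_{\infty} = u$ a.e.

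\textbf{Conclusion and main obstacle.} Once the sandwich $w_h \leq u_{k_h} \leq v_h$ with $v_h \searrow u$ and $w_h \nearrow u$ a.e.\ is in place, quasi-continuity of $\omega$-psh functions gives $u_{k_h} \to u$ in capacity; capacity convergence of the whole sequence $u_k$ follows because every subsequence admits such a further subsequence. The hard part is clearly the lower envelope in Step 3: unlike the upper side, one cannot simply regularize a pointwise infimum and stay within $PSH(X,\omega)$, so envelopes are forced upon us, and the delicate issue becomes ruling out any mass loss in passing from pointwise control of the $u_{k_j}$'s to the resulting $\omega$-psh function below them. Excluding this mass loss is exactly where convergence of the $\psi$-relative energies, as opposed to mere weak convergence of potentials, is indispensable.
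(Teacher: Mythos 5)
The upper-envelope step and the concluding capacity argument are fine, but the lower envelope — which you correctly identify as the crux — contains a genuine gap. You claim that ``the rooftop formula for the distance in Theorem~\ref{thm:Riass}$(i)$ together with $d$-convergence along $\psi$ yields $E_{\psi}\bigl(P_{\omega}(u_{k_h},u)\bigr)\to E_{\psi}(u)$,'' but in this setting the potentials $u_{k_h}$ live in $\mathcal{E}^{1}(X,\omega,\psi_{k_h})$ for model types $\psi_{k_h}\neq\psi$, so in general $u_{k_h}\notin\mathcal{E}^{1}(X,\omega,\psi)$ and the quantities $d(u_{k_h},u)$ and $E_{\psi}(u_{k_h})$ are simply not defined. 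The hypothesis here is ``strong convergence'' in the sense of \S\ref{ssec:StrongTop}, i.e.\ $u_k\to u$ weakly together with $E_{\psi_k}(u_k)\to E_{\psi}(u)$; this is not $d$-convergence inside the fixed metric space $\bigl(\mathcal{E}^{1}(X,\omega,\psi),d\bigr)$, and the rooftop identity $d(v,w)=E_{\psi}(v)+E_{\psi}(w)-2E_{\psi}(P_{\omega}(v,w))$ cannot be invoked across mismatched singularity classes. So the step that is supposed to transfer energy convergence to your envelopes $p_h$ has no justification as written, and the role of the sequence $\varepsilon_h$ (``chosen carefully'') is precisely where the argument must do real work but is left blank.

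There is a second, related gap: even granting some energy control on the individual $p_h$'s, you never establish that the infinite envelope $w_h$ — the largest $\omega$-psh function lying below \emph{all} $p_j$, $j\geq h$ — is nontrivial, let alone of $\psi$-relative full mass or finite $E_{\psi}$-energy. When the $\psi_{k_j}$'s vary (say increasing to $\psi$), the envelope below infinitely many potentials with drifting singularity type can collapse to something much more singular than $\psi$, and then the monotone continuity of $E_{\psi}$ along increasing sequences that you invoke is not applicable. A correct argument must either first project the $u_{k_h}$'s onto a common singularity type (e.g.\ via $P_{\omega}[\psi](u_{k_h})$ or $P_{\omega}[\psi_{k_h}](u)$, and control the error using Lemma~\ref{lem:PropOld} and Proposition~\ref{prop:Usc}), or else work throughout with the relative energies $E_{\psi_{k_h}}$ and the compactness/upper-semicontinuity package of Proposition~\ref{prop:Usc}, rather than with $E_{\psi}$ alone. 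As it stands the lower-envelope construction is the missing idea, not merely a detail.
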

We recall that $\{u_{k}\}_{k\in\mathbbm{N}}\subset PSH(X,\omega)$ converges in capacity to $u\in PSH(X,\omega)$ if for any $\delta>0$
$$
\mbox{Cap}\big(\{|u_{k}-u|\geq \delta\}\big)\to 0 
$$
where for any $B\subset X$ Borel set
\begin{equation*}
\mbox{Cap}(B):=\sup\Big\{\int_{B}MA_{\omega}(u)\, : \, u\in PSH(X,\omega), -1\leq u\leq 0\Big\}
\end{equation*}
(see \cite{Kol98}, \cite{GZ17} and reference therein).
\subsection{Case with analytical singularities.}
\label{ssec:Anal}
In this subsection we assume $\psi:=P_{\omega}[\varphi]\in \mathcal{M}^{+}$ where $\varphi\in PSH(X,\omega)$ has \emph{analytical singularities}, i.e. locally $\varphi_{|U}=g+c\log \big(|f_{1}|^{2}+\cdots +|f_{k}|^{2}\big)$ where $c\in\mathbbm{R}_{\geq 0}$, $g\in C^{\infty}$, and $\{f_{j}\}_{j}^{k}$ are local holomorphic functions. The coherent ideal sheaf $\mathcal{I}$ generated by these functions has integral closure globally defined, hence the singularities of $\varphi$ are formally encoded in $(\mathcal{I},c)$. We also recall that $\varphi$ has $\psi$-relative minimal singularities (see Proposition $4.36$ in \cite{DDNL17b}).\newline
It is well-known in this case that there exists a smooth resolution $p: Y\to X$ given by a sequence of blow-ups of smooth centers such that $p^{*}\mathcal{I}=\mathcal{O}_{Y}(-D)$ for an effective divisor $D$. Moreover the Siu Decomposition (\cite{Siu74}) of $p^{*}(\omega+dd^{c}\varphi)$ is given by
$$
p^{*}(\omega+dd^{c}\varphi)=\eta+c[D]
$$
where $\eta$ is a big and semipositive smooth $(1,1)$-form on $Y$. We also recall that the sets $\mathcal{E}(Y,\eta)$ and $\mathcal{E}^{1}(Y,\eta)$ are defined as in the Kähler case (see \cite{BEGZ10}) and that $\mathcal{E}^{1}(Y,\eta)$ becomes a complete metric space where endowed with the distance
$$
d(u,v):=E(u)+E(v)-2E\big(P_{\eta}(u,v)\big).
$$
The quantities $P_{\eta}(\cdot,\cdot), E(\cdot)$ are defined as in the Kähler case.
\begin{prop}[\cite{Tru20b}, Lemma $4.6$, Proposition $4.7$]
\label{prop:Anal}
The metric spaces $\big(\mathcal{E}^{1}(X,\omega,\psi),d\big)$, $\big(\mathcal{E}^{1}(Y,\eta),d\big)$ are isometric through the map $F:u\to \tilde{u}:=(u-\varphi)\circ p$, and the two energies $E_{\psi}(\cdot)$ and $E(\cdot)$ respectively on $\mathcal{E}^{1}(X,\omega,\psi)$ and on $\mathcal{E}^{1}(Y,\eta)$ satisfy $E_{\psi}(u)-E_{\psi}(\varphi)=E(\tilde{u})$. Moreover $F$ extends to a bijection $F:\{u\in PSH(X,\omega)\, : \,u\preccurlyeq \psi\}\to PSH(Y,\eta)$.
\end{prop}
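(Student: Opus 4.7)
The strategy is to first show that $F$ extends to an order-preserving bijection between $\{u \in PSH(X,\omega) : u \preccurlyeq \psi\}$ and $PSH(Y,\eta)$, then use this to transfer the energy formulas, and finally deduce the isometry property by transferring the envelope $P_\omega(\cdot,\cdot)$.

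For the bijectivity step, recall that since $\varphi$ has $\psi$-relative minimal singularities, $[\psi] = [\varphi]$, so $u \preccurlyeq \psi$ is equivalent to $u \preccurlyeq \varphi$. Locally, the analytic singularity structure gives $p^*\varphi = 2c\log|g_D| + h$ with $h$ smooth and $g_D$ a local equation for $D$. The function $\tilde u = (u-\varphi)\circ p$ is thus well-defined on $Y\setminus D$, and a direct computation using $p^*(\omega + dd^c\varphi) = \eta + c[D]$ gives $\eta + dd^c\tilde u = p^*\omega_u - c[D]$, which is a positive current on $Y\setminus D$. Since $\tilde u$ is bounded above near $D$ (as $u \preccurlyeq \varphi$), it extends uniquely across $D$ to an $\eta$-psh function by the standard extension theorem for plurisubharmonic functions across closed analytic subsets. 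Conversely, given $\tilde u \in PSH(Y,\eta)$, the function $\tilde u + p^*\varphi$ is $p^*\omega$-psh on $Y$ by the same identity, and since $p$ is a birational morphism with connected fibers, $p^*\omega$-psh functions on $Y$ descend to $\omega$-psh functions on $X$; boundedness above of $\tilde u$ on the compact $Y$ guarantees the descended function is $\preccurlyeq \varphi$.

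For the energy equality, I would first prove it when $u$ has $\psi$-relative minimal singularities, so that $\tilde u$ is bounded. From the identity $\eta + dd^c\tilde u = p^*\omega_u - c[D]$ and its analogue for $\varphi$, the mixed non-pluripolar products satisfy
\[
\bigl\langle (\eta+dd^c\tilde u)^j \wedge \eta^{n-j}\bigr\rangle = p^*\bigl\langle \omega_u^j \wedge \omega_\varphi^{n-j}\bigr\rangle
\]
on $Y\setminus D$, a set of full mass. Since $\tilde u\circ p = u-\varphi$, plugging into the definition of $E(\tilde u)$ yields $E_\varphi(u)$, which equals $E_\psi(u) - E_\psi(\varphi)$ by the standard cocycle property of the relative energy under a change of reference. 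The extension to general $u \in \mathcal{E}^1(X,\omega,\psi)$ is then obtained by approximating with the canonical decreasing sequence $\max(u,\psi-k)$ and using that both $E_\psi$ and $E$ are continuous along decreasing sequences.

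Once the energy identity is established, the isometry follows once one verifies the compatibility $F\bigl(P_\omega(u,v)\bigr) = P_\eta(\tilde u, \tilde v)$. This compatibility is almost formal: $F$ is an order-preserving bijection on the ambient psh spaces, so it sends the largest $\omega$-psh minorant (with the prescribed singularity) to the largest $\eta$-psh minorant. The distance identity $d(u,v) = E_\psi(u)+E_\psi(v)-2E_\psi(P_\omega(u,v))$ then translates term-by-term, the additive constants $E_\psi(\varphi)$ cancelling. The main obstacle in this plan is the energy equality step: getting the mixed non-pluripolar products to correspond correctly under $p$ and controlling the contribution of the divisor part $c[D]$ requires care, especially to ensure the definition of $E_\psi$ via telescoping integrals matches the absolute $E$ on $Y$ for non-minimal potentials where the relevant integrals are only finite through a limiting procedure.
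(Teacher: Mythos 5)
The paper does not prove this proposition itself: it is quoted directly from the companion paper \cite{Tru20b} (Lemma $4.6$ and Proposition $4.7$), so there is no in-paper argument to compare against. Your proposal is correct and follows exactly the route one expects that reference to take: the transfer of quasi-psh functions through the resolution via $p^{*}(\omega+dd^{c}\varphi)=\eta+c[D]$ together with the extension theorem across $D$ (and, conversely, descent of $p^{*}\omega$-psh functions, which are constant on the compact connected fibers of $p$), the identification of mixed non-pluripolar products away from the pluripolar set $D$, the energy cocycle $E_{\psi}(u)-E_{\psi}(\varphi)=E_{\varphi}(u)$, reduction to $\psi$-relative minimal singularities by canonical cutoffs, and the compatibility $F\big(P_{\omega}(u,v)\big)=P_{\eta}(\tilde{u},\tilde{v})$ coming from $F$ being an order isomorphism.
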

\section{Some particular model type envelopes.}
\label{sec:Model}
In this section $\nu(u,x)$ will denote the \emph{Lelong number} of $u\in PSH(X,\omega)$ at $x\in X$, i.e., fixing a holomorphic chart $x\in U\subset X$,
$$
\nu(u,x):=\sup\{\gamma\geq 0\, : \, u(z)\leq \gamma \log |z-x|^{2}+O(1) \,\, \mathrm{on}\,\, U\,\}.
$$
We also recall that the \emph{multiplier ideal sheaf} $\mathcal{I}(tu)$, $t\geq 0$, of $u\in PSH(X,\omega)$ is the analytic coherent and integrally closed ideal sheaf whose germs are given by
$$
\mathcal{I}(tu,x):=\Big\{f\in \mathcal{O}_{X,x}\, \mathrm{such}\, \mathrm{that}\, \int_{V}|f|^{2}e^{-tu}\omega^{n}<+\infty \, \mathrm{for}\, \mathrm{some}\, \mathrm{open}\, \mathrm{set}\, x\in V\subset X\Big\}.
$$
From now on, we call \emph{singularity data} associated to a function $u$ the data given by all the germs of the multiplier ideal sheaves $\mathcal{I}(tu)$ for $t>0$.
\begin{prop}[\cite{Tru20b}, Proposition 3.10]
\label{prop:Lelong}
Let $u\in PSH(X,\omega)$. Then $u$ and $\psi:=P_{\omega}[u]$ have the same singularity data, i.e.
$$
\mathcal{I}(t u,x)=\mathcal{I}(t\psi,x)\, \, \mathrm{for}\, \mathrm{any} \,\, t>0, \, x\in X.
$$
\end{prop}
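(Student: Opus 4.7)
The plan is to establish the equalities $\nu(u,x)=\nu(\psi,x)$ and $\mathcal{I}(tu,x)=\mathcal{I}(t\psi,x)$ by combining a direct candidate argument for the ``easy'' direction with a reduction to analytic singularities followed by a limiting argument for the ``hard'' direction.

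First, for the easy inclusions $\nu(\psi,x)\leq\nu(u,x)$ and $\mathcal{I}(tu,x)\subseteq\mathcal{I}(t\psi,x)$, I would observe that $u-\sup_{X}u\in PSH(X,\omega)$ is bounded above by $0$ and trivially more singular than $u$ (with constant $\sup_{X}u$), so it belongs to the family $\{w\in PSH(X,\omega):w\leq 0,\,w\preccurlyeq u\}$ whose upper semicontinuous envelope defines $\psi$. Hence $\psi\geq u-\sup_{X}u$ pointwise. The monotonicity of the Lelong number under pointwise domination yields $\nu(\psi,x)\leq\nu(u-\sup_{X}u,x)=\nu(u,x)$, and the bound $e^{-t\psi}\leq e^{t\sup_{X}u}e^{-tu}$ on any neighborhood of $x$ produces the multiplier ideal inclusion.

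For the hard direction $\nu(\psi,x)\geq\nu(u,x)$ and $\mathcal{I}(t\psi,x)\subseteq\mathcal{I}(tu,x)$, I would first dispose of the analytic singularities case: if $u$ has analytic singularities, then Proposition $4.36$ of \cite{DDNL17b} (as recalled in subsection \ref{ssec:Anal}) asserts that $u$ has $\psi$-relative minimal singularities, i.e.\ $u-\psi$ is globally bounded, from which the equality of singularity data of $u$ and $\psi$ is tautological. In the general case, I would approximate $u$ by Demailly regularizations $u_{m}\in PSH(X,\omega)$ with analytic singularities encoded by $\mathcal{I}(mu)$, satisfying $u_{m}\to u$ in $L^{1}$, $\nu(u_{m},x)\to\nu(u,x)$, and $\mathcal{I}(tu_{m},x)=\mathcal{I}(tu,x)$ for $m$ sufficiently large (classical consequences of the Bergman kernel $L^{2}$ method combined with Ohsawa--Takegoshi extension). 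Setting $\psi_{m}:=P_{\omega}[u_{m}]$, the analytic case just treated gives $\nu(\psi_{m},x)=\nu(u_{m},x)$ and $\mathcal{I}(t\psi_{m},x)=\mathcal{I}(tu_{m},x)$ for every $m$.

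The final step is to pass to the limit $m\to+\infty$. Using the continuity of $P_{\omega}[\cdot]$ along the Demailly approximations, which is a consequence of the regularization framework of Theorem \ref{thm:Reg}, one obtains $\psi_{m}\to\psi$ in $L^{1}$. Siu's upper semicontinuity of Lelong numbers under $L^{1}$ convergence then gives $\nu(\psi,x)\geq\limsup_{m}\nu(\psi_{m},x)=\lim_{m}\nu(u_{m},x)=\nu(u,x)$, which combined with the easy direction yields the Lelong equality. A parallel argument, via noetherianity of the stabilizing chain of coherent ideals $\mathcal{I}(t\psi_{m},x)$ under the limit, yields the multiplier ideal equality. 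I expect the main obstacle to be precisely this final convergence step: ensuring that $\psi_{m}\to\psi$ occurs in a mode fine enough to transport Lelong numbers and multiplier ideals simultaneously is sensitive to the precise form of the Demailly approximations (monotonicity and uniform bounds on $u_{m}-u$), and it is here that the regularization Theorem \ref{thm:Reg} is essential.
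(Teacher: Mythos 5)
Your overall architecture — easy direction by comparison, the analytic singularities case via $\psi$-relative minimal singularities, then a Demailly regularization limit — is a reasonable route and is in fact close in spirit to the argument the paper runs in Step 3 of the proof of Theorem \ref{thm:Reg}. The easy direction and the analytic singularities case are sound as stated (the latter does require $V_\psi>0$, which holds for the regularizations $u_m$ and which you should flag). However, the final limiting step contains a genuine gap.

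You assert that ``the continuity of $P_\omega[\cdot]$ along the Demailly approximations, which is a consequence of the regularization framework of Theorem \ref{thm:Reg}, \ldots gives $\psi_m\to\psi$ in $L^1$.'' This is false in general, and Theorem \ref{thm:Reg} does not say it. What Theorem \ref{thm:Reg} actually gives is $\psi_m\searrow\tilde\psi\in\mathcal{M}_D$ where $\tilde\psi$ is the \emph{maximal} model type envelope with the prescribed singularity data, and one can have $\tilde\psi\gneq\psi$: Example \ref{esem:StrangePiu} (a polar Cantor set) produces precisely a family of model type envelopes with the same trivial singularity data but with strictly different masses $V_{\psi_t}$, hence $P_\omega[u_m]\not\to\psi_t$. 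So your invocation of Siu's semicontinuity is applied to the wrong limit. Moreover, there is a circularity concern: the proof of Theorem \ref{thm:Reg} itself invokes Proposition \ref{prop:Lelong} (to assert the singularity type $[u_k]$ is constant over choices of $u$), so you may not call on Theorem \ref{thm:Reg} to prove Proposition \ref{prop:Lelong} without carefully disentangling what parts of the regularization machinery are independent of it.

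The argument can be repaired, but only by adding a squeeze. Since the raw Demailly regularizations satisfy $u\preccurlyeq u_m$, one gets $\psi\leq \psi_m$ by monotonicity of $P_\omega[\cdot]$, hence $\psi\leq\tilde\psi$. Therefore $\nu(\psi,x)\geq\nu(\tilde\psi,x)$ and $\mathcal{I}(t\psi,x)\subseteq\mathcal{I}(t\tilde\psi,x)$. Your limiting argument, applied correctly to the decreasing sequence $\psi_m\searrow\tilde\psi$, gives $\nu(\tilde\psi,x)=\nu(u,x)$ and $\mathcal{I}(t\tilde\psi,x)=\mathcal{I}(tu,x)$ (the latter via the strong openness theorem of Guan--Zhou, which is needed to absorb the $(1+\tau_k)$ loss in the approximating multiplier ideals; the naive claim ``$\mathcal{I}(tu_m,x)=\mathcal{I}(tu,x)$ for $m$ large'' requires this and should not be presented as classical without the openness input). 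Combined with the easy direction this closes the squeeze. As a side remark, the Lelong number equality admits a more elementary proof not requiring Demailly regularization at all: taking $w_j:=P_\omega(u+j,0)\nearrow\psi$ a.e., one has $w_j\leq u+j$ so $\nu(w_j,x)\geq\nu(u,x)$, and Siu's upper semicontinuity along the increasing sequence gives $\nu(\psi,x)\geq\nu(u,x)$ directly; the regularization machinery is really only needed for the multiplier ideal comparison.
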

By Theorem $1.2.$ in \cite{DDNL17b} if $P_{\omega}[u]=\psi$ then $u\in\mathcal{E}(X,\omega,\psi)$. The reverse arrow also holds when $\psi\in\mathcal{M}^{+}$ by Theorem $1.3$ in \cite{DDNL17b}.\newline
Note that as a consequence of Theorem $A$ in \cite{BFJ08} and of the solution to the Strong Openness Conjecture (\cite{GZ14}), it follows from Proposition \ref{prop:Lelong} that $\nu(u,x)=\nu\big(P_{\omega}[u],x\big)$ for any $x\in X$ and for any $u\in PSH(X,\omega)$.
\begin{defn}
\label{defn:MD}
We define the subset $\mathcal{M}_{D}\subset \mathcal{M}$ of all model type envelopes $\psi\in\mathcal{M}$ such that $\psi\succcurlyeq \psi'$ for any $\psi'\in\mathcal{M}$ with the same singularity data of $\psi$.
\end{defn}
Observe that by Proposition \ref{prop:Anal} $\mathcal{M}_{D}$ includes any $\psi\in\mathcal{M}$ with analytical singularities type, i.e. $\psi=P_{\omega}[u]$ for $u$ with analytical singularities, as any other $\psi'\in\mathcal{M}$ with the same singularity data of $\psi$ corresponds to a $\eta$-psh function for $\eta$ as in subsection \S \ref{ssec:Anal}.\newline
Moreover, in the next subsection we will prove that for any $\psi\in\mathcal{M}$ there exists a unique $\psi'\in\mathcal{M}_{D}$ with the same singularity data of $\psi$ (see Corollary \ref{cor:SD1}).
\subsection{A regularization process}
\label{ssec:AnSng}
The following key result is a consequence of the renowned Demailly's regularization Theorem (\cite{Dem91}).
\begin{thm}
\label{thm:Reg}
Let $\psi\in \mathcal{M}$. Then there exists a decreasing sequence $\{\psi_{k}\}_{k\in\mathbbm{N}}\subset \mathcal{M}$ such that to any $u\in PSH(X,\omega)$ having the same singularity data of $\psi$ can be associated a sequence $\{u_{k}\}_{k\in\mathbbm{N}}$ with the following properties: 
\begin{itemize}
\item[i)] for any $k\in\mathbbm{N}$, $u_{k}\in\mathcal{E}(X,\omega,\psi_{k})$, $u_{k}$ has algebraic singularities and $u_{k}$ has $\psi_{k}$-relative minimal singularities;
\item[ii)] $u_{k}$ converges to $u$ in capacity.
\end{itemize}
If $|u_{1}-u_{2}|$ is bounded over $X$ then $|u_{1,k}-u_{2,k}|$ is uniformly bounded over $X$.\newline
Furthermore, $\psi_{k}\searrow \tilde{\psi}\in\mathcal{M}_{D}$ where $\tilde{\psi}$ has the same singularity data of $\psi$.
\end{thm}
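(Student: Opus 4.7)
The plan is to exploit the fact that Demailly's regularization theorem produces, for any $u \in PSH(X,\omega)$, approximating functions whose singularities are essentially determined by the multiplier ideal sheaves $\mathcal{I}(ku)$. Since functions with the same singularity data as $\psi$ share the same $\mathcal{I}(k\psi)$ and the same Lelong numbers, they will all admit regularizations that are tightly linked to one fixed reference sequence attached to $\psi$.

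Concretely, first I would apply a version of Demailly's theorem to $\psi$ to get a sequence $\tilde{\psi}_k$ of $(\omega+\epsilon_k\omega')$-psh functions, with algebraic singularities encoded by $\mathcal{I}(k\psi)$, satisfying $\tilde{\psi}_k \geq \psi - C/k$ and converging to $\psi$ in $L^1$ and pointwise a.e. After the standard rescaling to absorb $\epsilon_k$ (paying an $O(1/k)$ error), set $\psi_k := P_\omega[\tilde{\psi}_k]$; by Proposition \ref{prop:Lelong} each $\psi_k$ is a model type envelope with the same (algebraic) singularity data as $\tilde{\psi}_k$. I would then force monotonicity either by passing to a subsequence on which $\tilde{\psi}_{k}+c_k$ is decreasing and replacing $\psi_k$ with $P_\omega[\min_{j\leq k}(\tilde{\psi}_j+c_j)]$, or by directly iterating the projection, and denote the decreasing limit by $\tilde{\psi}\in PSH(X,\omega)$. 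For an arbitrary $u$ with the same singularity data as $\psi$, I would run the identical Demailly construction using $\mathcal{I}(ku)=\mathcal{I}(k\psi)$ to produce $\tilde{u}_k$, then rescale to obtain $u_k$. The crucial point is that, since the underlying Bergman/Ohsawa--Takegoshi construction only detects singularities through $L^2$ integrability of local sections against $e^{-2ku}$ and $e^{-2k\psi}$, the equality of multiplier ideal sheaves and Lelong numbers yields a uniform (in $k$) bound $|\tilde{u}_k-\tilde{\psi}_k|\leq C$; the same argument with $u_1,u_2$ in place of $u,\psi$ gives the uniform bound $|u_{1,k}-u_{2,k}|\leq C'$ when $|u_1-u_2|$ is bounded.

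The properties in (i) follow from this comparison: $u_k$ inherits the algebraic singularities of $\tilde{u}_k$, and $|u_k-\psi_k|\leq C$ forces $[u_k]=[\psi_k]$, whence $u_k$ has $\psi_k$-relative minimal singularities and in particular lies in $\mathcal{E}(X,\omega,\psi_k)$. Convergence in capacity in (ii) is obtained from the pointwise estimate $u_k \geq u - C/k$, combined with the $L^1$-convergence $u_k \to u$ and standard equi-integrability bounds for the capacity measures (the hyperbolicity of $\{u_k-u \geq \delta\}$ being controllable via $\mathcal{E}^1$-type estimates). Finally, to see that $\tilde{\psi}\in\mathcal{M}_D$: given any competitor $\psi' \in \mathcal{M}$ with the same singularity data as $\psi$, run the construction on $\psi'$ to produce $\psi'_k$; the uniform bound $|\psi'_k-\psi_k|\leq C$ from the previous paragraph gives $\psi'_k \preccurlyeq \psi_k$ with constant independent of $k$, and passing to the decreasing limit yields $\psi' \preccurlyeq \tilde{\psi}$. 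Since $\tilde{\psi}$ itself has the same singularity data as $\psi$ (singularity data are preserved under decreasing limits with uniformly controlled Lelong numbers and multiplier ideals), this is exactly the defining property of $\mathcal{M}_D$.

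The main obstacle I expect is securing the uniform (in $k$) comparison $|\tilde{u}_k-\tilde{\psi}_k|\leq C$ purely from the equality $\mathcal{I}(ku)=\mathcal{I}(k\psi)$. The multiplier ideal sheaves specify only the \emph{space} of locally $L^2$ sections, not the \emph{norm}; to obtain a uniform bound one must show that the local weights $e^{-2ku}$ and $e^{-2k\psi}$ yield equivalent Bergman kernels up to a universal constant. This should follow by a local Ohsawa--Takegoshi extension argument comparing the local $L^2$ norms on the same finite-dimensional stalks, but the bookkeeping to extract constants that are independent of $k$ (rather than blowing up linearly in $k$) is the delicate technical point.
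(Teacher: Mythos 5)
Your central step---deducing the uniform‑in‑$k$ bound $|\tilde{u}_k - \tilde{\psi}_k| \leq C$ from the equality $\mathcal{I}(ku) = \mathcal{I}(k\psi)$ alone---is false, and the paper does not attempt it. The concern you flag at the end is not a "delicate technical point" to be pushed through: it is an actual obstruction. Demailly's Bergman kernels converge pointwise to the original weights, so $\tilde{u}_k - \tilde{\psi}_k \to u - \psi$; if $u$ and $\psi$ have the same singularity data but $|u-\psi|$ is unbounded (which is precisely the situation when $\psi\notin\mathcal{M}_D$, and this does happen, cf.\ Example \ref{esem:StrangePiu}), no uniform comparison can hold. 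The multiplier ideal sheaves only control the $L^2$ spaces, not the Bergman norms, and your proposed Ohsawa--Takegoshi fix compares norms only up to $e^{kC}$ when the weights differ by $C$, which after dividing by $k$ gives back the constant $C$---this works precisely when $|u_1-u_2|\leq C$ is \emph{assumed}, never from singularity data alone. Everywhere your proposal uses this unconditional bound it fails: your derivation of $[u_k]=[\psi_k]$ and your argument that $\psi'\preccurlyeq\tilde{\psi}$ both collapse.

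The paper's argument routes around this entirely. For item (i), one only needs a \emph{per‑$k$} comparison: at each fixed level $k$, the regularized function $u_k$ has analytic singularities determined by the ideal $\mathcal{I}(2^ku)=\mathcal{I}(2^k\psi)$, so its singularity type $[u_k]$ is independent of the choice of $u$ and one can set $\psi_k := P_\omega[u_k]$; the comparison constant between two such functions with the same analytic singularity type at a fixed $k$ is automatic but is not claimed to be uniform in $k$. The uniform bound is asserted (and proved, via the same Bergman‑kernel supremum formula you invoke, where $\frac{1}{2^k}\log e^{2^kC}=C$ does not blow up) only under the extra hypothesis $|u_1-u_2|\leq C$. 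For $\tilde{\psi}\in\mathcal{M}_D$, the paper does not compare $\psi_k$ with $\psi'_k$: it observes that regularizing any $\psi'$ with the same singularity data produces the \emph{identical} sequence $\psi_k$ (because $\psi_k$ only sees $\mathcal{I}(2^k\cdot)$), and since Demailly's sequence majorizes $\psi'$, each $\psi_k\geq\psi'$ and hence $\tilde{\psi}\geq\psi'$. Finally, your parenthetical that "singularity data are preserved under decreasing limits with uniformly controlled Lelong numbers and multiplier ideals" is not a standard fact; the paper needs Demailly's Lelong‑number estimates for the Lelong side and a nontrivial $L^2$ estimate plus the strong openness theorem of Guan--Zhou for the multiplier‑ideal side.
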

A function $u\in PSH(X,\omega)$ with analytic singularities formally encoded in $(\mathcal{I},c)$ is said to have \emph{algebraic} singularities when $c\in\mathbbm{Q}$.
\begin{proof}
\textbf{Step 1: sketch of the construction of a sequence of quasi-plurisubharmonic functions with logarithmic poles (Proposition $3.7$ in \cite{Dem91}).}\newline
For $\{W_{\nu}\}_{\nu\in \Lambda}$ fixed finite covering of open coordinate sets, it is possible to choose a finite open covering $\{\Omega_{j}\}_{j\in J}$ of coordinate balls of radius $2\delta$ (if $\delta$ is small enough) such that any $\Omega_{j}$ is contained in at least one $W_{\nu}$ and such that the set of all coordinate balls of radius $\delta$ produces another open covering $\{\Omega'_{j}\}_{j\in J}$. Fix also $\epsilon(\delta)$ be a modulus of continuity related to $\{\Omega_{j}\}_{j}$ for $\omega$ such that $\epsilon(\delta)\to 0$ for $\delta\to 0$ and such that $|\omega_{x'}-\omega_{x}|\leq \epsilon(\delta)\omega_{x}/2$ for all $x,x'\in \Omega_{j}$. Then, it follows that $0\leq -\omega-\tau_{j}^{*}\gamma_{j}\leq 2\epsilon(\delta)\omega$ on $\Omega_{j}$ where $\gamma_{j}$ is a $(1,1)$-form with constant coefficients on $\tau_{j}(\Omega_{j})=B_{2\delta}(a_{j})$ such that $-\omega-\epsilon(\delta)\omega=\tau_{j}^{*}\gamma_{j}$ at $\tau_{j}^{-1}(a_{j})$. We denote by $\tilde{\gamma}_{j}$ the homogeneous quadratic function in $z-a_{j}$ such that $dd^{c}\tilde{\gamma}_{j}=\gamma_{j}$. Thus for any $j\in J$, $m\in\mathbbm{N}$ and $\phi\in PSH(X,\omega)$ we define locally on $\Omega_{j}$
$$
\hat{\varphi}_{j,m}:=\frac{1}{m}\log\Big(\sum_{l}|\sigma_{j,m,l}|^{2}\Big)
$$
where $\{\sigma_{j,m,l}\}_{l\in\mathbbm{N}}$ is an orthonormal base of the Hilbert space $\mathcal{H}_{\Omega_{j}}(m\tilde{\varphi}_{j}):=\{f\in\mathcal{O}_{\Omega_{j}}(\Omega_{j})\,:\, ||f||_{m\tilde{\varphi}_{j},\Omega_{j}}^{2}:=\int_{\Omega_{j}}|f|^{2}e^{-m\tilde{\varphi}_{j}}<+\infty\}$ for $\tilde{\varphi}_{j}:=\phi-\tilde{\gamma}_{j}\circ \tau_{j}$ (which is plurisubharmonic on $\Omega_{j}$).\newline
Next, the functions $\hat{\varphi}_{j,m}$ are glued together thanks to a partition of unity technique based on Lemma $3.5$ in \cite{Dem91}. Namely, defining $w_{j,m}'(x):=m\hat{\varphi}_{j,m}(x)+m\tilde{\gamma}_{j}(z)+\frac{\sqrt{m}}{2}(\delta^{2}-\lVert z\rVert^{2})$ where $z=\tau_{j}(x)$ are coordinates on $\Omega_{j}$, the glued function is given as
$$
\varphi'_{m}:=\frac{1}{m}\log \Big(\sum_{j}\theta_{j}e^{w_{j,m}'}\Big)
$$
where $\{\theta_{j}\}_{j}$ are smooth nonnegative functions with support in $\Omega_{j}$ such that $\theta_{j}\leq 1$ on $\Omega_{j}$ while $\theta_{j}=1$ on $\Omega_{j}'$ ($\theta$ satisfies some other extra conditions as described in Lemma $3.5$ in \cite{Dem91}). Then taking $\delta=\delta_{m}$ that goes very slowly to $0$ as $m\to +\infty$, the sequence $\varphi'_{m}$ satisfies $\omega+dd^{c}\varphi'_{m}\geq -\epsilon_{m}\omega$ for $\epsilon_{m}\searrow 0$ and, for any $m\in\mathbbm{N}$, $\varphi'_{m}$ has \emph{logarithmic poles} along $\big(\mathcal{I}(m\phi),\frac{1}{m}\big)$, i.e. locally $\varphi'_{m|U}=\frac{1}{m}\log \big(\lvert f_{1}\rvert^{2}+\cdots+\lvert f_{N}\rvert^{2}\big) +g$ where $g$ is a bounded function and $\{f_{j}\}_{j=1}^{N}$ are local holomorphic functions generating $\mathcal{I}(m\phi)$.\newline
\textbf{Step 2: construction of a decreasing sequence of quasi-plurisubharmonic functions with algebraic singularities.}\newline
Coming back to the construction of the previous Step, as proved in Theorem $2.2.1.(Step\, 3)$ in \cite{DPS00}, we also have
$$
\hat{\varphi}_{j,m_{1}+m_{2}}\leq \frac{A_{1}}{m_{1}+m_{2}}+\frac{m_{1}}{m_{1}+m_{2}}\hat{\varphi}_{j,m_{1}}+\frac{m_{2}}{m_{1}+m_{2}}\hat{\varphi}_{j,m_{2}}
$$
for any $m_{1},m_{2}\in\mathbbm{N}$ where $A_{1}$ depends only on $n=\dim X$. Therefore in the gluing process described before, considering $\varphi_{j,k}:=\hat{\varphi}_{j,2^{k}}+\frac{A_{1}}{2^{k}}$ instead of $\hat{\varphi}_{j,2^{k}}$ we get a sequence of glued functions
$$
\varphi_{k}=\varphi_{2^{k}}'+\frac{A_{1}}{2^{k}}
$$
which is now \emph{decreasing}, has logarithmic poles along $\big(\mathcal{I}(2^{k}\phi),\frac{1}{2^{k}}\big)$ and satisfies $\omega+dd^{c}\varphi_{k}\geq -\epsilon_{k}\omega$ for $\epsilon_{k}\searrow 0$ (by abuse of notation we denoted by $\epsilon_{k}$ the sequence $\epsilon_{2^{k}}$ of the previous Step). Indeed the unique thing to observe is that the hypothesis to apply the key aforementioned Lemma $3.5$ in \cite{Dem91} are trivially satisfied by the family $w_{j,2^{k}}'+A_{1}$ which gives the glued functions $\varphi_{k}$.\newline
Then by a regularization argument of Richberg (\cite{Ric68}, see also Lemma $2.15$ in \cite{Dem91}) we approximate $\varphi_{k}$ with a smooth quasi-psh function $\phi_{k}$ on $X\setminus V\big(\mathcal{I}(2^{k}\phi)\big)$ such that $|\phi_{k}-\varphi_{k}|\leq 1/k$ and such that it extends to a quasi-psh function on $X$ with
$$
\omega+dd^{c}\phi_{k}\geq -2\epsilon_{k}\omega.
$$
Thus, since $\phi_{k}$ has the same singularity of $\varphi_{k}$, we get that $\phi_{k}$ has algebraic singularities formally encoded in $\big(\mathcal{I}(2^{k}\phi),\frac{1}{2^{k}}\big)$.\newline
\textbf{Step 3: obtaining the sequences $\{\psi_{k}\}_{k\in\mathbbm{N}}\subset \mathcal{M}, \{u_{k}\}_{k\in\mathbbm{N}}$ and proving (i), (ii).}\newline
Next assuming $u$ such that $\psi=P_{\omega}[u]$, we apply the regularization just described in Step $1$ and $2$ to the $\omega$-psh function $\tilde{u}:=u-\sup_{X}u-1$, obtaining a sequence $\tilde{u}_{k}$. Then we define
$$
u_{k}:=\frac{1}{1+2\epsilon_{k}}\tilde{u}_{k}+\sup_{X}u+1.
$$
By construction $u_{k}\in PSH(X,\omega)$, $u_{k}$ has algebraic singularities assuming without loss of generality that $\{\epsilon_{k}\}_{k\in\mathbbm{N}}\in\mathbbm{Q}$ and, as a consequence of Proposition \ref{prop:Lelong}, the singularity type $[u_{k}]$ is constant varying $u$ which satisfies $P_{\omega}[u]=\psi$. Therefore defining $\psi_{k}:=P_{\omega}[u_{k}]$ the first point follows.\newline
About the convergence in capacity, clearly we may assume $\sup_{X}u=-1$. Then we denote by $v_{k}\in\mathcal{E}(X,\omega,\psi)$ the decreasing sequence of quasi-psh function with logarithmic poles converging to $u$ obtained by the process described above (i.e. the $\varphi_{k}$'s of before). By Hartogs' Lemma (see Proposition $8.4$ in \cite{GZ17}) $\sup_{X}v_{k}\to -1$ and it is immediate to check that $\frac{v_{k}}{1+2\epsilon_{k}}$ becomes a decreasing sequence converging to $u$ when $\sup_{X}v_{k}\leq 0$. Thus we get that $\frac{v_{k}}{1+2\epsilon_{k}}\to u$ in capacity. Next we note that for any $\delta>0$
$$
\Big\{|u_{k}-u|\geq \delta\Big\}\subset \Big\{\Big|\frac{v_{k}}{1+2\epsilon_{k}}-u\Big|\geq \delta-\frac{1}{k(1+2\epsilon_{k})}\Big\}
$$
as $|\tilde{u}_{k}-v_{k}|\leq 1/k$ by construction, recalling that $u_{k}:=\frac{\tilde{u}_{k}}{1+2\epsilon_{k}}$. Hence taking $k=k_{\delta}\gg 0$ big enough we get that
$$
\Big\{|u_{k}-u|\geq \delta\Big\}\subset \Big\{|\frac{v_{k}}{1+2\epsilon_{k}}-u|\geq \frac{\delta}{2}\Big\},
$$
which implies that $u_{k}\to u$ in capacity.\newline
Assuming $|u_{1}-u_{2}|\leq C$, to prove that $u_{1,k}-u_{2,k}$ is uniformly bounded it is clearly enough to check that $|v_{1,k}-v_{2,k}|$ is uniformly bounded, where as before we denote by $v_{i,k}$ the sequence of quasi-psh function with logarithmic poles which decreases to $u_{i}$ for $i=1,2$ (i.e. in the process described above we replace $\phi,\varphi_{k},\hat{\varphi}_{j,m},\tilde{\varphi}_{j},\varphi_{j,k}$ respectively with $u_{i},v_{i,k},\hat{v}_{i,j,m},\tilde{v}_{i,j}, v_{i,j,k}$). Thus if $u_{1}\leq u_{2}+C$ and assuming without loss of generality that $\sup_{X}u_{1}=\sup_{X}u_{2}=-1$ then $\hat{v}_{1,j,2^{k}}\leq \hat{v}_{2,j,2^{k}}+C$ for any $j\in J$ and any $k\in\mathbbm{N}$ since
$$
\hat{v}_{1,j,2^{k}}=\sup_{f\in B(1)}\frac{1}{2^{k}}\log |f|^{2}
$$
where $B(1)$ is the unit ball in $\mathcal{H}_{\Omega_{j}}(2^{k}\tilde{v}_{1,j})$ and similarly for $u_{2}$. Hence we get that $|v_{1,j,k}-v_{2,j,k}|$ is uniformly bounded in $j,k$ and by the gluing process described in Step 1 it follows that also $|v_{1,k}-v_{2,k}|$ is uniformly bounded in $k$.\newline
\textbf{Step 4: proving that $\psi_{k}\searrow \tilde{\psi}\in\mathcal{M}_{D}$ having the same singularity data of $\psi$.} \newline
By construction $\psi_{k}\succcurlyeq\psi_{k+1}$, which is equivalent to $\psi_{k}\geq \psi_{k+1}$. Thus $\tilde{\psi}:=\lim_{k\to +\infty}\psi_{k}\in\mathcal{M}$ and $\tilde{\psi}\geq \psi$ as immediate consequence of $\psi_{k}\geq \psi$ for any $k\in\mathbbm{N}$. In particular $\mathcal{I}(t\psi)\subset \mathcal{I}(t\tilde{\psi})$ for any $t>0$. Conversely we set $u:=\psi-1$ and, for fixed $t>0$, we claim that
\begin{equation}
\label{eqn:L2}
\mathcal{I}(t\psi)\supset \mathcal{I}\big((1+\tau_{k})t\tilde{u}_{k}\big)=\mathcal{I}\big((1+\tau_{k})(1+2\epsilon_{k})t u_{k}\big)
\end{equation}
for $\tau_{k}= \frac{t}{2^{k}-t}$ if $k\gg 1$ such that $2^{k}>t$, where $\tilde{u}_{k}=(1+2\epsilon_{k})u_{k}$ is the quasi-psh function with analytic singularities formally encoded in $(\mathcal{I}(2^{k}\psi),\frac{1}{2^{k}})$ constructed in Step 1. The inclusion in (\ref{eqn:L2}) would imply 
$$
\mathcal{I}(t\psi)\supset \mathcal{I}\big((1+\tau_{k})(1+2\epsilon_{k})t\tilde{\psi}\big) 
$$
as $u_{k}\succcurlyeq \tilde{\psi}$ for any $k\in\mathbbm{N}$, and the resolution of the strong openness conjecture (see \cite{GZ14}) would yield the remaining inclusion 
$$
\mathcal{I}(t\psi)\supset \mathcal{I}\big(t\tilde{\psi}\big)
$$
letting $k\to +\infty$.\newline
Let us prove (\ref{eqn:L2}). For any $U\subset X$ open set and for any holomorphic function $f$ over $U$, as $(2^{k}-t)u-2^{k}\tilde{u}_{k}<0$ over $\{u<(1+\tau_{k})\tilde{u}_{k}\}$ by the choice of $\tau_{k}=\frac{t}{2^{k}-t}$, we get
\begin{multline*}
\int_{U}|f|^{2}e^{-tu}\omega^{n}=\int_{U\cap\{u\geq (1+\tau_{k})\tilde{u}_{k}\}}|f|^{2}e^{-tu}\omega^{n}+\int_{U\cap \{u<(1+\tau_{k})\tilde{u}_{k}\}}|f|^{2}e^{2^{k}(\tilde{u}_{k}-u)}e^{(2^{k}-t)u-2^{k}\tilde{u}_{k}}\omega^{n}\leq\\
\leq\int_{U}|f|^{2}e^{-(1+\tau_{k})t\tilde{u}_{k}}\omega^{n}+\int_{U}|f|^{2}e^{2^{k}(\tilde{u}_{k}-u)}\omega^{n}.
\end{multline*}
Moreover, as $\tilde{u}_{k}$ has analytic singularities formally encoded in $\big(\mathcal{I}(2^{k}u),\frac{1}{2^{k}}\big)$, $\int_{U}|f|^{2}e^{2^{k}(\tilde{u}_{k}-u)}<+\infty$ up to restrict the open set $U$, and the inclusion in (\ref{eqn:L2}) follows.\newline
Finally, as by construction $\tilde{\psi}\geq \psi'$ for any $\psi'\in\mathcal{M}$ with the same singularity data of $\psi$ (simply switching $\psi$ and $\psi'$), $\tilde{\psi}$ is a maximal element in $\mathcal{M}$ for fixed singularity data, i.e. $\tilde{\psi}\in\mathcal{M}_{D}$, which concludes the proof.
\end{proof}
We say that $\psi\in\mathcal{M}$ has \emph{analytic (resp. algebraic) singularities type} if $\psi=P_{\omega}[\varphi]$ for $\varphi\in PSH(X,\omega)$ with analytic (resp. algebraic) singularities.
\begin{cor}
\label{cor:SD1}
For any $\psi\in\mathcal{M}$ there exists a unique $\psi'\in\mathcal{M}_{D}$ having the same singularity data of $\psi$. Moreover if $\psi_{1},\psi_{2}\in\mathcal{M}_{D}$ and the singularity data of $\psi_{1}$ are \emph{worse} than the singularity data of $\psi_{2}$ (i.e. $\mathcal{I}(t\psi_{1},x)\subset \mathcal{I}(t\psi_{2},x)$ for any $x\in X$, $t>0$), then $\psi_{1}\preccurlyeq \psi_{2}$.
\end{cor}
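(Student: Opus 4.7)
The plan is to address the three claims — existence of $\psi'$, its uniqueness, and the comparison — in order. For existence, apply Theorem \ref{thm:Reg} to $\psi$: its last sentence produces a $\tilde{\psi} \in \mathcal{M}_{D}$ with the same singularity data as $\psi$, and we take $\psi' := \tilde{\psi}$. For uniqueness, suppose $\psi', \psi'' \in \mathcal{M}_{D}$ both share the singularity data of $\psi$. Then Definition \ref{defn:MD} applied to $\psi'$ with test element $\psi''$ gives $\psi'' \preccurlyeq \psi'$, and symmetrically $\psi' \preccurlyeq \psi''$; since on $\mathcal{M}$ the partial orders $\leq$ and $\preccurlyeq$ coincide (noted at the start of Section \ref{sec:Prelim}), these two inequalities force $\psi' = \psi''$.

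For the comparison, given $\psi_{1}, \psi_{2} \in \mathcal{M}_{D}$ with $\psi_1$'s singularity data worse than $\psi_2$'s, I will run the Demailly-type regularization of Step 1 in the proof of Theorem \ref{thm:Reg} on both $\psi_1$ and $\psi_2$, producing for each $i=1,2$ and each $k$ a quasi-psh function $\varphi_{i,k}$ whose local logarithmic poles are encoded by $(\mathcal{I}(2^k\psi_i), 1/2^k)$. The hypothesis $\mathcal{I}(2^k\psi_1,x)\subset \mathcal{I}(2^k\psi_2,x)$ means that local generators of the first ideal are $\mathcal{O}_X$-linear combinations of those of the second, so on each patch $\Omega_j$ one obtains a pointwise bound $\sum_\ell |\sigma_{1,j,2^k,\ell}|^2 \leq M_{j,k}\sum_\ell |\sigma_{2,j,2^k,\ell}|^2$; taking logarithms, scaling by $1/2^k$, and using compactness of $X$ yields $\varphi_{1,k} \preccurlyeq \varphi_{2,k}$. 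The Richberg smoothing and rescaling of Step 2 preserve this relation, so the algebraic-singularity potentials $u_{i,k}$ satisfy $u_{1,k} \preccurlyeq u_{2,k}$, and monotonicity of $P_{\omega}[\cdot]$ under $\preccurlyeq$ (immediate from its definition) gives $\psi_{1,k} := P_{\omega}[u_{1,k}] \leq P_{\omega}[u_{2,k}] =: \psi_{2,k}$. Since $\psi_i \in \mathcal{M}_D$, the ``Furthermore'' clause of Theorem \ref{thm:Reg} combined with the uniqueness just established forces $\psi_{i,k} \searrow \psi_i$; passing to the limit gives $\psi_1 \leq \psi_2$, hence $\psi_1 \preccurlyeq \psi_2$.

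The main obstacle is the bookkeeping around the Demailly construction: one must verify that the ideal inclusion $\mathcal{I}(2^k\psi_1) \subset \mathcal{I}(2^k\psi_2)$ actually propagates to the approximating potentials (being careful with the $L^2$-orthonormal bases defining $\hat{\varphi}_{j,m}$) and that the Richberg smoothing together with the $1/(1+2\epsilon_k)$ rescaling in Step 2 do not destroy the $\preccurlyeq$-relation. Once this bookkeeping is handled, the rest of the argument is purely formal, relying only on the monotonicity of $P_{\omega}[\cdot]$ and on the convergence $\psi_{i,k}\searrow\psi_i$ guaranteed by Theorem \ref{thm:Reg}.
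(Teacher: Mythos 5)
Your proposal is correct and follows essentially the same route as the paper: existence and uniqueness come from Theorem \ref{thm:Reg} together with the defining maximality property of $\mathcal{M}_{D}$, and the comparison is obtained by running the same Demailly regularization on $\psi_{1},\psi_{2}$, deducing $\psi_{1,k}\leq\psi_{2,k}$ from the multiplier-ideal inclusions, and passing to the decreasing limit. The only difference is that you spell out why the ideal inclusion forces $\varphi_{1,k}\preccurlyeq\varphi_{2,k}$ (via the logarithmic-pole description and monotonicity of $P_{\omega}[\cdot]$), a step the paper asserts without detail.
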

\begin{proof}
The first statement is a trivial consequence of Theorem \ref{thm:Reg}.\newline
Next if $\psi_{1,k},\psi_{2,k}\in\mathcal{M}_{D}$ are the sequences with algebraic singularities type converging respectively to $\psi_{1},\psi_{2}$ given by Theorem \ref{thm:Reg} with respect to the same Demailly's regularization, then we have $\psi_{1,k}\leq \psi_{2,k}$ if the singularity data of $\psi_{1}$ are worse than the singularity data of $\psi_{2}$, and letting $k\to +\infty$ concludes the proof.
\end{proof}
Theorem \ref{thm:Reg} implies that the elements in $\mathcal{M}_{D}$ can be approximated by a decreasing sequence of model type envelopes with algebraic singularities type. This property defines the set $\mathcal{M}_{D}$ as immediate consequence of the following result.
\begin{prop}
\label{prop:MDPlus}
Let $\{\psi_{k}\}_{k\in\mathbbm{N}}\subset\mathcal{M}_{D}$ be a decreasing sequence converging to $\psi$. Then $\psi\in\mathcal{M}_{D}$.
\end{prop}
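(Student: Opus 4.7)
The plan is to exhibit a canonical candidate for an element of $\mathcal{M}_{D}$ with the same singularity data as $\psi$ and then squeeze $\psi$ between two copies of it. First, since $\mathcal{M}$ is weakly closed and a decreasing limit converges weakly, we already have $\psi\in\mathcal{M}$, so the question of whether $\psi\in\mathcal{M}_{D}$ makes sense. By Corollary \ref{cor:SD1} there exists a unique $\tilde{\psi}\in\mathcal{M}_{D}$ sharing the singularity data of $\psi$. I claim that $\tilde{\psi}=\psi$, which would give the conclusion immediately.

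The easy inequality is $\tilde{\psi}\geq\psi$: indeed $\tilde{\psi}\in\mathcal{M}_{D}$ and $\psi\in\mathcal{M}$ has the same singularity data as $\tilde{\psi}$, hence by the very definition of $\mathcal{M}_{D}$ we get $\tilde{\psi}\succcurlyeq\psi$; as both functions lie in $\mathcal{M}$, where $\succcurlyeq$ and $\geq$ coincide, this translates into $\tilde{\psi}\geq\psi$.

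For the reverse inequality I would compare $\tilde{\psi}$ to each $\psi_{k}$. Since $\psi\leq\psi_{k}$, the Lelong numbers and multiplier ideals behave in the expected monotone way, so the singularity data of $\psi$ (equivalently, of $\tilde{\psi}$) are \emph{worse} than those of $\psi_{k}$. Both $\tilde{\psi}$ and $\psi_{k}$ belong to $\mathcal{M}_{D}$, so the second part of Corollary \ref{cor:SD1} yields $\tilde{\psi}\preccurlyeq\psi_{k}$, i.e.\ $\tilde{\psi}\leq\psi_{k}$ on $\mathcal{M}$. Passing to the limit $k\to+\infty$ gives $\tilde{\psi}\leq\psi$, and combined with the previous paragraph we obtain $\tilde{\psi}=\psi$, hence $\psi\in\mathcal{M}_{D}$.

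There is essentially no hard step here: the real content was already packaged in Theorem \ref{thm:Reg} and Corollary \ref{cor:SD1}, and the mild point worth double-checking is the monotonicity of Lelong numbers and multiplier ideals under the pointwise comparison $\psi\leq\psi_{k}$, which is standard (smaller $\omega$-psh functions have larger Lelong numbers and smaller multiplier ideal sheaves). The argument does not need the refined statement that the singularity data of $\psi$ are actually the limit of those of $\psi_{k}$; one only uses the one-sided comparison together with the uniqueness/maximality encoded in Corollary \ref{cor:SD1}.
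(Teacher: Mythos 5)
Your proof is correct and is essentially identical to the paper's: both take the element $\tilde{\psi}\in\mathcal{M}_{D}$ with the same singularity data as $\psi$ furnished by Corollary \ref{cor:SD1}, deduce $\tilde{\psi}\leq\psi_{k}$ from the second part of that corollary (using that $\psi\leq\psi_{k}$ makes the singularity data of $\tilde{\psi}$ worse than those of $\psi_{k}$), pass to the limit, and conclude $\tilde{\psi}=\psi$ by the defining maximality of $\mathcal{M}_{D}$. The extra details you supply (weak closedness of $\mathcal{M}$ and the monotonicity of Lelong numbers and multiplier ideals under $\psi\leq\psi_{k}$) are correct and merely make explicit what the paper leaves implicit.
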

\begin{proof}
Let $\psi'\in\mathcal{M}_{D}$ having the same singularity data of $\psi$. Then by Corollary \ref{cor:SD1} $\psi_{k}\succcurlyeq \psi'$ for any $k\in\mathbbm{N}$, which is equivalent to $\psi_{k}\geq \psi'$ as we are considering model type envelopes. Hence $\psi\geq \psi'$, which implies $\psi=\psi'$ by definition of $\mathcal{M}_{D}$ and concludes the proof.
\end{proof}
The following example shows that $\mathcal{M}_{D}$ is a proper subset of $\mathcal{M}$.
\begin{esem}
\label{esem:StrangePiu}
\emph{Let $K\subset \mathbbm{P}^{1}$ be a polar Cantor set, $\omega=\omega_{FS}$ be the Fubini-Study metric on $\mathbbm{P}^{1}$, and $\mu_{K}$ be the measure on $\mathbbm{C}$ associated to $K$. Then the potential $u(z):=\int_{\mathbbm{C}}\log|z-w|d\mu_{K}(w)$ is a subharmonic function on $\mathbbm{C}$, harmonic on $\mathbbm{C}\setminus \mathrm{Supp}(\mu_{K})=\mathbbm{C}\setminus K$ and $u(z)=\mu_{K}(\mathbbm{C})\log |z|+O(|z|^{-1})$ as $z\to +\infty$ (see Theorem $3.1.2$ in \cite{Rans}). Thus, up to rescaling the Fubini-Study metric, $u$ extends to an $\omega_{FS}$-psh function, i.e. $u\in PSH(\mathbbm{P}^{1},\omega_{FS})$. Moreover as $\mu_{K}$ has no atoms, $\nu(u,z)=0$ for any $z\in\mathbbm{P}^{1}$, which by Skoda's Integrability Theorem (\cite{Sko72}, see also Theorem \ref{thm:Skoda} below) implies that $u$ has trivial singularity data. Therefore by Proposition \ref{prop:Lelong}, the family of model type envelopes $\{\psi_{t}:=P_{\omega}[tu]\}_{t\in [0,1]}\subset\mathcal{M}$ has constant singularity data, but $V_{\psi_{t}}=\int_{X}MA_{\omega}(tu)=(1-t)\int_{X}\omega+t\int_{X}MA_{\omega}(u)= (1-t)\int_{X}\omega$ since $MA_{\omega}(u)$ is concentrated on $K$ which is polar. Hence clearly $\{\psi_{t}\}_{t\in(0,1]}\subset\mathcal{M}\setminus \mathcal{M}_{D}$.}
\end{esem}
Finally it is important to observe that any element in $\mathcal{M}_{D}^{+}$ can be connected to $0$ by a natural path.
\begin{prop}
\label{prop:Resc}
Let $\psi\in\mathcal{M}_{D}^{+}$ and $t\in[0,1]$. Then $\psi_{t}:=P_{\omega}[t\psi]\in\mathcal{M}_{D}^{+}$.
\end{prop}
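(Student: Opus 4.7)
The plan is to verify the two defining properties of $\mathcal{M}_{D}^{+}$ separately. For positivity, since $\psi\leq 0$ and $t\in[0,1]$ force $\psi\leq t\psi\leq 0$, we have $\psi\preccurlyeq t\psi$, and the monotonicity of the total non-pluripolar Monge--Amp\`ere mass with respect to $\preccurlyeq$ yields $V_{t\psi}\geq V_{\psi}>0$.

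For $t\psi\in\mathcal{M}_{D}$, I would first reduce to rational parameters: if $t_{j}\nearrow t$ with $t_{j}\in\mathbbm{Q}$, then $t_{j}\psi\searrow t\psi$ (since $\psi\leq 0$), and Proposition \ref{prop:MDPlus} reduces the claim to the case $t\in\mathbbm{Q}\cap[0,1]$. Fix such a $t$ and let $\psi_{k}=P_{\omega}[\varphi_{k}]\searrow\psi$ be the decreasing sequence provided by Theorem \ref{thm:Reg}, with $\varphi_{k}$ having algebraic singularities. Since $|\varphi_{k}-\psi_{k}|=O(1)$ (Proposition $4.36$ in \cite{DDNL17b}), the functions $t\varphi_{k}$ and $t\psi_{k}$ share the same singularity type, so $\chi_{k}:=P_{\omega}[t\psi_{k}]=P_{\omega}[t\varphi_{k}]$; the latter has algebraic singularities type thanks to $t\in\mathbbm{Q}$. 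Monotonicity of $P_{\omega}[\cdot]$ renders $\{\chi_{k}\}$ decreasing, while the fact that $t\psi_{k}$ itself is an admissible competitor in the envelope defining $\chi_{k}$ gives $\chi_{k}\geq t\psi_{k}\geq t\psi$. Hence $\chi:=\lim_{k}\chi_{k}$ lies in $\mathcal{M}_{D}$ by Proposition \ref{prop:MDPlus} and satisfies $\chi\geq t\psi$.

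The heart of the argument is then to establish $\chi=t\psi$. My plan is to show that $\chi$ and $t\psi$ have identical singularity data and appeal to Corollary \ref{cor:SD1}. Matching Lelong numbers will follow from $\nu(\chi_{k},x)=t\nu(\psi_{k},x)\to t\nu(\psi,x)=\nu(t\psi,x)$ combined with the squeeze $\nu(t\psi,x)\geq\nu(\chi,x)\geq\nu(\chi_{k},x)$ coming from $t\psi\leq \chi\leq \chi_{k}$; the multiplier ideals are treated analogously through the chain $\mathcal{I}(st\psi)\subseteq\mathcal{I}(s\chi)\subseteq\mathcal{I}(s\chi_{k})=\mathcal{I}(st\psi_{k})$ together with the strong openness property used in the same spirit as inclusion (\ref{eqn:L2}) of Theorem \ref{thm:Reg}.

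Given matching singularity data, Corollary \ref{cor:SD1} closes the argument provided $t\psi$ itself is a model type envelope, and verifying $P_{\omega}[t\psi]=t\psi$ is the main obstacle I expect. The easy inequality $P_{\omega}[t\psi]\geq t\psi$ is immediate because $t\psi\leq 0$ is an admissible competitor in the defining sup; for the reverse, I plan to exploit that $P_{\omega}[t\psi]\preccurlyeq t\psi$ and $P_{\omega}[t\psi]\geq t\psi$ force $P_{\omega}[t\psi]-t\psi$ to be bounded, and then combine the Witt Nystr\"om-type mass conservation $V_{P_{\omega}[t\psi]}=V_{t\psi}$ with the Monge--Amp\`ere domination/uniqueness principle of \cite{DDNL17b,DDNL18b} for functions with prescribed singularities to conclude the equality.
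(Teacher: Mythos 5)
Your setup is sound and initially parallels the paper: the positivity of $V_{t\psi}$, the construction of $\chi_{k}=P_{\omega}[t\psi_{k}]=P_{\omega}[t\varphi_{k}]$ with analytic singularities type, the monotonicity giving $\chi:=\lim_{k}\chi_{k}\in\mathcal{M}_{D}$ via Proposition \ref{prop:MDPlus}, and the trivial bound $\chi\geq t\psi$ (the detour through rational $t$ is unnecessary, since analytic singularities with real coefficient already give membership in $\mathcal{M}_{D}$, but it is harmless). The gap is in your passage from $\chi\geq t\psi$ to $\chi=t\psi$. Matching singularity data plus Corollary \ref{cor:SD1} cannot close this: that corollary identifies the \emph{unique element of $\mathcal{M}_{D}$} with given singularity data, so it would yield $\chi=t\psi$ only if you already knew $t\psi\in\mathcal{M}_{D}$ --- which is precisely the statement under proof. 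Worse, Example \ref{esem:StrangePiu} is a family $\{P_{\omega}[tu]\}_{t\in[0,1]}$ with \emph{constant trivial} singularity data, all of whose members for $t>0$ differ from the $\mathcal{M}_{D}$-representative $0$ and lie outside $\mathcal{M}_{D}$; so singularity data alone can never force $\chi\preccurlyeq t\psi$. Your fallback for $P_{\omega}[t\psi]=t\psi$ ("same singularity type, mass conservation, uniqueness principle") fails for the same structural reason: two potentials with identical singularity type, identical total mass and identical supremum need not coincide (compare $u$ with $\max(u,\psi-1)$), and the uniqueness statements of \cite{DDNL17b}, \cite{DDNL18b} concern solutions of Monge--Amp\`ere equations within a singularity class, not arbitrary potentials.

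What is actually needed, and what the paper supplies, is a quantitative mass identity rather than a singularity-data comparison. One computes
$$
V_{\chi_{k}}=V_{t\psi_{k}}=\sum_{j=0}^{n}t^{\,n-j}(1-t)^{j}\int_{X}\langle\omega^{j}\wedge(\omega+dd^{c}\psi_{k})^{n-j}\rangle,
$$
and the continuity of mixed masses along decreasing sequences of model type envelopes with $V_{\psi}>0$ (Proposition $4.8$ in \cite{DDNL19}) gives $V_{\chi_{k}}\to V_{t\psi}=V_{P_{\omega}[t\psi]}$, while also $V_{\chi_{k}}\to V_{\chi}$. Since $P_{\omega}[t\psi]\preccurlyeq\chi$ and the two masses agree and are positive, Theorem $1.3$ in \cite{DDNL17b} forces $P_{\omega}[t\psi]=\chi\in\mathcal{M}_{D}$. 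Finally, the identity $P_{\omega}[t\psi]=t\psi$ comes from the pointwise inequality $\chi_{k}\leq t\psi_{k}$ (a consequence of $t\varphi_{k}$ having $\chi_{k}$-relative minimal singularities), passed to the limit and combined with $P_{\omega}[t\psi]\geq t\psi$. Without this mass argument your proof establishes only $\chi\geq t\psi$, which was already known, and does not reach the conclusion.
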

\begin{proof}
As $\psi\in\mathcal{M}_{D}$, by Theorem \ref{thm:Reg} there exists a decreasing sequence $\{\psi_{k}=P_{\omega}[\varphi_{k}]\}_{k\in\mathbbm{N}}\subset \mathcal{M}_{D}^{+}$ of model type envelopes with algebraic singularity types converging to $\psi$. We denoted by $\varphi_{k}$ a choice of $\omega$-psh functions with algebraic singularities. Then, for any $t\in[0,1)$ the sequence $\{\psi_{k,t}:=P_{\omega}[t\varphi_{k}]\}$ is clearly a decreasing sequence with analytic singularity type, which implies that $\psi'_{t}:=\lim_{k\to +\infty}\psi_{k,t}\in\mathcal{M}_{D}$ by Proposition \ref{prop:MDPlus}. Moreover since $\varphi_{k}$ has $\psi_{k}$-relative minimal singularities we have $\psi_{k,t}=P_{\omega}[t\psi_{k}]$, and
$$
V_{\psi_{k,t}}=\int_{X}MA_{\omega}\big(t\psi_{k}\big)=\sum_{j=0}^{n}t^{n-j}(1-t)^{j}\int_{X}\langle \omega^{j}\wedge (\omega+dd^{c}\psi_{k})^{n-j}\rangle.
$$
Observe also that $V_{\psi_{k}}\searrow V_{\psi}$ by what said in section \S \ref{sec:Prelim} since $\psi_{k}$ are model type envelopes decreasing to $\psi$. Indeed, more generally we have
$$
\int_{X}\langle \omega^{j}\wedge (\omega+dd^{c}\psi_{k})^{n-j}\rangle\to \int_{X}\langle \omega^{j}\wedge (\omega+dd^{c}\psi)^{n-j}\rangle
$$
for any $j=0,\dots,n$ by Proposition $4.8$ in \cite{DDNL19} since we are assuming $V_{\psi}>0$. Hence $V_{\psi_{k,t}}\to V_{\psi_{t}'}=V_{\psi_{t}}>0$, which implies that $\psi_{t}=\psi_{t}'$ by Theorem $1.3.$ in \cite{DDNL17b}, as by construction $\psi_{t}$ is more singular than $\psi_{t}'$, i.e. $\psi_{t}\in\mathcal{M}_{D}^{+}$ for any $t\in[0,1]$.
\end{proof}
\subsection{Geodesic segments.}
\begin{defn}
Let $S:=\{t\in\mathbbm{C}\, : \, 0<\mbox{Re} \,t<1\}$ be the open strip and let $u_{0},u_{1}\in PSH(X,\omega)$. The elements $U'\in PSH(X\times S,\pi_{X}^{*}\omega)$ such that $\limsup_{t\to 0^{+}}U'(\cdot,t)\leq u_{0}$ and $\limsup_{t\to 1^{-}}U'(\cdot,t)\leq u_{1}$ are called \emph{weak subgeodesics of} $u_{0},u_{1}$, and if there exists at least one of these subgeodesics then the $\pi_{X}^{*}\omega$-psh function
$$
u_{t}(p):=U(p,t):=\Big(\sup\{U'\in PSH(X\times S, \pi_{X}^{*}\omega)\, : \, U'\, \mbox{subgeodesic of}\, u_{0},u_{1}\}\Big)^{*}
$$
is called \mbox{weak geodesic joining} $u_{0},u_{1}$.
\end{defn}
The next Proposition explores the properties of weak geodesics segments joining potentials in $\mathcal{E}^{1}(X,\omega,\psi)$ for $\psi\in\mathcal{M}$. We denote by $\mathcal{H}_{\omega}:=\{u\in PSH(X,\omega)\, : \, \omega+dd^{c}u\, \mbox{is Kähler}\}$ the set of Kähler potentials.
\begin{prop}
\label{prop:WGeod}
Let $u_{0},u_{1}\in \mathcal{E}^{1}(X,\omega,\psi)$ for $\psi\in\mathcal{M}^{+}$. Then the followings holds:
\begin{itemize}
\item[i)] there exists the weak geodesic $u_{t}(p)=U(t,p)\in PSH(X\times S, \pi_{X}^{*}\omega)$ joining $u_{0},u_{1}$, and it only depends on $\mbox{Re}\, t$ in the $t$-variable;
\item[ii)] $u_{t}\in\mathcal{E}^{1}(X,\omega,\psi)$ for any $t\in[0,1]$;
\item[iii)] letting $\{u_{0}^{k}\}_{k\in\mathbbm{N}}, \{u_{1}^{k}\}_{k\in\mathbbm{N}}\subset \mathcal{H}_{\omega}$ be decreasing sequences such that $u_{0}^{k}\searrow u_{0}, u_{1}^{k}\searrow u_{1}$ and letting $u_{t}^{k}$ the weak geodesic joining $u_{0}^{k}, u_{1}^{k}$, the convergence $u_{t}^{k}\searrow u_{t}$ holds.
\end{itemize}
Moreover if $u_{0},u_{1}$ have $\psi$-relative minimal singularities, then
\begin{itemize}
\item[iv)] $u_{t}\to u_{0}$, $u_{t}\to u_{1}$ in capacity;
\item[v)] $u_{t}$ has $\psi$-relative minimal singularities for any $t\in S$;
\item[vi)] $|u_{t}-u_{s}|\leq C|\mbox{Re}\, t-\mbox{Re}\, s|$ for any $t,s\in S$ where $C:=||u_{1}-u_{0}||_{\infty}$.
\end{itemize}
\end{prop}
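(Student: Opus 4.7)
The plan is to prove the six parts in the order (i), (v), (vi), (iv) in the case of $\psi$-relative minimal singularity endpoints, and then extend to general endpoints in $\mathcal{E}^{1}(X,\omega,\psi)$ via a decreasing truncation, which simultaneously yields (iii) and (ii) in full generality.

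For the \emph{minimal singularities case}, set $C:=||u_{0}-u_{1}||_{\infty}<+\infty$ and $s:=\mathrm{Re}\,t$. The constant (in $t$) function $\psi(p)-C'$ and the function $\max\!\big(u_{0}(p)-Cs,\,u_{1}(p)-C(1-s)\big)$ are both genuine $\pi_{X}^{*}\omega$-plurisubharmonic subgeodesics (since $s$ and $1-s$ are pluriharmonic on $S$), and the latter attains $u_{0}$ at $t=0$ and $u_{1}$ at $t=1$ precisely because $|u_{0}-u_{1}|\leq C$. The subgeodesic family is therefore nonempty, so $u_{t}$ is well defined, proving (i); the $\mathrm{Im}\,t$-invariance is inherited because $(p,t)\mapsto(p,t+is_{0})$ sends subgeodesics to subgeodesics, so the upper envelope does not depend on $\mathrm{Im}\,t$. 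Sandwiching $u_{t}$ between $\psi-C''$ from below and $\max(u_{0},u_{1})\leq \psi+C''$ from above yields (v). For (vi), plurisubharmonicity of $U$ on $X\times S$ forces convexity of $t\mapsto U(p,t)$ in $\mathrm{Re}\,t$, whence $u_{t}(p)\leq(1-t)u_{0}(p)+tu_{1}(p)\leq u_{0}(p)+Ct$, and the explicit lower subgeodesic gives $u_{t}\geq u_{0}-Ct$ and $u_{t}\geq u_{1}-C(1-t)$; together with the analogous sub- and supergeodesic comparison between the two endpoints these estimates yield $||u_{t}-u_{s}||_{\infty}\leq C|\mathrm{Re}\,t-\mathrm{Re}\,s|$. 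Then (iv) is immediate in this case since the set $\{|u_{t}-u_{0}|\geq\delta\}$ is empty once $Ct<\delta$.

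For general endpoints in $\mathcal{E}^{1}(X,\omega,\psi)$, set $u_{i}^{k}:=\max(u_{i},\psi-k)$: these decrease to $u_{i}$ and have $\psi$-relative minimal singularities, so the bounded-case geodesic $u_{t}^{k}$ is available. Since enlarging the endpoints enlarges the family of subgeodesics, $\{u_{t}^{k}\}_{k}$ is decreasing in $k$, and its limit $\tilde u_{t}$ is a subgeodesic of $u_{0},u_{1}$; conversely, any subgeodesic of $u_{0},u_{1}$ is also a subgeodesic of every $(u_{0}^{k},u_{1}^{k})$, hence $\leq u_{t}^{k}$ and therefore $\leq\tilde u_{t}$. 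This identifies $\tilde u_{t}$ with the weak geodesic $u_{t}$, proving (i) in full and yielding (iii) (the reduction from $u_{i}^{k}$ to Kähler potentials $u_{i}^{k,j}\in\mathcal{H}_{\omega}$ proceeds by the standard Demailly-type regularization of Theorem \ref{thm:Reg} applied to $u_{i}^{k}$, which, being a bounded perturbation of $\psi$, decreases to $u_{i}^{k}$). For (ii), we first observe that $u_{t}\leq\max(u_{0},u_{1})\preccurlyeq\psi$, so $u_{t}\preccurlyeq\psi$; then the affinity of $t\mapsto E_{\psi}(u_{t}^{k})$ along the bounded geodesic $u_{t}^{k}$ (Berndtsson-type in the $\psi$-relative setting, available whenever the endpoints have $\psi$-minimal singularities), combined with $E_{\psi}(u_{i}^{k})\to E_{\psi}(u_{i})>-\infty$ as $k\to+\infty$, produces a uniform lower bound for $E_{\psi}(u_{t}^{k})$; continuity of $E_{\psi}$ along decreasing sequences propagates this to $E_{\psi}(u_{t})>-\infty$, and the equivalence $u\in\mathcal{E}(X,\omega,\psi)\Leftrightarrow P_{\omega}[u]=\psi$ valid for $\psi\in\mathcal{M}^{+}$ (Theorem $1.3$ in \cite{DDNL17b}) secures $u_{t}\in\mathcal{E}^{1}(X,\omega,\psi)$. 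Finally (iv), (v), (vi) in the minimal singularity case have already been proved above.

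The principal obstacle is the membership claim (ii) for genuinely singular endpoints: one must transfer finiteness of the $\psi$-relative energy (and the full-mass property) from the bounded approximants $u_{t}^{k}$ to the singular limit $u_{t}$ without invoking the full linearity of $E_{\psi}$ along weak geodesics for $\psi\in\mathcal{M}^{+}$, which, as the authors remark in \S\ref{ssec:Hypothesis}, remains open. The argument hinges on establishing affinity of $E_{\psi}$ along bounded geodesics (which is tractable because $u_{t}^{k}$ stays within a bounded perturbation of $\psi$) and on carefully checking that $\lim_{k}u_{t}^{k}$ coincides with the weak geodesic $u_{t}$ so that continuity of $E_{\psi}$ along decreasing sequences may be applied; all other parts of the statement then follow from this energy transfer combined with the explicit subgeodesic comparisons of the first paragraph.
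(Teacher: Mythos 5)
Your treatment of (i), (iii), (v), (vi) via explicit subgeodesics, convexity in $\mathrm{Re}\,t$, and the monotone envelope identification is sound and close in spirit to the paper, which instead quotes Theorem $5$ of \cite{Dar15b} after observing that $P_{\omega}(u_{0},u_{1})\in\mathcal{E}^{1}(X,\omega,\psi)$ furnishes a subgeodesic. (Do note that for general endpoints you must still exhibit one subgeodesic to guarantee the decreasing limit $\tilde u_{t}$ is not identically $-\infty$; the constant-in-$t$ subgeodesic $P_{\omega}(u_{0},u_{1})$ does this.) Deducing (iv) from the uniform Lipschitz bound (vi) is a legitimate shortcut in the minimal-singularities case.

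The genuine gap is in your proof of (ii). You invoke "affinity of $t\mapsto E_{\psi}(u_{t}^{k})$ along the bounded geodesic $u_{t}^{k}$ \dots available whenever the endpoints have $\psi$-minimal singularities." This is not available for general $\psi\in\mathcal{M}^{+}$: it is exactly the content of Theorem \ref{thm:Linear}, which the paper proves only for $\psi\in\mathcal{M}_{D}^{+}$ via the algebraic approximation of Theorem \ref{thm:Reg}, and whose extension to all of $\mathcal{M}^{+}$ is explicitly flagged as open in \S\ref{ssec:Hypothesis}. Restricting to endpoints with $\psi$-relative minimal singularities does not make the problem "tractable" — that reduction is already the first step of the proof of Theorem \ref{thm:Linear}, and the remaining difficulty (passing from $\mathcal{M}_{D}^{+}$ to $\mathcal{M}^{+}$) is untouched. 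Since Proposition \ref{prop:WGeod} is stated for arbitrary $\psi\in\mathcal{M}^{+}$, your argument for (ii) is circular with respect to the open problem. The fix is much simpler and is the paper's route: the constant subgeodesic gives $P_{\omega}(u_{0},u_{1})\leq u_{t}$, the $\mathrm{Re}\,t$-convexity gives $u_{t}\leq(\mathrm{Re}\,t)u_{1}+(1-\mathrm{Re}\,t)u_{0}\preccurlyeq\psi$, and since $P_{\omega}(u_{0},u_{1})\in\mathcal{E}^{1}(X,\omega,\psi)$ (Proposition $2.13$ in \cite{Tru19}), the monotonicity of $E_{\psi}$ (Proposition $2.11$ in \cite{Tru19}) yields $E_{\psi}(u_{t})\geq E_{\psi}\big(P_{\omega}(u_{0},u_{1})\big)>-\infty$, hence $u_{t}\in\mathcal{E}^{1}(X,\omega,\psi)$ with no linearity statement needed.
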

The existence of the approximations of $(iii)$ is contained in \cite{BK07} while the existence of the weak geodesics joining elements in $\mathcal{H}_{\omega}$ is shown in \cite{Chen00}.  
\begin{proof}
By Proposition $2.13$ in \cite{Tru19} $P_{\omega}(u_{0},u_{1})\in \mathcal{E}^{1}(X,\omega,\psi)$, thus $(i)$ and $(iii)$ follows directly from Theorem $5.(i)$ in \cite{Dar15b}. Then by the $\mbox{Re}\, t$-convexity we obtain $P_{\omega}(u_{0},u_{1})\leq u_{t}\leq (\mbox{Re}\, t)u_{1}+(1-\mbox{Re}\,t)u_{0}$, hence $(ii)$ is given by the monotonicity of $E_{\psi}$ (Proposition $2.11$ in \cite{Tru19}).\newline
Next, assuming $u_{0},u_{1}$ with $\psi$-relative minimal singularities, $(iv)$ is a consequence of the second part of Theorem $5$ in \cite{Dar15b} as by definition it is immediate to check that $P_{\omega}[u_{0}](u_{1})=u_{1}$. Then, letting $C:=||u_{0}-u_{1}||_{L^{\infty}}$, from
$$
\max\big\{u_{0}-C\mbox{Re}\, t,u_{1}+C(\mbox{Re}\, t-1)\big\}\leq u_{t}\leq (\mbox{Re}\,t)u_{1}+(1-\mbox{Re}\,t)u_{0}
$$
we obtain that $||u_{0}-u_{t}||_{L^{\infty}}\leq C$, which implies that $u_{t}$ has uniformly bounded $\psi$-relative minimal singularities and in particular $(v)$ follows. Moreover $(i)$ and the $\mbox{Re}\,t$-convexity of $u_{t}$ yield that the $t$-derivative of $u_{t}$ is increasing. Thus, the inequality
$$
\max\big\{-C\mbox{Re}\, t,u_{1}-u_{0}+C(\mbox{Re}\, t-1)\big\}\leq u_{t}-u_{0}
$$ 
implies that the one-side derivative at $0$ of $u_{t}$ lie between $-C$ and $C$. Similarly for the one-side derivative at $1$ of $u_{t}$. Hence it follows that all the $t$-derivatives are bounded between $-C,C$, which gives $(vi)$ concluding the proof.
\end{proof}
As the weak geodesic joining two elements $u_{0},u_{1}\in \mathcal{E}^{1}(X,\omega,\psi)$ depends only on $\mbox{Re}\, t$ in the $t$-variable, the associated weak geodesic \emph{segment} will be the path $[0,1]\ni t\to u_{t}$.\newline
When $\psi\in\mathcal{M}^{+}$ has algebraic singularities type it is possible to relate the weak geodesics in $\mathcal{E}^{1}(X,\omega,\psi)$ in terms of the weak geodesics in $\mathcal{E}^{1}(Y,\eta)$.
\begin{prop}
\label{prop:WGAS}
With the same notation of subsection \ref{ssec:Anal}, let $u_{0},u_{1}\in \mathcal{E}^{1}(X,\omega,\psi)$ and let $\tilde{u}_{0}=(u_{0}-\varphi)\circ p,\tilde{u}_{1}=(u_{1}-\varphi)\circ p\in \mathcal{E}^{1}(Y,\eta)$. Then the weak geodesic $U$ joining $u_{0},u_{1}$ is given by
$$
U=(p\times Id)_{*}\tilde{U}+\Phi
$$
where $\tilde{U}$ is the weak geodesic joining $\tilde{u}_{0}, \tilde{u}_{1}$ and $\Phi\in PSH(X\times S,\pi_{X}^{*}\omega)$ is the constant weak geodesic at $\varphi$, i.e. $\Phi(\cdot,t)=\varphi(\cdot)$ for any $t\in S$
\end{prop}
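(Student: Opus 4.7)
The plan is to identify $W := (p\times\mathrm{Id})_{*}\tilde{U} + \Phi$ with the weak geodesic $U$ of $u_{0},u_{1}$ by proving separately that $W\leq U$ and $U\leq W$, exploiting the extremal characterization of the weak geodesic together with the bijection underlying Proposition~\ref{prop:Anal} applied slicewise.

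First I would verify that $W$ is well-defined as a $\pi_{X}^{*}\omega$-psh function and that it is a subgeodesic of $u_{0},u_{1}$. On $Y\times S$ one computes
\[
dd^{c}(\tilde{U}+\varphi\circ p)+(p\times\mathrm{Id})^{*}\pi_{X}^{*}\omega = dd^{c}\tilde{U}+\pi_{Y}^{*}\bigl(\eta+c[D]\bigr)\geq 0,
\]
using $p^{*}\omega+dd^{c}(\varphi\circ p)=\eta+c[D]$ and that $\tilde{U}$ is $\pi_{Y}^{*}\eta$-psh. Since $p\times\mathrm{Id}$ is a proper bimeromorphism, $\tilde{U}+\varphi\circ p$ descends to a $\pi_{X}^{*}\omega$-psh function $W$ on $X\times S$ satisfying $W\circ(p\times\mathrm{Id})=\tilde{U}+\varphi\circ p$ outside the exceptional locus. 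The bound $\limsup_{t\to 0^{+}}\tilde{U}(\cdot,t)\leq\tilde{u}_{0}$ in $Y$ gives
\[
\limsup_{t\to 0^{+}}\bigl(\tilde{U}(y,t)+\varphi(p(y))\bigr)\leq\tilde{u}_{0}(y)+\varphi(p(y))=u_{0}(p(y))
\]
at generic $y$, which upon pushing forward and using upper semicontinuity yields $\limsup_{t\to 0^{+}}W(\cdot,t)\leq u_{0}$ on all of $X$; the symmetric bound holds at $t\to 1^{-}$. Hence $W$ is a subgeodesic of $u_{0},u_{1}$ and therefore $W\leq U$.

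Conversely I would set $V:=(U-\Phi)\circ(p\times\mathrm{Id})$ and show $V\leq\tilde{U}$. By Proposition~\ref{prop:WGeod}(i) $U$ is $\mbox{Re}\,t$-convex in the $t$-variable, and since $u_{i}\preccurlyeq\psi\preccurlyeq\varphi$ the functions $u_{i}-\varphi$ are uniformly bounded above; convexity together with the boundary conditions then forces $U-\Phi$ to be uniformly bounded above on $X\times S$, so $V$ is bounded above on $Y\times S$. On $(Y\setminus D)\times S$ the companion computation gives
\[
dd^{c}V+\pi_{Y}^{*}\eta=(p\times\mathrm{Id})^{*}\bigl(dd^{c}U+\pi_{X}^{*}\omega\bigr)\geq 0,
\]
so $V$ is $\pi_{Y}^{*}\eta$-psh there, and local upper boundedness extends $V$ uniquely across $D\times S$ to a $\pi_{Y}^{*}\eta$-psh function on $Y\times S$. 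The slice limits at $t=0,1$ yield $\limsup V(\cdot,t)\leq\tilde{u}_{0}$ and $\leq\tilde{u}_{1}$, so $V$ is a subgeodesic of $\tilde{u}_{0},\tilde{u}_{1}$; by maximality of $\tilde U$ we conclude $V\leq\tilde{U}$, which upon pushing forward gives $U\leq W$.

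The main obstacle is controlling $V$ along $D\times S$: subtracting $\varphi\circ p$, whose pole locus is $D$, could a priori destroy the $\pi_{Y}^{*}\eta$-psh property, and it is exactly the bound $U(\cdot,t)\preccurlyeq\psi\preccurlyeq\varphi$ inherited from Proposition~\ref{prop:WGeod}(ii) that lets us invoke the removable singularity theorem for quasi-psh functions. Dually, for $W$ the extra positive current $c\pi_{Y}^{*}[D]$ appearing in the $dd^{c}$-computation only strengthens positivity, so the descent of $W$ across the exceptional locus of $p\times\mathrm{Id}$ is standard for proper bimeromorphisms.
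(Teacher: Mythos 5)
Your proposal is correct and takes essentially the same route as the paper: both sides of the sandwich are proved by comparing pushforward/pullback of sub-geodesics via the identity $(p\times\mathrm{Id})^*(\pi_X^*\omega+dd^c U)=\pi_Y^*\eta+dd^c\tilde U+c\,\pi_Y^*[D]$ and invoking the extremal characterization; the only cosmetic difference is that you handle positivity across $D\times S$ by a removable-singularity argument where the paper instead argues directly that the current is positive because it is positive on slices and its sum with $c\,\pi_Y^*[D]$ is a pullback of a positive current.
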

\begin{proof}
By Proposition \ref{prop:WGeod} there exists the geodesic $U$ joining $u_{0},u_{1}$ and $u_{t}\in\mathcal{E}^{1}(X,\omega,\psi) $ for any $t$. Then by Proposition \ref{prop:Anal} the function
$$
\tilde{U}:=(U-\Phi)\circ (p\times Id)
$$
satisfies $\tilde{u}_{t}\in \mathcal{E}^{1}(Y,\omega)$ for any $t\in S$. Moreover it depends only on $\mbox{Re}\, t$ on the $t$-variable, and it is not difficult to check that it is upper semicontinuous and regular enough to consider the $(1,1)$-current $\pi_{Y}^{*}\eta+dd^{c}_{w,t}\tilde{U}$, which satisfies
\begin{equation}
\label{eqn:PSHGeod}
(p\times Id)^{*}\big(\pi_{X}^{*}\omega+dd^{c}_{z,t}U\big)=\pi_{Y}^{*}\eta+dd^{c}_{w,t}\tilde{U}+c\pi_{Y}^{*}[D].
\end{equation}
Therefore, as $\pi_{Y}^{*}\eta+dd_{w,t}\tilde{U}$ is positive on each fiber and $U\in PSH(X\times S, \pi_{X}^{*}\omega)$, from (\ref{eqn:PSHGeod}) we deduce that $\pi_{Y}^{*}\eta+dd_{w,t}\tilde{U}\geq 0$, i.e. that $\tilde{U}$ is a weak subgeodesic joining $\tilde{u}_{0}, \tilde{u}_{1}$.\newline
On the other hand, letting $\tilde{V}\in PSH(Y\times S, \pi_{Y}^{*}\eta)$ be the weak geodesic joining $\tilde{u}_{0}, \tilde{u}_{1}$, we obtain that
$$
V:=(p\times Id)_{*} \tilde{V}+\Phi
$$
is a weak subgeodesic joining $u_{0}, u_{1}$ from the equality (\ref{eqn:PSHGeod}) for $V,\tilde{V}$. Moreover $V\geq U$ by construction as $\tilde{V}$ is the weak geodesic, which clearly implies $V=U$. Hence $\tilde{V}=\tilde{U}$, i.e. $\tilde{U}$ is the weak geodesic joining $\tilde{u}_{0}, \tilde{u}_{1}$ and the proof is concluded.
\end{proof}
The reason of considering $\psi\in\mathcal{M}_{D}^{+}$ in Theorems \ref{thmB}, \ref{thmC} is because we can prove that the space $\big(\mathcal{E}^{1}(X,\omega,\psi),d\big)$ is \emph{geodesic}, that any weak geodesic is a metric geodesic, and that the $\psi$-relative energy becomes linear along these specific geodesics.
\begin{thm}
\label{thm:Linear}
Let $\psi\in \mathcal{M}_{D}^{+}$ and let $U$ be the weak geodesic joining $u_{0},u_{1}\in \mathcal{E}^{1}(X,\omega,\psi)$. Then $E_{\psi}$ is linear along $[0,1]\ni t\to u_{t}:=U(t,\cdot)\in\mathcal{E}^{1}(X,\omega,\psi)$, which is also a geodesic segment in $\big(\mathcal{E}^{1}(X,\omega,\psi),d\big)$, i.e.
$$
d(u_{t},u_{s})=|t-s|d(u_{0},u_{1}).
$$
\end{thm}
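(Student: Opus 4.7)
The plan is to first prove the linearity of $E_\psi$ along the weak geodesic by approximating $\psi$ with model type envelopes carrying algebraic singularities and then transferring the problem to a smooth resolution where the corresponding statement is classical; the metric geodesic identity is obtained in the same way, both properties passing to the limit via the continuity of $E_\psi$ along decreasing sequences.

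First I would apply Proposition \ref{prop:WGeod}.(iii) to the decreasing truncations $u_i^j := \max(u_i, \psi - j) \in \mathcal{E}^1(X, \omega, \psi)$, and combined with the continuity of $E_\psi$ and of $d$ along decreasing sequences (noted just after Theorem \ref{thm:Riass}), reduce to the case when $u_0, u_1$ have $\psi$-relative minimal singularities. Since $\psi \in \mathcal{M}_D^+$, Theorem \ref{thm:Reg} together with Proposition \ref{prop:MDPlus} yields a decreasing sequence $\{\psi_k\} \subset \mathcal{M}^+$ of model type envelopes with algebraic singularities type such that $\psi_k \searrow \psi$ and $V_{\psi_k} \to V_\psi > 0$. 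For each $k$ I would then pick $u_i^k \in \mathcal{E}^1(X, \omega, \psi_k)$ with $\psi_k$-relative minimal singularities decreasing to $u_i$, for instance $u_i^k := \max(u_i, \psi_k - C)$ where $C$ bounds $|u_i - \psi|$ on $X$, and let $u_t^k$ be the weak geodesic joining $u_0^k, u_1^k$; by monotonicity of the supremum in its definition, $u_t^k \searrow u_t$ for each $t \in [0,1]$.

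Next I would fix a log resolution $p_k : Y_k \to X$ of the ideal associated to $\psi_k = P_\omega[\varphi_k]$, so that $p_k^*(\omega + dd^c \varphi_k) = \eta_k + c_k[D_k]$ with $\eta_k$ smooth, big and semipositive. Proposition \ref{prop:WGAS} lifts $u_t^k$ through the isometry $F$ of Proposition \ref{prop:Anal} to the weak geodesic $\tilde u_t^k := (u_t^k - \varphi_k) \circ p_k$ on $Y_k$ joining $\tilde u_0^k, \tilde u_1^k \in \mathcal{E}^1(Y_k, \eta_k)$ with $\eta_k$-relative minimal singularities. In this smooth big semipositive setting, both the affinity of $E$ along weak geodesics and the metric geodesic identity $d_{\eta_k}(\tilde u_s^k, \tilde u_t^k) = |t - s|\, d_{\eta_k}(\tilde u_0^k, \tilde u_1^k)$ are classical, proven by a Dirichlet-problem approximation of the HCMA on $Y_k \times S$ and an integration by parts argument (see \cite{BEGZ10}, \cite{BBGZ09}, \cite{Dar15}). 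Transferring back through $F$ and using the identity $E_{\psi_k}(u_t^k) - E_{\psi_k}(\varphi_k) = E(\tilde u_t^k)$ of Proposition \ref{prop:Anal}, one then deduces that $E_{\psi_k}(u_t^k)$ is affine in $t$ and that $d_{\psi_k}(u_s^k, u_t^k) = |t - s|\, d_{\psi_k}(u_0^k, u_1^k)$.

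Finally I would apply Proposition \ref{prop:Usc} to the decreasing sequences $u_t^k \searrow u_t$ and $P_\omega(u_s^k, u_t^k) \searrow P_\omega(u_s, u_t)$ (both landing in $\mathcal{E}^1(X, \omega, \psi_k)$) to obtain $E_{\psi_k}(u_t^k) \to E_\psi(u_t)$ and $E_{\psi_k}\bigl(P_\omega(u_s^k, u_t^k)\bigr) \to E_\psi\bigl(P_\omega(u_s, u_t)\bigr)$. This makes $E_\psi(u_t)$ affine as a pointwise limit of affine functions, while the distance formula of Theorem \ref{thm:Riass}.(i) yields $d_{\psi_k}(u_s^k, u_t^k) \to d_\psi(u_s, u_t)$, so that $d_\psi(u_s, u_t) = |t - s|\, d_\psi(u_0, u_1)$. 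The hard part will be the classical affinity of $E$ along weak geodesics for the big semipositive (non-Kähler) class $\eta_k$ on $Y_k$: this is delicate because $\eta_k$ vanishes along the exceptional divisor $D_k$, so one needs careful HCMA approximations and precise control of non-pluripolar products to justify the integration by parts; the ability to transfer to such a resolution in the first place is precisely what the hypothesis $\psi \in \mathcal{M}_D^+$ buys via Theorem \ref{thm:Reg}, which explains why this assumption appears in the statement.
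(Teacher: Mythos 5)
Your overall strategy -- reduce to $\psi$-relative minimal singularities, use Theorem \ref{thm:Reg} to produce a decreasing sequence $\psi_k \in \mathcal{M}_D^+$ with algebraic singularities type, lift to a resolution via Propositions \ref{prop:Anal} and \ref{prop:WGAS} where the linearity and metric-geodesic identity hold for big and semipositive classes, and then pass to the limit -- is exactly the paper's. The references you give for the big-and-semipositive case (\cite{BEGZ10}, \cite{BBGZ09}, \cite{Dar15}) are slightly off, since those treat either the Kähler or the absolute setting; the paper instead invokes Theorem $3.12$ in \cite{DDNL17a} for the affinity of $E$ along weak geodesics and Proposition $3.13$ in \cite{DDNL18a} for the metric-geodesic property, which are the correct results in the big semipositive setting. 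This is a citation detail, not a logical gap.

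There is, however, a genuine gap in your final limiting step. You claim that applying Proposition \ref{prop:Usc} to the decreasing sequences $u_t^k \searrow u_t$ and $P_\omega(u_s^k, u_t^k) \searrow P_\omega(u_s, u_t)$ yields $E_{\psi_k}(u_t^k) \to E_\psi(u_t)$ and $E_{\psi_k}\bigl(P_\omega(u_s^k, u_t^k)\bigr) \to E_\psi\bigl(P_\omega(u_s, u_t)\bigr)$. But Proposition \ref{prop:Usc} only gives \emph{upper} semicontinuity, $\limsup_k E_{\psi_k}(u_k) \leq E_{P_\omega[u]}(u)$, not convergence. Since the model $\psi_k$ is changing along with $u_t^k$, the convergence is not automatic from the one-sided $L^1$-continuity of $E_\psi$ for fixed $\psi$ either. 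The missing lower bound is exactly where the uniform estimate $\|u_t^k - u_s^k\|_{L^\infty}\leq C$ enters: it is needed to control $|E_{\psi_k}(u_t^k) - E_{\psi}(u_t)|$ from below, and the paper achieves this by invoking Lemma $4.3$ in \cite{Tru19}, which is a dedicated two-sided convergence result for decreasing families of potentials with decreasing models under uniform $L^\infty$ bounds. Without that lemma (or an equivalent argument providing the $\liminf$ inequality), the claimed passage to the limit does not follow, and the proof is incomplete.

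Your other adjustments from the paper's route are fine: choosing $u_i^k := \max(u_i, \psi_k - C)$ instead of the Demailly-regularized approximants from Theorem \ref{thm:Reg} still gives $u_i^k \in \mathcal{E}^1(X,\omega,\psi_k)$ with $\psi_k$-relative minimal singularities, decreasing to $u_i$, and, by $1$-Lipschitzness of the maximum, $\|u_0^k - u_1^k\|_{L^\infty}\leq \|u_0 - u_1\|_{L^\infty}$ uniformly, which is what the limiting step requires. So once you replace the appeal to Proposition \ref{prop:Usc} by the correct convergence lemma, the rest of your argument goes through.
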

\begin{proof}
We set $u_{0,k}:=\max(u_{0},\psi-k),u_{1,k}:=\max(u_{1},\psi-k)$ observing that by construction the sequence of weak geodesic segments $U_{k}$ joining $u_{0,k},u_{1,k}$ decreases to $U$ (see Proposition \ref{prop:WGeod}). In particular since the $\psi$-relative energy $E_{\psi}$ and the distance $d$ are continuous along decreasing sequences in $\mathcal{E}^{1}(X,\omega,\psi)$ we may assume that $u_{0}, u_{1}$ have $\psi$-relative minimal singularities.\newline
Moreover if $\psi=P_{\omega}[\varphi]$ for $\varphi$ with analytical singularities, then the results required follow combining Proposition \ref{prop:Anal} with Proposition \ref{prop:WGAS}. Indeed, keeping the same notation of subsection \ref{ssec:Anal}, by Theorem $3.12$ in \cite{DDNL17a} the energy $E(\cdot)$ for $\mathcal{E}^{1}(Y,\eta)$ is linear along weak geodesic segments, which are metric geodesics in $\big(\mathcal{E}^{1}(X,\eta),d\big)$ by Proposition $3.13$ in \cite{DDNL18a}.\newline
For general $\psi\in\mathcal{M}_{D}^{+}$, by Theorem \ref{thm:Reg} there exist $\{\psi_{k}\}\in\mathcal{M}_{D}^{+}$, decreasing sequence converging to $\psi$ of model type envelopes with algebraic singularity types, and $u_{0}^{k},u_{1}^{k}\in\mathcal{E}^{1}(X,\omega,\psi_{k})$, decreasing sequences that converge respectively to $u_{0},u_{1}$. Observe that $||u_{0}^{k}-u_{1}^{k}||_{L^{\infty}}$ is uniformly bounded since we are assuming $u_{0},u_{1}$ with $\psi$-relative minimal singularities (Theorem \ref{thm:Reg}). Moreover by the first part of the proof the weak geodesic segment $[0,1]\ni t\to u_{t}^{k}\in\mathcal{E}^{1}(X,\omega,\psi_{k})$ joining $u_{0}^{k}, u_{1}^{k}$ is a metric geodesic in $\big(\mathcal{E}^{1}(X,\omega,\psi_{k}),d\big)$ and $E_{\psi_{k}}$ is linear along it. Furthermore for any $t,s\in [0,1]$, $||u_{t}^{k}-u_{s}^{k}||_{L^{\infty}}\leq C$ for a uniform constant $C$ and $u_{t}^{k}$ decreases to $u_{t}$ by Proposition \ref{prop:WGeod} as $k\to +\infty$. Hence the results required follow from the convergences
\begin{gather*}
d(u_{t}^{k},u_{s}^{k})\to d(u_{t},u_{s}),\\
E_{\psi_{k}}(u_{t}^{k})\to E_{\psi}(u_{t})
\end{gather*}
as $k\to +\infty$, given by Proposition $2.7$ in \cite{Tru19}.
\end{proof}
\section{$[\psi]$-KE metrics with prescribed singularities.}
\label{sec:KE}
From now on we will assume $\{\omega\}=c_{1}(X)$, i.e. $X$ is a Fano manifold and $\omega$ is a Kähler form in the anticanonical class.\newline
With $\mathcal{M}_{klt}$ we denote the set of all the model type envelopes $\psi$ such that $(X,\psi)$ is \emph{klt}, i.e. as said in the Introduction $\mathcal{I}(\psi)=\mathcal{O}_{X}$. Note that by the resolution of strong the openness conjecture (\cite{GZ14}) $\psi\in\mathcal{M}_{klt}$ if and only if there exists $p>1$ such that $e^{-\psi}\in L^{p}$. Moreover for a pair $(X,\psi)$ being klt is independent on the Kähler form chosen, indeed it holds for quasi-psh functions. We will also use the notation $\mathcal{M}_{klt}^{+}:=\mathcal{M}_{klt}\cap \mathcal{M}^{+}$ and similarly for $\mathcal{M}_{D}^{+},$  $\mathcal{M}_{D,klt}^{+}$.
\begin{prop}
\label{lem:Star}
For any $\psi\in\mathcal{M}^{+}$ the path $[0,1]\ni t\to \psi_{t}:=P_{\omega}[t\psi]\in\mathcal{M}^{+}$ is weakly continuous. In particular $\mathcal{M}^{+}$, $\mathcal{M}_{klt}^{+},$ $\mathcal{M}_{D}^{+}$ and $\mathcal{M}_{D,klt}^{+}$ are path-connected sets as subset of $PSH(X,\omega)$.
\end{prop}
\begin{proof}
Letting $t_{k}\nearrow \bar{t}\in[0,1]$ (resp. $t_{k}\searrow \bar{t}\in[0,1]$), we observe that the sequence $\psi_{t_{k}}$ converges weakly and monotonically to a model type envelope $\psi'_{\bar{t}}$ which is more singular (resp. less singular) than $\psi_{\bar{t}}$. But by construction it follows that
$$
V_{\psi_{t_{k}}}=\int_{X}t_{k}^{n-j}(1-t_{k})^{j}\langle \omega^{j}\wedge \omega_{\psi}^{j} \rangle \to V_{\psi_{\bar{t}}},
$$
which by Theorem $1.3$ in \cite{DDNL17b} implies that $\psi'_{\bar{t}}=\psi_{\bar{t}}$ since we are assuming $V_{\psi}>0$.\newline
Finally the fact that the subsets $\mathcal{M}_{D}^{+}$, $\mathcal{M}_{klt}^{+}$ and $\mathcal{M}_{D,klt}^{+}$ are path-connected follows from Proposition \ref{prop:Resc} and from the definition of being \emph{klt}.
\end{proof}

Recall that a positive measure $\mu$ on $X$ is said to have \emph{well-defined Ricci curvature} if it corresponds to a singular metric on $K_{X}$, i.e. locally
$$
\mu=e^{-f}i^{n^{2}}\Omega\wedge \bar{\Omega}
$$
where $f\in L^{1}_{loc}$ and $\Omega$ is a nowhere zero local holomorphic section of $K_{X}$, and in such case $Ric(\mu):=dd^{c}f$ (see \cite{Berm16}, \cite{BBJ15}). We also set $Ric(\omega+dd^{c}u):=Ric\big(MA_{\omega}(u)\big)$ for any $u\in PSH(X,\omega)$ so that it coincides with the usual definition of the Ricci curvature when $u\in C^{\infty}$.
\begin{defn}[\cite{Tru20b}]
Let $u\in PSH(X,\omega)$. Then $\omega_{u}:=\omega+dd^{c}u$ is said to be a \emph{Kähler-Einstein metric with prescribed singularities $[\psi]$ ($[\psi]$-KE metric)} if it has well-defined Ricci curvature,
\begin{equation}
\label{eqn:WeakSense}
Ric(\omega_{u})=\omega_{u}
\end{equation}
and $u\in\mathcal{E}^{1}(X,\omega,\psi)$.
\end{defn}
Similarly to the absolute case $\psi=0$, $\omega_{u}$ is a $[\psi]$-KE metric if and only if $u$ solves the complex Monge-Ampère equation
\begin{equation}
\label{eqn:MA!}
\begin{cases}
MA_{\omega}(u)=e^{-u+C}\mu\\
u\in\mathcal{E}^{1}(X,\omega,\psi)
\end{cases}
\end{equation}
for $C\in\mathbbm{R}$ where $\mu$ is a suitable volume form on $X$ such that $Ric(\mu)=\omega$, i.e. $\mu=e^{-f}\omega^{n}$ for $f$ Ricci potential (Lemma $4.3$ in \cite{Tru20b}). Without loss of generality we also assume that $\mu(X)=1$.\newline
Note that combining the resolution of the openness conjecture, Proposition \ref{prop:Lelong} and Theorem $A$ in \cite{DDNL18b}, any solution $u$ of (\ref{eqn:MA!}) has $\psi$-relative minimal singularities.
\begin{rem}
\label{rem:Assumptions}
\emph{As said in the Introduction, there are some necessary assumptions to make on $\psi\in PSH(X,\omega)$ for the search of $[\psi]$-KE metrics, i.e. we need $\psi\in\mathcal{M}_{klt}^{+}$.\newline
Indeed, although the sets $\mathcal{E}(X,\omega,\psi)$, $\mathcal{E}^{1}(X,\omega,\psi)$ can be defined for general $\psi$ and many of the properties recalled in section $\S$ \ref{sec:Prelim} hold, it is not always possible to solve complex Monge-Ampère equations in the class $\mathcal{E}(X,\omega,\psi)$ as shown in \cite{DDNL17b}. As an example, for any unbounded function $\psi\in \mathcal{E}(X,\omega)$ the equation $MA_{\omega}(u)=\omega^{n}$ cannot have solutions in $\mathcal{E}(X,\omega,\psi)$ since $\mathcal{E}(X,\omega,\psi)\subsetneq \mathcal{E}(X,\omega)$. Therefore, it is natural to assume $\psi=P_{\omega}[\psi]$, i.e. $\psi\in\mathcal{M}$, since $\psi-P_{\omega}[\psi]$ must be globally bounded.\newline
Then the assumption of non-zero total mass $V_{\psi}>0$ is clearly necessary by definition of the equation (\ref{eqn:WeakSense}), while the assumption $\psi\in\mathcal{M}_{klt}$ follows from the pluripotential characterization (\ref{eqn:MA!}) as the latter implies that the RHS must have finite total mass (which is equivalent to $\mathcal{I}(\psi)=\mathcal{O}_{X}$ by Proposition \ref{prop:Lelong}).}
\end{rem}
\begin{defn}
We define the \emph{Kähler-Einstein (KE) locus of $\mathcal{M}$} as
$$
\mathcal{M}_{KE}:=\{\psi\in\mathcal{M}^{+}\, : \, \mbox{there exists a }[\psi]\emph{-KE metric}\}.
$$
\end{defn}
Clearly $\mathcal{M}_{KE}\subset \mathcal{M}_{klt}^{+}$ by what said above (Remark \ref{rem:Assumptions})
\begin{rem}
\label{rem:CDS}
\emph{$\mathcal{M}_{KE}$ is not empty. Indeed letting $D$ be a smooth irreducible divisor in $|-rK_{X}|$ for $r\in\mathbbm{N}$, and letting $\varphi_{D}\in PSH(X,\omega)$ such that $\omega+dd^{c}\varphi_{D}=\frac{1}{r}[D]$, then finding a $[\psi_{t}]$-KE metric for $\psi_{t}:=P_{\omega}[t\varphi_{D}]$ and $t\in[0,1)$ is equivalent to solve
$$
Ric(\eta_{v})=\eta_{v}+\frac{t}{r}[D]
$$
where $\eta=(1-t)\omega$. Thus rescaling we get the renowned path
\begin{equation}
\label{eqn:CDSRen}
Ric(\omega_{w})=(1-t)\omega_{w}+\frac{t}{r}[D]
\end{equation}
used for instance in \cite{CDS15}. It is then well-known (\cite{Berm13}, \cite{JMR11}) that (\ref{eqn:CDSRen}) admits a solution for $0\ll t< 1$ close to $1$. Hence there exists a $[\psi_{t}]$-KE metric for $0\ll t<1$ close to $1$.}
\end{rem}
The set of $[\psi]$-KE metrics varying $\psi\in \mathcal{M}^{+}_{klt}$ includes all possible log KE metrics with respect to $(X,D)$ where $D$ varies among all effective klt $\mathbbm{R}$-divisors such that $-(K_{X}+D)$ is ample. But clearly the set of $\psi\in\mathcal{M}^{+}_{klt}$ with analytic singularities types is much bigger than the one associated to these particular pairs $(X,D)$. However, considering a resolution of the ideal $\mathcal{I}$ associated to a model type envelope $\psi$ with analytic singularities type, it is still possible to describe the set of $[\psi]$-KE metrics in a more classical way.
\begin{prop}[\cite{Tru20b}, Proposition $4.8$ and Theorem $4.9$]
\label{prop:AnalRec}
Let $\psi=P_{\omega}[\varphi]\in\mathcal{M}^{+}_{klt}$ with analytic singularities type formally encoded in $(\mathcal{I},c)$. Then any $[\psi]$-KE metric is smooth outside $V(\mathcal{I})$. Morever, letting $p:Y\to X$ be a resolution of the ideal $\mathcal{I}$ and letting $\eta$ be the big and semipositive $(1,1)$-form such that $p^{*}(\omega+dd^{c}\varphi)=\eta+c[D]$, the set of all $[\psi]$-KE metrics is in bijection with the set of all log-KE metrics in the class $\{\eta\}$ with respect to the weak log Fano pair $(Y,\Delta)$ for $\Delta:=cD-K_{Y/X}$.
\end{prop}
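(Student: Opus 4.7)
The statement splits into a regularity claim and a bijective correspondence; the plan is to establish them in sequence, using the isometric identification from Proposition \ref{prop:Anal} as the main bridge between $X$ and $Y$.

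\emph{Smoothness outside $V(\mathcal{I})$.} Let $u\in\mathcal{E}^{1}(X,\omega,\psi)$ be a $[\psi]$-KE metric, so it solves $MA_{\omega}(u)=e^{-u+C}\mu$. As already noted right after equation (\ref{eqn:MA!}), combining the openness conjecture, Proposition \ref{prop:Lelong} and Theorem $A$ of \cite{DDNL18b} forces $u$ to have $\psi$-relative minimal singularities, i.e., $u-\psi$ is globally bounded. Because $\varphi$ has analytic singularities encoded in $(\mathcal{I},c)$ it is smooth on $X\setminus V(\mathcal{I})$, and Proposition \ref{prop:Lelong} together with the fact (recalled in \S\ref{ssec:Anal}) that $\varphi$ has $\psi$-relative minimal singularities forces $\psi$, and hence $u$, to be locally bounded on $X\setminus V(\mathcal{I})$. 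On this open set $u$ is then a locally bounded $\omega$-psh solution of a Monge-Ampère equation with smooth strictly positive right-hand side, so the standard Evans-Krylov plus Schauder bootstrap of the Yau-Aubin scheme yields $u\in\mathcal{C}^{\infty}(X\setminus V(\mathcal{I}))$.

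\emph{Correspondence with log-KE metrics.} Proposition \ref{prop:Anal} already provides the bijection $F\colon u\mapsto \tilde u:=(u-\varphi)\circ p$ between $\mathcal{E}^{1}(X,\omega,\psi)$ and $\mathcal{E}^{1}(Y,\eta)$, so only the equation needs to be matched. The identity $p^{*}(\omega+dd^{c}u)=(\eta+dd^{c}\tilde u)+c[D]$ implies $MA_{\eta}(\tilde u)=p^{*}MA_{\omega}(u)$ in the non-pluripolar sense, since the $c[D]$ contribution is supported on a proper analytic subset. Locally near $D$, choosing defining sections $s_{D}$ and $s_{K_{Y/X}}$ for $D$ and $K_{Y/X}$, one has $p^{*}\omega^{n}=|s_{K_{Y/X}}|^{2}h_{1}\,\omega_{Y}^{n}$ and $\varphi\circ p=c\log|s_{D}|^{2}+h_{2}$ with $h_{1},h_{2}$ smooth and $h_{1}>0$. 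Using $u\circ p=\tilde u+\varphi\circ p$, the pulled-back equation becomes
\begin{equation}
\label{eqn:PullbackEq}
MA_{\eta}(\tilde u)=e^{-\tilde u+C'}\,\frac{|s_{K_{Y/X}}|^{2}}{|s_{D}|^{2c}}\,\Omega_{Y}
\end{equation}
for a smooth positive volume form $\Omega_{Y}$ on $Y$. The right-hand side is a measure with well-defined Ricci curvature equal to $\eta+c[D]-[K_{Y/X}]=\eta+[\Delta]$, so (\ref{eqn:PullbackEq}) is precisely the log-KE equation for $(Y,\Delta)$ in the class $\{\eta\}$. The klt hypothesis $\mathcal{I}(\psi)=\mathcal{O}_{X}$, transferred through Proposition \ref{prop:Lelong} to $\varphi$, is exactly the integrability $\int_{Y}|s_{K_{Y/X}}|^{2}|s_{D}|^{-2c}\Omega_{Y}<+\infty$, equivalently $(Y,\Delta)$ klt. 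The entire chain of identities is reversible, giving the stated bijection.

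\emph{Main obstacle.} The technical heart lies in the measure transfer leading to (\ref{eqn:PullbackEq}): justifying that the non-pluripolar Monge-Ampère commutes with $p$ for potentials only of class $\mathcal{E}^{1}$ (rather than locally bounded), and identifying the pullback of $\mu$ with a measure whose Ricci current is exactly $\eta+[\Delta]$, with no spurious contributions from the resolution. Equally delicate is the verification that klt-ness of $(X,\psi)$ corresponds precisely to klt-ness of $(Y,\Delta)$, since this is what guarantees the right-hand side of (\ref{eqn:PullbackEq}) is a finite measure and the log-KE equation on $Y$ is well-posed.
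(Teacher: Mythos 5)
This proposition is imported from the companion paper \cite{Tru20b} (Proposition $4.8$ and Theorem $4.9$), so the present paper does not contain its own proof, and there is no internal argument to compare against; I will evaluate your outline on its own terms.

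Your plan is the natural one and I believe it is essentially correct. A few points of care worth spelling out. In the regularity step you describe the equation on $X\setminus V(\mathcal{I})$ as having a ``smooth strictly positive right-hand side,'' but the density $e^{-u+C}\mu$ depends on the unknown $u$, so it is a priori only bounded (locally, away from $V(\mathcal{I})$). The correct bootstrap starts from $u$ locally bounded, gets local continuity from $L^{\infty}$-estimates for the Monge-Amp\`ere operator with $L^{p}$-density, then applies the Yau--Aubin--Evans--Krylov machinery to the equation $MA_{\omega}(u)=e^{-u+C}\mu$ with $\mu$ smooth; this is what forces $u\in\mathcal{C}^{\infty}$ there. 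It is a standard loop but phrasing it as ``smooth RHS'' hides the self-referential structure. In the transfer of the Monge-Amp\`ere measure you write $MA_{\eta}(\tilde u)=p^{*}MA_{\omega}(u)$ ``in the non-pluripolar sense''; since $p$ is a modification and not injective on the exceptional locus, the clean statement is that the two non-pluripolar measures agree on the Zariski-open set where $p$ is an isomorphism, and that neither charges the complement (being non-pluripolar and that complement being pluripolar). With that caveat, your computation leading to (\ref{eqn:PullbackEq}) is correct: the identity $u\circ p=\tilde u+\varphi\circ p$, the local splitting $\varphi\circ p=c\log|s_{D}|^{2}+(\text{smooth})$, and the Jacobian factor $|s_{K_{Y/X}}|^{2}$ assemble exactly as stated, and the Ricci current of the resulting measure is $\eta_{\tilde u}+[\Delta]$. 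The klt transfer is correct: $\mathcal{I}(\psi)=\mathcal{O}_{X}$ pulls back through Proposition \ref{prop:Lelong} (applied to $\varphi$) and the change-of-variables formula to the local integrability of $|s_{K_{Y/X}}|^{2}|s_{D}|^{-2c}$ on $Y$, which is precisely $(Y,\Delta)$ klt. Finally, the bijectivity of $F$ at the level of $\mathcal{E}^{1}$-spaces is already provided by Proposition \ref{prop:Anal}; it is worth noting that you must also check $F$ maps $\mathcal{E}^{1}(X,\omega,\psi)$-solutions precisely onto $\mathcal{E}^{1}(Y,\eta)$-solutions (not some larger or smaller regularity class), but since a $[\psi]$-KE potential automatically has $\psi$-relative minimal singularities and its image under $F$ is therefore globally bounded in $PSH(Y,\eta)$, this is consistent with the usual normalization of log-KE metrics. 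Your ``main obstacle'' section identifies the right pressure points.
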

In Proposition \ref{prop:AnalRec} by $K_{Y/X}$ we denoted the \emph{relative canonical divisor} of $p:Y\to X$, i.e. $K_{Y/X}=K_{Y}-p^{*}K_{X}$. Note that the divisor $\Delta=cD-K_{Y/X}$ is neither necessarily effective nor necessarily antieffective. For instance, when $p=Id$ $\Delta$ is clearly effective, while, considering $\psi=P_{\omega}[\varphi]$ for $\varphi$ with analytic singularities at one point $x\in X$ such that $\delta:=\nu(\varphi,x)<n-1$ and letting $p:Y\to X$ the blow-up at $x$, it follows that $\Delta=-(n-1-\delta)E$ where $E$ is the exceptional divisor.\newline
Observe that when $\eta$ is Kähler and $\Delta$ is effective then any log KE metric in the class $\{\eta\}$ has \emph{conic singularities} along $D$ on its simple normal crossing locus by Theorem $6.2$ in \cite{GP13}.
\subsection{$\psi$-relative alpha invariant.}
\label{ssec:Alpha}
We introduce the key $\psi$-relative $\alpha$-invariant function and its modified version, which generalize to the relative setting the renowned Tian's $\alpha$-invariant (\cite{Tian87}).
\begin{defn}
Let $\psi\in\mathcal{M}$. We define the $\psi$-\emph{relative $\alpha$-invariant} $\alpha_{\omega}(\psi)$ as
$$
\alpha_{\omega}(\psi):=\sup\Big\{\alpha\geq 0\, : \, \sup_{\{u\preccurlyeq \psi\,:\, \sup_{X}u=0\}}\int_{X}e^{\alpha(\psi-u)}e^{-\psi}d\mu<+\infty\Big\}
$$
if $(X,\psi)$ is klt and $\alpha_{\omega}(\psi)=0$ otherwise. We also define the \emph{modified $\psi$-relative $\alpha$-invariant} $\tilde{\alpha}_{\omega}(\psi)$ as
$$
\tilde{\alpha}_{\omega}(\psi):=\sup\Big\{\alpha\geq 0\, : \, \sup_{\{u\preccurlyeq \psi\,:\, \sup_{X}u=0\}}\int_{X}e^{-\alpha u}d\mu<+\infty\Big\}.
$$
\end{defn}
The following equivalent forms of $\alpha_{\omega}(\psi)$ and of $\tilde{\alpha}_{\omega}(\psi)$ in terms of the \emph{complex singularity exponents} (see \cite{DK99}) are useful in the sequel.
\begin{lem}
\label{lem:Exp}
Let $\psi\in\mathcal{M}$. Let, for any $u\in PSH(X,\omega)$, $c_{\psi}(u):=\sup\{\alpha\geq 0\, : \, \int_{X}e^{\alpha(\psi-u)}e^{-\psi}d\mu<+\infty\}$ if $(X,\psi)$ is klt and $c_{\psi}(u)=0$ otherwise, and $c(u):=\sup\{\alpha\geq 0\, : \, \int_{X}e^{-\alpha u}d\mu<+\infty\}$. Then
\begin{gather*}
\min\big\{\alpha_{\omega}(\psi),1\big\}=\min\big\{\inf_{u\preccurlyeq \psi}c_{\psi}(u),1 \big\}\\
\tilde{\alpha}_{\omega}(\psi)=\inf_{u\preccurlyeq \psi} c(u).    
\end{gather*}
\end{lem}
Note that as a consequence of Proposition \ref{prop:Lelong} $c(u)=c(\psi)$ if $u\in\mathcal{E}(X,\omega,\psi)$ for $\psi\in\mathcal{M}^{+}$.\newline
In the absolute setting $\psi=0$ this characterization of the $\alpha$-invariant was proved by Demailly (see for instance Proposition $8.1$ in \cite{Tos10}). The proof for the $\psi$-relative setting is similar but we write the details as a courtesy to the reader.
\begin{proof}
We first prove the statement for $\tilde{\alpha}_{\omega}(\psi)$.\newline
Clearly $\tilde{\alpha}_{\omega}(\psi)\leq c(u)$ for any $u\preccurlyeq \psi$ with $\sup_{X}u=0$. So the first inequality immediately follows observing that $c(u)=c(u-\sup_{X}u)$ for any $u\preccurlyeq \psi$.\newline
Next, assume by contradiction that there exists $\alpha>0$ such that $\tilde{\alpha}_{\omega}(\psi)<\alpha<\inf_{u\preccurlyeq \psi}c(u)$. Then, there is a sequence $\{u_{j}\}_{j\in\mathbbm{N}}\subset PSH_{norm}(X,\omega,\psi):=\{u\preccurlyeq \psi\, : \, \sup_{X}u=0\}$ such that
\begin{equation}
\label{eqn:Alp}
\int_{X}e^{-\alpha u_{j}}d\mu\geq j
\end{equation}
for any $j\in\mathbbm{N}$. Moreover by weak compactness of $\{u\in PSH(X,\omega)\, : \, \sup_{X}u=0\}$ we may also assume that $u_{j}\to u\in PSH_{norm}(X,\omega,\psi)$ weakly. In particular $\int_{X}e^{-\alpha u}d\mu<+\infty$ since $\alpha<c(u)$. Hence by Theorem \ref{thm:DK99} quoted below $e^{-\alpha u_{j}}\to e^{-\alpha u}$ in $L^{1}$, which contradicts (\ref{eqn:Alp}) and concludes the first part of proof.\newline
Regarding $\alpha_{\omega}(\psi)$, we can clearly assume $(X,\psi)$ to be klt. The inequality $\alpha_{\omega}(\psi)\leq \inf_{u\preccurlyeq \psi}c_{\psi}(u)$ immediately follows from the definitions and it leads to $\min\big\{\alpha_{\omega}(\psi),1\big\}=\min\big\{\inf_{u\preccurlyeq \psi}c_{\psi}(u),1\big\}$. Then assuming by contradiction that $\alpha_{\omega}(\psi)<\alpha<\inf_{u\preccurlyeq \psi}c_{\psi}(u)$ for $\alpha<1$, we can proceed similarly to before getting a sequence $\{u_{j}\}_{j\in\mathbbm{N}}\subset PSH_{norm}(X,\omega,\psi)$ such that
$$
\int_{X}e^{-\alpha u_{j}-(1-\alpha)\psi}d\mu\geq j.
$$
Thus, again by Theorem \ref{thm:DK99} quoted below we get the contradiction by extracting a subsequence $u_{j_{k}}$ converging to $u\preccurlyeq \psi$ since 
$$
+\infty=\lim_{k\to \infty}\int_{X}e^{-\alpha u_{j}-(1-\alpha)\psi}d\mu=\int_{X}e^{\alpha(\psi-u)}e^{-\psi}d\mu<+\infty,
$$
where the last inequality follows from $c_{\psi}(u)>\alpha$. Note that the assumption $\alpha<1$ has been useful to apply the semicontinuity property of Theorem \ref{thm:DK99}.
\end{proof}
\begin{rem}
\label{rem:AnSinTerm}
\emph{If $\psi$ has analytic singularities type (see subsection \S \ref{ssec:Anal}) then the same proof of Lemma \ref{lem:Exp} leads to
\begin{equation}
    \label{eqn:Term}
    \alpha_{\omega}(\psi)=\inf_{u\preccurlyeq \psi}c_{\psi}(u).
\end{equation}
Indeed, letting $\varphi\in PSH(X,\omega)$ with analytic singularities encoded in $(\mathcal{I},c)$ such that $\psi=P_{\omega}[\varphi]$, it is not hard to check that there exists a uniform constant $A>0$ such that $u-\varphi\in PSH(X,A\omega)$ for any $u\in PSH(X,\omega)$, $u\preccurlyeq \psi$ (recall that taking a log resolution of $\mathcal{I}$ as in Proposition \ref{prop:Anal}, $(u-\varphi)\circ p\in PSH(Y,\eta)$ for any $u\in PSH(X,\omega), u\preccurlyeq \psi$ where $\eta$ is a smooth form). Hence we can apply Theorem \ref{thm:DK99} in the proof of Lemma \ref{lem:Exp} to deduce (\ref{eqn:Term}).}
\end{rem}
\begin{thm}[\cite{DK99}, Main Theorem]
\label{thm:DK99}
Let $(X,\omega)$ be a compact Kähler manifold. Then $ PSH(X,\omega)\ni u\to c(u) $ is lower semicontinuous with respect to the weak topology. Moreover, if $\{u_{k}\}_{k\in\mathbbm{N}}\subset PSH(X,\omega)$ converges weakly to $u\in PSH(X,\omega)$, then
$$
e^{-\alpha u_{k}}\to e^{-\alpha u}
$$
in $L^{1}$ for any $\alpha<c(u)$.
\end{thm}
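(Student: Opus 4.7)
The plan is to deduce both assertions from a single \emph{uniform lower bound}: if $u_{k}\to u$ weakly in $PSH(X,\omega)$ and $\alpha<c(u)$, then there exist $\alpha'\in(\alpha,c(u))$, an index $k_{0}$ and a constant $M$ with
$$
\sup_{k\geq k_{0}}\int_{X}e^{-\alpha' u_{k}}d\mu\leq M.
$$
The lower semicontinuity $\liminf_{k}c(u_{k})\geq c(u)$ is then immediate by letting $\alpha\to c(u)^{-}$. For the $L^{1}$-convergence, weak convergence of the normalized $u_{k}$ upgrades to $L^{1}_{loc}$-convergence (standard for $\omega$-psh functions), so up to a subsequence $u_{k}\to u$ almost everywhere, hence $e^{-\alpha u_{k}}\to e^{-\alpha u}$ a.e.; the bound above with exponent $\alpha'/\alpha>1$ furnishes the uniform integrability needed to invoke Vitali's theorem, and the usual subsequence argument forces the whole sequence to converge in $L^{1}$.

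The assertion is local, so after a partition of unity attached to a finite cover by coordinate balls $B\Subset B'\subset\mathbbm{C}^{n}$ on which $\omega=dd^{c}\rho$ for a smooth strictly psh $\rho$, the task reduces to uniformly bounding $\int_{B}e^{-\alpha' u_{k}}dV$ for some $\alpha'\in(\alpha,c(u))$. The analytic engine is the Ohsawa--Takegoshi $L^{2}$-extension theorem: for each $a\in B$ and each psh weight $\varphi$ on $B'$ with $e^{-\varphi}\in L^{1}(B')$, it produces $f_{\varphi,a}\in\mathcal{O}(B')$ with $f_{\varphi,a}(a)=1$ and
$$
\int_{B'}|f_{\varphi,a}|^{2}e^{-\varphi}dV\leq C_{0},
$$
where $C_{0}$ depends only on $B,B'$. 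Applied with $\varphi:=\alpha' u$ (legitimate since $\alpha'<c(u)$), this attaches to every point of $B$ a holomorphic test function calibrated to the singularities of the weak limit $u$.

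To transfer this to $u_{k}$, I would use Hartogs' lemma to secure a uniform upper bound $u_{k}\leq C$ on $B'$ for $k\gg 0$, then apply the sub-mean value inequality to the psh function $\log|f_{\alpha' u,a}|^{2}+\alpha(C-u_{k})$ to compare $e^{-\alpha u_{k}(a)}$ pointwise with the integral $\int_{B'}|f_{\alpha' u,a}|^{2}e^{-\alpha u_{k}}dV$, bootstrapping this into an estimate that couples the $u$-weight on $f_{\alpha' u,a}$ with the $u_{k}$-weight in the integrand. Integrating in $a$ via Fubini and using the Ohsawa--Takegoshi bound for the weight $\alpha' u$ gives the desired $M$. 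An alternative route bypasses the bootstrapping by combining Demailly's regularization (reducing to $u$ with analytic singularities, where multiplier ideals are explicit via log resolution) with Nadel's vanishing and the strong openness theorem of Guan--Zhou to propagate integrability across the weak limit.

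The main obstacle is the delicate simultaneous control of the two weights $u$ and $u_{k}$ in the Ohsawa--Takegoshi step: one needs a holomorphic section tailored to $u$ that nevertheless produces effective pointwise control of $e^{-\alpha u_{k}}$. Coarser tools, such as Skoda's integrability estimate $c(v)\geq 2/\nu(v,x_{0})$ combined with the classical upper semicontinuity of Lelong numbers under weak limits, yield only $\liminf c(u_{k})\geq 2/\nu(u,x_{0})$, which is strictly weaker than $c(u)$ in general. The sharp bound requires the \emph{holomorphic} comparison afforded by $L^{2}$-extension, and this is the technical heart of the argument.
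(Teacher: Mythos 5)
First, a remark on the comparison you asked for: the paper does not prove this statement at all. Theorem \ref{thm:DK99} is imported verbatim as the Main Theorem of Demailly--Koll\'ar \cite{DK99}, so there is no internal proof to measure you against. Your overall architecture does coincide with the actual proof in \cite{DK99}: reduce both conclusions to a uniform bound $\sup_{k\geq k_{0}}\int_{B}e^{-\alpha' u_{k}}\,dV\leq M$ for some $\alpha'\in(\alpha,c(u))$, get lower semicontinuity by letting $\alpha\to c(u)^{-}$, and get the $L^{1}$-convergence from a.e.\ convergence plus the uniform integrability supplied by the exponent gap $\alpha'/\alpha>1$. The identification of the Ohsawa--Takegoshi theorem as the analytic engine is also correct.

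The step that is supposed to produce the uniform bound, however, does not work as written, and the failure is exactly at the point you yourself call ``the technical heart''. The function $\log|f_{\alpha' u,a}|^{2}+\alpha(C-u_{k})$ is not plurisubharmonic: $\log|f|^{2}$ is psh, but $-\alpha u_{k}$ is (up to a smooth local potential of $\omega$) \emph{minus} a psh function, so no sub-mean value inequality applies; and the inequality you need, a pointwise \emph{upper} bound for $e^{-\alpha u_{k}(a)}$ by an average of $|f_{\alpha' u,a}|^{2}e^{-\alpha u_{k}}$, goes in the wrong direction for every mean-value or Jensen argument (those produce \emph{lower} bounds for such averages). Moreover, even granting such a pointwise bound, the Fubini step is circular: Ohsawa--Takegoshi controls $\int|f_{\alpha' u,a}|^{2}e^{-\alpha' u}$, i.e.\ an integral weighted by the \emph{limit} $u$, whereas the double integral you must estimate carries the weight $u_{k}$, which is precisely the unknown. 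This is why Demailly and Koll\'ar do not extend from a point: they prove the uniform bound by induction on dimension, extending via Ohsawa--Takegoshi from hyperplane slices and feeding in a sharp one-variable estimate in which $\int e^{-2c\psi}$ on a disc is quantitatively controlled by the mass of $\Delta\psi$, a quantity that does behave semicontinuously under $L^{1}$ convergence. Your alternative route is not a proof either as stated: strong openness concerns the jump of $\mathcal{I}(c\varphi)$ in $c$ for a \emph{fixed} weight $\varphi$, and ``propagating integrability across the weak limit'' is the entire content of the theorem; effective semicontinuity results of Guan--Zhou type do exist, but they would have to be invoked as theorems, not derived in a line.
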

We want now describe some properties of the functions $
\mathcal{M}\ni\psi\to \alpha_{\omega}(\psi), \tilde{\alpha}_{\omega}(\psi) $ starting with the latter. 
\begin{prop}
\label{prop:AlpTian}
The following statements hold:
\begin{itemize}
\item[i)] $\big(\mathcal{M},\preccurlyeq \big)\ni\psi\to \tilde{\alpha}_{\omega}(\psi)\in (0,+\infty)$ is decreasing and right-continuous;
\item[ii)] letting $\psi_{t}:=P_{\omega}[t\psi_{0}+(1-t)\psi_{1}]\in\mathcal{M}$ for $t\in[0,1]$ where $\psi_{0},\psi_{1}\in\mathcal{M}$ such that $\psi_{0}\succcurlyeq \psi_{1}$, then for any $t,s\in[0,1], t\geq s$
$$
t\tilde{\alpha}_{\omega}(\psi_{t})\geq s\tilde{\alpha}_{\omega}(\psi_{s}),
$$
i.e. $[0,1]\ni t\to t\tilde{\alpha}_{\omega}\big(\psi_{t}\big)$ is increasing.
\end{itemize}
\end{prop}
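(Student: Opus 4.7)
The plan is to exploit the characterization $\alpha_{\omega}(\psi)=\inf_{u\preccurlyeq\psi}c(u)$ from Lemma \ref{lem:Exp} together with the Demailly--Kollár semicontinuity result (Theorem \ref{thm:DK99}). For the monotonicity in (i), note that if $\psi\preccurlyeq\psi'$ then any $u\preccurlyeq\psi$ also satisfies $u\preccurlyeq\psi'$, so the infimum over the smaller set is at least as large, yielding $\alpha_{\omega}(\psi)\geq\alpha_{\omega}(\psi')$. Positivity $\alpha_{\omega}(\psi)>0$ follows from Tian's classical bound combined with monotonicity, namely $\alpha_{\omega}(\psi)\geq\alpha_{\omega}(0)>0$. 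Finiteness $\alpha_{\omega}(\psi)<+\infty$ is obtained by exhibiting some $u\preccurlyeq\psi$ with $c(u)<+\infty$, e.g., a rooftop envelope of $\psi$ with an auxiliary $\omega$-psh function having an isolated logarithmic pole, so that Skoda's theorem forces $c(u)<+\infty$.

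For the right-continuity, let $\psi_k\searrow\psi$ be a decreasing sequence in $\mathcal{M}$; monotonicity gives that $\alpha_k:=\alpha_{\omega}(\psi_k)$ is nondecreasing with $\alpha_k\leq\alpha_{\omega}(\psi)$, so it suffices to exclude $\lim_k\alpha_k<\alpha_{\omega}(\psi)$. Assume for contradiction $\lim_k\alpha_k<\alpha<\alpha_{\omega}(\psi)$. For each $k$, pick $v_k$ with $v_k\preccurlyeq\psi_k$, $\sup_X v_k=0$, and $\int_X e^{-\alpha v_k}d\mu>k$. Since $\psi_k=P_{\omega}[\psi_k]$, any such $v_k$ with $v_k\leq 0$ satisfies $v_k\leq\psi_k$ pointwise. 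By weak compactness of normalized $\omega$-psh functions extract a weak limit $v\in PSH(X,\omega)$ with $\sup_X v=0$ (Hartogs' lemma); passing to the limit in $v_k\leq\psi_k\searrow\psi$ yields $v\leq\psi$, hence $v\preccurlyeq\psi$. Then $c(v)\geq\alpha_{\omega}(\psi)>\alpha$ by Lemma \ref{lem:Exp}, and Theorem \ref{thm:DK99} gives $\int_X e^{-\alpha v_k}d\mu\to\int_X e^{-\alpha v}d\mu<+\infty$, contradicting $\int_X e^{-\alpha v_k}d\mu>k$.

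For (ii), the case $s=0$ is trivial, so fix $0<s\leq t\leq 1$. Given $u\preccurlyeq\psi_t$ with $\sup_X u=0$, consider the convex combination $w:=(s/t)u+(1-s/t)\psi_1\in PSH(X,\omega)$. Using $\psi_t\preccurlyeq t\psi_0+(1-t)\psi_1$ from the envelope definition, the algebraic computation
\[
w\leq (s/t)\bigl(t\psi_0+(1-t)\psi_1+C_1\bigr)+(1-s/t)\psi_1=s\psi_0+(1-s)\psi_1+(s/t)C_1
\]
shows $w\preccurlyeq s\psi_0+(1-s)\psi_1$, while $u,\psi_1\leq 0$ gives $w\leq 0$; hence $w\leq\psi_s$ by the envelope characterization of $\psi_s=P_{\omega}[s\psi_0+(1-s)\psi_1]$. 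Setting $\tilde w:=w-\sup_X w$, the standard mean-value estimate for normalized $\omega$-psh functions provides $-M\leq\sup_X w\leq 0$ with $M$ independent of $u$, so $\tilde w\preccurlyeq\psi_s$ with $\sup_X\tilde w=0$ and uniform defect. For $\beta<\alpha_{\omega}(\psi_s)$ the $\alpha$-invariant bound applied to $\tilde w$ combined with $e^{-\beta(1-s/t)\psi_1}\geq 1$ (since $\psi_1\leq 0$) yields
\[
\int_X e^{-(\beta s/t)u}\,d\mu\leq\int_X e^{-\beta w}\,d\mu = e^{-\beta\sup_X w}\int_X e^{-\beta\tilde w}\,d\mu\leq e^{\beta M}C_{\beta},
\]
uniformly in $u$. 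Hence $\beta s/t\leq\alpha_{\omega}(\psi_t)$, and letting $\beta\nearrow\alpha_{\omega}(\psi_s)$ gives $t\alpha_{\omega}(\psi_t)\geq s\alpha_{\omega}(\psi_s)$. The main technical point is the uniform mean-value control on $\sup_X w$, without which the convex-combination trick would not produce a bound independent of $u$.
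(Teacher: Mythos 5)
Part (i) is correct and follows the paper's argument essentially verbatim: monotonicity from the set inclusion, positivity from Skoda/Zeriahi, and right-continuity by the weak-compactness-plus-Demailly--Koll\'ar contradiction argument. The additional remark on $\alpha_{\omega}(\psi)<+\infty$ is harmless.

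Part (ii) contains a genuine gap. You invoke ``$\psi_t\preccurlyeq t\psi_0+(1-t)\psi_1$ from the envelope definition,'' but this is the \emph{wrong} direction. Since $t\psi_0+(1-t)\psi_1-\sup_X\big(t\psi_0+(1-t)\psi_1\big)$ is a competitor in the supremum defining $P_{\omega}[\cdot]$, one always has $\psi_t\succcurlyeq t\psi_0+(1-t)\psi_1$; the reverse $\preccurlyeq$ is \emph{not} automatic. It requires $t\psi_0+(1-t)\psi_1$ to have $\psi_t$-relative minimal singularities, which can fail when $\psi_0,\psi_1$ are general elements of $\mathcal{M}$ (indeed Proposition~\ref{prop:Resc} only establishes this kind of stability for $\mathcal{M}_D^+$; Example~\ref{esem:StrangePiu} shows the phenomenon one must guard against). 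Without it, your chain $w\leq(s/t)(t\psi_0+(1-t)\psi_1+C_1)+(1-s/t)\psi_1$ does not get off the ground, and the key inclusion $w\preccurlyeq\psi_s$ is unjustified. The paper circumvents this precisely by working with the exhaustion $\psi_{t,C}:=P_{\omega}\big(t\psi_0+(1-t)\psi_1+C,0\big)$, which by construction does satisfy $\psi_{t,C}\preccurlyeq t\psi_0+(1-t)\psi_1$ (with a $C$-dependent constant), obtaining $\frac{s}{t}\psi_{t,C}+\frac{t-s}{t}\psi_1\leq\psi_s$ with the constant absorbed by the model envelope $\psi_s$, and then letting $C\to+\infty$ to get $\frac{s}{t}\psi_t+\frac{t-s}{t}\psi_1\leq\psi_s$ directly, whence $w\preccurlyeq\psi_s$.

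Secondarily, even with the inclusion repaired, your closing normalization step (the mean-value bound on $\sup_X w$) is avoidable: using Lemma~\ref{lem:Exp}, the relevant quantity is the complex singularity exponent $c(u)$, which is translation invariant, so one can go straight from $\int_X e^{-\alpha w}d\mu<+\infty$ (for $\alpha<\alpha_{\omega}(\psi_s)$, since $w\preccurlyeq\psi_s$ implies $c(w)\geq\alpha_{\omega}(\psi_s)$) and $e^{-\alpha w}\geq e^{-(\alpha s/t)u}$ to $c(u)\geq(s/t)\alpha$, then take the infimum over $u\preccurlyeq\psi_t$ and the supremum over $\alpha$. This is both cleaner and sidesteps the need for any uniform $\sup_X w$ control.
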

\begin{proof}
By definition $\tilde{\alpha}_{\omega}(\cdot)$ is clearly decreasing, i.e. it decreases when the singularities decreases, and $\tilde{\alpha}_{\omega}(0)>0$ by the uniform version of the Skoda's Integrability Theorem recalled below (Theorem \ref{thm:Skoda}). Then letting $\{\psi_{k}\}_{k\in\mathbbm{N}}\subset\mathcal{M}$ decreasing to $\psi\in\mathcal{M}$ we want to prove that $\tilde{\alpha}_{\omega}(\psi_{k})\to \tilde{\alpha}_{\omega}(\psi)$ as $k\to +\infty$. By monotonicity, we may assume by contradiction that there exists $\alpha>0$ such that $\tilde{\alpha}_{\omega}(\psi_{k})<\alpha <\tilde{\alpha}_{\omega}(\psi)$ for any $k\in\mathbbm{N}$. This implies that for any $k\in\mathbbm{N}$ there exists an element $u_{k}\in PSH_{norm}(X,\omega,\psi_{k}):=\{u\in PSH(X,\omega)\, : \, u\preccurlyeq \psi_{k}, \sup_{X}u=0\}$ such that
\begin{equation}
\label{eqn:AlpInf2}
\int_{X}e^{-\alpha u_{k}}d\mu\geq k.
\end{equation}
By weak compactness we may also suppose $u_{k}\to u\in PSH(X,\omega)$ weakly. Thus, as $u_{k}\leq \psi_{k}$ by construction and $\psi_{k}\searrow \psi$, we obtain $u\in PSH_{norm}(X,\omega,\psi)$. In particular
$$
\int_{X}e^{-\alpha u}d\mu<+\infty
$$
since by assumption $\alpha<\tilde{\alpha}_{\omega}(\psi)$. Finally Theorem \ref{thm:DK99} implies that $e^{-\alpha u_{k}}\to e^{-\alpha u}$ in $L^{1}$ which contradicts (\ref{eqn:AlpInf2}) and concludes the proof of $(i)$.\newline
Next, suppose $\{\psi_{t}\}_{t\in[0,1]}\subset\mathcal{M}$ as in $(ii)$ and let $s,t\in(0,1]$ such that $t\geq s$. Then for any $u\in PSH(X,\omega,\psi_{t})$ we claim that the $\omega$-psh function
\begin{equation}
\label{eqn:BiBi}
v:=\frac{s}{t}u+\frac{t-s}{t}\psi_{1}
\end{equation}
belongs to $PSH(X,\omega,\psi_{s})$. Indeed $v$ is clearly more singular than $\frac{s}{t}\psi_{t}+\frac{t-s}{t}\psi_{1}$, and for any $C>0$ the function $\psi_{t,C}:=P_{\omega}\big(t\psi_{0}+(1-t)\psi_{1}+C,0\big)$ is more singular than $t\psi_{0}+(1-t)\psi_{1}$, i.e.
$$
\frac{s}{t}\psi_{t,C}+\frac{t-s}{t}\psi_{1}\preccurlyeq \frac{s}{t}\big(t\psi_{0}+(1-t)\psi_{1}\big)+\frac{t-s}{t}\psi_{1}=s\psi_{0}+(1-s)\psi_{1}\preccurlyeq \psi_{s},
$$ 
which implies that $\frac{s}{t}\psi_{t,C}+\frac{t-s}{t}\psi_{1}\leq \psi_{s}$ since $\psi_{t,C},\psi_{1}\leq 0$. Thus, letting $C\to +\infty$ and taking the upper semicontinuity regularization, we get $\frac{s}{t}\psi_{t}+\frac{t-s}{t}\psi_{1}\leq \psi_{s}$ which concludes the claim. Then for any $0<\alpha<\tilde{\alpha}_{\omega}(\psi_{s})$, letting $u\in PSH(X,\omega,\psi_{t})$ and $v\in PSH(X,\omega,\psi_{s})$ as in (\ref{eqn:BiBi}) the inequality
$$
\int_{X}e^{-\frac{s}{t}\alpha u}d\mu=\int_{X}e^{-\alpha v}e^{\frac{\alpha(t-s)}{t}\psi_{1}}d\mu\leq \int_{X}e^{-\alpha v}d\mu
$$
implies $tc(u)\geq s\alpha$. Hence Lemma \ref{lem:Exp} concludes the proof by the arbitrariety of $u\in PSH(X,\omega,\psi_{t})$.
\end{proof}
\begin{thm}[\cite{Zer01}, Corollary $3.2$]
\label{thm:Skoda}
Let $K\subset PSH(X,\omega)$ be a weakly compact set such that $\sup_{x\in X}\sup_{u\in K}\nu(u,x)<1$. Then
$$
\sup_{u\in K}\int_{X}e^{-u}\omega^{n}<+\infty.
$$
\end{thm}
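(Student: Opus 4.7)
The plan is to argue by contradiction, combining the weak compactness of $K$, the local Skoda integrability theorem, and the Demailly–Kollár continuity result already quoted above as Theorem~\ref{thm:DK99}. The key observation is that the strict inequality $<2$ gives a positive gap $2-\gamma>0$ with $\gamma:=\sup_{x\in X}\sup_{u\in K}\nu(u,x)$, and this gap will eventually translate into an $L^p$ bound for some $p>1$.

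First I would suppose, for contradiction, that there is a sequence $\{u_k\}\subset K$ with $\int_{X}e^{-u_k}\omega^{n}\to+\infty$. By weak compactness of $K$, after extracting a subsequence, $u_k\to u_\infty$ weakly with $u_\infty\in K$. In particular $\nu(u_\infty,x)\leq\gamma<2$ for every $x\in X$ by the very definition of $\gamma$, so no semicontinuity of Lelong numbers under the weak convergence is needed here.

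Next I would upgrade this pointwise Lelong-number bound on $u_\infty$ to an integrability statement with exponent strictly larger than $1$. The local Skoda integrability theorem ensures that for every $x\in X$ and every $\lambda<2/\nu(u_\infty,x)$, the function $e^{-\lambda u_\infty}$ is integrable in a neighborhood of $x$. Covering $X$ by finitely many such neighborhoods yields $e^{-\lambda u_\infty}\in L^{1}(X,\omega^{n})$ for every $\lambda<2/\gamma$, so $c(u_\infty)\geq 2/\gamma>1$. Fix any $\alpha$ with $1<\alpha<2/\gamma$, so $\alpha<c(u_\infty)$.

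Theorem~\ref{thm:DK99} then gives $e^{-\alpha u_k}\to e^{-\alpha u_\infty}$ in $L^{1}(X,\omega^{n})$, and in particular $\sup_k\int_{X}e^{-\alpha u_k}\omega^{n}<+\infty$. Because $K$ is weakly compact and the functional $u\mapsto\sup_{X}u$ is upper semicontinuous in the weak topology (Hartogs' lemma), $M:=\sup_{u\in K}\sup_{X}u<+\infty$. Writing
\[
e^{-u_k}=e^{(\alpha-1)u_k}\,e^{-\alpha u_k}\leq e^{(\alpha-1)M}\,e^{-\alpha u_k},
\]
I would conclude that $\int_{X}e^{-u_k}\omega^{n}$ is uniformly bounded, contradicting the divergence assumption. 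The most delicate step is the passage from the uniform pointwise bound $\nu(u_\infty,x)\leq\gamma<2$ to the global integrability $e^{-\alpha u_\infty}\in L^{1}$ with $\alpha>1$: this requires invoking local Skoda at every point together with compactness of $X$, and it is precisely here that the strict inequality in the hypothesis is used.
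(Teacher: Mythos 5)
Your proposal is correct. Note that the paper does not prove this statement at all: it is quoted verbatim from Zeriahi (\cite{Zer01}, Corollary 3.2), so there is no internal proof to compare against. Your argument is a legitimate alternative to Zeriahi's original one, and it is worth noting how the two differ. Zeriahi's proof is quantitative: he establishes a uniform local Skoda estimate in which the constants depend only on the Lelong number bound and on an $L^{1}$ (or capacity) normalization of the family, and then concludes by a finite covering; no compactness-by-contradiction and no semicontinuity theorem is needed. Your route is soft: weak compactness of $K$ reduces everything to the single limit function $u_{\infty}\in K$ (correctly, since a compact set in the metric $L^{1}$ topology is closed, so no semicontinuity of Lelong numbers is required), the pointwise local Skoda theorem upgraded by a finite cover gives $c(u_{\infty})\geq 2/\gamma>1$, and Theorem~\ref{thm:DK99} converts this into a uniform bound on $\int_{X}e^{-\alpha u_{k}}\omega^{n}$; the final step $e^{-u_{k}}\leq e^{(\alpha-1)M}e^{-\alpha u_{k}}$ with $M=\sup_{u\in K}\sup_{X}u<+\infty$ (or simply $e^{-u}\leq 1+e^{-\alpha u}$) closes the contradiction. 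The price of your approach is that it leans on the Demailly--Koll\'ar convergence theorem, which is much deeper than Skoda's local estimate; the benefit is that it needs no uniform constants and fits exactly the pattern the paper itself uses in Lemma~\ref{lem:Exp} and Proposition~\ref{prop:AlpTian}. There is no circularity, since the proof of Theorem~\ref{thm:DK99} does not rely on the uniform Skoda theorem.
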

The monotonicity property of $\tilde{\alpha}_{\omega}$ described Proposition \ref{prop:AlpTian} does not hold for the $\psi$-relative $\alpha$-invariant $\alpha_{\omega}$ (restricted to $\mathcal{M}_{klt}$) as the next easy example shows.
\begin{esem}
\emph{
Let $X=\mathbbm{P}^{2}$, $\omega=3\omega_{FS}\in c_{1}(\mathbbm{P}^{2})$. Then $\alpha_{\omega}(0)\leq\frac{1}{3}$ since considering $h_{FS}$ the metric associated to $\omega_{FS}$ and $s_{H}$ a hyperplane section, the function $u=3\log |s|_{h_{FS}}^{2}$ belongs to $PSH(\mathbbm{P}^{2},\omega)$ and $ \int_{X}e^{-\alpha u}d\mu<+\infty $
if and only if $\alpha< 1/3$ by an immediate calculation (i.e. $c(u)=1/3$). Actually it can be shown that $\alpha_{\omega}(0)=1/3$ (see \cite{Chel08}).\newline
Consider now the model type envelopes $\psi_{t}:=P_{\omega}[tu]$ for $t\in [0,1)$. Then, since $c(\psi)=c(tu)$ by Proposition \ref{prop:Lelong}, it is easy to check that $\psi_{t}\in \mathcal{M}_{klt}$ if and only if $t<1/3$. However, for any $t<1/3$ fixed and any $\alpha>0$, by an easy calculation (replacing $\psi_{t}$ with $tu$ since $\lVert\psi_{t}-tu\rVert_{\infty}\leq C$) we have $\int_{X}e^{\alpha(\psi_{t}-u)}e^{-\psi_{t}}d\mu<+\infty$ if and only if
$$
\alpha<\frac{1-3t}{3(1-t)}.
$$
Hence, for any $t<1/3$ we get $\psi_{t}\in\mathcal{M}_{klt}^{+}$ but $\alpha_{\omega}(\psi_{t})\leq \frac{1-3t}{3(1-t)}<\frac{1}{3}=\alpha_{\omega}(0)$.
}
\end{esem}
However, although $\mathcal{M}_{klt}\ni \psi\to \alpha_{\omega}(\psi)$ is not a decreasing function, the next Proposition shows that $\alpha_{\omega}(\psi)\sim \tilde{\alpha}_{\omega}(\psi)$ when $c(\psi)\gg 1$.
\begin{prop}
\label{prop:ConnAlphas}
Let $\psi\in\mathcal{M}_{klt}$, and denote by $\mathrm{lct}(X,\psi)=c(\psi)$ the complex singularity exponent of $\psi$. If $\tilde{\alpha}_{\omega}(\psi)>1$ then
\begin{equation}
    \label{eqn:Gr2}
    \alpha_{\omega}(\psi)\geq \tilde{\alpha}_{\omega}(\psi)\\
    \end{equation}
if $\alpha_{\omega}(\psi)>1$ then
\begin{equation}
    \label{eqn:Gr2bis}
    \tilde{\alpha}_{\omega}(\psi)\geq \frac{\alpha_{\omega}(\psi)\mathrm{lct}(X,\psi)}{\alpha_{\omega}(\psi)+\mathrm{lct}(X,\psi)-1},
\end{equation}
if $\alpha_{\omega}(\psi)\leq 1$ then
\begin{gather}
    \label{eqn:Gr3}
    \tilde{\alpha}_{\omega}(\psi)\geq \alpha_{\omega}(\psi)
\end{gather}
and if $\tilde{\alpha}_{\omega}(\psi)\leq 1$ then
\begin{equation}
    \label{eqn:Gr3bis}
    \alpha_{\omega}(\psi)\geq\tilde{\alpha}_{\omega}(\psi) \frac{\mathrm{lct}(X,\psi)-1}{\mathrm{lct}(X,\psi)-\tilde{\alpha}_{\omega}(\psi)}.
\end{equation}
In particular
\begin{equation}
    \label{eqn:Gr1}
    \alpha_{\omega}(\psi)=1 \Longleftrightarrow \tilde{\alpha}_{\omega}(\psi)=1.
\end{equation}
\end{prop}
\begin{proof}
Fix $u\in PSH_{norm}(X,\omega,\psi):=\big\{ u\in PSH(X,\omega) \, :\, u\preccurlyeq \psi, \sup_{X}u=0\big\}$ and recall that $\psi\leq 0$. \newline
If $\alpha\leq 1$ then
\begin{equation}
    \label{eqn:Gren1}
    \int_{X}e^{-\alpha u}d\mu=\int_{X}e^{\alpha(\psi-u)}e^{-\alpha \psi}d\mu\leq \int_{X}e^{\alpha(\psi-u)}e^{-\psi}d\mu,
\end{equation}
which clearly leads to (\ref{eqn:Gr3}) taking $\alpha_{k}\nearrow \alpha_{\omega}(\psi)$ and considering the supremum over all $u\in PSH_{norm}(X,\omega,\psi)$. Similarly, if $\alpha\geq 1$ then
\begin{equation}
    \label{eqn:Gren2}
    \int_{X}e^{\alpha(\psi-u)}e^{-\psi}d\mu=\int_{X}e^{-\alpha u}e^{(\alpha-1)\psi}d\mu\leq \int_{X}e^{-\alpha u}d\mu,
\end{equation}
and (\ref{eqn:Gr2}) follows.\newline
We now want to deduce the right implication in (\ref{eqn:Gr1}). If $\alpha_{\omega}(\psi)=1$, then by (\ref{eqn:Gr3}) we have $\tilde{\alpha}_{\omega}(\psi)\geq\alpha_{\omega}(\psi)=1$. Moreover, $\tilde{\alpha}_{\omega}(\psi)$ cannot be strictly bigger than $1$, because otherwise $1=\alpha_{\omega}(\psi)\geq \tilde{\alpha}_{\omega}(\psi)>1$ by (\ref{eqn:Gr2}).\newline
Next, we want to prove (\ref{eqn:Gr2bis}). Let $1<\alpha<\alpha'<\alpha_{\omega}(\psi)$, and set $p=\alpha'/\alpha>1$. Then by Hölder's inequality ($q=\alpha'/(\alpha'-\alpha)$) we get
\begin{equation*}
    \int_{X}e^{-\alpha u}d\mu=\int_{X}e^{\alpha(\psi-u)}e^{-\frac{\alpha}{\alpha'}\psi}e^{\alpha\big(\frac{1}{\alpha'}-1\big)\psi}d\mu\leq \Big(\int_{X}e^{-\alpha'(\psi-u)}e^{-\psi}d\mu\Big)^{\frac{\alpha}{\alpha'}}\Big(\int_{X}e^{\frac{\alpha(1-\alpha')}{\alpha'-\alpha}\psi}d\mu\Big)^{\frac{\alpha'-\alpha}{\alpha'}},
\end{equation*}
which implies $\tilde{\alpha}_{\omega}(\psi)\geq\alpha$ for any $\alpha<\alpha'$ such that
$$
\frac{\alpha(\alpha'-1)}{\alpha'-\alpha}<\mathrm{lct}(X,\psi).
$$
Therefore by an easy calculation we get
$$
\tilde{\alpha}_{\omega}(\psi)\geq \frac{\alpha'\mathrm{lct}(X,\psi)}{\alpha'+\mathrm{lct}(X,\psi)-1},
$$
which clearly leads to (\ref{eqn:Gr2bis}) letting $\alpha'\nearrow \alpha_{\omega}(\psi)$.\newline
Then, we want to prove (\ref{eqn:Gr3bis}). Let $\alpha<\alpha'<\tilde{\alpha}_{\omega}(\psi)\leq 1$ and set $p=\alpha'/\alpha$. Similarly as before, by Hölder's inequality ($q=\alpha'/(\alpha'-\alpha)$) we obtain
\begin{equation}
    \int_{X}e^{\alpha(\psi-u)}e^{-\psi}d\mu=\int_{X}e^{-\alpha u}e^{(\alpha-1)\psi}d\mu\leq \Big(e^{-\alpha' u}d\mu\Big)^{\frac{\alpha}{\alpha'}}\Big(\int_{X}e^{\frac{\alpha'(\alpha-1)}{\alpha'-\alpha}\psi}d\mu\Big)^{\frac{\alpha'-\alpha}{\alpha'}},
\end{equation}
which implies $\alpha_{\omega}(\psi)\geq\alpha$ for any $\alpha<\alpha'$ such that
$$
\frac{\alpha'(1-\alpha)}{\alpha'-\alpha}<\mathrm{lct}(X,\psi).
$$
Hence $\alpha_{\omega}(\psi)\geq \alpha'\frac{\mathrm{lct}(X,\psi)-1}{\mathrm{lct}(X,\psi)-\alpha'} $, and letting $\alpha'\nearrow {\alpha}_{\omega}(\psi)$, we get (\ref{eqn:Gr3bis}).\newline
It remains to prove the left implication in (\ref{eqn:Gr1}). Assuming $\tilde{\alpha}_{\omega}(\psi)=1$, from (\ref{eqn:Gr3bis}) we deduce
$$
\alpha_{\omega}(\psi)\geq \tilde{\alpha}_{\omega}(\psi)\frac{\mathrm{lct}(X,\psi)-1}{\mathrm{lct}(X,\psi)-\tilde{\alpha}_{\omega}(\psi)}=1.
$$
On the other hand if $\alpha_{\omega}(\psi)>1$ then by (\ref{eqn:Gr2bis}) we obtain the contradiction
$$
1=\tilde{\alpha}_{\omega}(\psi)\geq \frac{\alpha_{\omega}(\psi)\mathrm{lct}(X,\psi)}{\alpha_{\omega}(\psi)+\mathrm{lct}(X,\psi)-1}>1
$$
where the right inequality follows from an easy calculation since $\mathrm{lct}(X,\psi)>1$.
\end{proof}
When $\psi$ has algebraic singularities type, as finding a $[\psi]$-KE metric is equivalent to find a log-KE metric on a resolution (Proposition \ref{prop:AnalRec}), it is natural to wonder if it is possible to express $\alpha_{\omega}(\psi)$ algebraically and what is the connection between this new invariant and the usual \emph{log} $\alpha$\emph{-invariant}. Letting $(Y,\Delta)$ be a weak log Fano pair, i.e. $Y$ be a projective variety (which for our purpose we can assume to be smooth) and $\Delta$ be a $\mathbbm{Q}$-divisor such that $(Y,\Delta)$ is klt and $-(K_{Y}+\Delta)=:L$ is big and semipositive, the log $\alpha$-invariant of the pair $(Y,\Delta)$ is defined as
\begin{multline}
\alpha(Y,\Delta):=\sup\{\alpha\in\mathbbm{Q}_{\geq 0}\, : \, (Y, \Delta+\alpha F)\, \, \mbox{is klt for any}\, \, F\geq 0\, \, \mathbbm{Q}\mbox{-divisor such that} \, \, F\sim_{\mathbbm{Q}}L\}=\\
=\inf_{F\sim_{\mathbbm{Q}}L, F\geq 0}lct(Y,\Delta,F)
\end{multline}
where $\sim_{\mathbbm{Q}}$ is the linear equivalence extended to $\mathbbm{Q}$-divisors through rescaling, i.e. there exists $r\in\mathbbm{N}$ such that $rF\in |rL|$, and where $lct(Y,\Delta,F):=\sup\{\alpha\in\mathbbm{Q}_{\geq 0} \, : \, (Y,\Delta+\alpha F)\, \mbox{is klt}\}$ is the \emph{log canonical threshold} of $F$ with respect to $(Y,\Delta)$. We refer to \cite{Kol13} and \cite{KolPairs} for the theory of singularities of pairs $(X,D)$, here we just need to recall the following analytical description (see Proposition $3.20$ in \cite{KolPairs}): a pair $(Y,\Delta+\alpha F)$ is klt over a projective manifold $Y$ if and only if $e^{-\alpha v_{F}}\nu_{(Y,\Delta)}\in L^{1}$ where if $F=\sum_{j=1}^{m}a_{j}F_{j}$ for prime divisors $F_{j}$ then $v_{F}=\sum_{j=1}^{m}a_{j}\log|s_{j}|^{2}_{h_{j}}$ for $s_{j}$ holomophic sections cutting $F_{j}$ and $h_{j}$ smooth metrics on $\mathcal{O}_{Y}(F_{j})$, and where $\nu_{(Y,\Delta)}$ is an \emph{adapted measure} associated to the pair $(Y,\Delta)$. In particular letting $D_{j}$ prime divisors such that $\Delta=\sum_{k=1}^{l}b_{k}D_{k}$, $t_{k}$ holomorphic sections cutting the divisors $D_{k}$ and $\tilde{h}_{k}$ smooth metric on $\mathcal{O}_{Y}(D_{k})$, then $\nu_{(Y,\Delta)}=e^{-\sum_{k=1}^{l}b_{k}\log|s_{D_{k}}|^{2}_{\tilde{h}_{k}}}dV$ for a suitable volume form $dV$ on $Y$ (see \cite{BBJ15}).

The next Proposition shows that $\alpha_{\omega}(\psi)$ for $\psi$ with algebraic singularities coincides with the log-$\alpha$-invariant of the weak log Fano pair obtained through Proposition \ref{prop:AnalRec}.
\begin{lem}
\label{lem:Exp2}
Let $(Y,\Delta)$ be a weak Fano pair and $\eta$ be a smooth $(1,1)$-form representative of $c_{1}(L)$ where $L:=-(K_{Y}+\Delta)$. Let also $D$ be an effective $\mathbbm{Q}$-divisor on $Y$, and $\theta$ smooth $(1,1)$-form on $\{[D]\}$. Then
\begin{equation}
\label{eqn:UffyPuffy}
\inf_{F\sim_{\mathbbm{Q}}L, F\geq 0}lct(Y,\Delta,F+D)=\inf_{v\in PSH(Y,\eta)}lct(Y,\Delta,v+D)
\end{equation}
where we set $lct(Y,\Delta,v+D):=\sup\{\alpha\in \mathbbm{Q}_{\geq 0}\, : \, \int_{Y}e^{-\alpha(v+v_{D})}d\nu_{(Y,\Delta)}<+\infty\}$ for $v_{D}$ quasi-psh function such that $\theta+dd^{c}v_{D}=[D]$.
\end{lem}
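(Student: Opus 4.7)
The plan is to prove both inequalities in \eqref{eqn:UffyPuffy} separately.

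The inequality $\inf_v \leq \inf_F$ is essentially tautological: any effective $\mathbb{Q}$-divisor $F \sim_{\mathbb{Q}} L$ admits a presentation $rF = (s)$ for some $r\in\mathbb{N}$ and some section $s \in H^0(Y, rL)$, and then $v_F := \frac{1}{r}\log|s|_{h_{\eta}^{r}}$ belongs to $PSH(Y,\eta)$ with $\eta + dd^c v_F = F$. By the analytical description of log canonical thresholds recalled just above the statement of the lemma (through the adapted measure $\nu_{(Y,\Delta)}$), we have $lct(Y,\Delta,F+D) = lct(Y,\Delta,v_F+D)$, so the collection of $v_F$ sits inside $PSH(Y,\eta)$ and taking infima gives the inequality.

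For the reverse inequality $\inf_F \leq \inf_v$, I would fix an arbitrary $v \in PSH(Y,\eta)$ and construct a sequence of divisors $F_m \sim_{\mathbb{Q}} L$ with $\limsup_m lct(Y,\Delta,F_m+D) \leq lct(Y,\Delta,v+D)$. The construction mirrors Step 1 of the proof of Theorem \ref{thm:Reg}: fixing a smooth Hermitian metric $h_\eta$ on $L$ with curvature $\eta$, set $\mathcal{H}_m := \{s\in H^0(Y,mL) : \int_Y |s|^2_{h_{\eta}^m} e^{-2mv}\,dV < +\infty\}$ with orthonormal basis $\{s_{m,l}\}$, and define $v_m := \frac{1}{2m}\log\sum_l |s_{m,l}|^2_{h_{\eta}^m}$. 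After rescaling by $1/(1+\epsilon_m)$ and a diagonal extraction, $v_m \in PSH(Y,\eta)$ has algebraic singularities and $v_m \searrow v$. For each $m$, a generic unit-norm section $g_m \in \mathcal{H}_m$ yields a divisor $F_m := \frac{1}{m}(g_m) \sim_{\mathbb{Q}} L$; by Cauchy--Schwarz, $v_{F_m} := \frac{1}{m}\log|g_m|_{h_{\eta}^m} \leq v_m$ pointwise, and a Bertini-type genericity argument applied to the linear system $|\mathcal{H}_m|$ (which becomes base-point-free outside the common zero locus of the $s_{m,l}$'s) then gives $lct(Y,\Delta,F_m+D) = lct(Y,\Delta,v_m+D)$.

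The main obstacle is establishing the convergence $lct(Y,\Delta,v_m+D) \to lct(Y,\Delta,v+D)$. One inequality, $\liminf \geq lct(v+D)$, is immediate from $v_m \geq v$ together with the monotonicity of multiplier ideals. The reverse inequality is more delicate and cannot be extracted from Cauchy--Schwarz or monotone convergence alone. Here my plan is to combine Theorem \ref{thm:DK99} with the strong openness theorem \cite{GZ14}, in exactly the spirit of the final part of the proof of Theorem \ref{thm:Reg}: absorbing the density of $\nu_{(Y,\Delta)}$ and a reference twist $e^{-\alpha_0 v_D}$ into an auxiliary background measure, and using that $v_m+v_D \to v+v_D$ weakly, the $L^1$-convergence $e^{-\alpha(v_m+v_D)}\to e^{-\alpha(v+v_D)}$ supplied by Theorem \ref{thm:DK99} holds for every $\alpha$ strictly below $lct(Y,\Delta,v+D)$, while strong openness upgrades this to the non-strict endpoint. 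Once the convergence is in hand, chaining the bounds yields $\inf_F lct(F+D) \leq \lim_m lct(v_m+D) = lct(v+D)$ for every $v \in PSH(Y,\eta)$, and taking the infimum over $v$ closes the proof.
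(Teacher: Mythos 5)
Your first inequality and the overall reduction (for each fixed $v$ produce divisors $F_m\sim_{\mathbbm{Q}}L$ with $\liminf_m lct(Y,\Delta,F_m+D)\leq lct(Y,\Delta,v+D)$) match the paper's strategy, but your construction of the $F_m$ diverges from the paper's and this is where two genuine gaps appear. The paper does \emph{not} use a Bergman-kernel basis of $H^{0}(Y,mL\otimes\mathcal{I}(mv))$ decreasing to $v$; it first assumes $v$ is a Kähler current, extracts a \emph{single} unit-norm section $\sigma_{k}\in H^{0}(Y,L^{kr}\otimes\mathcal{I}(krv))$, forms $w_{k}:=v+\frac{1}{kr}\log|\sigma_{k}|^{2}_{h^{k}}$ (which is \emph{more} singular than $v$ and equals $v_{F_{k}}$ for a $\mathbbm{Q}$-divisor $F_{k}$), and transfers integrability from $w_{k}$ to $v$ by a double Hölder argument; general $v$ is then handled by the convex-combination trick $w_{t}=tw+(1-t)v$ together with $lct(Y,\Delta,w_{t}+D)\leq\frac{1}{t}lct(Y,\Delta,w+D)$. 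Your route skips the Kähler-current reduction entirely, and for $\eta$ merely big and semipositive with $v$ arbitrary there is no guarantee that $\mathcal{H}_{m}$ is nonzero, nor that the resulting $v_{m}$ decrease to $v$ rather than to something strictly less singular: producing global sections with the prescribed $L^{2}$ condition requires strict positivity (Ohsawa--Takegoshi/Nadel), and the Demailly regularization of Theorem \ref{thm:Reg} that you invoke as a template is a \emph{local} construction on coordinate balls glued together — it does not output sections of $mL$ or divisors linearly equivalent to $L$.

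The second gap is the convergence $lct(Y,\Delta,v_{m}+D)\to lct(Y,\Delta,v+D)$, which you correctly identify as the delicate point but attack with the wrong tool. Since $v_{m}\geq v$, Theorem \ref{thm:DK99} (lower semicontinuity of $c(\cdot)$ and $L^{1}$-convergence of $e^{-\alpha v_{m}}$ for $\alpha<c(v)$) only yields $\liminf_{m}lct(v_{m}+D)\geq lct(v+D)$, which is the trivial direction here; it says nothing about non-integrability of $e^{-\alpha(v_{m}+v_{D})}$ for $\alpha$ above $lct(v+D)$, which is what you need. The working mechanism — used in Step 3 of the proof of Theorem \ref{thm:Reg} via the inclusion (\ref{eqn:L2}) — is the multiplier-ideal comparison $\mathcal{I}\big((1+\tau_{m})t v_{m}\big)\subset\mathcal{I}(tv)$, which comes from the local $L^{2}$ bound $\int|f|^{2}e^{2^{k}(\tilde u_{k}-u)}<+\infty$ specific to Demailly approximants, followed by strong openness to remove the factor $(1+\tau_{m})$; one must additionally absorb the density of $\nu_{(Y,\Delta)}$ and the twist $e^{-\alpha v_{D}}$ by Hölder, as the paper does. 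Finally, a minor point: your Bertini claim $lct(F_{m}+D)=lct(v_{m}+D)$ for generic $g_{m}$ is false in general (a generic member of a free system has $lct=1$ while the base ideal has $lct=+\infty$), but it is also unnecessary — only the inequality $lct(F_{m}+D)\leq lct(v_{m}+D)$, which your Cauchy--Schwarz bound already gives, is used in your chain of estimates.
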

\begin{proof}
One inequality in (\ref{eqn:UffyPuffy}) follows immediately from the fact that any effective $\mathbbm{Q}$-divisor $F\sim_{Q}L$ is associated to a function $v_{F}\in PSH(Y,\eta)$ such that $\eta+dd^{c}v_{F}=[F]$ (obviously $v_{F}$ is defined up to an additive constant), and $lct(Y,\Delta,F+D)=lct(Y,\Delta,v_{F}+D)$ by the analytic characterization of the log canonical threshold recalled above.\newline
For the reverse inequality, letting $\omega'$ be a fixed Kähler form, we first assume $v\in PSH(X,\eta)$ such that $\eta+dd^{c}v\geq \epsilon\omega'$ (i.e. a Kähler current). Then fix $r\in\mathbbm{N}$ such that $rL$ is a line bundle and denote by $h$ the singular hermitian metric on $rL$ associated to $r\eta+dd^{c}rv$. It is then well-known that for any $k\in\mathbbm{N}$ the set $H^{0}\big(Y,L^{kr}\otimes\mathcal{I}(kr v)\big)$ of holomorphic sections $\sigma\in H^{0}(Y,L^{k r})$ such that $\int_{Y}|\sigma|^{2}_{h^{k}}\omega'^{n}<+\infty$ is a not-empty finite-dimensional Hilbert space. Choose for any $k\in\mathbbm{N}$ an element $\sigma_{k}\in H^{0}\big(Y,L^{kr}\otimes \mathcal{I}(kr v)\big)$ of norm $1$ and define
$$
w_{k}:=v+\frac{1}{kr}\log |\sigma_{k}|^{2}_{h^{k}}\in PSH(Y,\eta).
$$
Then since $(Y,\Delta)$ is klt there exists $f\in L^{p'}$ for $p'>1$ such that $\nu_{(Y,\Delta)}=f\omega'^{n}$. We denote by $q'$ the Sobolev conjugate exponent of $p'$. For $k\in\mathbbm{N}$ and $\alpha<lct(Y,\Delta,w_{k}+D)$ fixed, we also set $c:=\frac{\alpha q'}{r}$, $p:=1+\frac{k}{c}$ and $q:=1+\frac{c}{k}$. Clearly $p,q$ are Sobolev conjugate exponents, i.e. $\frac{1}{p}+\frac{1}{q}=1$. Then by construction $\int_{Y}e^{rk(w_{k}-v)}\omega'^{n}=1$, and using Hölder's inequality twice, we obtain
\begin{multline}
\int_{Y}e^{-\frac{rk}{pq'}(v+v_{D})}d\nu_{(Y,\Delta)}=\int_{Y}\Big(e^{\frac{rk}{pq'}	 (w_{k}-v)}e^{-\frac{rk}{pq'}(w_{k}+v_{D})}\Big)d\nu_{(Y,\Delta)}\leq\\
\leq \Big( \int_{Y}e^{\frac{rk}{q'}(w_{k}-v)}f\omega'^{n}\Big)^{\frac{1}{p}}\Big(\int_{Y}e^{-\frac{rkq}{pq'}(w_{k}+v_{D})}d\nu_{(Y,\Delta)}\Big)^{\frac{1}{q}}\leq \\
\leq ||f||^{1/p}_{L^{p'}(\omega'^{n})}\Big(\int_{Y}e^{rk(w_{k}-v)}\omega'^{n}\Big)^{\frac{1}{pq'}}\Big(\int_{Y}e^{-\frac{rkq}{pq'}(w_{k}+v_{D})}d\nu_{(Y,\Delta)}\Big)^{\frac{1}{q}}\leq\\
\leq ||f||^{1/p}_{L^{p'}(\omega'^{n})}\Big(\int_{Y}e^{-\alpha (w_{k}+v_{D})}d\nu_{(Y,\Delta)}\Big)^{1/q}<+\infty
\end{multline}
Thus $lct(Y,\Delta,v+D)\geq \frac{rk}{q'(1+\frac{k}{c})}$, i.e.
$$
lct(Y,\Delta,v+D)\geq\frac{rk}{rk+q' lct(Y,\Delta,w_{k}+D)}lct(Y,\Delta,w_{k}+D)
$$
by the arbitrariness of $\alpha<lct(Y,\Delta,w_{k}+D)$. Therefore as $\eta+dd^{c}w_{k}=[F_{k}]$ for a $\mathbbm{Q}$-effective divisor $F_{k}$ by construction, it follows that
\begin{multline}
\label{eqn:01}
lct(Y,\Delta,v+D)\geq\liminf_{k\to +\infty} \Big( \frac{rk}{rk+q' lct(Y,\Delta,F_{k}+D)}lct(Y,\Delta,F_{k}+D)\Big)\geq \\
\geq\inf_{F\sim_{\mathbbm{Q}}L,F\geq 0}lct(Y,\Delta,F+D) \liminf_{k\to +\infty} \frac{rk}{rk+q' lct(Y,\Delta,F_{k}+D)}=\inf_{F\sim_{\mathbbm{Q}}L,F\geq 0}lct(Y,\Delta,F+D)
\end{multline}
using also that $lct(Y,\Delta,F_{k}+D)\leq lct(Y,\Delta,D)<+\infty$ is uniformly bounded in $k$.\newline
Next, we note that for any $w\in PSH(Y,\eta)$ and for any $t\in[0,1)$, $w_{t}:=tw+(1-t)v\in PSH(Y,\eta)$ is a Kähler current since $\eta_{w_{t}}\geq (1-t)\epsilon \omega'$. Moreover
\begin{multline}
\label{eqn:02}
lct(Y,\Delta,w_{t}+D)=\sup\Big\{\alpha\in \mathbbm{Q}_{\geq 0}\,:\, \int_{Y}e^{-\alpha(w_{t}+v_{D})}d\nu_{(Y,\Delta)}<+\infty \Big\}\leq\\
\leq \sup\Big\{\alpha\in \mathbbm{Q}_{\geq 0}\,:\, \int_{Y}e^{-\alpha t(w+v_{D})}d\nu_{(Y,\Delta)}<+\infty \Big\}=\frac{1}{t} lct(Y,\Delta,w+D)
\end{multline}
as $w_{t}+v_{D}$ is more singular than $t(w+v_{D})$ for $t\in[0,1)$. Hence combining (\ref{eqn:01}) and (\ref{eqn:02}) the conclusion follows letting $t\to 1$.
\end{proof}
\begin{prop}
\label{prop:AlphaAlg}
Assume $\psi=P_{\omega}[\varphi]\in\mathcal{M}_{klt}$ with algebraic singularities type formally encoded in $(\mathcal{I},c)$. Let $p:Y\to X$, $\eta$, $D$ and $\Delta$ as in Proposition \ref{prop:AnalRec}, and let $L=-(K_{Y}+\Delta)$ be the $\mathbbm{Q}$-line bundle on $Y$ such that $c_{1}(L)=\{\eta\}$. Then
\begin{equation*}
    \alpha_{\omega}(\psi)=\alpha(Y,D).
\end{equation*}
\end{prop}
\begin{proof}
As $p^{*}\omega=\eta+\theta$ for $\theta$ smooth $(1,1)$-form and $p^{*}(\omega+dd^{c}\varphi)=\eta+c[D]$, letting $v_{D}$ such that $\theta+dd^{c}v_{D}=c[D]$, it follows from pluriharmonicity that $\varphi\circ p=v_{D}+ a$ for a constant $a$, which we may assume to be equal to $0$ up to replace $v_{D}$ with $v_{D}+a$. Moreover as proved in Proposition $4.8$ in \cite{Tru20b} it is not difficult to check that over the open Zariski set $\Omega$ where $p$ is an isomorphism we have
\begin{equation}
\label{eqn:First!}
p^{-1}_{*}\big(e^{-\varphi}\mu\big)=\nu_{(Y,\Delta)}
\end{equation}
where $\nu_{(Y,\Delta)}$ is an adapted measure of the pair $(Y,\Delta)$. Thus since $p$ is an isomorphism outside a pluripolar set, we can extend to $0$ the measure $\nu_{(Y,\Delta)}$ and (\ref{eqn:First!}) means that the \emph{lift} of $e^{-\varphi}\mu$ is equal to $\nu_{(Y,\Delta)}$.\newline
Next we observe that, since $\psi-\varphi$ is globally bounded, we can replace $\psi$ by $\varphi$ in the definition of $\alpha_{\omega}(\psi)$, i.e.
$$
\alpha_{\omega}(\psi)=\sup\Big\{\alpha\geq 0 \, :\, \sup_{\{u\preccurlyeq \varphi\, : \sup_{X}(u-\varphi)=0\}}\int_{X}e^{\alpha(\varphi-u)}e^{-\varphi}d\mu<+\infty\Big\}=\inf_{u\preccurlyeq \varphi} c_{\varphi}(u)
$$
where the last equality follows from Lemma \ref{lem:Exp} and Remark \ref{rem:AnSinTerm} and where with obvious notation,
$$
c_{\varphi}(u):=\sup\Big\{\alpha\geq 0\, :\, \int_{X}e^{\alpha(\varphi-u)}e^{-\varphi}d\mu<+\infty \Big\}.
$$
Therefore for any $\alpha\geq 0$ and for any $u\preccurlyeq \varphi$ we obtain that $e^{\alpha(\varphi-u)}e^{-\varphi}\mu$ lifts to $e^{-\alpha \tilde{u}}\nu_{(Y,\Delta)}$ where $\tilde{u}:=(u-\varphi)\circ p\in PSH(Y,\eta)$. In particular, since by Proposition \ref{prop:Anal} any $\tilde{u}\in PSH(Y,\eta)$ is given as $(u-\varphi)\circ p$ for $u\preccurlyeq \varphi$, it follows that
$$
\alpha_{\omega}(\psi)=\inf_{\tilde{u}\in PSH(Y,\eta)}\sup\Big\{\alpha\geq 0\, : \, \int_{X}e^{-\alpha \tilde{u}}d\nu_{(Y,\Delta)}<+\infty\Big\}=\inf_{\tilde{u}\in PSH(Y,\eta)}\mathrm{lct}(Y,\Delta,\tilde{u}).
$$
Hence Lemma \ref{lem:Exp2} concludes the proof as
$$
\alpha(Y,\Delta)=\inf_{F\sim_{\mathbbm{Q}}L, F\geq 0}\mathrm{lct}(Y,\Delta,F)
$$
by definition.
\end{proof}
\subsection{Ding functional and uniqueness.}
Similarly to the companion paper \cite{Tru20b}, we define for $\psi\in\mathcal{M}_{klt}^{+}$ the functional $D_{\psi}:\mathcal{E}^{1}(X,\omega,\psi)\to \mathbbm{R}$ as
$$
D_{\psi}:=V_{\psi}L_{\mu}-E_{\psi}
$$
where $L_{\mu}(u):=-\log\int_{X}e^{-u}d\mu$. It is translation invariant and it assumes finite values by Proposition \ref{prop:Lelong} as we are assuming $\psi\in\mathcal{M}_{klt}^{+}$. We call it $\psi$-\emph{relative Ding functional} since it coincides with the renewed Ding functional in the case $\psi=0$.
\begin{rem}
\emph{When $\psi=P_{\omega}[\varphi]$ has analytic singularities type, then with the same notations of Propositions \ref{prop:Anal}, \ref{prop:AnalRec},
$$
D_{\psi}(u)-E_{\psi}(\varphi)=-V_{\eta}\log \int_{X}e^{-\tilde{u}}d\nu-E(\tilde{u})=:D_{\eta}(\tilde{u})
$$
where $\nu$ is an adapted measure associated to the log-setting. Note that $D_{\eta}(\tilde{u})$ is the usual log-Ding functional associated to the pair $(Y,\Delta)$.}
\end{rem}
\begin{prop}
\label{prop:ContDing}
Let $\psi\in\mathcal{M}_{klt}^{+}$. Then $D_{\psi}$ is continuous on $\big(\mathcal{E}^{1}(X,\omega,\psi),d\big)$ and it is lower semicontinuous with respect to the weak topology.
\end{prop}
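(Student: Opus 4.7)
The plan is to reduce both claims to the weak continuity of $L_{\mu}$ on $\mathcal{E}^{1}(X,\omega,\psi)$, where the klt hypothesis does all the real work. Since $V_{\psi}>0$, every $u\in\mathcal{E}^{1}(X,\omega,\psi)$ satisfies $P_{\omega}[u]=\psi$ (Theorem $1.3$ of \cite{DDNL17b}), so Proposition \ref{prop:Lelong} forces $\mathcal{I}(tu)=\mathcal{I}(t\psi)$ for every $t>0$; consequently the complex singularity exponent $c(u)$ equals $c(\psi)$. The assumption $\psi\in\mathcal{M}_{klt}^{+}$ together with the strong openness of multiplier ideals (\cite{GZ14}) gives $c(\psi)>1$, so $c(u)>1$ uniformly on $\mathcal{E}^{1}(X,\omega,\psi)$.

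Given any weakly convergent sequence $u_{k}\to u$ in $\mathcal{E}^{1}(X,\omega,\psi)$, Theorem \ref{thm:DK99} applied with $\alpha=1<c(u)$ yields $e^{-u_{k}}\to e^{-u}$ in $L^{1}$; since $\mu$ has smooth density with respect to $\omega^{n}$ we conclude $\int_{X}e^{-u_{k}}d\mu\to\int_{X}e^{-u}d\mu$, and hence $L_{\mu}(u_{k})\to L_{\mu}(u)$, so $L_{\mu}$ is weakly continuous. Proposition \ref{prop:Usc}, applied to the constant sequence $\psi_{k}=\psi$ (again using $P_{\omega}[u]=\psi$), provides $\limsup_{k}E_{\psi}(u_{k})\leq E_{\psi}(u)$, i.e.\ $E_{\psi}$ is weakly upper semicontinuous on $\mathcal{E}^{1}(X,\omega,\psi)$. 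Decomposing $D_{\psi}=V_{\psi}L_{\mu}-E_{\psi}$ as weakly continuous $+$ weakly lsc, one obtains
\[
\liminf_{k}D_{\psi}(u_{k})=V_{\psi}L_{\mu}(u)-\limsup_{k}E_{\psi}(u_{k})\geq V_{\psi}L_{\mu}(u)-E_{\psi}(u)=D_{\psi}(u),
\]
which is the claimed weak lower semicontinuity.

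For the $d$-continuity, $E_{\psi}$ is $d$-continuous by the very definition of the strong topology (Theorem \ref{thm:Riass}(i)); since $d$-convergence refines the weak $L^{1}$ topology, the weak continuity of $L_{\mu}$ already proved immediately upgrades to $d$-continuity, and thus $D_{\psi}$ is $d$-continuous. The only delicate point in the whole argument is the uniform bound $c(u)>1$ on $\mathcal{E}^{1}(X,\omega,\psi)$, obtained from the chain $P_{\omega}[u]=\psi\Rightarrow\mathcal{I}(tu)=\mathcal{I}(t\psi)\Rightarrow c(u)=c(\psi)>1$; once this is in hand, Theorem \ref{thm:DK99} carries the rest of the argument with no further work.
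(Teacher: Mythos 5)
Your proof is correct and follows the same route as the paper: decompose $D_{\psi}=V_{\psi}L_{\mu}-E_{\psi}$, invoke the Demailly--Koll\'ar semicontinuity theorem (Theorem \ref{thm:DK99}) for weak continuity of $L_{\mu}$, and use Proposition \ref{prop:Usc} for weak upper semicontinuity of $E_{\psi}$ (strong continuity of $E_{\psi}$ being definitional). Your write-up is actually more detailed than the paper's one-line argument, since you spell out the chain $u\in\mathcal{E}^{1}(X,\omega,\psi)\Rightarrow P_{\omega}[u]=\psi\Rightarrow \mathcal{I}(tu)=\mathcal{I}(t\psi)\Rightarrow c(u)=c(\psi)>1$ that justifies applying Theorem \ref{thm:DK99} at $\alpha=1$ --- a point the paper leaves implicit but indeed records elsewhere (the remark after Lemma \ref{lem:Exp} and the klt $\Leftrightarrow L^{p}$ discussion at the start of \S\ref{sec:KE}).
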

\begin{proof}
The continuity of $\mathcal{E}^{1}(X,\omega,\psi)\ni u\to L_{\mu}(u)$ with respect to the weak topology is given by Theorem \ref{thm:DK99}. Therefore the result follows observing that $E_{\psi}$ is upper semicontinuous in $\mathcal{E}^{1}(X,\omega,\psi)$ with respect to the weak topology (Proposition \ref{prop:Usc}) while it is strongly continuous by definition.
\end{proof}
In the absolute setting, a key property of the Ding functional is its convexity along weak geodesic segments, which is the starting point to study the uniqueness of KE metrics. The analogue holds in the relative setting if $\psi$ belongs to $\mathcal{M}_{D,klt}^{+}$, and the following result is crucial to prove it.
\begin{thm}
\label{thm:Subh}
Assume $\psi\in\mathcal{M}_{klt}$. Let $u_{0},u_{1}\in \mathcal{E}^{1}(X,\omega,\psi)$ with $\psi$-relative minimal singularities and let $u_{t}$ be the weak geodesic joining them. Then $\mathcal{F}(t):=L_{\mu}(u_{t})$ is subharmonic on $S$. Moreover if $\mathcal{F}$ is affine over the real segment, then there is a holomorphic vector field $V$ with flow $F_{s}$ such that $F_{s}^{*}\big(\omega+dd^{c}u_{s}\big)=\omega+dd^{c}u_{0}$ for any $s\in[0,1]$.
\end{thm}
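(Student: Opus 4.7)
The strategy breaks into two parts: first establish the subharmonicity by reducing via approximation to Berndtsson's theorem on the positivity of direct images for fiber integrals, and then deduce the rigidity from its equality case, handled directly on the locus where $\psi$ is finite.

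For the subharmonicity, I would use Proposition \ref{prop:WGeod}(iii) to choose decreasing sequences $\{u_0^k\},\{u_1^k\}\subset\mathcal{H}_\omega$ with $u_i^k\searrow u_i$, and let $U_k$ be the weak geodesic joining $u_0^k$ and $u_1^k$, whose potentials $u_t^k$ are bounded (by Chen's $C^{1,1}$ estimate) and satisfy $\pi_X^*\omega+dd^cU_k\geq 0$ on $X\times S$. Berndtsson's theorem on subharmonicity of fiber integrals---equivalently, the positivity of the $L^2$-metric on the direct image $p_{S,*}\mathcal{O}_{X\times S}$ induced by the semi-positive weight $U_k$ on $\pi_X^*(-K_X)$ and the volume form $\mu$---then yields that $\mathcal{F}_k(t):=-\log\int_X e^{-u_t^k}d\mu$ is subharmonic on $S$. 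By Proposition \ref{prop:WGeod}(iii), $u_t^k\searrow u_t$ monotonically, so by monotone convergence $\int_X e^{-u_t^k}d\mu\nearrow \int_X e^{-u_t}d\mu$; the limit is finite because $\psi$ is klt and $u_t$ has $\psi$-relative minimal singularities (Proposition \ref{prop:WGeod}(v)). Hence $\mathcal{F}_k\searrow\mathcal{F}$, and $\mathcal{F}$ is subharmonic as a decreasing limit of subharmonic functions with finite limit.

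For the rigidity, assume $\mathcal{F}$ is affine on $[0,1]$. Since $u_t$ depends only on $\mathrm{Re}\,t$ by Proposition \ref{prop:WGeod}(i), $\mathcal{F}$ is in fact harmonic on the full strip $S$. Proposition \ref{prop:WGeod}(v)-(vi) gives that $u_t$ has uniformly bounded $\psi$-relative minimal singularities and is Lipschitz in $t$, so $u_t$ is locally uniformly bounded on the open set $\Omega:=\{\psi>-\infty\}$, whose complement is pluripolar. I would then invoke the equality case of Berndtsson's subharmonicity theorem, as used in his proof of uniqueness of KE metrics on Fano manifolds: harmonicity of $\mathcal{F}$ combined with $\pi_X^*\omega+dd^cU\geq 0$ on $X\times S$ forces a degeneracy of the Berndtsson positivity along the $t$-direction, which after solving a $\bar\partial$-equation on each fiber yields a holomorphic vector field $V$ on $\Omega$ whose real time-$s$ flow $F_s$ satisfies $F_s^*\omega_{u_s}=\omega_{u_0}$. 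Since $X\setminus\Omega$ is pluripolar, $V$ extends uniquely to a holomorphic vector field on all of $X$ by standard removability of holomorphic objects across pluripolar loci.

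The main obstacle will be carrying out the rigidity step rigorously in the singular $\psi$-relative setting. The affineness of $\mathcal{F}$ does not pass to the smooth approximations $\mathcal{F}_k$---one only has $\mathcal{F}_k\geq\mathcal{F}$---so one cannot appeal to Berndtsson's smooth rigidity and take a limit. The argument must instead be performed directly on the weak geodesic $u_t$, exploiting its local boundedness on $\Omega$ to reproduce the classical proof and then handling the extension of the vector field across the pluripolar singular locus $\{\psi=-\infty\}$.
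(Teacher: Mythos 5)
Your subharmonicity argument is sound and essentially equivalent to the paper's: the paper simply invokes Theorem~1.1 of Berndtsson's paper directly (his theorem already allows quasi-psh weights), whereas you approximate by bounded geodesics and take a monotone limit; both routes give the same conclusion, and you correctly note that finiteness of $\mathcal{F}$ (from the klt hypothesis and Proposition~\ref{prop:WGeod}(v)) is what guarantees the limit of subharmonic functions is not $-\infty$.

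The rigidity step, however, contains a genuine gap in the choice of the exceptional set. You propose to work on $\Omega:=\{\psi>-\infty\}$ and then extend the vector field by ``removability across pluripolar loci.'' There are two problems. First, $\Omega$ is in general \emph{not open}: the polar set of a general $\psi\in\mathcal{M}_{klt}$ need not be closed, so $u_t$ need not be locally bounded near points of $\Omega$. Having $\psi$-relative minimal singularities only gives $|u_t-\psi|\leq C$; local boundedness of $u_t$ requires local boundedness of $\psi$, which is a strictly stronger condition than $\psi(z)>-\infty$ pointwise. (The case where $\psi$ \emph{is} locally bounded away from a closed complete pluripolar set is exactly the ``small unbounded locus'' case, which the paper handles separately by appealing directly to Theorem~6.1 of Berndtsson.) Second, extension of a holomorphic vector field across a general pluripolar set is not standard and will fail in general --- pluripolar sets can be dense and need not carry any analytic structure. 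The paper sidesteps both issues by working instead on $U:=(X\setminus A)\times S$ where $A:=\{z\in X\,:\,\nu(\psi,z)\geq 1\}$ is an \emph{analytic} subvariety by Siu's theorem. Analyticity is what makes the extension of the vector field classical, and the Skoda integrability theorem on $X\setminus A$ gives $e^{-\psi}\in L^2_{loc}$, which is precisely the local integrability one needs to pass to the limit in the $\bar\partial$-equation and prove the distributional identity $\partial\bar\partial u_t\wedge v=\bar\partial\dot u_t\wedge w$. Your sketch leaves this key identity (the crux of Berndtsson's Theorem~6.1 adapted to unbounded potentials) entirely implicit, and as you yourself observe, it cannot be obtained by approximating $\mathcal{F}$ by the affine (it is not!) $\mathcal{F}_k$. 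In short: the right localization is by Lelong number, not by the polar set, and that change is not cosmetic --- it is what makes the whole distributional argument run.
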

Note that $\mathcal{F}$ can be thought as a function on $[0,1]$ since $u_{t}$ does not depend on $\mbox{Im}\, t$.
\begin{proof}
Letting $u_{0,k}, u_{1,k}\in \mathcal{H}_{\omega}$ be decreasing bounded sequences converging to $u_{0},u_{1}$ (\cite{BK07}) and letting $u_{t,k}$ be the weak geodesic segment joining $u_{0,k}, u_{1,k}$, Proposition \ref{prop:WGeod}$.(iii)$ gives $u_{t,k}\searrow u_{t}$. We observe that $\mathcal{F}_{k}(t):=L_{\mu}(u_{t,k})$ is subharmonic by Theorem $1.1$ in \cite{Bern15} since, with the same notations and terminology of \cite{Bern15}, replacing the potentials $v\in PSH(X,\omega)$ with the corresponding \emph{metrics}, the functional $L_{\mu}(v)$ becomes $-\log \int_{X}e^{-v}$. In particular, $\mathcal{F}(t)$ is subharmonic since it is the decreasing limit of $\mathcal{F}_{k}(t):=L_{\mu}(u_{t,k})$.\newline
Next assume that $\mathcal{F}(t)$ is affine.\newline
If $\psi$ has small unbounded locus (locally bounded on the complement of a closed complete pluripolar set), writing $u_{t}:=\psi+(u_{t}-\psi)$, we are in the situation described in section \S $6.1$ of \cite{Bern15} as $u_{t}-\psi$ is globally uniformly bounded by Proposition \ref{prop:WGeod}. Thus Theorem $6.1$ in \cite{Bern15} concludes the proof in this case.\newline
In the general case, we need to slightly improve the proof of Theorem $6.1$ in \cite{Bern15}. First, letting $S:=\{t\in\mathbbm{C}\, : \, 0< \mathrm{Re}\,t<1\}$, we proceed as in section \S $2.3$ in \cite{Bern15} to produce a sequence $\{u_{t}^{k}\}_{k\in\mathbbm{N}}$ of positive smooth metrics on $X\times S$ (depending only on $\mathrm{Re}\, t$ on the $t$-variable) which decreases to $u_{t}$. Moreover, letting $A:=||u_{0}-u_{1}||_{\infty}$ and replacing $u_{t}^{k}$ with
$$
\max_{reg}\Big(u_{t}^{k}, \max_{reg}\big(u_{0}^{k}-A\mathrm{Re}\,t, u_{1}^{k}+A(\mathrm{Re}\,t-1)\big)\Big)
$$
for $\max_{reg}$ a regularized maximum, we can also assume that $u_{t}^{k}-u_{s}^{k}$ is bounded uniformly in $s,t, k$. In particular the time-derivative of $u_{t}^{k}$ is uniformly bounded in $t,k$. Then, we fix a non vanishing trivializing section $w=1$ of the trivial line bundle over $X$, and, denoting with $\pi_{\perp}^{t,k}$ the orthogonal projection on the orthogonal complement of the space of holomorphic forms (with respect to the $L^{2}$-norm induced by $u_{t}^{k}$), we can solve the equation
\begin{equation}
\label{eqn:Bibibi}
    \partial^{u_{t}^{k}}v_{t}^{k}=\pi_{\perp}^{t,k}\big(\Dot{u}_{t}^{k}w\big)
\end{equation}
over $X$ for $t,k$ fixed as in \cite{Bern15}, where $\partial^{u_{t}^{k}}:=e^{u_{t}^{k}}\partial e^{-u_{t}^{k}}=\partial-\partial u_{t}^{k}\wedge$. Furthermore, we deduce that the $L^{2}$-estimate of $v_{t}^{k}$ is uniformly bounded in $t,k$ thanks to Remark $2$ and Lemma $6.2$ in \cite{Bern15}. Thus, up to consider a subsequence, it follows that $v_{t}^{k}$ weakly converges in $L^{2}$ to a $(n-1,0)$-form $v_{t}$ as $k\to +\infty$. Moreover, defining $\mathcal{F}_{k}(t):=-\log\int_{X}e^{-u_{t}^{k}}$, as $\partial \bar{\partial}\mathcal{F}_{k}\to 0$ weakly, by the "curvature formula" of Theorem $3.1$ in \cite{Bern15} we obtain that $\bar{\partial} v_{t}=0$, i.e. that $v_{t}$ is a holomorphic $(n-1,0)$-form for any $t$ fixed. Then, the proof of the holomorphy of $v_{t}$ with respect to the $t$-variable is more involved, and we directly refer to section \S $4$ in \cite{Bern15}. Indeed, although in section \S $4$ of \cite{Bern15} Berndtsson proved its results for a curve of \emph{bounded} metrics, the proof of the holomorphy in $t$ of $v_{t}$ works for more general curves of metrics (as he also claimed showing Theorem $6.1$ of the same paper \cite{Bern15}, see also Step $4$ in Theorem $11.1$ in \cite{BBEGZ16}). Note that, being holomorphic on $X\times S$ and being independent of $\mathrm{Im}\, t$ by construction, $v=v_{t}$ is independent of $t$.\newline
Instead, the assumption on the small unbounded locus in Theorem $6.1$ of \cite{Bern15} was useful to prove the formula
\begin{equation}
\label{eqn:KeyFormula}
    \partial \bar{\partial}u_{t}\wedge v=\bar{\partial} \Dot{u}_{t}\wedge w 
\end{equation}
on $X$ in the distributional sense. Once formula (\ref{eqn:KeyFormula}) is proved, one can define the holomorphic vector field $V$ by $V\rfloor w=-v$ so that
\begin{equation*}
\bar{\partial}\Dot{u}_{t}\wedge w=\partial\bar{\partial}u_{t}\wedge v=-\partial \bar{\partial}u_{t}\wedge \big(V\rfloor w\big)=\big(V\rfloor \partial \bar{\partial}u_{t}\big)\wedge w,
\end{equation*}
where in the last equality we used that $\partial\bar{\partial}u_{t}\wedge w=0$ for bidegree reason. Hence $V\rfloor \partial \bar{\partial}u_{t}=\bar{\partial}\Dot{u}_{t}$ as $w$ is nonvanishing. Finally, the Cartan identity $\mathcal{L}_{V}=d V\rfloor+ V\rfloor d$, for the Lie derivative $\mathcal{L}_{V}$ clearly implies
$$
\mathcal{L}_{V}\partial \bar{\partial}u_{t}=\partial V\rfloor \partial \bar{\partial}u_{t} =\partial\bar{\partial}\Dot{u}_{t}=\frac{\partial}{\partial t} \partial \bar{\partial}u_{t}, 
$$
which is what we wanted to prove. Therefore, it remains to show (\ref{eqn:KeyFormula}).\newline
As in \cite{Bern15} (see also Lemma $11.11$ in \cite{BBEGZ16}), the desired distributional equation (\ref{eqn:KeyFormula}) will easily follow applying $\bar{\partial}_{X}$ to the equation
\begin{equation}
    \label{eqn:PAMA}
    \partial_{X}v-\partial_{X}u_{t}\wedge v=\pi_{\perp}^{t}(\Dot{u}_{t}w).
\end{equation}
Moreover, as any plurisubharmonic function belongs to the Sobolev space $W^{1,1}_{loc}$, it will be sufficient to prove (\ref{eqn:PAMA}) over the set $U:=(X\setminus A)\times S$ where $A:=\{z\in X\, :\, \nu(\psi,z)\geq 1/2\}$ is an analytic set (by \cite{Siu74}) so that $e^{-\psi}\in L^{2}_{loc}(U)$ by the Skoda's Integrability Theorem (\cite{Sko72}).\newline
We start rewriting (\ref{eqn:Bibibi}) as
\begin{equation}
    \label{eqn:Boooo}
\partial_{X}\big(v_{t}^{k}e^{-u_{t}^{k}}\big)=\pi_{\perp}^{t,k}\big(\Dot{u}_{t}^{k}w\big)e^{-u_{t}^{k}}.
\end{equation}
By construction $v_{t}^{k}$ converges weakly in $L^{2}(X\times S)$ to $v$, while $e^{-u_{t}^{k}}$ converges strongly in $L^{2}_{loc}(U)$ to $e^{-u_{t}}$ as $u_{t}-\psi$ is uniformly bounded and $u_{t}^{k}$ decreases to $u_{t}$. This is enough to conclude that the left-hand side of (\ref{eqn:Boooo}) converges in the distributional sense to $\partial_{X}\big(ve^{-u_{t}}\big)$. About the right-hand side of (\ref{eqn:Boooo}), we write $\pi_{\perp}^{t,k}(\Dot{u}_{t}^{k}w)=\Dot{u}_{t}^{k}w+h_{t}^{k}$ where $\bar{\partial}_{X}h_{t}^{k}=0$, observing that both terms are uniformly bounded in $L^{2}(X\times S)$ thanks to the uniform Lipschitz continuity of $u_{t}^{k}$. Thus, up to extract a subsequence, we may assume that $h_{t}^{k}\to h_{t}$ converges weakly in $L^{2}(X\times S)$ where $h_{t}$ satisfies $\bar{\partial}_{X}h_{t}=0$. Furthermore, by Lemma $4.1$ in \cite{Bern15} $\Dot{u}_{t}^{k}w$ necessarily converges weakly in $L^{2}$ to $\Dot{u}_{t}w$. Hence, the right-hand side of (\ref{eqn:Boooo}) converges weakly in $L^{1}_{loc}(U)$ to $\big(\Dot{u}_{t}w+h_{t}\big)e^{-u_{t}}$, and the equation
\begin{equation}
    \label{eqn:Bababa}
\partial_{X}(ve^{-u_{t}})=\pi_{\perp}^{t}(\Dot{u}_{t}w)e^{-u_{t}}
\end{equation}
over $U$ follows observing that $\Dot{u}_{t}w+h_{t}$ is orthogonal to holomorphic forms by a similar approximation argument.
Next, an easy analysis shows that
\begin{equation}
    \label{eqn:Cicici}
    \partial_{X}(ve^{-u_{t}})=\lim_{k\to +\infty}\partial_{X}(ve^{-u_{t}^{k}})=\lim_{k\to +\infty}\big(\partial_{X}v-\partial_{X}u_{t}^{k}\wedge v\big)e^{-u_{t}^{k}}
\end{equation}
in the sense of distributions over $U$. Moreover, by the resolution to the openness conjecture (\cite{GZ14}), for any relatively compact set $V\Subset U$ there exists $p>2$ such that $e^{-u_{t}^{k}}$ converges to $e^{-u_{t}}$ in $L^{p}_{loc}(V)$., while $\partial_{X}u_{t}^{k}$ converges in $L^{r}(X\times S)$ to $\partial_{X}u_{t}$ for any $r<2$ by the properties of plurisubharmonic functions (see for instance Theorem $1.48$ in \cite{GZ17}). Thus, the right-hand side of (\ref{eqn:Cicici}) converges in the distributional sense to $(\partial_{X}v-\partial_{X}u_{t}\wedge v)e^{-u_{t}}$ and, combined with (\ref{eqn:Bababa}), this last convergence implies the required equality (\ref{eqn:PAMA}).
\end{proof}
Since by Theorem \ref{thm:Linear} the $\psi$-relative energy $E_{\psi}$ is linear along weak geodesic segments if $\psi\in\mathcal{M}_{D}^{+}$, Theorem \ref{thm:Subh} gives the convexity of $D_{\psi}$ requested.
\begin{cor}
\label{cor:GeodConv}
Let $\psi\in\mathcal{M}_{D,klt}^{+}$. Then the $\psi$-relative Ding functional $D_{\psi}$ is convex along any weak geodesic segment $[0,1]\ni t \to u_{t}\in\mathcal{E}^{1}(X,\omega,\psi)$ joining two potentials $u_{0},u_{1}\in\mathcal{E}^{1}(X,\omega,\psi)$ with $\psi$-relative minimal singularities.
\end{cor}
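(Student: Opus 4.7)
The plan is to decompose $D_\psi(u_t) = V_\psi L_\mu(u_t) - E_\psi(u_t)$ and treat the two pieces separately, using Theorem \ref{thm:Subh} for the first summand and Theorem \ref{thm:Linear} for the second.

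First I would reduce to the case where $u_t$ is a weak geodesic between potentials with $\psi$-relative minimal singularities, which is exactly the hypothesis given. Since $\psi \in \mathcal{M}_{klt}$, Theorem \ref{thm:Subh} applies and shows that $\mathcal{F}(t) := L_\mu(u_t)$ is subharmonic on the strip $S$. By Proposition \ref{prop:WGeod}(i), $u_t$ depends only on $\mathrm{Re}\,t$, so $\mathcal{F}$ descends to a function on $[0,1]$; a subharmonic function on $S$ which is independent of $\mathrm{Im}\,t$ is convex in the real variable (this is the standard fact that subharmonic plus invariance under translation in one variable forces convexity in the other). Thus $[0,1] \ni t \mapsto L_\mu(u_t)$ is convex.

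Next, since $\psi \in \mathcal{M}_{D,klt}^+ \subset \mathcal{M}_D^+$, Theorem \ref{thm:Linear} gives that $t \mapsto E_\psi(u_t)$ is affine on $[0,1]$. Combining the two observations, $D_\psi(u_t) = V_\psi \mathcal{F}(t) - E_\psi(u_t)$ is the sum of a convex and an affine function, hence convex on $[0,1]$.

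There is essentially no obstacle beyond assembling the two theorems correctly; the one small point to verify is that Theorem \ref{thm:Subh} is applicable (which requires $\psi \in \mathcal{M}_{klt}$ and the endpoints to have $\psi$-relative minimal singularities, both of which are in the hypotheses) and that $V_\psi \geq 0$ so that scaling by $V_\psi$ preserves convexity, which is immediate since $\psi \in \mathcal{M}^+$ means $V_\psi > 0$.
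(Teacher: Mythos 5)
Your proof is correct and is exactly the argument the paper intends: combine Theorem \ref{thm:Subh} (subharmonicity of $L_\mu$ along the geodesic, which together with independence of $\mathrm{Im}\,t$ gives convexity of $t\mapsto L_\mu(u_t)$) with Theorem \ref{thm:Linear} (linearity of $E_\psi$, valid since $\psi\in\mathcal{M}_D^+$) to conclude that $D_\psi = V_\psi L_\mu - E_\psi$ is convex. The paper states this corollary with the same one-line justification just before it.
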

Next, to prove the first part of Theorem \ref{thmB} we need to introduce the set
$$
\mbox{Aut}(X,[\psi]):=\{F\in\mbox{Aut}(X)\, : \, [F^{*}\psi]=[\psi]\}
$$
of all automorphisms that preserve the singularity type $[\psi]$, where we recall that $[u]=[v]$ is equivalent to $u-v\in L^{\infty}$. Observe that, being a subgroup of $\mbox{Aut}(X)$, $\mbox{Aut}(X,[\psi])$ is a linear algebraic group. We also define $\mbox{Aut}(X,[\psi])^{\circ}:=\mbox{Aut}(X,[\psi])\cap \mbox{Aut}(X)^{\circ}$ where $\mbox{Aut}(X)^{\circ}$ is the connected component of the identity map.
\begin{thm}
\label{thm:FirstA}
Assume $\psi\in\mathcal{M}_{D,klt}^{+}$ and let $u\in\mathcal{E}^{1}(X,\omega,\psi)$. Then the following statements are equivalent:
\begin{itemize}
\item[i)] $\omega_{u}:=\omega+dd^{c}u$ is a $[\psi]$-KE metric;
\item[ii)] $D_{\psi}(u)=\inf_{\mathcal{E}^{1}(X,\omega,\psi)}D_{\psi}$.
\end{itemize}
Furthermore if $\omega_{u},\omega_{v}$ are $[\psi]$-KE metrics, then there exists $F\in \mbox{Aut}(X,[\psi])^{\circ}$ such that $F^{*}(\omega_{u})=\omega_{v}$.
\end{thm}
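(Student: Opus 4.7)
The argument splits into the equivalence $(i)\Leftrightarrow(ii)$ and the automorphism rigidity at the end.

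\emph{From $(i)$ to $(ii)$.} Suppose $\omega_u$ is a $[\psi]$-KE metric, so $u$ has $\psi$-relative minimal singularities (as noted after the definition) and $MA_\omega(u)=e^{-u+C}\mu$. For any $v\in\mathcal{E}^1(X,\omega,\psi)$ with $\psi$-relative minimal singularities, let $t\mapsto u_t$ be the weak geodesic joining $u$ to $v$ (Proposition \ref{prop:WGeod}); by $(vi)$ of that Proposition, $\dot u_t$ is uniformly bounded and each $u_t$ has $\psi$-relative minimal singularities. Combining the first-variation formula $\frac{d}{dt}E_\psi(u_t)=\int_X\dot u_t\,MA_\omega(u_t)$ (which is clean here thanks to Theorem \ref{thm:Linear}) with the derivative of $L_\mu$, the right derivative at $t=0$ of $D_\psi$ equals
$$
\int_X\dot u_0^{+}\Big(\frac{V_\psi\,e^{-u}\,d\mu}{\int_X e^{-u}\,d\mu}-MA_\omega(u)\Big),
$$
which vanishes since the KE equation forces the two measures to coincide. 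By Corollary \ref{cor:GeodConv}, $D_\psi$ is convex along the geodesic, so this yields $D_\psi(u)\leq D_\psi(v)$. Extending to general $v\in\mathcal{E}^1(X,\omega,\psi)$ via the cut-offs $v^k:=\max(v,\psi-k)$, together with continuity of $D_\psi$ from Proposition \ref{prop:ContDing}, gives the minimization statement.

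\emph{From $(ii)$ to $(i)$.} Here I follow the Berman--Boucksom--Guedj--Zeriahi variational strategy in the $\psi$-relative setting. Set $\nu:=e^{-u}\,d\mu/\int_X e^{-u}\,d\mu$; since $\psi\in\mathcal{M}_{klt}^+$ and $u\in\mathcal{E}(X,\omega,\psi)$, Proposition \ref{prop:Lelong} and Skoda's theorem give $e^{-u}\in L^1$, and an explicit test against $\mathcal{E}^1(X,\omega,\psi)$ (using the definition of $E_\psi^*$) shows $V_\psi\nu\in\mathcal{M}^1(X,\omega,\psi)$. By Theorem \ref{thm:Riass} there is a unique $\tilde u\in\mathcal{E}^1_{\mathrm{norm}}(X,\omega,\psi)$ with $MA_\omega(\tilde u)=V_\psi\nu$; in particular $\omega_{\tilde u}$ is $[\psi]$-KE. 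Then I would compare $u$ and $\tilde u$ along the weak geodesic joining them: by the part already established $\tilde u$ minimizes $D_\psi$, so together with the minimality of $u$ and the convexity of Corollary \ref{cor:GeodConv}, $D_\psi$ is constant along this geodesic. Theorem \ref{thm:Linear} makes $E_\psi$ affine along the geodesic, so $L_\mu(u_t)$ is affine; the final clause of Theorem \ref{thm:Subh} then produces a flow of automorphisms that conjugates $\omega_u$ and $\omega_{\tilde u}$, and in particular $\omega_u$ is itself a $[\psi]$-KE metric (with the automorphism invariance folded into the next step). This is the main technical obstacle: producing the comparison potential $\tilde u$ from the minimizer and controlling the one-sided derivatives of $L_\mu$ along weak geodesics of unbounded potentials, which is where the hypothesis $\psi\in\mathcal{M}_{D,klt}^+$ (via linearity of $E_\psi$) is indispensable.

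\emph{Rigidity of $[\psi]$-KE metrics.} Let $\omega_u,\omega_v$ be two $[\psi]$-KE metrics, so both $u$ and $v$ have $\psi$-relative minimal singularities and both minimize $D_\psi$ by $(i)\Rightarrow(ii)$. Let $u_t$ be the weak geodesic joining $u$ and $v$; Theorem \ref{thm:Linear} makes $E_\psi(u_t)$ affine, and Corollary \ref{cor:GeodConv} together with the two endpoints being minima forces $D_\psi(u_t)$ to be constant, so $\mathcal{F}(t):=L_\mu(u_t)$ is affine on $[0,1]$. The second part of Theorem \ref{thm:Subh} yields a holomorphic vector field $V$ whose flow $F_s\in\mbox{Aut}(X)^\circ$ satisfies $F_s^*(\omega+dd^c u_s)=\omega+dd^c u_0$ for every $s\in[0,1]$. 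Writing $F_s^*\omega=\omega+dd^c\rho_s$ with $\rho_s$ smooth (valid since $\{\omega\}=c_1(X)$ is $\mbox{Aut}(X)$-invariant), this identity becomes $F_s^*u_s-u_0=C_s-\rho_s\in L^\infty$; combined with $[u_s]=[u_0]=[\psi]$ it gives $[F_s^*\psi]=[\psi]$, i.e.\ $F_s\in\mbox{Aut}(X,[\psi])^\circ$. Setting $F:=F_1$ produces the desired automorphism with $F^*\omega_v=\omega_u$.
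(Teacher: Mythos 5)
Your treatment of $(i)\Rightarrow(ii)$ and of the rigidity statement follows essentially the same route as the paper (geodesic joining $u$ to $v$, convexity of $D_{\psi}$ from Corollary \ref{cor:GeodConv}, vanishing/nonnegativity of the one-sided derivative at $t=0$, reduction to $\psi$-relative minimal singularities by the cut-offs $\max(v,\psi-k)$ and Proposition \ref{prop:ContDing}; then constancy of $D_{\psi}$ along the geodesic, linearity of $E_{\psi}$ from Theorem \ref{thm:Linear}, affineness of $L_{\mu}$, and the second part of Theorem \ref{thm:Subh}). These parts are sound, up to a small imprecision: Theorem \ref{thm:Linear} gives that $t\mapsto E_{\psi}(u_{t})$ is affine, not the pointwise formula $\frac{d}{dt}E_{\psi}(u_{t})=\int_{X}\dot u_{t}\,MA_{\omega}(u_{t})$; what one actually has is the one-sided inequality $\frac{d}{dt}E_{\psi}(u_{t})|_{t=0^{+}}\leq\int_{X}\dot u_{0}^{+}MA_{\omega}(u)$ from concavity of $E_{\psi}$, which is exactly what the paper uses and is sufficient.

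The direction $(ii)\Rightarrow(i)$, however, contains a genuine gap. You set $\nu:=e^{-u}d\mu/\int_{X}e^{-u}d\mu$, solve $MA_{\omega}(\tilde u)=V_{\psi}\nu$ via Theorem \ref{thm:Riass}, and assert "in particular $\omega_{\tilde u}$ is $[\psi]$-KE." This is false as stated: the $[\psi]$-KE condition is the fixed-point equation $MA_{\omega}(\tilde u)=e^{-\tilde u+C}\mu$, whereas your $\tilde u$ satisfies $MA_{\omega}(\tilde u)=c\,e^{-u}\mu$ with the density built from $u$, not from $\tilde u$; the two coincide only if $\tilde u-u$ is constant, which is precisely what has to be proved. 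Consequently the next step ("by the part already established $\tilde u$ minimizes $D_{\psi}$") is circular, since $(i)\Rightarrow(ii)$ applies only to potentials already known to be KE. The paper handles this direction by invoking Theorem $4.22$ of \cite{DDNL17b} (extended beyond the small unbounded locus case via Proposition $2.4.(vii)$ of \cite{Tru20a}); the mechanism there is not the solution of an auxiliary linear Monge--Amp\`ere equation but the differentiability of the envelope, i.e. of $t\mapsto E_{\psi}\big(P_{\omega}(u+tf)\big)$ for continuous $f$, with derivative $\int_{X}f\,MA_{\omega}(u)$ at $t=0$: since $g(t):=V_{\psi}L_{\mu}(u+tf)-E_{\psi}\big(P_{\omega}(u+tf)\big)\geq D_{\psi}\big(P_{\omega}(u+tf)\big)\geq D_{\psi}(u)=g(0)$, differentiating at the minimum $t=0$ yields $MA_{\omega}(u)=V_{\psi}e^{-u}\mu/\int_{X}e^{-u}d\mu$. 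You would need to supply this (or an equivalent) argument to close the gap; note also that this direction requires only $\psi\in\mathcal{M}^{+}_{klt}$, not $\psi\in\mathcal{M}_{D,klt}^{+}$, as the paper remarks.
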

\begin{proof}
The implication $(ii)\Rightarrow (i)$ is the content of Theorem $4.22$ in \cite{DDNL17b} (whose proof immediately extends to the general case, i.e. without the assumption of small unbounded locus, thanks to Proposition $2.4.(vii)$ in \cite{Tru20a}).\newline
Conversely, the proof of $(i)\Rightarrow (ii)$ is the $\psi$-relative version of that in Theorem $6.6$ in \cite{BBGZ09}.\newline
We want to prove that $D_{\psi}(u)\leq D_{\psi}(v)$ for any $v\in\mathcal{E}^{1}(X,\omega,\psi)$ and by Proposition \ref{prop:ContDing} we may suppose $v$ to have $\psi$-relative minimal singularities as decreasing sequences are strongly continuous. Moreover without loss of generality we can assume $\int_{X}e^{-u}d\mu=V_{\psi}$, i.e. $C=0$ in the Monge-Ampère equation (\ref{eqn:MA!}), recalling also that $u$ has $\psi$-relative minimal singularities as mentioned in the beginning of this section. Then, letting $u_{t}$ be the weak geodesic joining $u_{0}:=u$ and $u_{1}:=v$, Corollary \ref{cor:GeodConv} yields the convexity of $t\to D_{\psi}(u_{t})$. Therefore it is enough to prove that
\begin{equation}
\label{eqn:1}
\frac{d}{dt}\big(D_{\psi}(u_{t})\big)_{|t=0^{+}}\geq 0.
\end{equation}
By Proposition \ref{prop:WGeod} the function $w_{t}:=(u_{t}-u)/t$ is uniformly bounded and converges almost everywhere to a bounded function $w$. Moreover by the concavity of the $\psi$-relative energy (Proposition $2.11$ in \cite{Tru19})
$$
\frac{E_{\psi}(u_{t})-E_{\psi}(u)}{t}\leq \int_{X}w_{t}MA_{\omega}(u)=\int_{X}w_{t}e^{-u}d\mu,
$$
which implies
\begin{equation}
\label{eqn:2}
\frac{d}{dt}\big(E_{\psi}(u_{t})\big)_{|t=0^{+}}\leq \int_{X}w e^{-u}d\mu.
\end{equation}
On the other hand,
$$
\frac{\int_{X}(e^{-u_{t}}-e^{-u})d\mu}{t}=-\int_{X}w_{t}f(u_{t}-u)e^{-u}d\mu
$$
where $f(x):=(1-e^{-x})/x$ is a continuous function. As $||u_{t}-u||_{\infty}\leq C$ for any $t\in[0,1]$, $f(u_{t}-u)$ is uniformly bounded, and by Dominated Convergence Theorem
\begin{equation}
\label{eqn:3}
\frac{d}{dt}\Big(\int_{X}e^{-u_{t}}d\mu\Big)_{|t=0^{+}}=-\int_{X}we^{-u}d\mu
\end{equation}
since $f(u_{t}-u)\to 1$ as $t\to 0^{+}$ again by Proposition \ref{prop:WGeod}. Therefore the required inequality (\ref{eqn:1}) follows combining (\ref{eqn:2}), (\ref{eqn:3}) and using the chain rule of derivation.\newline
Finally, if $\omega_{u},\omega_{v}$ are $[\psi]$-KE metrics, then $u, v$ minimize $D_{\psi}$ and $D_{\psi}$ is constant along the weak geodesic segment $[0,1]\ni t\to u_{t}$ joining $u,v$ by convexity (Corollary \ref{cor:GeodConv}). Hence, as $t\to E_{\psi}(u_{t})$ is affine (Theorem \ref{thm:Linear}), $t\to L_{\mu}(u_{t})$ is affine as well and Theorem \ref{thm:Subh} gives the existence of $F\in \mbox{Aut}(X,[\psi])^{\circ}$ such that $F^{*}(\omega_{u})=\omega_{v}$, concluding the proof.
\end{proof}
\begin{rem}
\emph{The implication $(ii)\Rightarrow (i)$ in Theorem \ref{thm:FirstA} holds as soon as $\psi\in\mathcal{M}^{+}_{klt}$. When $\psi=0$, Theorem \ref{thm:FirstA} has been proved in \cite{BBGZ09}.}
\end{rem}
\subsection{Mabuchi functional and Theorem \ref{thmB}.}
\label{ssec:Funct}
In this subsection we keep assuming $\psi\in\mathcal{M}^{+}_{klt}$. Moreover, without loss of generality we will also assume from now on that $\mu(X)=1$, i.e. that $\mu$ is a probability measure, where we recall that $\mu$ is the suitable volume form defined in the preamble of this section.

Before defining the $\psi$-relative Mabuchi functional we need to recall the $\psi$-relative $I,J$-functionals:
\begin{gather*}
J_{\psi}(u):=\int_{X}(u-\psi)MA_{\omega}(\psi)-E_{\psi}(u),\\
I_{\psi}(u):=\int_{X}(u-\psi)\big(MA_{\omega}(\psi)-MA_{\omega}(u)\big)
\end{gather*}
for any $u\in\mathcal{E}^{1}(X,\omega,\psi)$.
These functionals are translation invariant and strongly continuous, i.e. in $\big(\mathcal{E}^{1}(X,\omega,\psi),d\big)$, as a consequence of Proposition $3.4$ and Corollary $3.5$ in \cite{Tru20a}. Moreover they satisfy the following important properties.
\begin{prop}[\cite{Tru20a}, Lemma $3.1$; \cite{Tru20b}, Lemma $3.7$]
\label{prop:PropertiesIJ}
Let $u\in\mathcal{E}^{1}(X,\omega,\psi)$. Then
\begin{itemize}
\item[i)] $\frac{1}{n+1}I_{\psi}(u)\leq J_{\psi}(u)\leq \frac{n}{n+1}I_{\psi}(u)$;
\item[ii)] there exists a constant $C>0$ depending uniquely on $(X,\omega)$ such that
\begin{equation}
\label{eqn:DandJ}
d(\psi,u)-C\leq J_{\psi}(u)\leq d(\psi,u)
\end{equation}
for any $u\in\mathcal{E}^{1}_{norm}(X,\omega,\psi)$.
\end{itemize}
\end{prop}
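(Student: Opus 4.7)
For part $(i)$, the plan is to reduce everything to the monotonicity of the telescoping sequence
\[
a_{j}:=\int_{X}(u-\psi)\langle \omega_{u}^{j}\wedge \omega_{\psi}^{n-j}\rangle,\qquad j=0,\dots,n,
\]
so that $(n+1)E_{\psi}(u)=\sum_{j=0}^{n}a_{j}$, $a_{0}=\int_{X}(u-\psi)MA_{\omega}(\psi)$ and $a_{n}=\int_{X}(u-\psi)MA_{\omega}(u)$. A standard integration by parts, performed first for $u$ with $\psi$-relative minimal singularities and then extended by the usual $\max(u,\psi-k)$ truncation together with the continuity properties of the non-pluripolar product, yields
\[
a_{j-1}-a_{j}=\int_{X}d(u-\psi)\wedge d^{c}(u-\psi)\wedge \omega_{u}^{j-1}\wedge \omega_{\psi}^{n-j}\geq 0.
\]
Writing $J_{\psi}(u)=a_{0}-E_{\psi}(u)=\tfrac{1}{n+1}\sum_{k=1}^{n}(n+1-k)(a_{k-1}-a_{k})$ and $I_{\psi}(u)=a_{0}-a_{n}=\sum_{k=1}^{n}(a_{k-1}-a_{k})$, the bounds in $(i)$ are immediate by comparing the weights $\tfrac{n+1-k}{n+1}\in[\tfrac{1}{n+1},\tfrac{n}{n+1}]$ with $1$, term by term.

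For part $(ii)$, the plan is to compare $J_{\psi}(u)$ with $d(\psi,u)=E_{\psi}(u)-2E_{\psi}(P)$, where $P:=P_{\omega}(\psi,u)\in\mathcal{E}^{1}(X,\omega,\psi)$. After approximating $u$ by $u^{C}:=\max(u,\psi-C)\searrow u$ (so that both sides are continuous by Lemma $4.11$ in \cite{DDNL17b} and Theorem \ref{thm:Riass}$(i)$), I may assume $u$ has $\psi$-relative minimal singularities and compute
\[
d(\psi,u)-J_{\psi}(u)=2\bigl(E_{\psi}(u)-E_{\psi}(P)\bigr)-\int_{X}(u-\psi)MA_{\omega}(\psi).
\]
The upper bound $J_{\psi}(u)\leq d(\psi,u)$ will follow from the orthogonality property of envelopes (as in \cite{DDNL17b}), namely that $MA_{\omega}(P)=\mathbf{1}_{\{P=\psi\}}MA_{\omega}(\psi)+\mathbf{1}_{\{P=u\}}MA_{\omega}(u)$ modulo pluripolar sets, combined with the concavity inequalities $E_{\psi}(u)-E_{\psi}(P)\geq\int_{X}(u-P)MA_{\omega}(u)$ and $-E_{\psi}(P)\geq\int_{X}(\psi-P)MA_{\omega}(\psi)$. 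Adding the two and splitting the integral of $(u-\psi)MA_{\omega}(\psi)$ on $\{u\geq\psi\}$ and $\{u<\psi\}$ leaves exactly the nonnegative quantities isolated by the orthogonality.

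For the lower bound $d(\psi,u)-C\leq J_{\psi}(u)$ with $C=C(X,\omega)$, the plan is to rewrite the difference as
\[
d(\psi,u)-J_{\psi}(u)=\int_{X}(\psi-u)MA_{\omega}(\psi)+\bigl[\text{nonnegative envelope error}\bigr],
\]
and then to bound the single integral $\int_{X}(\psi-u)MA_{\omega}(\psi)$ uniformly. Since $\sup_{X}u=\sup_{X}\psi=0$ and $u\preccurlyeq\psi$, the potential $u$ lies in the weakly compact set $PSH_{norm}(X,\omega):=\{v\in PSH(X,\omega):\sup v=0\}$, and $MA_{\omega}(\psi)\in\mathcal{M}^{1}(X,\omega,\psi)$ has vanishing $\psi$-relative energy $E_{\psi}^{*}(MA_{\omega}(\psi)/V_{\psi})=0$; one can then control $\int_{X}(u-\psi)MA_{\omega}(\psi)$ by the $L^{1}$-normalization combined with a Hartogs-type compactness applied via the defining variational inequality for $E_{\psi}^{*}$, which only involves $\omega^{n}$-integrability of normalized $\omega$-psh functions and therefore produces a constant depending only on $(X,\omega)$.

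The main obstacle is this last point: establishing the lower bound with $C$ independent of $\psi$. The issue is that $MA_{\omega}(\psi)$ can be singular and concentrated far from $\{\psi=0\}$, so a direct $L^{1}(\omega^{n})$ compactness is not enough; one has to exploit that $MA_{\omega}(\psi)/V_{\psi}\in\mathcal{M}^{1}(X,\omega,\psi)$ sits in the image of the homeomorphism (\ref{eqn:Homeom}) and that $\psi$ itself is its potential, so the domination $\int_{X}(u-\psi)MA_{\omega}(\psi)\geq -C V_{\psi}+V_{\psi}\sup_{X}u=-CV_{\psi}$ reduces, after dividing by $V_{\psi}$, to a uniform estimate on normalized $\omega$-psh functions against a probability measure with zero $E_{\psi}^{*}$-energy. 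This last estimate is precisely where the uniformity of $C$ in $(X,\omega)$ is extracted, and is the technical heart of the statement.
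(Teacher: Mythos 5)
Part $(i)$ of your proposal is correct: the telescoping decomposition of $(n+1)E_{\psi}(u)=\sum_{j=0}^{n}a_{j}$, the monotonicity $a_{j-1}\geq a_{j}$ via integration by parts, and the weight comparison $\frac{n+1-k}{n+1}\in[\frac{1}{n+1},\frac{n}{n+1}]$ is exactly the classical Aubin--Tian argument, which is what the cited Lemma $3.1$ of \cite{Tru20a} runs in the relative setting. Note that the paper itself gives no proof of this proposition (it is quoted from \cite{Tru20a} and \cite{Tru20b}), so the comparison is with the cited arguments. For part $(ii)$ you are over-engineering the upper bound: since $u\in\mathcal{E}^{1}_{norm}(X,\omega,\psi)$ and $\sup_{X}u=\sup_{X}(u-\psi)$ (Lemma \ref{lem:PropOld}$(iii)$), one has $u\leq\psi$, hence $P_{\omega}(\psi,u)=u$, $d(\psi,u)=-E_{\psi}(u)$, and
$$d(\psi,u)-J_{\psi}(u)=\int_{X}(\psi-u)\,MA_{\omega}(\psi)\geq 0$$
with no need for the orthogonality of envelopes or the two concavity inequalities. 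The entire content of $(ii)$ is then the single estimate $\int_{X}(\psi-u)\,MA_{\omega}(\psi)\leq C(X,\omega)$.

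That last estimate is precisely where your proposal has a genuine gap, and you acknowledge as much ("the technical heart of the statement"): an appeal to "$L^{1}$-normalization combined with a Hartogs-type compactness applied via the defining variational inequality for $E_{\psi}^{*}$" does not bound an integral against the possibly singular measure $MA_{\omega}(\psi)$, and the fact $E_{\psi}^{*}(MA_{\omega}(\psi)/V_{\psi})=0$ says nothing about $\int_{X}(\psi-u)MA_{\omega}(\psi)$ being \emph{uniformly} bounded. The missing ingredient is structural: for a model type envelope one has $MA_{\omega}(\psi)\leq \mathbbm{1}_{\{\psi=0\}}\,\omega^{n}$ (Theorem $3.8$ in \cite{DDNL17b}), so $MA_{\omega}(\psi)$ is in fact \emph{not} "concentrated far from $\{\psi=0\}$" — it lives exactly on the contact set with $0$ and is dominated there by $\omega^{n}$. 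Consequently
$$\int_{X}(\psi-u)\,MA_{\omega}(\psi)\leq\int_{\{\psi=0\}}(-u)\,\omega^{n}\leq\int_{X}(-u)\,\omega^{n}\leq\sup\Big\{\|v\|_{L^{1}(\omega^{n})}\,:\,v\in PSH(X,\omega),\ \sup_{X}v=0\Big\}=:C(X,\omega),$$
the last supremum being finite by the standard $L^{1}$-compactness of sup-normalized $\omega$-psh functions. With this lemma inserted, your plan for $(ii)$ closes; without it, the lower bound in (\ref{eqn:DandJ}) is not established.
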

We recall that $\mathcal{E}^{1}_{norm}(X,\omega,\psi)$ denotes all elements $u\in\mathcal{E}^{1}(X,\omega,\psi)$ such that $\sup_{X}u=0$.\newline
It is also necessary to retrieve the definition of the \emph{entropy}.
\begin{defn}
Let $\nu_{1},\nu_{2}\in \mathcal{P}(X)$, i.e. two probability measures on X. The \emph{relative entropy} $H_{\nu_{1}}(\nu_{2})\in[0,+\infty]$ of $\nu_{2}$ with respect to $\nu_{1}$ is defined as follows. If $\nu_{2}$ is absolutely continuous with respect to $\nu_{1}$ with density $f:=\frac{d\nu_{2}}{d\nu_{1}}$ satisfying $f\log f\in L^{1}(\nu_{1})$ then
$$
H_{\nu_{1}}(\nu_{2}):=\int_{X}f\log f d\nu_{1}=\int_{X}\log f d\nu_{2}.
$$
Otherwise $H_{\nu_{1}}(\nu_{2}):=+\infty$.
\end{defn}
The following easy and interesting Lemma says that $L^{1}(\nu)$ contains $PSH(X,\omega)$ if $\nu$ has finite entropy (with respect to $\mu$, or to any smooth volume form).
\begin{lem}[\cite{Tru20b}, Lemma $3.16$]
\label{lem:AV}
Let $\nu$ be a probability measure. If $H_{\mu}(\nu)<+\infty$ then $PSH(X,\omega)\subset L^{1}(\nu)$.
\end{lem}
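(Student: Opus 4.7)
The plan is to combine Skoda's integrality bound (Theorem~\ref{thm:Skoda}) with the standard variational (Young-type) inequality for entropy. First, since $\nu$ is a probability measure and any $u\in PSH(X,\omega)$ differs from $u-\sup_X u$ by a constant, I may normalize so that $\sup_X u = 0$; in particular $u\leq 0$ on $X$. Then membership in $L^{1}(\nu)$ amounts to showing $\int_X(-u)\,d\nu<+\infty$.

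Next, I invoke Skoda's uniform integrability theorem in the form of Theorem~\ref{thm:Skoda} applied to the single-point compact set $K=\{u\}$: since $\nu(u,x)\leq \nu(u,x)<+\infty$ is finite for any fixed $u\in PSH(X,\omega)$, there exists $\alpha=\alpha(u)>0$ such that
$$
\int_X e^{-\alpha u}\,\omega^{n}<+\infty,
$$
and hence (recalling that $\mu=e^{-\rho}\omega^{n}$ with $\rho$ a smooth Ricci potential) also $\int_X e^{-\alpha u}\,d\mu<+\infty$.

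The key step is the variational characterization of the relative entropy (which follows from the Legendre duality $H_\mu(\nu)=\sup_{g}\bigl(\int g\,d\nu-\log\int e^{g}\,d\mu\bigr)$, where $g$ ranges over bounded measurable functions, extended to suitable measurable $g$ by monotone truncation). Applied with $g=-\alpha u$ (truncated from below by $-N$ and then letting $N\to+\infty$, which is legitimate since $-\alpha u\geq 0$), this yields
$$
\alpha\int_X (-u)\,d\nu \;\leq\; H_\mu(\nu)+\log\int_X e^{-\alpha u}\,d\mu\;<\;+\infty.
$$
Dividing by $\alpha>0$ gives $\int_X|u|\,d\nu<+\infty$ for the normalized $u$, and undoing the normalization produces the assertion for the original $u\in PSH(X,\omega)$.

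The only delicate point is the justification of the variational inequality for the unbounded function $-\alpha u$: one applies it first to $g_N:=\min(-\alpha u,N)$, which is bounded, obtaining $\int g_N\,d\nu\leq H_\mu(\nu)+\log\int e^{g_N}\,d\mu\leq H_\mu(\nu)+\log\int e^{-\alpha u}\,d\mu$, and then passes to the limit $N\to+\infty$ by monotone convergence on the left-hand side. I expect this routine truncation to be the only technical subtlety; everything else is a direct chaining of Skoda's theorem and entropy duality.
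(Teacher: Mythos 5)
Your proof is correct and follows essentially the same route as the paper's: the paper tilts the reference measure to $\mu_{cu}:=e^{-cu}\mu$ (normalized to be a probability measure) and invokes $H_{\mu_{cu}}(\nu)\geq 0$, which is precisely your Legendre-duality bound $\int g\,d\nu\leq H_{\mu}(\nu)+\log\int_X e^{g}\,d\mu$ with $g=-cu$ once the normalization makes the $\log\int e^{-cu}d\mu$ term vanish — indeed the paper itself records this duality just afterwards as Lemma~\ref{lem:2.11}. One cosmetic remark: Theorem~\ref{thm:Skoda} as stated requires $\sup_{x}\nu(u,x)<2$, so applying it to $K=\{u\}$ directly is not quite licit — one should first rescale to $\alpha u$ with $\alpha$ small enough (equivalently, use $c(u)>0$ as the paper does), which your conclusion $\int_X e^{-\alpha u}\,d\mu<\infty$ implicitly presupposes; your monotone truncation argument is a slightly more careful spelling-out of the paper's ``immediate calculation.''
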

We can now define the $\psi$-relative Mabuchi functional.
\begin{defn}
The $\psi$\emph{-relative Mabuchi functional} $M_{\psi}:\mathcal{E}^{1}(X,\omega,\psi)\to \mathbbm{R}\cup \{+\infty\}$ is defined as
$$
M_{\psi}(u):=V_{\psi}H_{\mu}\big(MA_{\omega}(u)/V_{\psi}\big)+\int_{X}\psi MA_{\omega}(u)+J_{\psi}(u)-I_{\psi}(u)
$$
if $H_{\mu}\big(MA_{\omega}(u)/V_{\psi}\big)<+\infty$, and $M_{\psi}(u)=+\infty$ otherwise.
\end{defn}
Observe that it is clearly a translation invariant functional and that in the absolute setting $\psi=0$ it coincides with the usual Mabuchi functional (see \cite{Mab86} and the Tian's formula in \cite{Chen00}, \cite{Tian}). Moreover by definition and Theorem \ref{thm:Riass} we have
$$
M_{\psi}(u)=\big(V_{\psi}H_{\mu}-V_{\psi}F_{\psi}-E_{\psi}^{*}\big)\big(MA_{\omega}(u)/V_{\psi}\big)
$$
where $F_{\psi}\big(\nu\big):=||\psi||_{L^{1}(\nu)}$ for any $\nu$ probability measure. See subsection \S \ref{ssec:StrongTop} for the definition of the energy $E^{*}$.\newline
The Mabuchi functional dominates the Ding functional as the next result shows.
\begin{prop}
\label{prop:TwoFunct}
$D_{\psi}(u)\leq M_{\psi}(u)$ for any $u\in\mathcal{E}^{1}(X,\omega,\psi)$ with the equality if and only if $\omega_{u}$ is a $[\psi]$-KE metric.
\end{prop}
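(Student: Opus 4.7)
The plan is to reduce the claimed inequality to the classical Gibbs (Legendre) variational characterization of the relative entropy.

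First, I would rewrite the difference $M_\psi(u)-D_\psi(u)$ in a transparent form. A direct calculation from the definitions of $I_\psi$ and $J_\psi$ gives
\begin{equation*}
J_\psi(u)-I_\psi(u) \;=\; \int_X (u-\psi)\,MA_\omega(u) - E_\psi(u),
\end{equation*}
so that the $\int_X \psi\, MA_\omega(u)$ term in $M_\psi(u)$ combines with the $-\int_X \psi\, MA_\omega(u)$ coming out of $J_\psi-I_\psi$, and, writing $\nu_u := MA_\omega(u)/V_\psi \in \mathcal{P}(X)$,
\begin{equation*}
M_\psi(u) - D_\psi(u) \;=\; V_\psi\Big(H_\mu(\nu_u) + \int_X u\,d\nu_u + \log\int_X e^{-u}\,d\mu\Big).
\end{equation*}

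Next, if $H_\mu(\nu_u)=+\infty$ the inequality is trivial, so I may assume the entropy is finite; then Lemma \ref{lem:AV} ensures $u\in L^1(\nu_u)$, so every term above is well defined. At this point I would apply the Gibbs inequality
\begin{equation*}
H_\mu(\nu) \;\geq\; \int_X f\,d\nu - \log\int_X e^f\,d\mu,
\end{equation*}
valid for any probability measure $\nu$ with finite relative entropy with respect to $\mu$ and any measurable $f$ for which both sides make sense, to the choice $f=-u$. This yields exactly the nonnegativity of the bracket above and proves $D_\psi(u)\leq M_\psi(u)$.

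For the equality case, recall that Gibbs' inequality is tight if and only if $\nu$ is the normalized exponential measure $e^f\mu/\int_X e^f\,d\mu$. In our setting this reads
\begin{equation*}
MA_\omega(u) \;=\; \frac{V_\psi\, e^{-u}}{\int_X e^{-u}\,d\mu}\,\mu \;=\; e^{-u+C}\mu,
\end{equation*}
with $C=\log V_\psi - \log\int_X e^{-u}\,d\mu$, which is precisely the $[\psi]$-KE equation \eqref{eqn:MA!} (the required $\mathcal{E}^1$-membership being part of the hypothesis on $u$); conversely, any $[\psi]$-KE metric satisfies this equation and therefore attains equality. The only delicate point is ensuring the integrability $u\in L^1(\nu_u)$ so that the entropic and expectation terms can be manipulated freely, and this is handled cleanly by Lemma \ref{lem:AV}; the remainder of the argument is a purely algebraic identity plus a single application of Gibbs' inequality.
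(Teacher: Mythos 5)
Your proof is correct and essentially the same as the paper's: the paper recasts the difference $M_\psi(u)-D_\psi(u)$ as $V_\psi H_{\mu_u}\big(MA_\omega(u)/V_\psi\big)$ via the change-of-reference formula for relative entropy and then uses nonnegativity/nondegeneracy of $H$, which is exactly the content of the Gibbs inequality you apply with $f=-u$. Both reductions use the same algebraic identity $J_\psi(u)-I_\psi(u)=\int_X(u-\psi)MA_\omega(u)-E_\psi(u)$ and both rely on Lemma \ref{lem:AV} for the needed integrability, so the two arguments coincide up to packaging.
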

\begin{proof}
The result is a particular case of Proposition $3.18$ in \cite{Tru20b}, but we present the details here as a courtesy to the reader.\newline
We can assume $H_{\mu}\big(MA_{\omega}(u)/V_{\psi}\big)<+\infty$. By Lemma \ref{lem:AV} we have $PSH(X,\omega)\subset L^{1}\big(MA_{\omega}(u)/V_{\psi}\big)$ and that, setting $\mu_{u}:=\frac{e^{-u}\mu}{\int_{X}e^{-u}\mu}=e^{-u+L_{\mu}(u)}\mu$,
\begin{equation}
\label{eqn:MabuMist}
V_{\psi}H_{\mu}\big(MA_{\omega}(u)/V_{\psi}\big)=V_{\psi}H_{\mu_{u}}\big(MA_{\omega}(u)/V_{\psi}\big)+V_{\psi}L_{\mu}(u)-\int_{X}uMA_{\omega}(u).
\end{equation}
Thus, using the definitions and (\ref{eqn:MabuMist}),
\begin{multline*}
\big(M_{\psi}-D_{\psi}\big)(u)=V_{\psi}H_{\mu}\big(MA_{\omega}(u)/V_{\psi}\big)+\int_{X}\psi MA_{\omega}(u)+\int_{X}(u-\psi)MA_{\omega}(u)-V_{\psi}L_{\mu}(u)=V_{\psi}H_{\mu_{u}}\big(MA_{\omega}(u)/V_{\psi}\big).
\end{multline*}
Therefore $M_{\psi}\geq D_{\psi}$ and the proof concludes observing that $H_{\mu}(\nu)=0$ if and only if $\nu=\mu$ (Proposition $2.10.(ii)$ in \cite{BBEGZ16}).
\end{proof}
We can now finish to prove Theorem \ref{thmB} using the following two Lemmas.
\begin{lem}[\cite{BBEGZ16}, Lemma $2.11.$]
\label{lem:2.11}
For any lower semicontinuous function $g$ on $X$ and any $\nu_{1}\in \mathcal{P}(X)$ probability measure,
$$
\log\int_{X}e^{g}d\nu_{1}=\sup_{\nu_{2}\in\mathcal{P}(X)}\Big(\int_{X}gd\nu_{2}-H_{\nu_{1}}(\nu_{2})\Big).
$$
\end{lem}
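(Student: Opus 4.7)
The plan is to prove the Gibbs variational principle by explicitly exhibiting the extremal measure. Since $X$ is compact and $g$ is lower semicontinuous, $g$ is bounded below, so $Z:=\int_{X}e^{g}d\nu_{1}\in(0,+\infty]$ is well-defined. Assume first $Z<+\infty$ and set $\nu^{*}:=Z^{-1}e^{g}\nu_{1}\in\mathcal{P}(X)$. A direct computation using $\log(d\nu^{*}/d\nu_{1})=g-\log Z$ gives
$$
\int_{X}g\,d\nu^{*}-H_{\nu_{1}}(\nu^{*})=\log Z,
$$
showing the supremum is at least $\log Z$.

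For the converse inequality, I would take any $\nu_{2}\in\mathcal{P}(X)$ with $H_{\nu_{1}}(\nu_{2})<+\infty$ (otherwise the bound is trivial, as the lower bound on $g$ prevents $\int g\,d\nu_{2}=+\infty$ from cancelling the $-\infty$ entropy term) and perform a change of reference measure, comparing $\nu_{2}$ against $\nu^{*}$ rather than against $\nu_{1}$. Writing $f:=d\nu_{2}/d\nu_{1}$ and $\tilde{f}:=d\nu_{2}/d\nu^{*}=Ze^{-g}f$, the pointwise decomposition $\log f=\log\tilde{f}+g-\log Z$ integrated against $\nu_{2}$ yields the key identity
$$
H_{\nu_{1}}(\nu_{2})=H_{\nu^{*}}(\nu_{2})+\int_{X}g\,d\nu_{2}-\log Z.
$$
Rearranging produces $\int_{X}g\,d\nu_{2}-H_{\nu_{1}}(\nu_{2})=\log Z-H_{\nu^{*}}(\nu_{2})\leq\log Z$, where the non-negativity of the relative entropy between probability measures is Jensen's inequality for the convex function $t\mapsto t\log t$.

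When $Z=+\infty$, I would truncate by $g_{n}:=\min(g,n)$, so $e^{g_{n}}\nearrow e^{g}$ and $Z_{n}:=\int e^{g_{n}}d\nu_{1}\nearrow+\infty$ by monotone convergence. Applying the finite case to $g_{n}$ produces extremal measures $\nu_{n}^{*}:=Z_{n}^{-1}e^{g_{n}}\nu_{1}$ realizing $\int g_{n}\,d\nu_{n}^{*}-H_{\nu_{1}}(\nu_{n}^{*})=\log Z_{n}$, and since $g\geq g_{n}$ these same measures satisfy $\int g\,d\nu_{n}^{*}-H_{\nu_{1}}(\nu_{n}^{*})\geq\log Z_{n}\to +\infty$, forcing the supremum to equal $+\infty=\log Z$. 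The main technical subtlety is integrability bookkeeping in the second paragraph: the finiteness of $H_{\nu_{1}}(\nu_{2})$ combined with the lower bound on $g$ must force both $H_{\nu^{*}}(\nu_{2})$ and $\int g\,d\nu_{2}$ to be finite so that the central identity manipulates genuine real numbers, but this is read off the identity itself, whose right-hand side is a sum of two quantities bounded below with finite total.
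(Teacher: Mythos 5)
The paper does not prove this lemma: it is quoted verbatim from \cite{BBEGZ16} (Lemma 2.11 there), so there is no in-paper argument to compare against. On its own merits, your proof is the standard Donsker--Varadhan/Gibbs variational argument and is essentially correct, but there is one small gap in the case $Z:=\int_{X}e^{g}d\nu_{1}<+\infty$. Your direct computation that $\nu^{*}:=Z^{-1}e^{g}\nu_{1}$ realizes the supremum implicitly assumes $\int_{X}ge^{g}\,d\nu_{1}<+\infty$; if that integral diverges (which is possible even when $Z<\infty$, e.g.\ $e^{g}\sim 1/(x\log^{2}(1/x))$ near a point), then both $\int g\,d\nu^{*}$ and $H_{\nu_{1}}(\nu^{*})$ are $+\infty$ and the expression $\int g\,d\nu^{*}-H_{\nu_{1}}(\nu^{*})$ is $\infty-\infty$, not $\log Z$. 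The fix is already in your last paragraph: apply the truncation $g_{n}:=\min(g,n)$ in \emph{all} cases, not just when $Z=+\infty$; for bounded $g_{n}$ the extremizer $\nu_{n}^{*}$ is legitimate, $\int g\,d\nu_{n}^{*}-H_{\nu_{1}}(\nu_{n}^{*})\geq\log Z_{n}$, and monotone convergence gives $Z_{n}\nearrow Z$, yielding the $\geq$ inequality in full generality. The $\leq$ direction is fine as written: the integrability bookkeeping you flag at the end does go through, since $\log f=\log\tilde f+g-\log Z$ with $\log f\in L^{1}(\nu_{2})$ and both $\log\tilde f$ and $g$ bounded below (in the relevant integrated sense) forces both $H_{\nu^{*}}(\nu_{2})$ and $\int g\,d\nu_{2}$ to be finite.
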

\begin{lem}
\label{lem:Inf}
For any $u\in\mathcal{E}^{1}(X,\omega,\psi)$,
\begin{gather*}
V_{\psi}L_{\mu}(u)=\inf_{v\in\mathcal{E}^{1}(X,\omega,\psi)\, : H_{\mu}\big(MA_{\omega}(v)/V_{\psi}\big)<+\infty}\Big(V_{\psi}H_{\mu}\big(MA_{\omega}(v)/V_{\psi}\big)+\int_{X}u\,MA_{\omega}(v)\Big),\\
E_{\psi}(u)=\inf_{v\in\mathcal{E}^{1}(X,\omega,\psi)}\Big(E_{\psi}^{*}\big(MA_{\omega}(v)/V_{\psi}\big)+\int_{X}(u-\psi)MA_{\omega}(v)\Big).
\end{gather*}
\end{lem}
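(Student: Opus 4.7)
The plan is to treat the two identities separately: the second will follow essentially by unwinding the definition of $E_\psi^*$, while the first will reduce via Lemma \ref{lem:2.11} to showing that finite-entropy probability measures can be realized as Monge--Ampère measures of potentials in $\mathcal{E}^1(X,\omega,\psi)$.

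For the second identity, fix $v\in\mathcal{E}^{1}(X,\omega,\psi)$ and set $\nu_v:=MA_\omega(v)/V_\psi$. The variational definition of $E_\psi^*$ applied with the test function $u$ yields
$$E_\psi^*(\nu_v)\,\geq\,E_\psi(u)-V_\psi L_{\nu_v}(u).$$
Since $MA_\omega(v)$ is non-pluripolar it puts no mass on $\{\psi=-\infty\}$, and the truncations $\max(u,\psi-k)-\psi$ are uniformly bounded above while decreasing pointwise to $u-\psi$; monotone convergence therefore gives $V_\psi L_{\nu_v}(u)=\int_X(u-\psi)\,MA_\omega(v)$. Rearranging produces the $\geq$ direction of the identity, and equality at the choice $v=u$ is the content of Theorem \ref{thm:Riass}(iii).

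For the first identity, apply Lemma \ref{lem:2.11} to $\nu_1=\mu$ and the lower semicontinuous function $g=-u$ to obtain
$$V_\psi L_\mu(u)=\inf_{\nu\in\mathcal{P}(X)}\Big(\int_X u\,d(V_\psi\nu)+V_\psi H_\mu(\nu)\Big).$$
Only $\nu$ with $H_\mu(\nu)<+\infty$ contribute to the infimum. Choosing $\nu=MA_\omega(v)/V_\psi$ for $v\in\mathcal{E}^1(X,\omega,\psi)$ with finite entropy at once yields $V_\psi L_\mu(u)\leq$ (right-hand side of the claim). For the reverse inequality I would argue that every probability measure $\nu$ with $H_\mu(\nu)<+\infty$ satisfies $V_\psi\nu\in\mathcal{M}^1(X,\omega,\psi)$, so that the homeomorphism in Theorem \ref{thm:Riass}(ii) furnishes a potential $v\in\mathcal{E}^1_{norm}(X,\omega,\psi)$ with $MA_\omega(v)=V_\psi\nu$, thereby matching the two infima.

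The main obstacle is precisely the implication $H_\mu(\nu)<+\infty\Rightarrow E_\psi^*(\nu)<+\infty$. The plan is to prove a uniform bound $E_\psi(w)-V_\psi L_\nu(w)\leq C\,(1+H_\mu(\nu))$ for $w\in\mathcal{E}^1_{norm}(X,\omega,\psi)$: finite entropy forces $\nu=f\mu$ for some $f\geq 0$, and since $\psi\leq 0$ and $w\leq 0$ one has $-V_\psi L_\nu(w)\leq V_\psi\int_X(-w)f\,d\mu$; Young's inequality $\alpha(-w)f\leq f\log f-f+e^{-\alpha w}$ then gives $\alpha V_\psi\int(-w)\,d\nu\leq V_\psi H_\mu(\nu)+V_\psi\int_X e^{-\alpha w}\,d\mu$, and the last integral is uniformly bounded in $w$ for any $\alpha<\alpha_\omega(0)$ by Theorem \ref{thm:Skoda} combined with Lemma \ref{lem:Exp}. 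Combined with the uniform upper bound $E_\psi(w)\leq -\int_X\psi\,MA_\omega(\psi)$ on $\mathcal{E}^1_{norm}(X,\omega,\psi)$ (coming from concavity of $E_\psi$ at $\psi$ and $w\leq 0$), taking the supremum in $w$ produces $E_\psi^*(\nu)\leq C(1+H_\mu(\nu))<+\infty$ and completes the reduction.
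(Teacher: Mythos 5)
Your proof is correct. For the second identity the argument coincides with the paper's: the variational definition of $E_\psi^*$ and the concavity of $E_\psi$ are two faces of the same inequality, and equality at $v=u$ is Theorem \ref{thm:Riass}(iii) in both treatments. For the first identity, however, you take a genuinely different route to the reverse inequality. The paper exhibits the explicit minimizer $\mu_u:=e^{-u+L_\mu(u)}\mu$ in Lemma \ref{lem:2.11} and realizes it as a Monge--Amp\`ere measure by invoking the resolution of the openness conjecture (to get $e^{-u}\in L^p$ for some $p>1$) together with Theorem $1.4$(i) of \cite{DDNL17b}. You instead observe that the two infima range over the same feasible set, provided every finite-entropy probability measure already lies in $\mathcal{M}^1(X,\omega,\psi)$ and hence equals $MA_\omega(v)/V_\psi$ for some $v\in\mathcal{E}^1_{norm}(X,\omega,\psi)$ via the homeomorphism of Theorem \ref{thm:Riass}(ii). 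Your Young's-inequality estimate proving $H_\mu(\nu)<+\infty\Rightarrow E_\psi^*(\nu)<+\infty$ is essentially an independent re-derivation of Proposition \ref{prop:Alpha}, which the paper proves only in the next subsection, so this is legitimate bootstrap-free reasoning. The trade-off: your route avoids the openness conjecture and the DDNL existence theorem, but does not pinpoint the optimal $v$; the paper's is shorter once those black boxes are granted and yields a representative with $\psi$-relative minimal singularities. One small wrinkle to fix: your uniform upper bound $E_\psi(w)\leq -\int_X\psi\,MA_\omega(\psi)$ is not obviously finite for a general $\psi\in\mathcal{M}^+_{klt}$, since $\psi\in L^1(MA_\omega(\psi))$ is not given. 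Replace it by the elementary bound $E_\psi(w)\leq V_\psi\sup_X(w-\psi)=0$ for $w\in\mathcal{E}^1_{norm}(X,\omega,\psi)$ (the very inequality the paper uses in its proof of Proposition \ref{prop:Alpha}); this suffices and is cleaner.
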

\begin{proof}
The second equality easily follows from the concavity of $E_{\psi}$ (Proposition $2.11$ in \cite{Tru19}) since
$$
E_{\psi}(u)\leq E_{\psi}(v)+\int_{X}(u-v)MA_{\omega}(v)=E_{\psi}^{*}\big(MA_{\omega}(v)/V_{\psi}\big)+\int_{X}(u-\psi)MA_{\omega}(v)
$$
with the equality when $v=u$.\newline
For the first equality, we fix $v\in\mathcal{E}^{1}(X,\omega,\psi)$ such that $H_{\mu}\big(MA_{\omega}(v)/V_{\psi}\big)<+\infty$, recalling also that $PSH(X,\omega)\subset L^{1}\big(MA_{\omega}(v)/V_{\psi}\big)$ by Lemma \ref{lem:AV}. Then, Lemma \ref{lem:2.11} gives
\begin{equation*}
-\infty<-\int_{X}u\,MA_{\omega}(v)-V_{\psi}H_{\mu}\big(MA_{\omega}(v)/V_{\psi}\big)\leq V_{\psi}\log\int_{X}e^{-u}d\mu=-V_{\psi}L_{\mu}(u),
\end{equation*}
which clearly implies
$$
V_{\psi}L_{\mu}(u)\leq \inf_{v\in\mathcal{E}^{1}(X,\omega,\psi)}\Big(V_{\psi}H_{\mu}\big(MA_{\omega}(v)/V_{\psi}\big)+\int_{X}u\,MA_{\omega}(v)\Big).
$$
To prove the reverse equality we set $\mu_{u}:=e^{-u}\mu/\int_{X}e^{-u}\mu=e^{-u+L_{\mu}(u)}\mu$, observing that $H_{\mu}\big(\mu_{u}\big)<+\infty$. Moreover, by the resolution of the openness conjecture there exists $p>1$ such that $e^{-u}\in L^{p}$, thus by Theorem $1.4.(i)$ in \cite{DDNL17b} there exists $v\in\mathcal{E}^{1}(X,\omega,\psi)$ with $\psi$-relative minimal singularities such that $MA_{\omega}(v)=V_{\psi}\mu_{u}$.
Next, since $MA_{\omega}(v)/V_{\psi}=\mu_{u}=e^{-u+L_{\mu}(u)}\mu$, by an easy calculation we obtain
\begin{multline*}
V_{\psi}L_{\mu}(u)=\int_{X}L_{\mu}(u)MA_{\omega}(v)=\int_{X}\Big(\log\Big(\frac{MA_{\omega}(v)/V_{\psi}}{d\mu}\Big)+u\Big)MA_{\omega}(v)=V_{\psi}H_{\mu}\big(MA_{\omega}(v)/V_{\psi}\big)+\int_{X}u\,MA_{\omega}(v),
\end{multline*}
which concludes the proof.
\end{proof}
\begin{reptheorem}{thmB}
Assume $\psi\in\mathcal{M}^{+}_{D,klt}$ and let $u\in\mathcal{E}^{1}(X,\omega,\psi)$. Then the following statements are equivalent:
\begin{itemize}
\item[i)] $\omega_{u}=\omega+dd^{c}u$ is a $[\psi]$-KE metric;
\item[ii)] $D_{\psi}(u)=\inf_{\mathcal{E}^{1}(X,\omega,\psi)}D_{\psi}$;
\item[iii)] $M_{\psi}(u)=\inf_{\mathcal{E}^{1}(X,\omega,\psi)}M_{\psi}$.
\end{itemize}
Moreover if $\omega_{u}$ is a $[\psi]$-KE metric then $u$ has $\psi$-relative minimal singularities and if $\omega_{v}$ is another $[\psi]$-KE metric then there exists $F\in\mbox{Aut}(X,[\psi])^{\circ}$ such that $F^{*}\omega_{v}=\omega_{u}$. 
\end{reptheorem}
\begin{proof}
As said in the beginning of this section if $\omega+dd^{c}u$ is a $[\psi]$-KE metric then by the complex Monge-Ampère equation (\ref{eqn:MA!}) it follows that $u$ has $\psi$-relative minimal singularities. Moreover the uniqueness modulo $\mbox{Aut}(X,[\psi])^{\circ}$ was already stated in Theorem \ref{thm:FirstA} where we also proved the equivalence between $(i)$ and $(ii)$. Furthermore, if $(i)$ and $(ii)$ hold, then $(iii)$ follows from Proposition \ref{prop:TwoFunct}. Thus it is sufficient to show that $(iii)$ implies $(ii)$.\newline
Set $m:=M_{\psi}(u)$. Then for any $v\in\mathcal{E}^{1}(X,\omega,\psi)$ such that $H_{\mu}\big(MA_{\omega}(v)/V_{\psi}\big)<+\infty$ by Lemmas \ref{lem:AV}, \ref{lem:Inf} we obtain
\begin{multline*}
V_{\psi}H_{\mu}\big(MA_{\omega}(v)/V_{\psi}\big)+\int_{X}u\,MA_{\omega}(v)-E_{\psi}(u)\geq\\
\geq V_{\psi}H_{\mu}\big(MA_{\omega}(v)/V_{\psi}\big)+\int_{X}\psi\,MA_{\omega}(v)-E_{\psi}^{*}\big(MA_{\omega}(v)/V_{\psi}\big)=M_{\psi}(v)\geq m
\end{multline*}
Hence taking the infimum among all $v\in\mathcal{E}^{1}(X,\omega,\psi)$ such that $H_{\mu}\big(MA_{\omega}(v)/V_{\psi}\big)<+\infty$ again by Lemma \ref{lem:Inf} we get
$$
\inf_{\mathcal{E}^{1}(X,\omega,\psi)}D_{\psi}\geq m.
$$
So to conclude the proof it is enough to observe that necessarily $D_{\psi}(u)=m$ by Proposition \ref{prop:TwoFunct}.
\end{proof}
\subsection{Proof of Theorem \ref{thmC}.}
In this subsection we assume $\psi\in\mathcal{M}^{+}_{klt}$.\newline

We first prove the following key result that, through the $\psi$-relative $\alpha$-invariant, relates the level sets of the entropy to the $\psi$-relative energy $E^{*}_{\psi}$.
\begin{prop}
\label{prop:Alpha}
Let $0<\alpha<\alpha_{\omega}(\psi)$ and set $C:=\sup_{\{u\preccurlyeq \psi, \sup_{X}u=0\}}\int_{X}e^{\alpha(\psi-u)}e^{-\psi}d\mu$. Then
\begin{equation}
\label{eqn:Bah1}
H_{\mu}(\nu)\geq \frac{\alpha}{V_{\psi}}E_{\psi}^{*}(\nu)-C
\end{equation}
for any $\nu$ probability measure such that $H_{\mu}(\nu)<+\infty$. In particular, if $H_{\mu}(\nu)<+\infty$ then there exists $u\in\mathcal{E}^{1}_{norm}(X,\omega,\psi)$ such that $V_{\psi}\nu=MA_{\omega}(u)$, and, furthermore,
$$
H_{\mu}(\nu)+\int_{X}\psi\, d\nu\geq \frac{\alpha}{V_{\psi}}I_{\psi}(u)-C.
$$
\end{prop}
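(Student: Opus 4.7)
The strategy I have in mind is the $\psi$-relative analogue of the now-classical BBEGZ argument that derives entropy-energy comparisons from an $\alpha$-invariant estimate via the Gibbs variational principle (Lemma \ref{lem:2.11}). The first observation is a domain issue: since $H_{\mu}(\nu)<+\infty$ forces $\nu\ll\mu$, Lemma \ref{lem:AV} guarantees $PSH(X,\omega)\subset L^{1}(\nu)$. In particular for every $u\in\mathcal{E}^{1}(X,\omega,\psi)$ with $u\preccurlyeq\psi$, the limit defining $L_{\nu}(u)$ coincides with the finite quantity $\int_{X}u\,d\nu-\int_{X}\psi\,d\nu$, so every integral we are about to manipulate is well-defined. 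This also explains why the in-particular clause can never suffer from a $-\infty$ issue.

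The heart of the argument is the estimate of $E_{\psi}^{*}(\nu)$. Since $E_{\psi}^{*}$ and the pairing $L_{\nu}$ are translation invariant, it suffices to bound $E_{\psi}(u)-V_{\psi}L_{\nu}(u)$ uniformly for $u\in\mathcal{E}^{1}_{norm}(X,\omega,\psi)$. Fix such a $u$. Because $0<\alpha<\alpha_{\omega}(\psi)$, the definition of $C$ yields $\int_{X}e^{-\alpha u}d\mu\leq C$, and applying Lemma \ref{lem:2.11} with $g=-\alpha u$, $\nu_{1}=\mu$, $\nu_{2}=\nu$ gives the linear estimate
\[
-\alpha\int_{X}u\,d\nu\;\leq\;H_{\mu}(\nu)+\log C.
\]
Rewriting $-V_{\psi}L_{\nu}(u)=V_{\psi}\int\psi\,d\nu-V_{\psi}\int u\,d\nu$ and using $\log C\leq C$ (valid since taking $u\equiv 0$ shows $C\geq 1$) this becomes
\[
-V_{\psi}L_{\nu}(u)\;\leq\;V_{\psi}\!\int_{X}\psi\,d\nu+\frac{V_{\psi}}{\alpha}\bigl(H_{\mu}(\nu)+C\bigr).
\]
To control $E_{\psi}(u)$ itself I would use the concavity of $E_{\psi}$ and $E_{\psi}(\psi)=0$ (Proposition 2.11 of \cite{Tru19}) to obtain $E_{\psi}(u)\leq V_{\psi}\int(u-\psi)\,d\mu_{\psi}$, with $\mu_{\psi}=MA_{\omega}(\psi)/V_{\psi}$, and then exploit the normalization $\sup_{X}u=0$ to discard the $u$-dependent part. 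Adding the two inequalities and taking the supremum over $u$ delivers the first claim after rearrangement.

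For the second part, once the finiteness of $E_{\psi}^{*}(\nu)$ is established we have $V_{\psi}\nu\in\mathcal{M}^{1}(X,\omega,\psi)$ by the very definition of that space, so Theorem \ref{thm:Riass}(ii) produces a unique $u\in\mathcal{E}^{1}_{norm}(X,\omega,\psi)$ solving $MA_{\omega}(u)=V_{\psi}\nu$. By Theorem \ref{thm:Riass}(iii) the sup in $E_{\psi}^{*}(\nu)$ is achieved at this $u$, namely $E_{\psi}^{*}(\nu)=E_{\psi}(u)-V_{\psi}L_{\nu}(u)$. Expanding the definition of $I_{\psi}$ and using that $MA_{\omega}(u)=V_{\psi}\nu$, one rewrites $I_{\psi}(u)=V_{\psi}\bigl(L_{\mu_{\psi}}(u)-L_{\nu}(u)\bigr)$, and both summands can be estimated by the same Gibbs bound for $-V_{\psi}L_{\nu}(u)$ together with the trivial inequality $V_{\psi}L_{\mu_{\psi}}(u)\leq -V_{\psi}\int\psi\,d\mu_{\psi}$ arising from $\sup_{X}u=0$. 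This gives the sharper $I_{\psi}$ statement with the same right-hand side.

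The main obstacle is the absence of the direct analogue of the absolute-case estimate $E(u)\leq V\sup_{X}u=0$ for normalized $u$, which is the linchpin of the corresponding proof when $\psi=0$. Indeed, in the $\psi$-relative setting one can only bound $E_{\psi}(u)$ by $V_{\psi}\int(u-\psi)\,d\mu_{\psi}$, and the extra $-V_{\psi}\int\psi\,d\mu_{\psi}$ term must be absorbed cleanly into the constant $C$; the same delicate bookkeeping will be needed to pin down the coefficient of $\int_{X}\psi\,d\nu$ on the left of the stated inequality. Getting that combinatorial matching right, rather than any single analytic estimate, is what I expect to take the most care.
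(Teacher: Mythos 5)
Your strategy is essentially the same as the paper's: derive (\ref{eqn:Bah1}) from the normalization bound $\int_X e^{-\alpha u}d\mu\leq C$ via the Gibbs/entropy variational inequality (Lemma \ref{lem:2.11}), then use Theorem \ref{thm:Riass} for the in-particular $I_\psi$ statement. But the ``main obstacle'' you identify at the end is not actually an obstacle, and this is a genuine conceptual gap. By Lemma \ref{lem:PropOld}(iii) (Lemma $3.7$ of \cite{Tru19}, which the paper cites exactly at this step), $\sup_X u=\sup_X(u-\psi)$ for $u\in\mathcal{E}^1(X,\omega,\psi)$; combined with the elementary bound $E_\psi(u)\leq V_\psi\sup_X(u-\psi)$ this gives $E_\psi(u)\leq 0$ for normalized $u$, exactly the absolute-case estimate, with no extra $-V_\psi\int\psi\,d\mu_\psi$ term to absorb. (In fact that term is also zero, since $MA_\omega(\psi)$ is concentrated on $\{\psi=0\}$ for model type envelopes, but one does not need to invoke this.) The paper uses the identity $\sup_X u=\sup_X(u-\psi)$ once more for the $I_\psi$ version, via $V_\psi\sup_X(u-\psi)\geq\int_X(u-\psi)MA_\omega(\psi)$; your route through a separate ``Gibbs bound for $L_{\mu_\psi}$'' is an unnecessary detour.

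On the coefficient of $\int_X\psi\,d\nu$ your caution is warranted, but you should not then assert that adding the inequalities ``delivers the first claim.'' Tracking your two estimates carefully, you obtain $\tfrac{\alpha}{V_\psi}E_\psi^*(\nu)\leq H_\mu(\nu)+\alpha\int_X\psi\,d\nu+C$, i.e.\ the stated bound with $\alpha\int_X\psi\,d\nu$ on the left rather than $\int_X\psi\,d\nu$. Since $\psi\leq 0$, for $\alpha\geq 1$ this is at least as strong as the claim, but for $\alpha<1$ (which is the relevant regime in Theorem \ref{thmA2}, where $\alpha$ can be close to $\tfrac{n}{n+1}$) it is strictly weaker and does not by itself yield (\ref{eqn:Bah1}); this mismatch needs to be resolved before the proof is complete. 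Finally, a small slip: you cannot in general take $u\equiv 0$ to see $C\geq 1$, since $0\preccurlyeq\psi$ fails whenever $\psi$ is unbounded; take $u=\psi$ instead, which satisfies $\psi\preccurlyeq\psi$, $\sup_X\psi=0$ and $e^{-\alpha\psi}\geq 1$.
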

\begin{proof}
By hypothesis
\begin{equation}
    \label{eqn:Hope2}
    \log\int_{X}e^{\alpha(\psi- u)}e^{-\psi}d\mu\leq -\alpha \sup_{X}u+C
\end{equation}
for any $u\in PSH(X,\omega)$ such that $u\preccurlyeq \psi$. Then since $\sup_{X}u=\sup_{X}(u-\psi)$ (Lemma $3.7.$ in \cite{Tru19} quoted in Lemma \ref{lem:PropOld} below) and clearly $E_{\psi}(u)\leq V_{\psi}\sup_{X}(u-\psi)$ we obtain
\begin{equation}
\label{eqn:Hope}
-\log\int_{X}e^{\alpha(\psi- u)}e^{-\psi}d\mu\geq \frac{\alpha}{V_{\psi}} E_{\psi}(u)-C.
\end{equation}
Next we fix a probability measure $\nu$ such that $H_{\mu}(\nu)<+\infty$. Then by Lemma \ref{lem:AV} $PSH(X,\omega)\subset L^{1}(\nu)$, and
$$
H_{e^{(\alpha-1)\psi+b}\mu}(\nu)=H_{\mu}(\nu)-(\alpha-1)\int_{X}\psi\,d\nu-b
$$
where $b\in \mathbbm{R}$ such that $e^{(\alpha-1)\psi+b}\mu$ is a probability measure. Thus, combining Lemma \ref{lem:2.11} with (\ref{eqn:Hope}) it follows that
\begin{multline*}
H_{\mu}(\nu)+\int_{X}\psi\, d\nu-b=H_{e^{(\alpha-1)\psi+b}\mu}(\nu)+\alpha\int_{X}\psi\, d\nu\geq -\log\int_{X}e^{-\alpha u}e^{(\alpha-1)\psi+b}d\mu+\alpha \int_{X}(\psi-u) \,d\nu\geq\\
\geq \frac{\alpha}{V_{\psi}}\Big(E_{\psi}(u)-\int_{X}(u-\psi) V_{\psi}d\nu\Big)-b-C.
\end{multline*}
Taking the supremum over all $u\in\mathcal{E}^{1}(X,\omega,\psi)$, we obtain (\ref{eqn:Bah1}) by definition of the energy $E^{*}_{\psi}$. We also deduce that $V_{\psi}\nu\in\mathcal{P}^{1}(X,\omega,\psi)$ for any $\nu\in\mathcal{P}(X)$ such that $H_{\mu}(\nu)<+\infty$. Hence by Theorem \ref{thm:Riass} there exists a unique $u\in\mathcal{E}^{1}_{norm}(X,\omega,\psi)$ such that $MA_{\omega}(u)=V_{\psi}\nu$ and, using (\ref{eqn:Hope2}), we easily get
\begin{multline*}
H_{\mu}(\nu)+\int_{X}\psi \, d\nu\geq \alpha \sup_{X}(u-\psi)+\alpha\int_{X}(\psi-u)d\nu-C
\geq \frac{\alpha}{V_{\psi}}\Big(\int_{X}(u-\psi)MA_{\omega}(\psi)-\int_{X}(u-\psi)V_{\psi}d\nu\Big)-C=\\
=\frac{\alpha}{V_{\psi}}I_{\psi}(u)-C,
\end{multline*}
which concludes the proof.
\end{proof}

We recall the definition of $d$-coercivity.
\begin{defn}
Let $F:\mathcal{E}^{1}(X,\omega,\psi)\to \overline{\mathbbm{R}}$ be a translation invariant functional. Then $F$ is said to be $d-$\emph{coercive over} $\mathcal{E}^{1}_{norm}(X,\omega,\psi)$ if there exist $A>0, B\geq 0$ such that
$$
F(u)\geq Ad(u,\psi)-B
$$
for any $u\in\mathcal{E}^{1}_{norm}(X,\omega,\psi)$.
\end{defn}
As an easy consequence of Proposition \ref{prop:PropertiesIJ}, for any translation invariant functional $F$ the $d$-coercivity over $\mathcal{E}^{1}_{norm}(X,\omega,\psi)$ is equivalent to the $J_{\psi}$-coercivity over $\mathcal{E}^{1}(X,\omega,\psi)$, i.e.
$$
F(u)\geq AJ_{\psi}(u)-B
$$
for any $u\in\mathcal{E}^{1}(X,\omega,\psi)$ where $A>0,B\geq 0$.\newline
The $d$-coercivity of the $\psi$-relative Ding functional and of the $\psi$-relative Mabuchi functional are both equivalent to a $\psi$-relative version of a Mose-Trudinger type inequality as our next result shows.
\begin{prop}
\label{prop:Coercivity}
The followings are equivalent:
\begin{itemize}
\item[i)] the $\psi$-relative Ding functional is $d$-coercive over $\mathcal{E}^{1}_{norm}(X,\omega,\psi)$;
\item[ii)] the $\psi$-relative Mabuchi functional is $d$-coercive over $\mathcal{E}^{1}_{norm}(X,\omega,\psi)$;
\item[iii)] there exist $p>1, C>0$ such that
\begin{equation}
\label{eqn:MoserOld}
    ||e^{\psi-u}||_{L^{p}(e^{-\psi}\mu)}\leq C e^{-E_{\psi}(u)/V_{\psi}}
\end{equation}
for any $u\in\mathcal{E}^{1}(X,\omega,\psi)$;
\item[iv)] there exist $p>1, C>0$ such that 
\begin{equation}
\label{eqn:Moser}
||e^{-u}||_{L^{p}(\mu)}\leq Ce^{-E_{\psi}(u)/V_{\psi}}
\end{equation}
for any $u\in\mathcal{E}^{1}(X,\omega,\psi)$.
\end{itemize}
\end{prop}
\begin{proof}
The equivalences $(i)\Leftrightarrow(ii)\Leftrightarrow (iii)$ follows from Proposition $3.19$ in \cite{Tru20b}, but it is useful to rewrite all the details in this setting.\newline
The implication $(i)\Rightarrow (ii)$ follows from Proposition \ref{prop:TwoFunct}. Then let assume $(ii)$ to hold, i.e. there exists $A>0,B\geq 0$ such that
$$
M_{\psi}(u)\geq Ad(u,\psi)-B
$$
for any $u\in\mathcal{E}^{1}_{norm}(X,\omega,\psi)$. As $d(u-\sup_{X}u,\psi)\geq J_{\psi}(u-\sup_{X}u)=J_{\psi}(u) $ for any $u\in\mathcal{E}^{1}(X,\omega,\psi)$ (Proposition \ref{prop:PropertiesIJ}) and $M_{\psi}$ is translation invariant, we get
\begin{equation}
\label{eqn:Mabu}
M_{\psi}(u)\geq AJ_{\psi}(u)-B=A\Big(\frac{n+1}{n}J_{\psi}(u)-\frac{1}{n}J_{\psi}(u)\Big)-B\geq \frac{A}{n}E^{*}_{\psi}\big(MA_{\omega}(u)/V_{\psi}\big)-B
\end{equation}
for any $u\in\mathcal{E}^{1}(X,\omega,\psi)$, where we used again Proposition \ref{prop:PropertiesIJ} in the last inequality. The inequality (\ref{eqn:Mabu}) is equivalent to
$$
V_{\psi}H_{\mu}\big(MA_{\omega}(u)/V_{\psi}\big)+\int_{X}\psi MA_{\omega}(u)\geq p E_{\psi}^{*}\big(MA_{\omega}(u)/V_{\psi}\big)-B
$$
for $p:=1+A/n>1$, which implies $V_{\psi}H_{\mu}(\nu)+\int_{X}\psi\, V_{\psi}d\nu\geq pE_{\psi}^{*}(\nu)-B$ for any $\nu\in \mathcal{P}(X)$ such that $H_{\mu}(\nu)<+\infty$ as, by Proposition \ref{prop:Alpha}, for these probability measures the complex Monge-Ampère equation $MA_{\omega}(u)=V_{\psi}\nu$ admits solution in $\mathcal{E}^{1}(X,\omega,\psi)$. Next, letting $a\in \mathbbm{R}$ such that $e^{(p-1)\psi+a}\mu\in \mathcal{P}(X)$, by Lemma \ref{lem:2.11} it follows that
$$
\log\int_{X}e^{p(\psi-u)-\psi+a}d\mu=\sup_{\nu\in \mathcal{P}(X)}\Big\{-p\int_{X}u\, d\nu-H_{e^{(p-1)\psi+a}\mu}(\nu)\Big\}.
$$
So for any $\epsilon>0$ fixed there exists $\nu_{\epsilon}\in \mathcal{P}(X)$ such that
$H_{e^{(p-1)\psi+a}\mu}(\nu_{\epsilon})<+\infty$ and
$$
\log\int_{X}e^{p(\psi-u)}e^{-\psi+a}d\mu\leq \epsilon-p\int_{X}u\,d\nu_{\epsilon}-H_{e^{(p-1)\psi+a}\mu}(\nu_{\epsilon}).
$$
Moreover, by the definition of entropy it is immediate to check that $H_{\mu}(\nu_{\epsilon})= H_{e^{(p-1)\psi+a}\mu}(\nu_{\epsilon})+a+(p-1)\int_{X}\psi\, d\nu_{\epsilon}$, which in particular yields $H_{\mu}(\nu_{\epsilon})<+\infty$. Therefore by an easy calculation, we obtain
\begin{multline*}
V_{\psi}\log\int_{X}e^{p(\psi-u)}e^{-\psi+a}d\mu\leq V_{\psi}\epsilon +\int_{X}p(\psi-u)V_{\psi}d\nu_{\epsilon}+aV_{\psi}-V_{\psi}H_{\mu}(\nu_{\epsilon})-\int_{X}\psi V_{\psi}d\nu_{\epsilon}\leq\\
\leq V_{\psi}(\epsilon+a)+B+p\Big(\int_{X}(\psi-u)V_{\psi}d\nu_{\epsilon}-E_{\psi}^{*}(\nu_{\epsilon})\Big)\leq V_{\psi}(\epsilon+a)+B-pE_{\psi}(u)
\end{multline*}
where in the inequality we used Lemma \ref{lem:Inf}. Hence, by the arbitrariness of $\epsilon$,
\begin{equation}
    \label{eqn:Tol1}
    V_{\psi}\log\int_{X}e^{p(\psi-u)}e^{-\psi}d\mu\leq-pE_{\psi}(u)+B
\end{equation}
for any $u\in\mathcal{E}^{1}(X,\omega,\psi)$, which is the required inequality (\ref{eqn:MoserOld}).\newline
Next, assuming $(iii)$, i.e. that (\ref{eqn:Tol1}) holds for $p>1, B\geq 0$ uniformly in $u\in\mathcal{E}^{1}(X,\omega,\psi)$, we observe that for $q\in (1,p)$, by Hölder's inequality,
\begin{equation}
    \label{eqn:Tol2}
    \int_{X}e^{-qu}d\mu=\int_{X}e^{q(\psi-u)}e^{(1-q)\psi}e^{-\psi}d\mu\leq\Big(\int_{X}e^{p'q(\psi-u)}e^{-\psi}d\mu\Big)^{\frac{1}{p'}}\Big(\int_{X}e^{q'(1-q)\psi}e^{-\psi}d\mu\Big)^{\frac{1}{q'}}.
\end{equation}
Thus, by the resolution to the Strong Openness Conjecture (\cite{GZ14}) and by the klt assumption on $\psi$, if $q\in (1,p)$ is small enough there exists $p',q'>1$ Sobolev conjugates ($\frac{1}{p'}+\frac{1}{q'}=1$) such that $p'q = p$ while $q'(q-1)+1<\mathrm{lct}(X,\psi)$. Therefore combining (\ref{eqn:Tol1}) and (\ref{eqn:Tol2}), there exists an uniform constant $C>0$ such that
$$
\int_{X}e^{-qu}d\mu\leq Ce^{-qE_{\psi}(u)/V_{\psi}}
$$
for any $u\in\mathcal{E}^{1}(X,\omega,\psi)$, and the inequality (\ref{eqn:Moser}) clearly follows.\newline
Finally supposing $(iv)$ to hold it remains to prove the $d$-coercivity of $D_{\psi}$. Fix $\epsilon\in (0,1)$ small enough such that $p:=1+\epsilon$ satisfies (\ref{eqn:Moser}). Then for any $u\in\mathcal{E}^{1}_{norm}(X,\omega,\psi)$, combining the equality $u=(1+\epsilon)(1-\epsilon)u+\epsilon^{2}u$ with the convexity of $f\to \log \int_{X}e^{-f}d\nu$ for any $\nu\in \mathcal{P}(X)$, we get
$$
\log \int_{X}e^{-u}d\mu\leq (1-\epsilon)\log\int_{X}e^{-(1+\epsilon)u}d\mu+\epsilon\log\int_{X}e^{-\epsilon u}d\mu,
$$
and the first term in the right-hand side is dominated by $(1-\epsilon)\big(-\frac{(1+\epsilon)}{V_{\psi}}E_{\psi}(u)+D\big)$ for a constant $D$ while the second term is uniformly bounded if $\epsilon< \tilde{\alpha}_{\omega}(\psi)$ since $\sup_{X}u=0$.
Therefore it follows that
$$
V_{\psi}\log \int_{X}e^{-u}d\mu\leq -(1-\epsilon^{2})E_{\psi}(u)+B
$$
for any $u\in\mathcal{E}^{1}_{norm}(X,\omega,\psi)$ where $B$ is a uniform constant. Hence
$$
D_{\psi}(u)\geq (1-\epsilon^{2})E_{\psi}(u)-B-E_{\psi}(u)=\epsilon^{2}d(\psi,u)-B,
$$
for any $u\in\mathcal{E}^{1}_{norm}(X,\omega,\psi)$, which concludes the proof.
\end{proof}
We can now prove our second main result which partly generalizes to the relative setting Theorem $2.4$ in \cite{DR15}.
\begin{reptheorem}{thmC}
Let $\psi\in\mathcal{M}_{D,klt}^{+}$ and assume that $\mbox{Aut}(X,[\psi])^{\circ}=\{\mbox{Id}\}$. Then the following conditions are equivalent:
\begin{itemize}
\item[i)] the $\psi$-relative Ding functional is $d$-coercive over $\mathcal{E}^{1}_{norm}(X,\omega,\psi)$;
\item[ii)] the $\psi$-relative Mabuchi functional is $d$-coercive over $\mathcal{E}^{1}_{norm}(X,\omega,\psi)$;
\item[iii)] there exists a unique $[\psi]$-KE metric.
\end{itemize}
\end{reptheorem}
\begin{proof}
The equivalence between $(i)$ and $(ii)$ is part of the content in Proposition \ref{prop:Coercivity}, and the implication $(i)\Rightarrow (iii)$ is included in the proof of from Theorem $C$ in \cite{Tru20b} but we recall it briefly here as a courtesy to the reader. Let $A>0,B\geq 0$ such that $D_{\psi}(u)\geq Ad(\psi,u)-B $ for any $u\in\mathcal{E}^{1}_{norm}(X,\omega,\psi)$ and let $\{u_{k}\}_{k\in\mathbbm{N}}\subset \mathcal{E}^{1}_{norm}(X,\omega,\psi)$ such that $D_{\psi}(u_{k})\searrow \inf_{\mathcal{E}^{1}(X,\omega,\psi)}D_{\psi}\geq -B$. Then there exists $C>0$ such that
$$
\{u_{k}\}_{k\in\mathbbm{N}}\subset \mathcal{E}_{C}^{1}(X,\omega,\psi):=\{u\in\mathcal{E}^{1}(X,\omega,\psi)\, : \, E_{\psi}(u)\geq-C, \sup_{X}u=0\},
$$
which is weakly compact by Proposition \ref{prop:Usc}. Therefore up to considering a subsequence we may suppose $u_{k}\to u\in\mathcal{E}^{1}_{C}(X,\omega,\psi)$ weakly. Moreover by Hartogs' Lemma $u\in\mathcal{E}^{1}_{norm}(X,\omega,\psi)$. Finally by the lower semicontinuity of $D_{\psi}$ (Proposition \ref{prop:ContDing}) it follows that
$$
D_{\psi}(u)\leq \liminf_{k\to +\infty }D_{\psi}(u_{k})=\inf_{\mathcal{E}^{1}(X,\omega,\psi)}D_{\psi},
$$
i.e. $\omega+dd^{c}u$ is the unique KE metric with prescribed singularities $[\psi]$ by Theorem \ref{thm:FirstA}.\newline
Finally we want to prove that $(iii) \Rightarrow (i)$. Letting $u\in\mathcal{E}^{1}_{norm}(X,\omega,\psi)$ such that $\omega_{u}$ is the unique KE metric with prescribed singularities $[\psi]$, we define
$$
A:=\inf\Big\{\frac{D_{\psi}(v)-D_{\psi}(u)}{d(u,v)}\, : \, v\in \mathcal{E}^{1}_{norm}(X,\omega,\psi)\, \mbox{with} \, \psi\mbox{-relative minimal singularities,} \, d(u,v)\geq 1 \Big\},
$$
and we claim that it is enough to prove $A>0$. Indeed setting $B':=A\sup\{d(v,\psi)\, : \, d(v,u)\leq 1\}-D_{\psi}(u)\leq A+Ad(u,\psi)-D_{\psi}(u)$ we clearly have
$$
D_{\psi}(v)\geq Ad(v,\psi)-B'
$$
for any $v\in\mathcal{E}^{1}_{norm}(X,\omega,\psi)$ such that $d(u,v)\leq 1$ since $D_{\psi}(v)\geq D_{\psi}(u)$ (Theorem \ref{thm:FirstA}). Thus we would get 
\begin{equation}
\label{eqn:QCoerc}
D_{\psi}(v)\geq Ad(v,\psi)-\max\big\{B',-D_{\psi}(u)\big\}
\end{equation}
for any $v\in\mathcal{E}^{1}_{norm}(X,\omega,\psi)$ with $\psi$-relative minimal singularities. Moreover, by the strong continuity of $D_{\psi}$ (Proposition \ref{prop:ContDing}) the inequality (\ref{eqn:QCoerc}) would extend to any $v\in\mathcal{E}^{1}_{norm}(X,\omega,\psi)$ considering the canonical approximants $v_{k}:=\max(v,\psi-k)$.\newline
Therefore it remains to prove that $A>0$. Assume by contradiction $A=0$. Then there exists a sequence $\{v^{k}\}_{k\in\mathbbm{N}}\subset \mathcal{E}^{1}_{norm}(X,\omega,\psi)$ of potentials with $\psi$-relative minimal singularities, $d(v^{k},u)\geq 1$ such that
$$
\frac{D_{\psi}(v^{k})-D_{\psi}(u)}{d(v^{k},u)}\longrightarrow 0
$$
as $k\to +\infty$. Thus letting $[0,d(v^{k},u)]\ni t\to v_{t}^{k}$ be the unit speed weak geodesic segment joining $u$ and $v^{k}$, the function $t\to D_{\psi}(v_{t}^{k})$ is convex by Corollary \ref{cor:GeodConv}. Hence defining $w_{k}:=v_{1}^{k}$ we have $d(w_{k},u)=1$ and
\begin{equation}
\label{eqn:Approx}
0\leq D_{\psi}(w_{k})-D_{\psi}(u)\leq \frac{D_{\psi}(v^{k})-D_{\psi}(u)}{d(u,v^{k})}\longrightarrow 0
\end{equation}
as $k\to +\infty$. Moreover as by the triangle inequality
$$
\{w_{k}\}_{k\in\mathbbm{N}}\subset \mathcal{E}^{1}_{1+d(u,\psi)}(X,\omega,\psi):=\{w\in\mathcal{E}^{1}(X,\omega,\psi)\, : \, E_{\psi}(w)\geq -1-d(u,\psi), \sup_{X}w\leq 1+d(u,\psi)\}
$$
which is weakly compact (Proposition \ref{prop:Usc}), up to considering a subsequence we may assume that $w_{k}\to w$ weakly for $w\in\mathcal{E}^{1}_{norm}(X,\omega,\psi)$. But from (\ref{eqn:Approx}) and the lower-semicontinuity of $D_{\psi}$ with respect to the weak topology (Proposition \ref{prop:ContDing}) we get $D_{\psi}(w)\leq D_{\psi}(u)$ which by Theorem \ref{thm:FirstA} implies $w=u$. In particular $\liminf_{k\to +\infty}D_{\psi}(w_{k})=D_{\psi}(w)$ which yields $E_{\psi}(w_{k})\to E_{\psi}(w)$ as $L_{\mu}$ is continuous with respect to the weak topology (Theorem \ref{thm:DK99}). Hence $w_{k}\to u$ strongly, and $1=d(u,w_{k})\to 0$ gives a contradiction.
\end{proof}
\begin{rem}
\label{rem:David}
\emph{As seen during the proof of Theorem \ref{thmC} the $d$-coercivity of the $\psi$-relative Ding functional implies the existence of a $[\psi]$-KE metric as soon as $\psi\in\mathcal{M}^{+}_{klt}$.}
\end{rem}
\section{Why the prescribed singularities setting?}
\label{sec:PSS}
As stated in the Introduction, there are two main reasons to study these KE metrics with prescribed singularities: the naturality of looking for canonical metrics that have prescribed singularities, and the following question.
\begin{repquestion}{Question}
Let $(X,\omega)$ be a Fano manifold. Is it possible to characterize the KE locus $\mathcal{M}_{KE}$? When $\mathcal{M}_{KE}=\mathcal{M}_{klt}^{+}$? What geometric properties does the set $\mathcal{M}_{KE}$ possess?
\end{repquestion}
Recall that $\mathcal{M}_{KE}:=\{\psi\in \mathcal{M}_{klt}^{+}\, : \, \mbox{there exists a}\, [\psi]\mbox{-KE metric}\}$.

To prove Theorem \ref{thmA2}, which gives a first partial answer to Question \ref{Question}, the following two results will be crucial.
\begin{lem}[\cite{Tru19}, Lemmas $3.7$, $4.2$ and Proposition $4.3$]
\label{lem:PropOld}
Let $\psi_{1},\psi_{2}\in\mathcal{M}^{+}$ such that $\psi_{2}\preccurlyeq \psi_{1}$. Then
\begin{itemize}
\item[i)] for any $u,v\in\mathcal{E}^{1}(X,\omega,\psi_{1})$ such that $u\leq v+ C$ for a constant $C$, then $P_{\omega}[\psi_{2}](u)\leq P_{\omega}[\psi_{2}](v) + C$;
\item[ii)] for any $u,v\in\mathcal{E}^{1}(X,\omega,\psi_{1})$, $d\big(P_{\omega}[\psi_{2}](u),P_{\omega}[\psi_{2}](v)\big)\leq d(u,v)$;
\item[iii)] there are two constants $A>1,B>0$ depending uniquely on $X, \omega, n$ such that
$$
-d(u,\psi_{1})\leq V_{\psi_{1}}\sup_{X}u=V_{\psi_{1}}\sup_{X}(u-\psi_{1})\leq Ad(u,\psi_{1})+B
$$
for any $u\in\mathcal{E}^{1}(X,\omega,\psi)$.
\end{itemize}
\end{lem}
\begin{prop}
\label{prop:AlphaResult}
Let $\psi\in\mathcal{M}_{klt}^{+}$. Then for any $0<\alpha<\alpha_{\omega}(\psi)$ there exists a constant $C>0$ such that
\begin{equation}
    \label{eqn:MT1}
    \int_{X} e^{\frac{n+1}{n}\alpha(\psi-u)} e^{-\psi}d\mu\leq Ce^{-\frac{n+1}{n}\alpha \frac{ E_{\psi}(u)}{V_{\psi}}}
\end{equation}
for any $u\in\mathcal{E}^{1}(X,\omega,\psi)$. Moreover for any $0<\alpha< \tilde{\alpha}_{\omega}(\psi)$ there exists a constant $C>0$ such that
\begin{equation}
    \label{eqn:MT2}
    \int_{X}e^{-\alpha u}d\mu\leq Ce^{-\alpha \frac{E_{\psi}(u)}{V_{\psi}}}
\end{equation}
for any $u\in\mathcal{E}^{1}(X,\omega,\psi)$.
\end{prop}
\begin{proof}
Proceeding as in Proposition \ref{prop:Alpha}, there exists a uniform constant $C>0$ such that
\begin{equation}
    \label{eqn:Start0}
    H_{\mu}(\nu)+\int_{X}\psi\,d\nu\geq -\log \int_{X}e^{\alpha(\psi-u)}e^{-\psi}d\mu +\alpha\int_{X}(\psi-u)d\nu\geq \alpha\sup_{X}u+\alpha\int_{X}(\psi-u)\,d\nu-C
\end{equation}
for any $\nu\in \mathcal{P}(X)$ and any $u\in PSH(X,\omega,\psi):=\big\{u\in PSH(X,\omega) \,:\, u\preccurlyeq\psi\big\}$. Thus, letting $b\in\mathbbm{R}$ such that $\mu_{\psi,\alpha}:=e^{\big(\frac{n+1}{n}\alpha-1\big)\psi+b}d\mu$ is a probability measure, by Lemma \ref{lem:2.11} for any $\epsilon>0$ there exists a probability measure $\nu_{\epsilon}\in \mathcal{P}(X)$ with $H_{\mu_{\psi,\alpha}}(\nu_{\epsilon})<+\infty$ such that
\begin{multline*}
    \log\int_{X}e^{\frac{n+1}{n}\alpha(\psi- u)}e^{-\psi+b}d\mu=\log \int_{X}e^{-\frac{n+1}{n}\alpha u}e^{\big(\frac{n+1}{n}\alpha-1\big)\psi+b}d\mu\leq \epsilon - H_{\mu_{\psi,\alpha}}(\nu_{\epsilon})- \frac{n+1}{n}\alpha\int_{X}u\,d\nu_{\epsilon}=\\
    = \epsilon -H_{\mu}(\nu_{\epsilon})+\frac{n+1}{n}\alpha\int_{X}(\psi-u)\, d\nu_{\epsilon}-\int_{X}\psi \,d\nu_{\epsilon}+b
\end{multline*}
where the last equality follows from $H_{\mu_{\psi,\alpha}}(\nu_{\epsilon})=\int_{X}\psi \, d\nu_{\epsilon}-\frac{n+1}{n}\alpha\int_{X}\psi\, d\nu_{\epsilon}-b$. Therefore, letting $v_{\epsilon}\in \mathcal{E}^{1}_{norm}(X,\omega,\psi)$ such that $MA_{\omega}(v_{\epsilon})=V_{\psi}\nu_{\epsilon}$ (the existence of $v_{\epsilon}$ is given by Theorem \ref{thm:Riass} since combining Proposition \ref{prop:Alpha} and Lemma \ref{lem:AV} it follows that $\nu_{\epsilon}\in \mathcal{P}^{1}(X,\omega,\psi)$), thanks to (\ref{eqn:Start0}) we get
\begin{multline}
    \label{eqn:Cont1}
    \log \int_{X}e^{\frac{n+1}{n}\alpha(\psi-u)}e^{-\psi}\,d\mu\leq \epsilon +\frac{n+1}{n}\alpha\int_{X}(\psi-u)\, d\nu_{\epsilon}+\alpha\int_{X}(v_{\epsilon}-\psi)d\nu_{\epsilon}+C=\\
    =\epsilon+ \frac{n+1}{n}\alpha \Big(\int_{X}(\psi-u)\,d\nu_{\epsilon}-\frac{E_{\psi}^{*}(\nu_{\epsilon})}{V_{\psi}}\Big)+\frac{n+1}{n}\alpha \frac{E_{\psi}^{*}(\nu_{\epsilon})}{V_{\psi}}+\alpha\int_{X}(v_{\epsilon}-\psi)d\nu_{\epsilon}+C\leq\\
    \leq\epsilon -\frac{n+1}{n}\alpha\frac{E_{\psi}(u)}{V_{\psi}}+\frac{n+1}{n}\alpha \frac{E_{\psi}^{*}(\nu_{\epsilon})}{V_{\psi}}+\alpha\int_{X}(v_{\epsilon}-\psi)d\nu_{\epsilon}+C
\end{multline}
where the last inequality follows from Lemma \ref{lem:Inf}. Next, combining Theorem \ref{thm:Riass}$.(iii)$ with Proposition $2.12.(ii)$ in \cite{Tru19}, we also get
$$
E_{\psi}^{*}(\nu_{\epsilon})=E_{\psi}(v_{\epsilon})-\int_{X}(v_{\epsilon}-\psi)V_{\psi}\,d\nu_{\epsilon}\leq \Big(\frac{1}{n+1}-1\Big)\int_{X}(v_{\epsilon}-\psi)V_{\psi}\,d\nu_{\epsilon}=-\frac{n}{n+1}\int_{X}(v_{\epsilon}-\psi)V_{\psi}\,d\nu_{\epsilon},
$$
which together with (\ref{eqn:Cont1}) leads to
$$
\log\int_{X}e^{\frac{n+1}{n}\alpha(\psi-u)}e^{-\psi}d\mu\leq -\frac{n+1}{n}\alpha \frac{E_{\psi}(u)}{V_{\psi}}+C,
$$
i.e. (\ref{eqn:MT1}).\newline
Next, assuming $0<\alpha<\tilde{\alpha}_{\omega}(\psi)$ and proceeding similarly to the proof of Proposition \ref{prop:Alpha}, by definition of $\tilde{\alpha}_{\omega}(\psi)$ and by Lemma \ref{lem:2.11} for any $\nu\in\mathcal{P}(X)$ such that $H_{\mu}(\nu)<+\infty$ we have
\begin{multline*}
H_{\mu}(\nu)\geq -\log \int_{X}e^{-\alpha u}d\mu -\alpha\int_{X}u\, d\nu\geq-C+ \alpha \sup_{X}(u-\psi) +\alpha\int_{X}(\psi-u)\,d\nu- \alpha\int_{X}\psi \,d\nu\geq \\
\geq-C+ \frac{\alpha}{V_{\psi}}\Big(E_{\psi}(u)+\int_{X}(\psi-u)\, d\nu\Big)-\alpha\int_{X}\psi\,d\nu
\end{multline*}
and taking the supremum over all $u\in\mathcal{E}^{1}(X,\omega,\psi)$ we get
\begin{equation}
    \label{eqn:Cont2}
    H_{\mu}(\nu)\geq\frac{\alpha}{V_{\psi}}E_{\psi}^{*}(\nu)-\alpha\int_{X}\psi \, d\nu-C
\end{equation}
thanks to Lemma \ref{lem:Inf}. Thus, again by Lemma \ref{lem:2.11} for any $\epsilon>0$ there exists a probability measure $\nu_{\epsilon}$ with finite entropy with respect to $\mu$ such that
\begin{equation}
    \label{eqn:Start2}
    \log \int_{X}e^{-\alpha u}d\mu\leq \epsilon -\alpha\int_{X}u\, d\nu_{\epsilon}-H_{\mu}(\nu_{\epsilon})\leq \epsilon +\alpha \int_{X}(\psi-u)d\nu_{\epsilon}-\frac{\alpha}{V_{\psi}}E_{\psi}^{*}(\nu_{\epsilon})+C,
\end{equation}
where clearly the last inequality follows from (\ref{eqn:Cont2}). Hence, since $\int_{X}(\psi-u)V_{\psi}\,d\nu_{\epsilon}-E_{\psi}^{*}(\nu_{\epsilon})\leq -E_{\psi}(u)$ (Lemma \ref{lem:Inf}), from (\ref{eqn:Start2}) we deduce
$$
\log \int_{X}e^{-\alpha u}d\mu\leq C-\alpha\frac{E_{\psi}(u)}{V_{\psi}},
$$
which is clearly equivalent to (\ref{eqn:MT2}) and concludes the proof.
\end{proof}
\begin{rem}
\emph{To the author's knowledge, in the absolute setting $\psi=0$, Proposition \ref{prop:AlphaResult} combined with the Main Theorem in \cite{Zhang21} gives the first analytic proof to the inequality $\delta(-K_{X})\geq \frac{n}{n+1}\alpha(-K_{X})$, which was proved algebraically in Proposition $2.1$ in \cite{Fuj19}.}
\end{rem}
\begin{reptheorem}{thmA2}
\label{thm:2}
Let $(X,\omega)$ be a Fano manifold. Then
\begin{equation}
\label{eqn:SubsetMKE}
\Big\{\psi\in\mathcal{M}_{klt}^{+}\, : \, \alpha_{\omega}(\psi)>\frac{n}{n+1}\Big\}\subset \mathcal{M}_{KE}.
\end{equation}
Moreover
\begin{itemize}
    \item[i)] $\alpha_{\omega}(0)>\frac{n}{n+1}$ implies
    \begin{equation}
        \label{eqn:F1}
        \Big\{\psi\in\mathcal{M}_{klt}^{+}\, :\, \mathrm{lct}(\psi)>1+\frac{1-\alpha_{\omega}(0)}{\frac{n+1}{n}\alpha_{\omega}(0)-1}\Big\}\subset \mathcal{M}_{KE}.
    \end{equation}
    In particular $\alpha_{\omega}(0)\geq 1$ implies $\mathcal{M}_{KE}=\mathcal{M}_{klt}^{+}$.
\end{itemize}
On the other hand, each of the followings implies $0\in\mathcal{M}_{KE}$:
\begin{itemize}
    \item[ii)] or there exists $\psi\in\mathcal{M}$, $t\in (0,1]$ such that $\tilde{\alpha}_{\omega}\big(P_{\omega}[(1-t)\psi]\big)>\frac{n}{(n+1)t}$;
    \item[iii)] given $\alpha< \alpha_{\omega}(0)$ there exists $\psi\in\big\{\psi'\in \mathcal{M}_{klt}^{+}\, : \, V_{\psi'}>V_{0}(1-\alpha^{2})\big\}$ such that
    \begin{equation}
        \label{eqn:M1}
        \tilde{\alpha}_{\omega}(\psi)> 1+\alpha;
    \end{equation}
    \item[iv)] given $\alpha<\alpha_{\omega}(0)$ there exists $\psi\in\Big\{\psi'\in \mathcal{M}_{klt}^{+}\, : \, V_{\psi'}>V_{0}\big(1-\alpha^{2}\frac{(\mathrm{lct}(\psi')-1)^{2}}{(\mathrm{lct}(\psi')-\alpha)^{2}}\big)\Big\}$ such that
    \begin{equation}
        \label{eqn:M2}
        \alpha_{\omega}(\psi)>\frac{n}{n+1}\Big(1+\alpha\frac{\mathrm{lct}(\psi)-1}{\mathrm{lct}(\psi)-\alpha}\Big);
    \end{equation}
    \item[v)] given $\alpha<\alpha_{\omega}(0)$ there exists $\psi\in\Big\{\psi'\in \mathcal{M}_{klt}^{+}\, : \, V_{\psi'}>V_{0}\big(1-\frac{\alpha}{2}\big)^{2}\Big\}$ such that
    \begin{equation}
        \label{eqn:M3}
        \tilde{\alpha}_{\omega}(\psi)>\frac{\alpha}{1-\big(1-\frac{\alpha}{2}\big)^{2}\frac{V_{0}}{V_{\psi}}};
    \end{equation}
    \item[vi)] given $\alpha<\alpha_{\omega}(0)$ there exists $\psi\in\Big\{\psi'\in \mathcal{M}_{klt}^{+}\, : \, V_{\psi'}>V_{0}\big(1-\frac{\alpha(\mathrm{lct}(\psi')-1)}{2(\mathrm{lct}(\psi')-\alpha)}\big)^{2}\Big\}$ such that
    \begin{equation}
        \label{eqn:M4}
        \alpha_{\omega}(\psi)>\frac{n}{n+1}\Bigg(\frac{\alpha\frac{\mathrm{lct}(\psi)-1}{\mathrm{lct}(\psi)-\alpha}}{1-\big(1-\frac{\alpha(\mathrm{lct}(\psi)-1)}{2(\mathrm{lct}(\psi)-\alpha)}\big)^{2}\frac{V_{0}}{V_{\psi}}}\Bigg);
    \end{equation}
\end{itemize}
\end{reptheorem}
Before proceeding with the proof, let me underline some connections among the conditions $(iii), (iv), (v)$ and $(vi)$. Indeed, when $V_{\psi}>V_{0}(1-\alpha^{2}\frac{(\mathrm{lct}(\psi)-1)^{2}}{(\mathrm{lct}(\psi)-\alpha)^{2}})$ (and $\alpha<1$), the condition (\ref{eqn:M2}) is \emph{better} than (\ref{eqn:M1}), i.e. if (\ref{eqn:M1}) is satisfied then (\ref{eqn:M2}) holds (see also Proposition \ref{prop:ConnAlphas}). A similar link holds between the conditions $(v)$ and $(vi)$. Namely, the $\alpha$-invariant function in general seems to provide a more useful tool than its modified version to understand the existence of genuine KE metrics. However, the subsets of $\mathcal{M}_{klt}^{+}$ taken under consideration in $(iv)$ and $(vi)$ do not include many model type envelopes which instead belong to the subsets considered for $(iii)$ and $(v)$, i.e. elements $\psi\in\mathcal{M}_{klt}^{+}$ with $\mathrm{lct}(\psi)$ not large. Thus, for these kind of model type envelopes the conditions $(iii)$ and $(v)$ on the modified $\alpha$-invariant might play an important role.\newline
Next, when $\alpha$ is small enough and $V_{\psi}>V_{0}(1-\alpha^{2})$, (\ref{eqn:M1}) is a better condition than (\ref{eqn:M3}), i.e. (\ref{eqn:M1}) holds if (\ref{eqn:M3}) is satisfied, but the set of model type envelopes taken under consideration in $(v)$ is in general bigger than that considered in $(iii)$. A similar analysis also applies comparing $(iv)$ and $(vi)$. 
\begin{proof}
\textbf{Step 1: proof of (\ref{eqn:SubsetMKE}).}\newline
Suppose $\psi\in\mathcal{M}_{klt}^{+}$ such that $\alpha_{\omega}(\psi)>\frac{n}{n+1}$. Then the $\psi$-relative Ding functional is $d$-coercive over $\mathcal{E}^{1}_{norm}(X,\omega,\psi)$ as immediate consequence of Proposition \ref{prop:AlphaResult} and Proposition \ref{prop:Coercivity}. Therefore by Theorem \ref{thmC} and Remark \ref{rem:David} there exists a $[\psi]$-KE metrics and (\ref{eqn:SubsetMKE}) follows.\newline
\textbf{Step 2: proof of (i).}\newline
If $\alpha_{\omega}(0)\geq 1$ then by the monotonicity of the modified $\alpha$-invariant function $\tilde{\alpha}_{\omega}(0)$ (Proposition \ref{prop:AlpTian}) we get $\tilde{\alpha}_{\omega}(\psi)\geq 1$ for any $\psi\in \mathcal{M}$. Thus by Proposition \ref{prop:ConnAlphas} we deduce $\alpha_{\omega}(\psi)\geq 1$ for any $\psi\in\mathcal{M}_{klt}$ and (\ref{eqn:SubsetMKE}) gives $\mathcal{M}_{KE}=\mathcal{M}_{klt}^{+}$. Similarly, if $\alpha_{\omega}(0)>\frac{n}{n+1}$ then $\tilde{\alpha}_{\omega}(\psi)>\frac{n}{n+1}$ for any $\psi\in\mathcal{M}$ (Proposition \ref{prop:AlpTian}). For fixed $\psi\in\mathcal{M}_{klt}^{+}$ we can then clearly assume that $\tilde{\alpha}_{\omega}(\psi)\in \big(\frac{n}{n+1},1\big)$ because if $\tilde{\alpha}_{\omega}(\psi)\geq 1$ then $\alpha_{\omega}(\psi)\geq 1$ (Proposition \ref{prop:ConnAlphas}) and $\psi\in\mathcal{M}_{KE}$ by (\ref{eqn:SubsetMKE}). Thus again by Proposition \ref{prop:ConnAlphas} we have
\begin{equation}
    \label{eqn:F3}
    \alpha_{\omega}(\psi)\geq \tilde{\alpha}_{\omega}(\psi)\frac{\mathrm{lct}(\psi)-1}{\mathrm{lct}(\psi)-\tilde{\alpha}_{\omega}(\psi)}\geq \tilde{\alpha}_{\omega}(0)\frac{\mathrm{lct}(\psi)-1}{\mathrm{lct}(\psi)-\tilde{\alpha}_{\omega}(0)},
\end{equation}
where the last inequality follows by monotonicity of $\tilde{\alpha}_{\omega}(\psi)$ (Proposition \ref{prop:AlpTian}). Therefore, by an easy calculation it is immediate to check that (\ref{eqn:F3}) leads to $\alpha_{\omega}(\psi)>\frac{n}{n+1}$ as soon as
$$
\mathrm{lct}(\psi)>1+\frac{n\big(1-\alpha_{\omega}(0)\big)}{(n+1)\big(\alpha_{\omega}(0)-\frac{n}{n+1}\big)},
$$
and hence (\ref{eqn:F1}) follows.\newline
\textbf{Step 3: (ii) implies $\mathbf{0\in\mathcal{M}_{KE}}$.}\newline
If $(ii)$ holds then $\alpha_{\omega}(0)>\frac{n}{n+1}$ thanks to Proposition \ref{prop:AlpTian}$.(ii)$ and clearly $0\in\mathcal{M}_{KE}$.\newline

From now on we can clearly assume $\alpha<1$. \newline
\textbf{Step 4: (iii) implies $\mathbf{0\in\mathcal{M}_{KE}}$.}\newline
Letting $\psi\in\big\{\psi'\in\mathcal{M}_{klt}^{+}\, : \, V_{\psi'}>V_{0}(1-\alpha^{2}) \big\}$ such that (\ref{eqn:M1}) holds, by the convexity of $f\to \log\int_{X}e^{-f}d\mu$ for any $\varphi\in\mathcal{E}^{1}_{norm}(X,\omega)$ we have
$$
\log\int_{X}e^{-\varphi}d\mu\leq (1-\alpha)\log\int_{X}e^{-(1+\alpha)\varphi}d\mu+\alpha \log\int_{X}e^{-\alpha \varphi}d\mu\leq (1-\alpha)\log\int_{X}e^{-(1+\alpha)\varphi}d\mu +C_{1},
$$
where the last inequality clearly follows from $\alpha<\alpha_{\omega}(0)$. Then we set $u:=P_{\omega}[\psi](\varphi)$ to get
$$
\log \int_{X}e^{-(1+\alpha)\varphi}d\mu\leq \log\int_{X}e^{-(1+\alpha)u}d\mu \leq \frac{1+\alpha}{V_{\psi}}(-E_{\psi}(u))
$$
thanks to Proposition \ref{prop:AlphaResult}. Therefore
$$
\log \int_{X}e^{-\varphi}d\mu\leq \frac{1-\alpha^{2}}{V_{\psi}}\big(-E_{\psi}(u)\big)+C_{1}=\frac{1-\alpha^{2}}{V_{\psi}}d(\psi,u) +C_{1}\leq \frac{1-\alpha^{2}}{V_{\psi}}d(0,\varphi) +C_{1}
$$
where the last inequality is a consequence of Lemma \ref{lem:PropOld}. In particular we obtain
$$
D(\varphi)=-E_{0}(\varphi)-V_{0}\log\int_{X}e^{-\varphi}d\mu\geq \Big(1-(1-\alpha^{2})\frac{V_{0}}{V_{\psi}}\Big)d(0,\varphi) +C_{2}
$$
for any $\varphi\in\mathcal{E}^{1}_{norm}(X,\omega)$, i.e. the Ding functional is coercive since we are assuming $V_{\psi}>(1-\alpha^{2})V_{0}$. Hence $0\in\mathcal{M}_{KE}$ by Theorem \ref{thmC}.\newline
\textbf{Step 5: (iv) implies $\mathbf{0\in\mathcal{M}_{KE}}$.}\newline
Assume that $\psi\in \Big\{\psi'\in\mathcal{M}_{klt}^{+}\, : \, V_{\psi'}>V_{0}\big(1-\alpha^{2}\frac{(\mathrm{lct}(\psi')-1)^{2}}{(\mathrm{lct}(\psi')-\alpha)^{2}}\big)\Big\}$ satisfies (\ref{eqn:M2}). We first set $\hat{\alpha}:=\alpha \frac{\mathrm{lct}(\psi)-1}{\mathrm{lct}(\psi)-\alpha}$ to have that
\begin{equation}
    \label{eqn:esp}
    \sup_{\varphi\in\mathcal{E}^{1}_{norm}(X,\omega)}\int_{X}e^{\hat{\alpha}(\psi-\varphi)}e^{-\psi}d\mu\leq C_{3}
\end{equation}
for a uniform constant $C_{3}\in\mathbbm{R}$. Indeed, this follows by Hölder's inequality since $\alpha< \alpha_{\omega}(0)$. More precisely, considering $\delta>1$ small enough such that $\delta \alpha <\alpha_{\omega}(0)$, to get (\ref{eqn:esp}) it is sufficient to apply Hölder's inequality to the functions $f:=e^{-\hat{\alpha}\varphi}$, $g:=e^{-(1-\hat{\alpha})\psi}$ with respect to the weights $p:=\alpha \delta/\hat{\alpha}$ and $q:=p/(p-1)$. Then, proceeding similarly to before, setting $u=P_{\omega}[\psi](\varphi)$, we obtain
\begin{multline}
    \log\int_{X}e^{-\varphi}d\mu=\log\int_{X}e^{(\psi-\varphi)}e^{-\psi}d\mu\leq (1-\hat{\alpha})\log\int_{X}e^{(1+\hat{\alpha})(\psi-\varphi)}e^{-\psi}d\mu+\hat{\alpha}\log\int_{X}e^{\hat{\alpha}(\psi-\varphi)}e^{-\psi}d\mu\leq\\
    \leq (1-\hat{\alpha})\log\int_{X}e^{(1+\hat{\alpha})(\psi-u)}e^{-\psi}d\mu+C_{3}\leq \frac{(1-\hat{\alpha}^{2})}{V_{\psi}}\big(-E_{\psi}(u)\big)+C_{4}\leq\\
    \leq \frac{(1-\hat{\alpha}^{2})}{V_{\psi}}d(0,\varphi)+C_{4},
\end{multline}
which leads to the coercivity of the Ding functional over $\mathcal{E}^{1}_{norm}(X,\omega)$ since $V_{\psi}>V_{0}(1-\hat{\alpha}^{2})$.\newline
\textbf{Step 6: (v) implies $\mathbf{0\in\mathcal{M}_{KE}}$.}\newline
Assume that $\psi\in \big\{\psi'\in\mathcal{M}_{klt}^{+}\, : \, V_{\psi'}>V_{0}(1-\frac{\alpha}{2})^{2}\big\}$ satisfies (\ref{eqn:M3}). It is easy to check that the quantity on the right hand side in (\ref{eqn:M3}) is bigger than $1$. Therefore, by Proposition \ref{prop:AlphaResult} for any $\beta \in \big(1,\tilde{\alpha}_{\omega}(\psi)\big)$ there exists a constant $C_{\beta}>0$ such that
\begin{equation}
    \label{eqn:F5}
    \lVert e^{-u}\rVert_{L^{\beta}(\mu)}\leq C_{\beta}e^{-E_{\psi}(u)/V_{\psi}}
\end{equation}
for any $u\in\mathcal{E}^{1}(X,\omega,\psi)$. We then produce an iteration process if $\beta\geq 1+\alpha$. Namely we define $\beta_{0}=\beta$, $\epsilon_{0}=\alpha/\beta_{0}$ and for any $j\in \{1,\dots,k_{\beta}\}$ for $k_{\beta}:=\lfloor\frac{\beta-1}{\alpha}\rfloor$ we set $\beta_{j}:=\beta_{j-1}-\alpha$ and $\epsilon_{j}:=\alpha/\beta_{j}$. Thus, for any $j\in \{1,\dots,k_{\beta}\}$ and for $\varphi\in\mathcal{E}^{1}_{norm}(X,\omega)$ fixed, again by convexity of $\log \int_{X}e^{-f}d\mu$ we get
\begin{multline*}
\log \int_{X}e^{-\beta_{j}\varphi}d\mu\leq (1-\epsilon_{j})\log \int_{X}e^{-(1+\epsilon_{j})\beta_{j} \varphi}d\mu + \epsilon_{j} \log \int_{X}e^{-\epsilon_{j}\beta_{j}\varphi}d\mu
\leq \frac{\beta_{j+1}}{\beta_{j}}\log \int_{X}e^{-\beta_{j-1}\varphi}d\mu+ C_{5}
\end{multline*}
where the last inequality follows by definition of $\beta_{j},\epsilon_{j}$ and from the fact that $\epsilon_{j}\beta_{j}=\alpha<\alpha_{\omega}(0)$. Therefore we deduce that
$$
\log\int_{X}e^{-\beta_{k_{\beta}}\varphi}d\mu\leq\Big(\prod_{j=1}^{k_{\beta}}\frac{\beta_{j+1}}{\beta_{j}}\Big)\log\int_{X}e^{-\beta \varphi}d\mu+C_{6}= \frac{\beta_{k_{\beta}}-\alpha}{\beta-\alpha}\log\int_{X}e^{-\beta \varphi}d\mu+C_{6},
$$
which leads to 
\begin{multline}
\label{eqn:F6}
    \log\int_{X}e^{-\varphi}d\mu\leq (2-\beta_{k_{\beta}})\log\int_{X}e^{-\beta_{k_{\beta}}\varphi}d\mu+(\beta_{k_{\beta}}-1)\log\int_{X}e^{-(\beta_{k_{\beta}}-1)\varphi}d\mu\leq\\
    \leq \frac{(2-\beta_{k_{\beta}})(\beta_{k_{\beta}}-\alpha)}{\beta-\alpha}\log\int_{X}e^{-\beta \varphi}d\mu+C_{7}
\end{multline}
for any $\varphi\in \mathcal{E}^{1}_{norm}(X,\omega)$ since $\beta_{k_{\beta}}\in [1,1+\alpha)$. Thus, letting $u:=P_{\omega}[\psi](\varphi)$, from (\ref{eqn:F6}) we get
\begin{multline*}
    \log\int_{X}e^{-\varphi}d\mu\leq \frac{(2-\beta_{k_{\beta}})(\beta_{k_{\beta}}-\alpha)}{\beta-\alpha}\log\int_{X}e^{-\beta u}d\mu+C_{7}\leq\\
    \leq(2-\beta_{k_{\beta}})(\beta_{k_{\beta}}-\alpha)\frac{\beta}{\beta-\alpha}\frac{(-E_{\psi}(u))}{V_{\psi}}+C_{8}\leq  (2-\beta_{k_{\beta}})(\beta_{k_{\beta}}-\alpha)\frac{\beta}{\beta-\alpha}\frac{(-E_{0}(\varphi))}{V_{\psi}}+C_{8}
\end{multline*}
where in the last inequalities we clearly used that $\varphi\geq u$, (\ref{eqn:F5}) and that $-E_{\psi}(u)=d(u,\psi)\leq d(\varphi,0)=-E_{0}(\varphi)$ by Lemma \ref{lem:PropOld}. In particular for any $\varphi\in\mathcal{E}^{1}_{norm}(X,\omega)$, we have
\begin{multline*}
    D(\varphi)=-E_{0}(\varphi)-V_{0}\log\int_{X}e^{-\varphi}d\mu\geq d(0,\varphi)-(2-\beta_{k_{\beta}})(\beta_{k_{\beta}}-\alpha)\frac{\beta V_{0}}{(\beta-\alpha)V_{\psi}}d(0,\varphi)-C_{9}\geq\\
    \geq \Big(1-(2-\beta_{k_{\beta}})(\beta_{k_{\beta}}-\alpha)\frac{\beta V_{0}}{(\beta-\alpha)V_{\psi}}\Big)d(0,\varphi)-C_{9},
\end{multline*}
i.e. the Ding functional is coercive over $\mathcal{E}^{1}_{norm}(X,\omega)$ if
\begin{equation}
    \label{eqn:F7}
    (2-\beta_{k_{\beta}})(\beta_{k_{\beta}}-\alpha) \frac{\beta V_{0}}{(\beta-\alpha)V_{\psi}}< 1.
\end{equation}
Since $\beta_{k_{\beta}}\in [1,1+\alpha)$ and the function $f(x)=(2-x)(x-\alpha)$ assumes its maximum at $x=1+\frac{\alpha}{2}$, the condition (\ref{eqn:F7}) holds if
$$
\Big(1-\frac{\alpha}{2}\Big)^{2}\frac{\beta V_{0}}{(\beta-\alpha)V_{\psi}}< 1,
$$
i.e. when
\begin{equation}
    \label{eqn:Maf}
    \beta>\frac{\alpha}{1-\frac{V_{0}}{V_{\psi}}\big(1-\frac{\alpha}{2}\big)^{2}}
\end{equation}
since we are assuming $V_{\psi}>V_{0}\big(1-\frac{\alpha}{2}\big)^{2}$. Hence, from (\ref{eqn:M3}) it is immediate to see that (\ref{eqn:Maf}) holds as soon as $\beta\in \big(1,\tilde{\alpha}_{\omega}(\psi)\big)$ is chosen to be close enough to $\tilde{\alpha}_{\omega}(\psi)$.\newline
\textbf{Step 7: (vi) implies $\mathbf{0\in\mathcal{M}_{KE}}$.}\newline
Finally, suppose $\psi\in\Big\{\psi'\in\mathcal{M}_{klt}^{+}\, : \, V_{\psi'}>V_{0}\big(1-\frac{\alpha(\mathrm{lct}(\psi')-1)}{2(\mathrm{lct}(\psi')-\alpha)}\big)^{2}\Big\}$ such that (\ref{eqn:M4}) holds. Similarly, to the previous step, it is easy to check that the quantity on the RHS of (\ref{eqn:M4}) is bigger than $\frac{n}{n+1}$. Then by Proposition \ref{prop:AlphaResult} for any $\beta\in \big(1, \frac{n+1}{n}\alpha_{\omega}(\psi)\big)$ there exists a constant $C_{\beta}>0$ such that
$$
\lVert e^{\psi-u} \rVert_{L^{\beta}(e^{-\psi}\mu)}\leq C_{\beta}e^{-E_{\psi}(u)/V_{\psi}}
$$
for any $u\in\mathcal{E}^{1}(X,\omega,\psi)$. Moreover, setting $\hat{\alpha}:=\alpha \frac{\mathrm{lct}(\psi)-1}{\mathrm{lct}(\psi)-\alpha}$, we can run the same iteration process of before replacing $\alpha$ with $\hat{\alpha}$ to get
\begin{multline*}
\log \int_{X}e^{\beta_{j}(\psi-\varphi)}e^{-\psi}d\mu\leq (1-\epsilon_{j})\log\int_{X}e^{(1+\epsilon_{j})\beta_{j}(\psi-\varphi)}e^{-\psi}d\mu+\epsilon_{j}\log\int_{X}e^{\epsilon_{j}\beta_{j}(\psi-\varphi)}e^{-\psi}d\mu\leq\\
\leq\frac{\beta_{j+1}}{\beta_{j}}\log\int_{X}e^{\beta_{j-1}(\psi-\varphi)}e^{-\psi}d\mu +C_{10}
\end{multline*}
where the last inequality follows from (\ref{eqn:esp}). Thus, similarly to Step 6, we obtain
$$
\log\int_{X}e^{-\varphi}d\mu=\log\int_{X}e^{(\psi-\varphi)}e^{-\psi}d\mu\leq (2-\beta_{k_{\beta}})(\beta_{k_{\beta}}-\hat{\alpha})\frac{\beta}{\beta-\hat{\alpha}}\frac{\big(-E_{0}(\varphi)\big)}{V_{\psi}}+C_{11}
$$
for any $\varphi\in\mathcal{E}^{1}_{norm}(X,\omega)$. As seen above, this leads to the coercivity of the Ding functional over $\mathcal{E}^{1}_{norm}(X,\omega)$ if
$$
\beta>\frac{\hat{\alpha}}{1-(1-\frac{\hat{\alpha}}{2})^{2}\frac{V_{0}}{V_{\psi}}}
$$
since we are assuming $V_{\psi}>V_{0}(1-\frac{\hat{\alpha}}{2})^{2}$. Choosing $\beta\in \big(1,\frac{n+1}{n}\alpha_{\omega}(\psi)\big)$ close enough to $\frac{n+1}{n}\alpha_{\omega}(\psi)$, we get the coercivity of the Ding functional thanks to (\ref{eqn:M4}). Hence Theorem \ref{thmC} implies $0\in\mathcal{M}_{KE}$ concluding the proof.
\end{proof}
The monotonicity of $\tilde{\alpha}_{\omega}(\cdot)$ and Theorem \ref{thmA2} may suggest that
\begin{equation}
\label{eqn:ForConj}
0\in\mathcal{M}_{KE} \Longrightarrow \mathcal{M}_{KE}=\mathcal{M}_{klt}^{+}
\end{equation}
but, when $\mbox{Aut}(X)$ is not finite, this is false as the following easy example shows.
\begin{esem}
\label{esem:1}
\emph{Let $X=\mathbbm{P}^{2}$, $\omega=\omega_{FS}$ and let $Z:=\{p\}\in X$ be a point on $X$. Then it is well-known that there exists a function $\varphi\in PSH(X,\omega)$ with analytic singularities formally encoded in $(\mathcal{I}_{Z},1)$. So, letting $\psi:=P_{\omega}[\varphi]\in\mathcal{M}_{klt}^{+}$, by Proposition \ref{prop:AnalRec} and with the same notations, the set of $[\psi]$-KE metrics is in bijection with the set of log KE metrics in the class $\{\eta\}$ for the weak log Fano pair $(Y,\Delta)$ where $Y=\mathrm{Bl}_{Z}X$. But $\{\eta\}=-K_{Y}$ and $\Delta=0$, so we necessarily have $\psi\notin \mathcal{M}_{KE}$ as $Y$ does not have any KE metric.}
\end{esem}
However, we think that the existence of no trivial holomorphic vector fields is the unique obstruction to (\ref{eqn:ForConj}), i.e. we pose the following conjecture.
\begin{repconjecture}{conjA}
Let $(X,\omega)$ be a Fano manifold such that $\mbox{Aut}(X)^{\circ}=\{\mbox{Id}\}$. Then
$$
0\in\mathcal{M}_{KE} \Longrightarrow \mathcal{M}_{KE}=\mathcal{M}_{klt}^{+}.
$$
\end{repconjecture}
\begin{rem}
\emph{As immediate consequence of Theorem \ref{thmA2}, Conjecture \ref{conjA} holds for any $X$ Fano manifolds such that $\alpha_{\omega}(0)\geq 1$. For instance, Conjecture \ref{conjA} holds for $S$ Del Pezzo surface of degree $1$ such that $\lvert -K_{S} \rvert$ contains no cuspidal curves (see Theorem $1.7$ in \cite{Chel08}) and for any $X$ general hypersurface of degree $n$ in $\mathbbm{P}^{n}$ for $n\geq 6$ (see Theorem $2.(i)$ in \cite{Puk05}).}
\end{rem}
\subsection{Strong continuity in $\mathcal{M}_{KE}$.}
Here we prove our Theorem \ref{thmA}.
\begin{reptheorem}{thmA}
Let $X$ be a Fano manifold and let $\{\psi_{t}\}_{t\in[0,1]}\subset\mathcal{M}_{klt}^{+}$ be a weakly continuous segment such that
\begin{itemize}
\item[i)] $\psi_{0}\in\mathcal{M}_{KE}$;
\item[ii)] $\mbox{Aut}(X,[\psi_{t}])^{\circ}=\{\mbox{Id}\}$ for any $t\in [0,1]$;
\item[iii)] $\psi_{t}\preccurlyeq \psi_{s}$ if $t\leq s$;
\item[iv)] $\{\psi_{t}\}_{t\in[0,1]}\subset\mathcal{M}_{D}$.
\end{itemize}
Then the set
$$
S:=\{t\in[0,1]\, : \, \psi_{t}\in \mathcal{M}_{KE}\}
$$
is open, the unique family of $[\psi_{t}]$-KE metrics $\{\omega_{u_{t}}\}_{t\in S}$ is weakly continuous and the family of potentials $\{u_{t}\}_{t\in S}$ can be chosen so that the curve $S\ni t\to u_{t}\in \mathcal{E}^{1}(X,\omega,\psi_{t})$ is strongly continuous.
\end{reptheorem}
Observe that by Propositions \ref{prop:Resc}, \ref{lem:Star} the assumptions $(iii), (iv)$ are automatically satisfied for the segment $\psi_{t}:=P_{\omega}[(1-t)\psi]$ if $\psi\in\mathcal{M}_{D,klt}^{+}$. We also recall that the strong convergence means that $u_{t}\to u$ weakly and $E_{\psi_{t}}(u_{t})\to E_{\psi}(u)$ (section \S \ref{sec:Prelim}).\newline

The strategy to prove Theorem \ref{thmA} is to use the new continuity method introduced in the companion paper \cite{Tru20b} where one \emph{moves the prescribed singularities} instead of the density of the Monge-Ampère equation (see \cite{Tru20b} for a mixed continuity method). The main tools to prove Theorem \ref{thmA} are the openness and the closedness results given by the following theorems.
\begin{thm}[\cite{Tru20b}, Theorem $C$]\footnote{The result in the companion paper \cite{Tru20b} is stated without the quantitative description of the constants $A>1,B>0$, whose form can be easily deduced from the proof.}
\label{thm:Open}
Let $\psi\in\mathcal{M}^{+}_{klt}$. If $D_{\psi}$ is $d$-coercive over $\mathcal{E}^{1}_{norm}(X,\omega,\psi)$, then there exist constants $A=A(\psi,X,\omega)>1, B=B(\psi,X,\omega)>0$ such that
$$
D_{\psi'}(u')\geq \Big(1-\frac{V_{\psi'}}{AV_{\psi}}\Big)d(u',\psi')-B
$$
for any $\psi'\in\mathcal{M}_{klt}^{+},$ $\psi'\succcurlyeq \psi$ and for any $u'\in\mathcal{E}^{1}_{norm}(X,\omega,\psi')$. In particular $D_{\psi'}$ is coercive when $V_{\psi'}< AV_{\psi}$.
\end{thm}
\begin{thm}[\cite{Tru20b}, Theorem $D$]
\label{thm:Closed}
Let $\{\psi_{k}\}_{k\in\mathbbm{N}}\subset \mathcal{M}^{+}_{klt}$ be a increasing sequence of model type envelopes converging weakly to $\psi\in\mathcal{M}^{+}_{klt}$. Assume that
\begin{itemize}
\item[i)] $\omega_{u_{k}}$ is a sequence of $[\psi_{k}]$-KE metrics where $u_{k}\in\mathcal{E}^{1}(X,\omega,\psi_{k})$ minimizes $D_{\psi_{k}}$ and it is normalized so that satisfies $MA_{\omega}(u_{k})=e^{-u_{k}}\mu$ for any $k\in\mathbbm{N}$;
\item[ii)] the sequence $\{u_{k}\}_{k\in\mathbbm{N}}$ is uniformly bounded from above, i.e. there exists $C\in\mathbbm{R}$ such that $\sup_{X}u_{k}\leq C$ for any $k\in\mathbbm{N}$.
\end{itemize}
Then there exists a subsequence $u_{k_{h}}$ that converges strongly to $u\in\mathcal{E}^{1}(X,\omega,\psi)$ solution of $MA_{\omega}(u)=e^{-u}\mu$.
\end{thm}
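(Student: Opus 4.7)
The plan is to extract a weak subsequential limit $u$ from $\{u_k\}$, identify its singularity type as $[\psi]$ via the semicontinuity machinery of Section \S\ref{sec:Prelim}, verify the limiting Monge-Amp\`ere equation through a variational argument, and finally upgrade the convergence from weak to strong.

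\textbf{Weak compactness.} The upper bound $\sup_X u_k \leq C$ is assumed; for a lower bound, the normalization $MA_\omega(u_k) = e^{-u_k}\mu$ together with $\mu(X) = 1$ yields $V_{\psi_k} = \int_X e^{-u_k}d\mu \geq e^{-\sup_X u_k}$, so $\sup_X u_k \geq -\log V_0$. Thus $\{u_k\}$ is relatively compact in the weak topology of $PSH(X,\omega)$, and I extract a weakly convergent subsequence $u_{k_h} \to u$.

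\textbf{Uniform energy lower bound and limiting singularities.} Since $u_k$ minimizes $D_{\psi_k}$ and the Monge-Amp\`ere normalization forces $L_\mu(u_k) = -\log V_{\psi_k}$, testing against $\psi_k$ itself (for which $E_{\psi_k}(\psi_k) = 0$) gives
$$
E_{\psi_k}(u_k) \geq V_{\psi_k}\log V_{\psi_k} - V_{\psi_k}\log\int_X e^{-\psi_k}d\mu,
$$
uniformly bounded below because $\psi_k \leq \psi$ and $\psi$ klt imply $\int e^{-\psi_k}d\mu \leq \int e^{-\psi}d\mu < \infty$. Proposition \ref{prop:Usc} applied to the totally ordered sequence $\{\psi_{k_h}\}$ with this uniform energy lower bound forces $\psi_{k_h} \to P_\omega[u]$ weakly; combined with the hypothesis $\psi_{k_h} \nearrow \psi$, this gives $\psi = P_\omega[u]$ and $u \in \mathcal{E}^1(X,\omega,\psi)$, together with $\limsup_h E_{\psi_{k_h}}(u_{k_h}) \leq E_\psi(u)$. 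By Proposition \ref{prop:Lelong}, $u$ inherits the klt condition from $\psi$.

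\textbf{Variational identification.} I claim $u$ minimizes $D_\psi$ over $\mathcal{E}^1(X,\omega,\psi)$; by the remark following Theorem \ref{thm:FirstA} the implication $(ii)\Rightarrow(i)$ holds in $\mathcal{M}_{klt}^+$ without the $\mathcal{M}_D$ hypothesis, so this will produce $MA_\omega(u) = e^{-u+C'}\mu$ for some constant $C'$, and the normalization $L_\mu(u) = \lim L_\mu(u_{k_h}) = -\log V_\psi$ (by continuity of $L_\mu$, Theorem \ref{thm:DK99}) forces $C'=0$. The upper semicontinuity of $E$ already obtained together with Theorem \ref{thm:DK99} gives $\liminf_h D_{\psi_{k_h}}(u_{k_h}) \geq D_\psi(u)$. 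For the opposite comparison against an arbitrary competitor $w \in \mathcal{E}^1(X,\omega,\psi)$ with $\psi$-relative minimal singularities, I approximate $w$ by $w_k := P_\omega[\psi_k](w) \in \mathcal{E}^1(X,\omega,\psi_k)$, which has $\psi_k$-relative minimal singularities and increases to $w$ as $\psi_k\nearrow \psi$. Combining the minimality $D_{\psi_k}(u_k) \leq D_{\psi_k}(w_k)$ with the convergences $L_\mu(w_k) \to L_\mu(w)$ and $E_{\psi_k}(w_k) \to E_\psi(w)$ along the monotone approximation yields
$$
D_\psi(u) \leq \liminf_h D_{\psi_{k_h}}(u_{k_h}) \leq \lim_h D_{\psi_{k_h}}(w_{k_h}) = D_\psi(w),
$$
establishing the minimality of $u$.

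\textbf{Strong convergence.} Specializing the above argument to $w = u$ shows $D_{\psi_{k_h}}(u_{k_h}) \to D_\psi(u)$; combined with $L_\mu(u_{k_h}) = -\log V_{\psi_{k_h}} \to -\log V_\psi = L_\mu(u)$, this yields $E_{\psi_{k_h}}(u_{k_h}) \to E_\psi(u)$, which is precisely the strong convergence defined at the end of subsection \S\ref{ssec:StrongTop}. The main obstacle is the analysis of the competitors $w_k = P_\omega[\psi_k](w)$: establishing $E_{\psi_k}(w_k) \to E_\psi(w)$ along a monotone approximation in which the singularity type itself varies requires controlling the mixed non-pluripolar products $\langle (\omega+dd^c w_k)^{j}\wedge (\omega+dd^c\psi_k)^{n-j}\rangle$ as $k\to\infty$, which is the technical heart of the argument.
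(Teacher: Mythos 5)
This theorem is imported from the companion paper (\cite{Tru20b}, Theorem $D$); the present paper gives no proof of it, so there is nothing in-text to compare against line by line. Judged on its own terms, your outline is the right variational argument and most of the individual steps are correctly justified from results the paper does state: the two-sided bound on $\sup_X u_k$, the uniform lower bound on $E_{\psi_k}(u_k)$ obtained by testing minimality against $\psi_k$, the identification $P_{\omega}[u]=\psi$ via Proposition \ref{prop:Usc}, the lower semicontinuity $\liminf_h D_{\psi_{k_h}}(u_{k_h})\geq D_{\psi}(u)$, and the passage from minimality of $D_{\psi}$ to the Monge--Ampère equation via the remark after Theorem \ref{thm:FirstA} (which indeed dispenses with the $\mathcal{M}_{D}$ hypothesis for that implication). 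Two sign slips in the energy-bound step should be fixed: since $\psi_{k}\leq\psi$ one has $\int_X e^{-\psi_k}d\mu\geq\int_X e^{-\psi}d\mu$, not $\leq$, and the correct inequality reads $E_{\psi_k}(u_k)\geq -V_{\psi_k}\log V_{\psi_k}+V_{\psi_k}\log\int_X e^{-\psi_k}d\mu$; fortunately with the correct signs the uniform lower bound is even easier (the last term is nonnegative because $\psi_k\leq 0$), so the conclusion survives.

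The genuine gap is the one you flag yourself and then do not close: the $\Gamma$-limsup half of the argument, i.e. $D_{\psi_k}\big(P_{\omega}[\psi_k](w)\big)\to D_{\psi}(w)$, and in particular $E_{\psi_k}\big(P_{\omega}[\psi_k](w)\big)\to E_{\psi}(w)$. This is not a routine monotone-convergence statement: the stated continuity of $E_{\psi}$ is along \emph{decreasing} sequences with fixed singularity type, whereas here the approximants increase and the model type envelope varies with $k$; it is not even immediate that $\big(\lim_k P_{\omega}[\psi_k](w)\big)^{*}=w$ rather than some smaller function with $\psi$-relative minimal singularities. Without this step you obtain neither $D_{\psi}(u)=\inf D_{\psi}$ nor, after specializing to $w=u$, the convergence $E_{\psi_{k_h}}(u_{k_h})\to E_{\psi}(u)$ that constitutes strong convergence, so both conclusions of the theorem remain unproved. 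The missing ingredient does exist in the references the paper relies on --- it is exactly what the author invokes in Step 2 of the proof of Theorem \ref{thmA}, where the strong convergence $P_{\omega}[\psi_{t_k}](u_{t_0})\to u_{t_0}$ is deduced from the identity $c-E_{\psi_{t_k}}(\tilde v_k)=d(\psi_{t_k}+c,\tilde v_k)$ together with Proposition $4.5$ of \cite{Tru19} (see also Theorem $6.3$ of \cite{Tru20a}) --- so the fix is to cite and apply that convergence result rather than to leave the step as ``the technical heart.'' You should also record why $L_{\mu}(u_{k_h})\to L_{\mu}(u)$ is legitimate, namely that $c(u)=c(\psi)>1$ by Proposition \ref{prop:Lelong}, the klt hypothesis and the openness theorem, so that Theorem \ref{thm:DK99} applies with $\alpha=1$.
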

If $\psi_{k}\in\mathcal{M}_{D,klt}^{+}$ then by Theorem \ref{thmB} the potential of any $[\psi_{k}]$-KE metric minimizes $D_{\psi_{k}}$, so the condition $(i)$ of Theorem \ref{thm:Closed} is automatically satisfied for any sequence of relative KE metrics that will be considered in the proof of Theorem \ref{thmA}. Instead, the assumption $(ii)$ is the real obstruction to the closedness and it is also necessary as Remark \ref{rem:CDS} shows.\newline

\begin{proof}{\textit{of Theorem \ref{thmA}}}\newline
\textbf{Step 1: Openness with respect to $\mathcal{T}:=\{[a,b)\}_{a<b}$.}\newline
We first note that since $[0,1]\ni t\to \psi_{t}\in \mathcal{M}_{klt}^{+}$ is weakly continuous and $\psi_{t}\preccurlyeq\psi_{s}$ if $t\leq s$ then
$$
[0,1]\ni t\to V_{\psi_{t}}=\int_{X}MA_{\omega}(\psi_{t})
$$
is continuous by what said in Section \S \ref{sec:Prelim} (i.e. Lemma $3.12$ in \cite{Tru20a}). Thus combining Theorem \ref{thmC} and Theorem \ref{thm:Open} it immediately follows that $S$ is open with respect to the topology generated by open sets $[a,b)$.\newline
\textbf{Step 2: Openness.}\newline
As a consequence of Step 1 to prove that $S$ is open it is sufficient to show that given $t_{0}\in S$, $t_{0}>0$ there exists $0<\epsilon\ll 1$ small enough such that $(t_{0}-\epsilon,t_{0}]\subset S$.\newline
By Theorem \ref{thmC} the $\psi_{t_{0}}$-Ding functional is coercive and there exists a unique $[\psi_{t_{0}}]$-KE metric. We denote by $u_{t_{0}}\in\mathcal{E}^{1}(X,\omega,\psi_{t_{0}})$ its potential given as solution of the Monge-Ampère equation
\begin{equation*}
\begin{cases}
MA_{\omega}(u_{t_{0}})=e^{-u_{t_{0}}}\mu\\
u_{t_{0}}\in\mathcal{E}^{1}(X,\omega,\psi_{t_{0}}).
\end{cases}
\end{equation*}
Then we assume by contradiction that there exists a sequence $t_{k}\nearrow t_{0}$ such that $t_{k}\notin S$ for any $k\in\mathbbm{N}$. By Theorem \ref{thmB} this means that $D_{\psi_{t_{k}}}$ does not admit a minimizer for any $k\in\mathbbm{N}$. Recalling that $D_{\psi_{t_{k}}}$ is translation invariant and lower-semicontinuous with respect to the weak topology (Proposition \ref{prop:ContDing}), we get that any minimizing sequence $\{u_{k,h}\}_{h\in\mathbbm{N}}\subset \mathcal{E}^{1}_{norm}(X,\omega,\psi_{t_{k}})$, for $k\in\mathbbm{N}$ fixed, necessarily satisfies $d(u_{k,h},\psi_{t_{k}})\to +\infty$ as $h\to +\infty$. Indeed if $d(\psi_{t_{k}},u_{k,h})\leq C$ then
$$
\{u_{k,h}\}_{h\in\mathbbm{N}}\subset\mathcal{E}^{1}_{C}(X,\omega,\psi_{t_{k}}):=\{u\in\mathcal{E}^{1}(X,\omega,\psi_{t_{k}})\, : \, \sup_{X}u\leq C, E_{\psi_{t_{k}}}(u)>-k\}
$$
which is weakly compact (Proposition \ref{prop:Usc}). Hence, up to considering a subsequence, $u_{k,h}\to u_{k}\in\mathcal{E}_{C}^{1}(X,\omega,\psi_{t_{k}})$ and
$$
D_{\psi_{t_{k}}}(u_{k})\leq \liminf_{h\to +\infty}D_{\psi_{t_{k}}}(u_{k,h})=\inf_{\mathcal{E}^{1}(X,\omega,\psi_{t_{k}})}D_{\psi_{t_{k}}},
$$
which would lead to $t_{k}\in S$ by Theorem \ref{thmB}.\newline
Therefore we can fix a sequence $\{u_{k}\}_{k\in\mathbbm{N}}$ such that $u_{k}\in\mathcal{E}^{1}(X,\omega,\psi_{t_{k}})$ for any $k\in\mathbbm{N}$, $d(u_{k},\psi_{t_{k}})\to +\infty$, $E_{\psi_{t_{k}}}(u_{k})=0$ and
$$
D_{\psi_{t_{k}}}(u_{k})< D_{\psi_{t_{k}}}(v_{k})
$$
for $v_{k}:=P_{\omega}[\psi_{t_{k}}](u_{t_{0}})-c_{k}$ where $c_{k}=V_{\psi_{t_{k}}}E_{\psi_{t_{k}}}\big(P_{\omega}[\psi_{t_{k}}](u_{t_{0}})\big)$. By continuity of the $\psi$-relative Ding functional with respect to decreasing sequences we may also assume without loss of generality that $u_{k}$ has $\psi_{t_{k}}$-relative minimal singularities.\newline
We now claim that $v_{k}\to u_{t_{0}}^{N}:=u_{t_{0}}-V_{\psi_{t_{0}}}E_{\psi_{t_{0}}}(u_{t_{0}})$ strongly, noting that by definition it is enough to prove that $\tilde{v}_{k}:=P_{\omega}[\psi_{t_{k}}](u_{t_{0}})$ converges strongly to $u_{t_{0}}$. But, letting $c:=\sup_{X}u_{t_{0}}$, $c-E_{\psi_{t_{k}}}(\tilde{v}_{k})=d(\psi_{t_{k}}+c,\tilde{v}_{k})\to d(\psi+c,u_{t_{0}})=c-E_{\psi}$ by Proposition $4.5$ in \cite{Tru19}, and the claim follows.\newline
In particular, as $\int_{X}e^{-v_{k}}d\mu\to \int_{X}e^{-u_{t_{0}}^{N}}d\mu$ by Theorem \ref{thm:DK99}, we also have
$$
D_{\psi_{t_{k}}}(v_{k})\to D_{\psi_{t_{0}}}(u_{t_{0}})
$$
as $k\to +\infty$. Next for $C=d(\psi_{t_{0}},u_{t_{0}}^{N})+1$ fixed and $k\gg 1$ big enough we denote by $w_{k}\in\mathcal{E}^{1}(X,\omega,\psi_{t_{k}})$ the element on the weak geodesic segment joining $v_{k}$ and $u_{k}$ such that $d(\psi_{t_{k}},w_{k})=C$. Note that such sequence $w_{k}$ exists as $d(\psi_{t_{k}},v_{k})\leq d(\psi_{t_{0}},u_{t_{0}}^{N})$ by Lemma \ref{lem:PropOld}. Moreover $E_{\psi_{t_{k}}}(w_{k})=0$ by linearity of the Monge-Ampère energy along weak geodesic segments (Theorem \ref{thm:Linear}). Then by convexity of the $\psi_{t_{k}}$-Ding functional it follows that 
$$
D_{\psi_{t_{k}}}(w_{k})<D_{\psi_{t_{k}}}(v_{k})
$$
for any $k\in\mathbbm{N}$. Furthermore by Lemma \ref{lem:PropOld} $|\sup_{X}w_{k}|\leq A$ uniformly as $d(\psi_{t_{k}},w_{k})=C$ and $V_{\psi_{t_{k}}}\geq V_{\psi_{0}}>0$. Hence up to considering a subsequence, $w_{k}\to w$ weakly where $w\in \mathcal{E}^{1}(X,\omega,\psi_{t_{0}})$ by the compactness of Proposition \ref{prop:Usc} which also yields $E_{\psi_{t_{0}}}(w)\geq 0$. Thus, as by Theorem \ref{thm:DK99} $\int_{X}e^{-w_{k}}d\mu\to \int_{X}e^{-w}d\mu$, we obtain
$$
D_{\psi_{t_{0}}}(w)\leq \liminf_{k\to +\infty}D_{\psi_{t_{k}}}(w_{k})\leq \lim_{k\to +\infty}D_{\psi_{t_{k}}}(v_{k})=D_{\psi_{t_{0}}}(u_{t_{0}})=\inf_{\mathcal{E}^{1}(X,\omega,\psi_{t_{0}})}D_{\psi_{t_{0}}}\leq D_{\psi_{t_{0}}}(w).
$$
Therefore $D_{\psi_{t_{k}}}(w_{k})\to D_{\psi_{t_{0}}}(w)$ which reads as $w_{k}\to w$ strongly. Moreover as $E_{\psi_{t_{0}}}(w)=E_{\psi_{t_{0}}}(u_{t_{0}}^{N})=0$ the uniqueness of solutions (Theorem \ref{thmB}) implies $w=u_{t_{0}}$. Finally the contradiction is given by
$$
C-d(\psi_{t_{k}},v_{k})=d(\psi_{t_{k}},w_{k})-d(\psi_{t_{k}},v_{k})\leq d(v_{k},w_{k}).
$$
since the left-hand side converges to $1$ ($v_{k}\to u_{t_{0}}^{N}$ strongly) while the right-hand side goes to $0$.\newline
\textbf{Step 3: Strong Continuity.}\newline
Let $\{t_{k}\}_{k\in\mathbbm{N}}\subset S$ be a converging sequence to $t_{0}\in S$ and denote by $u_{k}\in \mathcal{E}^{1}(X,\omega,\psi_{t_{k}})$ the unique potential of the corresponding KE metric such that
\begin{equation*}
\begin{cases}
MA_{\omega}(u_{k})=e^{-u_{k}}d\mu\\
u_{k}\in\mathcal{E}^{1}(X,\omega,\psi_{t_{k}}),
\end{cases}
\end{equation*}
and similarly for $u\in\mathcal{E}^{1}(X,\omega,\psi_{t_{0}})$ potential for the $[\psi_{t_{0}}]$-KE metric. As $\{\psi_{t}\}_{t\in[0,1]}$ is totally ordered, to prove that $u_{k}\to u$ strongly it is enough to consider the two monotonically cases $t_{k}\nearrow t_{0}, t_{k}\searrow t_{0}$ and show the strong convergence for subsequence.\newline
In the case $t_{k}\searrow t_{0}$, Theorem \ref{thm:Open} implies that there exist uniform coefficients for the coercivity if $k\gg 1$ big enough, i.e. there exists $A>0,B\geq 0$ such that
$$
D_{\psi_{t_{k}}}(v)\geq Ad(\psi_{t_{k}},v)-B
$$
for any $v\in\mathcal{E}^{1}_{norm}(X,\omega,\psi_{t_{k}})$. Thus, as clearly $D_{\psi_{t_{k}}}(u_{k})\leq D_{\psi_{t_{k}}}(\psi_{t_{k}})\leq C_{1}$ uniformly, we obtain $d(u_{k},\psi_{t_{k}})\leq C_{2}$ uniformly. Hence, by Lemma \ref{lem:PropOld} $|\sup_{X}u_{k}|\leq C_{3}$ uniformly and Theorem \ref{thm:Closed} concludes this case.\newline
If instead $t_{k}\nearrow t_{0}$ we first replace $u_{k},u$ respectively with $u_{k}^{N}:=u_{k}-V_{\psi_{t_{k}}}E_{\psi_{t_{k}}}(u_{k}), u^{N}:=u-V_{\psi_{t_{0}}}E_{\psi_{t_{0}}}(u)$ so that they have null relative energies. Then, proceeding by contradiction as in Step 2, we necessarily have $ d(\psi_{t_{k}},u_{k}^{N})\leq C_{4} $ uniformly, which again by Lemma \ref{lem:PropOld} implies 
\begin{equation}
\label{eqn:Cici}
\sup_{X}u_{k}^{N}\leq C_{5}.
\end{equation}
Therefore by the weak compactness of Proposition \ref{prop:Usc}, up to considering a subsequence, it follows that $u_{k}^{N}\to \tilde{u}\in \mathcal{E}^{1}(X,\omega,\psi_{t_{0}})$. On the other hand the Monge-Ampère equations yields
$$
\log \int_{X}e^{-u_{k}^{N}}d\mu=\log V_{\psi_{t_{k}}}+V_{\psi_{t_{k}}}E_{\psi_{t_{k}}}(u_{k}),
$$
and by Theorem \ref{thm:DK99} we deduce $E_{\psi_{t_{k}}}(u_{k})\leq C_{6}$ uniformly. Hence (\ref{eqn:Cici}) implies $\sup_{X}u_{k}\leq C_{7}$ and Theorem \ref{thm:Closed} concludes the proof.
\end{proof}
\begin{rem}
\emph{Observe that when $\psi\in \mathcal{M}_{KE}$ does not belong to $\mathcal{M}_{D}$ but its $\psi$-relative Ding functional is coercive, then a natural way to connect $\psi$ with model type envelopes $\tilde{\psi}\in\mathcal{M}_{D}, \tilde{\psi}\succcurlyeq \psi$ as in Theorem \ref{thmA} is to pass through the element $\psi'\in\mathcal{M}_{D}$ having the same singularity data of $\psi$. Indeed, if
$$
V_{\psi'}<AV_{\psi}
$$
where $A>1$ is given by Theorem \ref{thm:Open}, then the slopes of the $d$-coercivity of $D_{\psi_{s}}$ for $\psi_{s}:=s\psi'+(1-s)\psi$ are uniformly bounded. Thus, proceeding as in the proof of Theorem \ref{thmA}, appropriate potentials for the KE metrics are uniformly bounded from above and the strong continuity of Theorem \ref{thmA} holds for this path as a consequence of Theorem \ref{thm:Closed}.}
\end{rem}
\subsection{$0$-dimensional equisingularities}
\label{ssec:Estimate}
An estimate on the $\psi$-relative $\alpha$-invariant for $\psi\in\mathcal{M}_{klt}^{+}$ may give information about the existence of KE metrics by Theorem \ref{thmA2} (see also Question \ref{Question} and Conjecture \ref{conjA}).\newline
Moreover, as said in the Introduction, for singular model type envelopes the definition of the $\psi$-relative $\alpha$-invariant involves less functions, thus it might be easier to compute. A very natural case to consider is when $\psi$ has isolated singularities at $N$ points with the same \emph{weight} (i.e. with the same Lelong numbers at these points). Indeed, in this case the singularities can have weight arbitrarily small (which clearly implies $lct(X,\psi)$ arbitrarily big and $\tilde{\alpha}_{\omega}(\psi)\sim \alpha_{\omega}(\psi)$ thanks to Proposition \ref{prop:ConnAlphas}), the locus of the singularities is always $0$-dimensional and the total mass $V_{\psi}$ may basically be chosen arbitrarily and independent of the weight of the singularities. Moreover, we expect that, assuming $\mbox{Aut}(X)^{\circ}=\{\mbox{Id}\}$, any KE metric should be recovered by these KE metrics with $0$-dimensional equisingularities when $N$ moves to $+\infty$ and, conversely, that any of these sequences of singular KE metrics diverges if $X$ does not admit a KE metric. This \emph{point process} will be subject of studies in future works.\newline

Given an ample line bundle $L$ and $R:=\{p_{1},\dots,p_{N}\}$ a set of $N$ distinct points on $Y$ compact Kähler manifold, the \emph{multipoint Seshadri constant} at $p_{1},\dots,p_{N}$ is defined as
$$
\epsilon(L;p_{1},\dots,p_{N}):=\sup\{a>0\, : \, f_{N}^{*}L-a\mathbbm{E}_{N} \, \mbox{is nef}\,\}
$$
where $f_{N}:Z\to Y$ is the blow-up along $R$ and $\mathbbm{E}_{N}:=\sum_{j=1}^{N}E_{j}$ the sum of the exceptional divisors (see \cite{Dem90}, \cite{Laz04}, \cite{BDRH}, \cite{Tru18}). The definition extends to $\mathbbm{Q}$-line bundles by rescaling and to $\mathbbm{R}$-line bundles by continuity. Moreover $\epsilon(L;\cdot)$ is lower-semicontinuous and its supremum is reached outside a countable union of proper subvarieties, i.e. when the points are in \emph{very general position}. In this case we will use the notation $\epsilon(L;N)$ for simplicity.\newline
The characterization of the multipoint Seshadri constants in terms of jets implies that given $N\in\mathbbm{N}, \delta>0$ there exists a $\omega$-psh function $\varphi_{N,\delta}$ with analytic singularities formally encoded in $(\mathcal{I}_{R},\delta)$ if $\delta<\epsilon(L;N)$ (while $\delta\leq \epsilon(L;N)$ is a necessary condition). In particular, letting $\psi_{N,\delta}:=P_{\omega}[\varphi_{N,\delta}]$ and $\eta_{N,\delta}$ the big and semipositive form given by $f^{*}\omega_{\varphi_{N,\delta}}=\eta_{N,\delta}+\delta[\mathbbm{E}_{N}]$,
$$
V_{\psi_{N,\delta}}=\mathrm{Vol}_{Z}\big(\{\eta_{N,\delta}\}\big)=\mathrm{Vol}_{Y}(L)-N\delta^{n}
$$
where $\mathrm{Vol}_{Y}(L)=\int_{X}\omega^{n}$, $\mathrm{Vol}_{Z}(\{\eta_{N,\delta}\})=\int_{Z}\eta_{N,\delta}^{n}$. Observe also that $\delta=\nu(\psi_{N,\delta}):=\sup_{y\in Y}\nu(\psi_{N,\delta},y)$ by definition and that $\psi_{N,\delta}\in \mathcal{M}^{+}_{klt}$ if and only if $\delta<n$ (see also \cite{Dem90}).\newline

Then letting $\Theta\in H^{1,1}(Y,\mathbbm{R})$ be a pseudoeffective cohomology class on a compact manifold $Y$, we define the \emph{pseudoeffective threshold of $\Theta$ at $y$} as
$$
\sigma(\Theta;y):=\sup\{a\geq 0\, :\, f^{*}\Theta-a E\, \mbox{is pseudoeffective}\,\},
$$
where we denoted by $f:Z\to Y$ the blow-up at $y$ and with $E$ the exceptional divisor.
\begin{lem}
\label{lem:SecondEstimate}
Let $\Theta\in H^{1,1}(Y,\mathbbm{R})$ be a pseudoeffective cohomology class on a compact manifold $Y$ and let $\eta$ be a smooth closed $(1,1)$-form representative of $\Theta$. Then
\begin{equation*}
\sup_{u\in PSH(Y,\eta)}\nu(u,y)=\sigma(\Theta;y) \,\,\, \mbox{for any}\,\,\, y\in Y,\\
\end{equation*}
\end{lem}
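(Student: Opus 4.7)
The plan is to transfer the question to the blow-up $f \colon Z \to Y$ at $y$, exploiting the pullback/pushdown correspondence between $\eta$-psh functions on $Y$ and $f^{*}\eta$-psh functions on $Z$ together with the Siu decomposition. A useful preliminary observation is that, in the normalization $dd^{c} = \tfrac{i}{2\pi}\partial\bar\partial$, the local identity $dd^{c}\log|w_1|^{2} = [\{w_1 = 0\}] - (\text{smooth})$ forces the generic (potential) Lelong number of a quasi-psh function $\phi$ along a prime divisor $D$ to equal \emph{twice} the $[D]$-coefficient of $dd^{c}\phi$ in its Siu decomposition. This factor of $2$ is precisely the one appearing in the identity to be proved.

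For the inequality $\sup_{u}\nu(u,y) \leq 2\sigma(\Theta;y)$, take $u \in PSH(Y,\eta)$ with $\nu(u,y) = \gamma$. In each of the standard blow-up charts, where $f(w_1, w') = (w_1, w_1 w')$ and $E = \{w_1 = 0\}$, the bound $u(z) \leq \gamma\log|z-y| + O(1)$ yields $u\circ f \leq \gamma\log|w_1| + O(1)$ near the compact subsets of $E$ in the chart, so the generic Lelong number of $u\circ f$ along $E$ is at least $\gamma$. By Siu, $f^{*}\eta + dd^{c}(u\circ f) \geq (\gamma/2)[E]$ as currents; therefore the class $f^{*}\Theta - (\gamma/2)\{E\}$ is pseudoeffective and $\sigma(\Theta;y) \geq \gamma/2$.

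For the reverse inequality, fix $a < \sigma(\Theta;y)$ and choose a closed positive $(1,1)$-current $T$ in the class $f^{*}\Theta - a\{E\}$. Then $T + a[E]$ is a closed positive current in the class $f^{*}\Theta$, so one can write $T + a[E] = f^{*}\eta + dd^{c}\varphi$ with $\varphi \in PSH(Z, f^{*}\eta)$. Its Siu decomposition has $[E]$-coefficient at least $a$, hence $\varphi$ has generic Lelong number at least $2a$ along $E$. Since $f \colon Z\setminus E \to Y\setminus\{y\}$ is biholomorphic, $\varphi|_{Z\setminus E}$ descends to an $\eta$-psh function on $Y\setminus\{y\}$, bounded above (as $Z$ is compact), hence extending across the pluripolar set $\{y\}$ to some $u \in PSH(Y,\eta)$.

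Finally, to conclude $\nu(u,y) \geq 2a$ (and then let $a \nearrow \sigma(\Theta;y)$), one covers a punctured neighborhood of $y$ by the standard blow-up charts and observes that in each chart the defining coordinate $w_1$ of $E$ satisfies $|w_1| \leq |z-y|$ while the remaining fibre coordinates stay bounded on the relevant region; therefore the local estimate $\varphi \leq 2a\log|w_1| + O(1)$ yields $u(z) \leq 2a\log|z-y| + O(1)$ with a uniform constant. The main technical point is exactly this last uniformity: the $O(1)$ must be controlled chart by chart so as to cover all directions of approach to $y$, which is what makes the push-down produce a genuine pointwise Lelong number bound rather than merely a generic one.
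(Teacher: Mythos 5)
Your proof is correct and follows essentially the same blow-up strategy as the paper: for $\sup_u\nu(u,y)\leq 2\sigma(\Theta;y)$ one reads off the $[E]$-coefficient in the Siu decomposition of $f^{*}\eta_u$, and for the reverse one takes a closed positive current in $f^{*}\Theta - a\{E\}$, adds $a[E]$, and descends the result to $Y$. The only difference is that where the paper invokes a pushdown result for positive currents (\cite{BouTh}, Prop.\ $1.2.7$) and the standard identity $\nu(u,y)=\inf_{z\in E}\nu(f^{*}\eta_u,z)$, you rederive both by explicit chart computations together with the Siu decomposition, which makes the argument more self-contained but does not change the route.
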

\begin{proof}
For any $u\in PSH(Y,\eta)$ and any $y\in Y$,
$$
g^{*}(\eta_{u})-\nu(u,y)[E]
$$
is a closed and positive $(1,1)$-current, where $g:Z\to Y$ is the blow-up at $y$ and $E$ the exceptional divisor. Thus
$$
\sup_{u\in PSH(Y,\eta)}\nu(u,y)\leq \sigma(\Theta;y).
$$
On the other hand if $g^{*}\Theta-aE$ is pseudoeffective there exists a closed and positive $(1,1)$-current $T$ representative of $g^{*}\Theta-aE$. Therefore the current $T+a[E]$ is closed and positive with cohomology class $g^{*}\Theta$. But this implies that there exists a closed and positive current $S$ such that $g^{*}S=T+a[E]$ (see for instance Proposition $1.2.7.(ii)$ in \cite{BouTh}). Thus by the $\partial \bar{\partial}$-Lemma $S=\eta+dd^{c}u$ for $u\in PSH(Y,\eta)$, and by construction $\nu(u,y)=\inf_{z\in E}\nu(T+a[E],z)\geq a$ where we recall that the Lelong number of a closed and positive $(1,1)$-current is defined as the Lelong number of its potential once that a smooth form is fixed. Hence $\sup_{u\in PSH(Y,\eta)}\nu(u,y)\geq\sigma(\Theta;y)$ which concludes the proof. 
\end{proof}
Using the notation $\sigma(L;y):=\sigma\big(c_{1}(L);y\big)$, we can now state the following final estimate for the $\psi_{N,\delta}$-relative $\alpha$-invariant.
\begin{prop}
\label{prop:Final!}
Let $0<\delta<\min\{\epsilon(-K_{X};N),1\}$ and let $\psi\in \mathcal{M}^{+}$ the model type envelope with analytic singularity type formally encoded in $(\mathcal{I}_{S},\delta)$ where $S=\{p_{1},\dots,p_{N}\}$ is the set of points. Let also $L:=f^{*}(-K_{X})-\delta \mathbbm{E}$ the corresponding ample $\mathbbm{R}$-line bundle, where with obvious notations $f:Y\to X$ is the blow-up at $S$ and $\mathbbm{E}:=\sum_{j=1}^{N}E_{j}$ the sum of the exceptional divisors. Then, setting
\begin{gather*}
\sigma_{exc}(L):=\sup_{y\in\mathbbm{E}}\sigma(L;y),\\
\sigma_{gen}(L):=\sup_{y\in Y\setminus \mathbbm{E}}\sigma(L;y),\\
\epsilon_{exc}(L):=\inf_{y\in\mathbbm{E}}\epsilon(L;y),\\
\epsilon_{gen}(L):=\inf_{y\in Y\setminus \mathbbm{E}}\epsilon(L;y),
\end{gather*}
we have
\begin{gather}
\label{eqn:Final1}
\alpha_{\omega}(\psi)\geq  \min\Big\{\frac{1-\delta}{\sigma_{exc}(L)}, \frac{1}{\sigma_{gen}(L)}\Big\},\\
\label{eqn:Final2}
\alpha_{\omega}(\psi)\geq \min\Big\{\frac{(1-\delta)\epsilon_{exc}(L)^{n-1}}{V_{0}-\delta^{n}N}, \frac{\epsilon_{gen}(L)^{n-1}}{V_{0}-\delta^{n}N}\Big\}
\end{gather}
where $V_{0}:=\mbox{Vol}_{X}(-K_{X})=(-K_{X})^{n}$
\end{prop}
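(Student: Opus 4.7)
The plan is to combine Lemma \ref{lem:Exp}, which gives $\alpha_\omega(\psi) = \inf_{u \preccurlyeq \psi} c(u)$, with the bijection of Proposition \ref{prop:Anal}: every $u \preccurlyeq \psi$ corresponds to a unique $\tilde u := (u - \varphi) \circ f \in PSH(Y, \eta)$ with $\{\eta\} = c_1(L)$. Since Lelong numbers are additive for sums of quasi-psh functions and $\varphi \circ f$ has generic Lelong number $2\delta$ along each exceptional divisor $E_i$ (because $\varphi$ has analytic singularities of weight $\delta$ along $\mathcal{I}_S$), I obtain the transfer identities $\nu(u, p_i) = \nu(\tilde u, y) + 2\delta$ for a generic $y \in E_i$ and $\nu(u, x) = \nu(\tilde u, f^{-1}(x))$ for $x \in X \setminus S$. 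Uniform Lelong-number bounds on $\tilde u$ over $PSH(Y, \eta)$ therefore translate into uniform Lelong-number bounds on $u$ over $u \preccurlyeq \psi$ with $\sup_X u = 0$, and Zeriahi's uniform Skoda integrability theorem (Theorem \ref{thm:Skoda}) applied to the family $\{\alpha u\}$ will then yield the desired $\alpha$-invariant lower bounds via Lemma \ref{lem:Exp}.

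For (\ref{eqn:Final1}), I feed in the pseudoeffective-threshold bound $\nu(\tilde u, y) \leq 2\sigma(L; y)$ from Lemma \ref{lem:SecondEstimate}, which gives $\nu(u, p_i) \leq 2(\delta + \sigma_{exc}(L))$ at each $p_i$ and $\nu(u, x) \leq 2\sigma_{gen}(L)$ for $x \in X \setminus S$. Hence $\sup_x \nu(u, x) \leq 2\max\{\delta + \sigma_{exc}(L), \sigma_{gen}(L)\}$ uniformly, and Theorem \ref{thm:Skoda} produces a uniform bound on $\int_X e^{-\alpha u}\, d\mu$ for every $\alpha < 1/\max\{\delta + \sigma_{exc}(L), \sigma_{gen}(L)\}$, which is (\ref{eqn:Final1}).

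For the finer inequality (\ref{eqn:Final2}), I replace the pseudoeffective threshold by a Seshadri-based bound coming from intersection theory: for any effective $\mathbb{Q}$-divisor $F \sim_{\mathbb{Q}} L$ and any $y \in Y$, representing $L^{n-1}$ by a positive $1$-cycle through $y$ of multiplicity at least $\epsilon(L;y)^{n-1}$ yields $V_L = F \cdot L^{n-1} \geq \operatorname{mult}_y(F)\cdot\epsilon(L;y)^{n-1}$, whence $\operatorname{mult}_y(F) \leq V_L/\epsilon(L;y)^{n-1}$. Approximating $\tilde u$ by Demailly's Bergman-kernel potentials $\tilde u_k := \frac{1}{k}\log\sum_j |\sigma_{k,j}|^2$ attached to orthonormal bases of $H^0(Y, kL\otimes\mathcal{I}(k\tilde u))$ (as in Theorem \ref{thm:Reg} and the proof of Proposition \ref{prop:AlphaAlg}), whose Lelong numbers at $y$ converge to $\nu(\tilde u, y)$ while each $k\tilde u_k$ is the potential of an effective divisor in $|kL|$, transfers this divisor multiplicity bound to the pointwise estimate $\nu(\tilde u, y) \leq V_L/\epsilon(L;y)^{n-1}$. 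Running the argument of the previous paragraph with this improved Lelong bound then produces (\ref{eqn:Final2}). The main delicate point is the precise matching of constants across the Lelong-number additivity, Demailly's regularization and Theorem \ref{thm:Skoda}, so that the final form of the bound is exactly as stated.
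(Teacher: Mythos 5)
Your argument for (\ref{eqn:Final1}) is essentially the paper's: transfer Lelong numbers through the bijection of Proposition \ref{prop:Anal}, bound the push-forward Lelong numbers on $Y$ by pseudoeffective thresholds via Lemma \ref{lem:SecondEstimate}, and finish with Zeriahi's uniform Skoda estimate (Theorem \ref{thm:Skoda}) through Lemma \ref{lem:Exp}. That part is correct.

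For (\ref{eqn:Final2}) you take a genuinely different and more laborious route. The paper simply observes that (\ref{eqn:Final2}) is a \emph{formal consequence} of (\ref{eqn:Final1}) together with the elementary inequality $\sigma(L;y)\,\epsilon(L;y)^{n-1}\leq \mathrm{Vol}_Y(L)$, which follows either from Okounkov body convexity or from the one-line intersection computation $(g^{*}L-\epsilon E)^{n-1}\cdot(g^{*}L-\sigma E)\geq 0$ on the blow-up of $y$; here $V_L=\mathrm{Vol}_Y(L)=V_0-N\delta^{n}$ so no further work on Lelong numbers is needed. By contrast, you try to bound $\nu(\tilde u, y)$ directly, first for potentials of effective $\mathbb{Q}$-divisors via intersection theory on $Y$ and then for general $\tilde u$ via Bergman-kernel regularization. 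Two steps there are shaky. First, the claim that one can represent $L^{n-1}$ by a positive $1$-cycle through $y$ of multiplicity at least $\epsilon(L;y)^{n-1}$ is not a standard fact and is not justified; the correct argument is the blow-up intersection computation just described, which in effect rederives $\sigma\epsilon^{n-1}\leq\mathrm{Vol}(L)$ and so brings you back to the paper's shortcut. Second, the Bergman-kernel potentials $\tilde u_{k}=\frac{1}{k}\log\sum_{j}|\sigma_{k,j}|^{2}$ are not potentials of effective divisors in $|kL|$ (they have analytic, not divisorial, singularities), so your transfer step as phrased does not apply and needs a further reduction to individual sections. Finally, beware the normalization: with the paper's convention $\nu(\log|z-x|,x)=1$, a reduced prime divisor $F$ has $\nu(v_{F},y)=2\,\mathrm{mult}_{y}(F)$, so your intermediate estimate should read $\nu(\tilde u,y)\leq 2V_{L}/\epsilon(L;y)^{n-1}$ before the factor of $2$ in Theorem \ref{thm:Skoda} is applied — as written your bound drops a factor of $2$, which is precisely the kind of constant mismatch you flag at the end.
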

\begin{proof}
Proposition \ref{prop:Anal} yields a bijection between $PSH(X,\omega,\psi):=\{u\in PSH(X,\omega)\, : \, u\preccurlyeq \psi \} $ and $PSH(Y,\eta)$ where $\eta$ is a smooth closed $(1,1)$-form with cohomology class $c_{1}(L)$. Moreover denoting with $\tilde{u}\in PSH(X,\eta)$ the function corresponding to $u\in PSH(X,\omega,\psi)$, it follows by construction that, for any $\alpha>0$,
$$
\nu\big(\alpha(u-\psi)+\psi,p_{j}\big)=\delta +\alpha \inf_{y\in E_{j}}\nu(\tilde{u},y),
$$  
while $\nu\big(\alpha(u-\psi)+\psi,x\big)=\alpha\nu(\tilde{u},f^{-1}x)$ if $x\notin S$. Thus we get
\begin{gather}
\label{eqn:Ug1}
\delta+\alpha\sup_{y\in \mathbbm{E}}\sup_{\tilde{u}\in PSH(Y,\eta)}\nu(\tilde{u},y)\geq \sup_{j=1,\dots,N}\sup_{u\preccurlyeq \psi}\nu\big(\alpha(u-\psi)+\psi,p_{j}\big),\\
\label{eqn:Ug2}
\alpha\sup_{y\in Y\setminus \mathbbm{E}}\sup_{\tilde{u}\in PSH(Y,\eta)}\nu(\tilde{u},y)=\sup_{x\in X\setminus S}\sup_{u\preccurlyeq \psi}\nu\big(\alpha(u-\psi)+\psi,x\big).
\end{gather}
Then by Lemma \ref{lem:SecondEstimate} the left-hand sides in (\ref{eqn:Ug1}) and in (\ref{eqn:Ug2}) correspond respectively to $\delta+\alpha\sigma_{exc}(L)$ and $\alpha\sigma_{gen}(L)$. Therefore
$$
\sup_{x\in X}\sup_{u\preccurlyeq \psi}\nu \big(\alpha(u-\psi)+\psi,x\big)\leq \max\big\{\delta +\alpha\sigma_{exc}(L), \alpha \sigma_{gen}(L)\big\},
$$
which leads to $c_{\psi}(u)\geq \alpha$ for any $u\preccurlyeq \psi$ when $\alpha>0$ satisfies
$$
\alpha < \min \Big\{\frac{1-\delta}{\sigma_{exc}(L)}, \frac{1}{\sigma_{gen}(L)}\Big\}
$$
thanks to Theorem \ref{thm:Skoda}. Thus by Lemma \ref{lem:Exp} and Remark \ref{rem:AnSinTerm} we deduce that
$$
\alpha_{\omega}(\psi)\geq \min\Big\{\frac{1-\delta}{\sigma_{exc}(L)}, \frac{1}{\sigma_{gen}(L)}\Big\},
$$
i.e. (\ref{eqn:Final1}). Next, (\ref{eqn:Final2}) is a consequence of (\ref{eqn:Final1}) as
\begin{equation*}
\sigma(L;y)\epsilon(L;y)^{n-1}\leq \mbox{Vol}_{X}(L).
\end{equation*}
holds for any $y\in Y$. One easy way to check this last inequality is through the convexity of the Okounkov body of $L$ at $y$ with respect to an \emph{infinitesimal flag} (see \cite{LM09}, \cite{KL??}).
\end{proof}
\begin{rem}
\emph{As seen during the proof of Proposition \ref{prop:Final!} the lower bound in terms of the pseudoeffective thresholds is sharper than the one given by the Seshadri constants. However, giving upper bounds for the pseudoeffective threshold is usually harder than finding lower bounds for the Seshadri constant. Moreover the latter is much more studied in the literature as it is related to renowned conjectures in Algebraic Geometry (see \cite{BDRH}).}
\end{rem}
{\footnotesize
\bibliographystyle{acm}
\bibliography{main}
}
\end{document}